\documentclass[11pt,reqno]{amsart}
\usepackage[a4paper]{geometry}

\usepackage{amsmath,amssymb,amsfonts,mathrsfs}
\usepackage{graphicx}
\usepackage{xcolor}
\usepackage[colorlinks=true,linkcolor=blue,citecolor=blue,urlcolor=blue]{hyperref}

\usepackage{cite}

\newtheorem{theorem}{Theorem}[section]

\newtheorem{lemma}{Lemma}[section]
\newtheorem{proposition}[lemma]{Proposition}
\newtheorem{corollary}[theorem]{Corollary}

\theoremstyle{definition}
\newtheorem{definition}[lemma]{Definition}

\theoremstyle{remark}
\newtheorem{remark}{Remark}[section]

\numberwithin{equation}{section}

\begin{document}

\title[INVERSE SCATTERING THEORY IN HYPERBOLIC SPACE]{Inverse Scattering from Conformal Infinity for Totally Geodesic Defects in Hyperbolic Space}

\author{Lu Chen}
\address[Lu Chen]{Key Laboratory of Algebraic Lie Theory and Analysis, Ministry of Education, School of Mathematics and Statistics, Beijing Institute of Technology, Beijing 100081, PR China}
\email{chenlu5818804@163.com}

\author{Hongyu Liu}
\address[Hongyu Liu]{Department of Mathematics, City University of Hong Kong, Hong Kong SAR, China}
\email{hongyu.liuip@gmail.com, hongyliu@cityu.edu.hk}


\author{Longyue Tao}
\address[Longyue Tao]{Department of Mathematics, City University of Hong Kong, Hong Kong SAR, China}
\email{sdyctly@163.com, longyue.tao@my.cityu.edu.hk}

\keywords{Hyperbolic space, conformal infinity, far-field pattern, Helgason mode, impenetrable topological defect, inverse scattering, totally geodesic, unique determination, quantitative stability, minimal data}

\subjclass[2020]{Primary 58J50, 35R30, 51M10; Secondary 35P25, 35B35.}

\date{\today}

\begin{abstract}

We study inverse scattering from conformal infinity for impenetrable topological defects whose interaction surfaces are totally geodesic in hyperbolic space \(\mathbb H^n\), \(n \geq 2\).
For a fixed spectral parameter \(\lambda_0 > 0\), the prescribed inputs are boundary labels \(\xi \in \partial_\infty \mathbb H^n\).
Each label selects an incoming Helgason mode in the hyperbolic interior.
Given a defect \(\mathcal P \Subset \mathbb H^n\), this mode generally fails to satisfy the homogeneous trace condition on the interaction surface and hence generates an outgoing correction.
The leading coefficient of this correction at conformal infinity defines the measured far-field pattern.
The central question is whether \(\mathcal P\) can be recovered from the far-field patterns corresponding to one or finitely many prescribed boundary labels.
This gives a formally determined inverse problem at one fixed spectral parameter.

Our first main result establishes the unique determination of a class of totally geodesic defects that includes both bulk components and hypersurface-supported ones.
For Dirichlet-type defects, a single boundary label suffices.
For Neumann-type defects, \(n+1\) boundary labels are sufficient and in general necessary, provided they satisfy the natural affine-independence condition at conformal infinity.

Our second main result provides {sharp} quantitative stability estimates within the same framework.
The hyperbolic Hausdorff distance between two defects is controlled by the discrepancy of their far-field patterns at conformal infinity.
The proof combines a delicate continuation from conformal infinity with quantitative hyperbolic reflection across totally geodesic hypersurfaces.
Taken together, these results yield a comprehensive qualitative and quantitative far-field inverse scattering theory at conformal infinity, based on formally determined data.

\end{abstract}

\maketitle

\tableofcontents

\section{Introduction}

\subsection{Background and motivation}

Scattering theory on hyperbolic spaces and asymptotically hyperbolic manifolds is naturally organized by the geometry at \emph{conformal infinity}.
Under conformal compactification, the boundary at infinity becomes a boundary of the compactified space and carries the asymptotic coefficients of bulk fields.
This point of view has its roots in spectral and automorphic scattering on hyperbolic quotients, including the work of Lax--Phillips and Patterson on scattering theory and Eisenstein series \cite{LaxPhillips1976,Patterson1975}.
It was further developed in automorphic and spectral scattering by Faddeev--Pavlov and Perry \cite{FaddeevPavlov1972,Perry1989}.
Friedlander introduced the radiation-field approach in the Euclidean setting \cite{Friedlander1980}, and S\'a Barreto subsequently adapted it to asymptotically hyperbolic manifolds \cite{SaBarreto2005}.
The resolvent and scattering matrix theory on asymptotically hyperbolic manifolds was developed by Mazzeo--Melrose, Graham--Zworski, and Guillarmou \cite{MazzeoMelrose1987,GrahamZworski2003,Guillarmou2005}.
In this setting, scattering data are objects at conformal infinity, and inverse scattering asks what information about the hyperbolic interior is encoded there.
Hyperbolic geometry also arises naturally in black-hole physics: near a static nondegenerate Killing horizon, the associated optical geometry is asymptotically hyperbolic \cite{GibbonsWarnick2009}.

The inverse scattering theory on asymptotically hyperbolic manifolds makes this principle precise.
In the stationary theory, Joshi--S\'a Barreto showed that, away from exceptional energies, the scattering matrix determines boundary asymptotics of the metric, and analogously of a potential \cite{JoshiSaBarreto2000}.
Radiation-field and boundary-control methods later led to global and partial-data recovery results from scattering data at conformal infinity \cite{SaBarreto2005,HoraSaBarreto2015}.
S\'a Barreto--Wang showed that the scattering operator is a Fourier integral operator associated with the scattering relation \cite{SaBarretoWang2019}.

These results use global operators at conformal infinity, such as scattering matrices, scattering operators, or radiation-field maps.
Such operators encode families of boundary inputs and outputs, often depending on a spectral or time parameter.
The present paper asks a complementary finite-label question at one fixed spectral parameter: what can be recovered from a single prescribed boundary label, or from at most finitely many such labels?

To formulate this question, we use the hyperbolic Sommerfeld--Rellich framework of Chen--Liu \cite{ChenLiu2023}.
This framework provides the hyperbolic radiation condition, the Rellich theorem, and the far-field expansion needed to identify the leading outgoing coefficient at conformal infinity.
After fixing a spectral parameter \(\lambda_0>0\), we prescribe a finite family of boundary labels
\[
    \Xi=(\xi_1,\ldots,\xi_m)
    \in
    \bigl(\partial_\infty\mathbb H^n\bigr)^m .
\]
Each label \(\xi_\ell\) selects an incoming Helgason mode \(u^i_{\xi_\ell}\), defined below by the Helgason--Fourier kernel.
The data considered here are the corresponding far-field patterns at conformal infinity.
For each prescribed label, the observation variable still ranges over all of \(\partial_\infty\mathbb H^n\).
We ask whether these prescribed labels and far-field patterns determine a compact impenetrable defect in the hyperbolic interior.
This is the far-field inverse problem at conformal infinity studied in this paper.

The problem thus exhibits a holographic boundary-to-bulk character in a precise geometric sense: the input labels and observations reside on \(\partial_\infty\mathbb H^n\), while the unknown defect lies in the interior \(\mathbb H^n\). This viewpoint is strongly reminiscent of the role of conformal infinity in hyperbolic and AdS-type geometries, where boundary data may encode bulk information \cite{Maldacena1998,GubserKlebanovPolyakov1998,Witten1998}. This naturally motivates the recovery of a compact hyperbolic defect from formally determined far-field data.

The present paper studies inverse scattering for defects whose interaction surfaces are finite unions of totally geodesic faces.
This class is intrinsic to hyperbolic geometry and includes both bulk components and components supported on hypersurfaces.
A supporting totally geodesic hypersurface \(V\) has its own ideal boundary
\[
    \partial_\infty V \subset \partial_\infty\mathbb H^n .
\]
The incidence of the prescribed boundary labels with these ideal boundaries is the geometric mechanism behind the finite-label uniqueness threshold, especially for the Neumann trace condition. In the Neumann-type case, the labels must not all lie in the ideal boundary of one supporting totally geodesic hypersurface; this is minimally ensured by $n+1$ affinely independent boundary labels. We prove that such defects can be uniquely and stably determined from far-field patterns generated by a single, or at most finitely many, prescribed boundary labels at conformal infinity. The stability estimates control the hyperbolic Hausdorff distance between defects by the discrepancy of their far-field patterns, with {sharp} quantitative moduli. The precise assumptions and main results, including the admissible boundary labels and the obstruction in the Neumann case, are stated below.

\subsection{The far-field inverse problem at conformal infinity}

Let \(\mathbb H^n\), \(n\geq 2\), be the \(n\)-dimensional hyperbolic space.
Throughout this paper, we identify \(\mathbb H^n\) with the Poincar\'e ball model \(\mathbb B^n\).
Its conformal infinity is
\[
    \partial_\infty\mathbb H^n
    =
    \partial\mathbb B^n
    =
    \mathbb S^{n-1}.
\]
The direct and inverse problems below are formulated in the hyperbolic Sommerfeld--Rellich framework of \cite{ChenLiu2023}.
In this framework, outgoing solutions are selected by a radiation condition at conformal infinity, and the leading outgoing coefficient is the far-field pattern.

Let \(\mathcal P\Subset\mathbb B^n\) be an \emph{impenetrable topological defect}, namely a compact set in the hyperbolic interior across which exterior solutions are not continued, with connected exterior
\[
    G:=\mathbb B^n\setminus\mathcal P.
\]
Here, the hyperbolic interior is the interior of the conformal compactification, while the far-field data are recorded at \(\partial_\infty\mathbb H^n\).
The \emph{defect} interacts with the exterior solution through a homogeneous trace condition on its interaction surface.
For notational simplicity, we write \(\partial\mathcal P\) for this interaction surface, with the trace interpretation understood for lower-dimensional components.

A component of \(\mathcal P\) with nonempty interior is called a \emph{bulk component}.
A component with empty interior is called a \emph{hypersurface-supported component}.
In this general structural class, both types of components may occur.
For hypersurface-supported components, the trace condition is imposed from each local side belonging to \(G\).

Fix a spectral parameter \(\lambda_0>0\).
For a boundary label \(\xi\in\partial_\infty\mathbb H^n\), we prescribe the \emph{incoming Helgason mode}
\begin{equation}\label{eq:incoming-helgason-mode}
    u^i_\xi(x):=e_{2\lambda_0,\xi}(x),
    \qquad x\in\mathbb B^n .
\end{equation}
When the boundary label \(\xi\) is fixed, we simply write \(u^i\) for \(u^i_\xi\).
Here, \(e_{\lambda,\xi}\) denotes the Helgason--Fourier kernel on the Poincar\'e ball, given by
\[
    e_{\lambda,\xi}(x)
    =
    \left(
    \frac{\sqrt{1-|x|^2}}{|x-\xi|}
    \right)^{n-1+\mathrm i\lambda},
    \qquad x\in\mathbb B^n,\quad \xi\in\mathbb S^{n-1}.
\]
Equivalently, \(e_{\lambda,\xi}\) is a generalized eigenfunction of the hyperbolic Laplacian selected by the boundary label \(\xi\).
Thus, \(u^i_\xi\) is the \emph{incoming Helgason mode} associated with the \emph{boundary label \(\xi\)}.
Set
\[
    \mathcal L_{\lambda_0}
    :=
    -\Delta_{\mathbb H}
    -
    \frac{(n-1)^2}{4}
    -
    \lambda_0^2 .
\]
By the eigenvalue relation for the Helgason kernel, one has
\begin{equation}\label{eq:ui}
	\mathcal L_{\lambda_0}u^i_\xi=0
    \qquad
    \text{in }\mathbb B^n .
\end{equation}
    
For the pair \((\mathcal P,\xi)\), let \(u=u_{\mathcal P,\xi}\) denote the exterior solution.
The incoming Helgason mode \(u^i_\xi\) solves \eqref{eq:ui} in the whole Poincar\'e ball, but it does not generally satisfy the constraints imposed by the geometric configuration of the defect \(\mathcal P\), its interaction surface, and the trace condition on \(\partial\mathcal P\).
Here and below, \(\mathscr B_{\mathcal P}\) denotes the trace operator associated with the chosen \(\mathsf D\)- or \(\mathsf N\)-type condition, specified below.
The \emph{outgoing correction} generated by the defect is defined by
\[
    u^s_{\mathcal P,\xi}:=u_{\mathcal P,\xi}-u^i_\xi .
\]
Equivalently, \(u^s_{\mathcal P,\xi}\) is characterized by
\[
\begin{cases}
\mathcal L_{\lambda_0}u^s_{\mathcal P,\xi}=0,
& \text{in }G, \medskip\\
\mathscr B_{\mathcal P} u^s_{\mathcal P,\xi}
=
-\mathscr B_{\mathcal P} u^i_\xi,
& \text{on }\partial\mathcal P, \medskip\\
u^s_{\mathcal P,\xi}\text{ is outgoing at }\partial_\infty\mathbb H^n.
\end{cases}
\]
When \((\mathcal P,\xi)\) is fixed, we simply write \(u^s\) for \(u^s_{\mathcal P,\xi}\).

The direct problem for the exterior solution \(u=u_{\mathcal P,\xi}\),
with outgoing correction \(u^s=u^s_{\mathcal P,\xi}\), is
\begin{equation}\label{eq:hhelm1}
\begin{cases}
\mathcal L_{\lambda_0} u=0,
& \text{in } G, \medskip\\
\mathscr B_{\mathcal P}u=0,
& \text{on } \partial\mathcal P, \medskip\\
\dfrac{\partial u^s}{\partial \rho}
-\left(\lambda_0\mathrm i-\dfrac{n-1}{2}\right)
\tanh\left(\dfrac{\rho}{2}\right)u^s
=o\Bigl(\sinh^{-(n-1)/2}(\rho)\Bigr),
& \text{as } \rho\to+\infty.
\end{cases}
\end{equation}
Here, \(\rho=\rho(x)\) denotes the hyperbolic distance from the origin.
The last condition in \eqref{eq:hhelm1} is the hyperbolic Sommerfeld radiation condition.
It selects the outgoing branch of the correction at conformal infinity.

The hyperbolic scattering picture is therefore the following.
A boundary label \(\xi\) selects an incoming Helgason mode in the interior.
The defect changes the corresponding exterior solution through a trace condition on \(\partial\mathcal P\).
The outgoing correction is then read off at \(\partial_\infty\mathbb H^n\) through its leading asymptotic coefficient.

The trace condition in \eqref{eq:hhelm1} is taken in one of the following two forms.
In the \emph{Dirichlet-type}, or \(\mathsf D\)-type, case,
\[
    u_{\mathcal P,\xi}=0
    \qquad
    \text{on }\partial\mathcal P .
\]
In the \emph{Neumann-type}, or \(\mathsf N\)-type, case,
\[
    \partial_{\nu_{\mathbb H}}u_{\mathcal P,\xi}=0
    \qquad
    \text{on }\partial\mathcal P ,
\]
{
Here \(\partial_{\nu_{\mathbb H}}\) denotes the hyperbolic normal
derivative defined in \eqref{eq:outnormal}.  For hypersurface-supported
components, the homogeneous condition is imposed from both local sides
whenever both sides belong to \(G\); its vanishing is independent of the sign
chosen for the unit hyperbolic normal.
}

For each defect considered in this paper, the well-posedness of \eqref{eq:hhelm1},
the radiation condition, the Rellich theorem, and the asymptotic expansion are understood
in the hyperbolic Sommerfeld--Rellich framework of Chen--Liu~\cite{ChenLiu2023};
see also the boundary-radiation viewpoint in~\cite{SaBarreto2005}.
In particular, the outgoing correction has the expansion at conformal infinity
\begin{equation}\label{eq:us}
u^s_{\mathcal P,\xi}(x)
=
\left(\cosh\left(\frac{\rho(x)}{2}\right)\right)^{2\lambda_0\mathrm i-(n-1)}
u_{\infty,\mathcal P,\xi}(\hat x)
+
\mathcal O\left(\sinh^{-\frac{n+1}{2}}(\rho(x))\right),
\qquad \rho(x)\to+\infty,
\end{equation}
where \(\hat x=x/|x|\in\mathbb S^{n-1}=\partial_\infty\mathbb H^n\).
The coefficient \(u_{\infty,\mathcal P,\xi}\) is the far-field pattern at conformal infinity generated by the defect \(\mathcal P\) and the incoming mode \(u^i_\xi\).
In the sequel, every far-field pattern is understood at conformal infinity, and the fixed spectral parameter \(\lambda_0\) is suppressed from the notation.

\medskip
The inverse problem considered in this paper is the recovery of the defect \(\mathcal P\) from its far-field patterns at conformal infinity.
Let \(u^i_{\xi_1},\ldots,u^i_{\xi_m}\) be prescribed incoming Helgason modes of the form \eqref{eq:incoming-helgason-mode}, with fixed spectral parameter \(\lambda_0\) and boundary labels \(\xi_\ell\in\partial_\infty\mathbb H^n\).
For a defect \(\mathcal P\), we denote by \(u_{\infty,\mathcal P,\xi_{\ell}}\) the far-field pattern at conformal infinity generated by \(\mathcal P\) and \(u^i_{\xi_\ell}\), \(\ell=1,\ldots,m\).
The inverse problem is formulated through the far-field map at conformal infinity
\begin{equation}\label{eq:hip1}
    \mathscr F(\mathcal P)
    :=
    \bigl(
    u_{\infty,\mathcal P,\xi_1},\ldots,
    u_{\infty,\mathcal P,\xi_m}
    \bigr)
    \in \bigl(L^2(\mathbb S^{n-1})\bigr)^m.
\end{equation}
Both the labels \(\xi_\ell\) and the output variable \(\hat x\) belong to conformal infinity \(\partial_\infty\mathbb H^n\), while \(\mathcal P\) lies in the hyperbolic interior.
Thus \eqref{eq:hip1} is an interior recovery problem from far-field data at conformal infinity.

For the inverse problem \eqref{eq:hip1}, we study two aspects: qualitative determination and quantitative determination.

\begin{itemize}
    \item \emph{Unique determination.}
    The qualitative issue asks whether the far-field map at conformal infinity determines \(\mathcal P\) uniquely in the prescribed defect class.
    Namely, whether
    \begin{equation}\label{eq:intro-uniqueness}
        u_{\infty,\mathcal P,\xi_\ell}(\hat x)
        =
        u_{\infty,\mathcal P',\xi_\ell}(\hat x)
        \quad
        \text{for all } \hat x\in\mathbb S^{n-1}
        \text{ and } \ell=1,\ldots, m
        \quad
        \Longleftrightarrow
        \quad
        \mathcal P=\mathcal P' .
    \end{equation}

    \item \emph{Stable determination.}
    The quantitative issue asks whether errors in the far-field patterns at conformal infinity control the hyperbolic geometric discrepancy of the defects.
    Namely, whether there exists a modulus of continuity \(\psi\), with \(\psi(t)\to0\) as \(t\to0^+\), such that
    \begin{equation}\label{eq:intro-stability}
        d_{\mathcal H}^{\mathbb H}(\mathcal P,\mathcal P')
        \leq
        \psi\!\left(
        \max_{\ell=1,\ldots,m}
        \|u_{\infty,\mathcal P,\xi_\ell}-u_{\infty,\mathcal P',\xi_\ell}\|_{L^2(\mathbb S^{n-1})}
        \right),
    \end{equation}
    where \(d_{\mathcal H}^{\mathbb H}\) is the hyperbolic Hausdorff distance between defects.
\end{itemize}

For the minimal-data formulation, we take \(m=1\) for Dirichlet-type defects and \(m=n+1\) for Neumann-type defects; the \(\mathsf N\)-type stability result below also allows larger admissible families of boundary labels.

\subsection{Main results}

We now state the qualitative and quantitative results for the inverse problems
\eqref{eq:intro-uniqueness} and \eqref{eq:intro-stability}.

A compact impenetrable defect \(\mathcal P\Subset\mathbb B^n\) is called a
\emph{totally geodesic defect} if its exterior \(G=\mathbb B^n\setminus\mathcal P\)
is connected and its interaction surface is a finite union of totally geodesic faces:
\[
    \partial\mathcal P
    =
    \bigcup_{j=1}^{J}\mathcal F_j,
    \qquad
    \mathcal F_j
    \text{ is a totally geodesic face for } j=1,\ldots,J.
\]
Here, \(\partial\mathcal P\) is understood in the interaction-surface sense introduced above.
The number \(J\) of faces is not assumed to be known a priori.
The class allows both bulk components and hypersurface-supported components.
The precise definitions of totally geodesic hypersurfaces, totally geodesic faces, and totally geodesic defects are given in Section~\ref{sec:reflection-principles}.

We consider two trace types: the \(\mathsf D\)-type case, corresponding to the homogeneous Dirichlet trace condition, and the \(\mathsf N\)-type case, corresponding to the homogeneous hyperbolic Neumann trace condition.
The totally geodesic structure is used through hyperbolic reflection and the ideal-boundary incidence of the supporting hypersurfaces.

\subsubsection{Unique determination of totally geodesic defects}

We first state the qualitative uniqueness results for the far-field map at conformal infinity \eqref{eq:hip1}.
They concern the determination of totally geodesic defects from far-field patterns at conformal infinity, in the sense of \eqref{eq:intro-uniqueness}.

\medskip
The first result treats the \(\mathsf D\)-type case.
It shows that one far-field pattern at conformal infinity, generated by one incoming Helgason mode with fixed spectral parameter and fixed boundary label, determines the \(\mathsf D\)-type totally geodesic defect.

\begin{theorem}\label{thm:mainip1}
Let \(\mathcal P\) and \(\mathcal P'\) be two \(\mathsf D\)-type totally geodesic defects in \(\mathbb B^n\).
Let \(\lambda_0>0\) and \(\xi\in\mathbb S^{n-1}=\partial_\infty\mathbb H^n\) be fixed.
Let
\[
    u^i_{\xi}=e_{2\lambda_0,\xi}
\]
be the incoming Helgason mode.
If
\[
    u_{\infty,\mathcal P,\xi}(\hat x)
    =
    u_{\infty,\mathcal P',\xi}(\hat x)
    \qquad
    \text{for all }\hat x\in\mathbb S^{n-1},
\]
then
\[
    \mathcal P=\mathcal P' .
\]
\end{theorem}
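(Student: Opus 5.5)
We argue by contradiction, following the standard scheme for polyhedral obstacles (Cheng--Yamamoto, Alessandrini--Rondi, Liu--Zou), transplanted to \(\mathbb H^n\) through hyperbolic reflection across totally geodesic hypersurfaces.

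\emph{Step 1: continuation from conformal infinity.} Suppose \(\mathcal P\neq\mathcal P'\) and \(u_{\infty,\mathcal P,\xi}=u_{\infty,\mathcal P',\xi}\). Let \(G_0\) be the unbounded connected component of \(\mathbb B^n\setminus(\mathcal P\cup\mathcal P')\). The difference \(w=u^s_{\mathcal P,\xi}-u^s_{\mathcal P',\xi}\) solves \(\mathcal L_{\lambda_0}w=0\) near conformal infinity, satisfies the hyperbolic Sommerfeld condition, and has vanishing far-field coefficient in \eqref{eq:us}. The Rellich theorem of the Chen--Liu framework \cite{ChenLiu2023} then gives \(w\equiv0\) near \(\partial_\infty\mathbb H^n\). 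Unique continuation for the elliptic operator \(\mathcal L_{\lambda_0}\) extends this to \(u_{\mathcal P,\xi}=u_{\mathcal P',\xi}\) in \(G_0\).

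\emph{Step 2: producing a nodal hypersurface.} Since \(\mathcal P\neq\mathcal P'\), we may assume without loss of generality that there is a point of \(\partial\mathcal P'\cap\partial G_0\) not lying in \(\mathcal P\). Near that point, some face \(\mathcal F'\subset\partial\mathcal P'\) bounds \(G_0\). On \(\mathcal F'\) we have \(u_{\mathcal P,\xi}=u_{\mathcal P',\xi}=0\), and \(u_{\mathcal P,\xi}\) solves \(\mathcal L_{\lambda_0}u=0\) across \(\mathcal F'\) in \(G\). The face \(\mathcal F'\) is an open piece of a totally geodesic hypersurface \(V\). Write \(R_V\) for the hyperbolic reflection across \(V\); it is an isometry, so it commutes with \(\Delta_{\mathbb H}\). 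The Schwarz reflection principle for isometries therefore gives
\[
u_{\mathcal P,\xi}\circ R_V=-u_{\mathcal P,\xi}
\]
near \(\mathcal F'\), and by analyticity this holds on the whole component where both sides are defined. I would then run the usual path argument. Let \(\Sigma\) be the maximal connected subset of \(V\cap\overline G\) containing \(\mathcal F'\) on which \(u_{\mathcal P,\xi}=0\). If \(\Sigma\) is unbounded in \(\mathbb H^n\), it reaches \(\partial_\infty V\). If it is bounded, its boundary must meet \(\partial\mathcal P\). In that case we reflect \(\mathcal P\) itself: \(R_V(\mathcal P)\) is again a union of totally geodesic faces, and on \(R_V(\partial\mathcal P)\) the function vanishes by the odd symmetry. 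This yields new totally geodesic nodal pieces. Because there are only finitely many faces and reflections preserve total geodesicity, iterating this procedure (a dihedral-group type argument at the edges) must eventually produce a totally geodesic hypersurface \(V^*\) on which \(u_{\mathcal P,\xi}\) vanishes identically, all the way out to \(\partial_\infty V^*\).

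\emph{Step 3: contradiction at conformal infinity.} Near \(\partial_\infty V^*\) we have \(u^s=O(\cosh^{-(n-1)}(\rho/2))\) from \eqref{eq:us}. By contrast, the mode \(u^i_\xi=e_{2\lambda_0,\xi}\) restricted to \(V^*\) has modulus \(\bigl((1-|x|^2)^{1/2}/|x-\xi|\bigr)^{n-1}\) and carries the oscillating phase \(|x-\xi|^{-2\mathrm i\lambda_0}\). Comparing the leading asymptotics of \(u=u^i+u^s\) as \(x\to\eta\in\partial_\infty V^*\) along \(V^*\), the term \(u^i\) behaves like \(c(\eta)(1-|x|^2)^{(n-1)/2+\mathrm i\lambda_0}\) with \(c(\eta)=|\eta-\xi|^{-(n-1)-2\mathrm i\lambda_0}\neq0\) for \(\eta\neq\xi\). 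The term \(u^s\) behaves like \(u_\infty(\eta)(1-|x|^2)^{(n-1)/2-\mathrm i\lambda_0}\) up to constants. The two powers \(\pm\mathrm i\lambda_0\) are distinct since \(\lambda_0>0\), so the sum can vanish along a sequence \(\rho\to\infty\) only if both coefficients vanish. This contradicts \(c(\eta)\neq0\), since \(\partial_\infty V^*\) is a sphere of dimension \(n-2\) and contains points \(\eta\neq\xi\). (For \(n=2\) it consists of two points, at least one of which differs from \(\xi\).)

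\emph{Main obstacle.} I expect Step 2 to be hardest. One must control how the reflected nodal set interacts with bulk and hypersurface-supported components, and ensure that the iteration terminates with a complete totally geodesic nodal hypersurface rather than being trapped inside the compact set. My first attempt would be the Alessandrini--Rondi argument: take the unbounded connected component of the nodal set in \(V\setminus\mathcal P\), and use the finiteness of faces together with the bounded hyperbolic diameter of \(\mathcal P\) to show that this component cannot be bounded.
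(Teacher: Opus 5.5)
Your Steps 1 and 3 use the same ingredients as the paper. Step 1 is Rellich's theorem plus unique continuation in the unbounded component. Step 3 is the mismatch between the phases \(e^{-\mathrm i\lambda_0\rho}\) and \(e^{\mathrm i\lambda_0\rho}\) along a ray of a nodal totally geodesic hypersurface, which is essentially the paper's Lemma~\ref{lem:mainip1}.

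\textbf{The gap is in Step 2.} You claim that, because there are finitely many faces and reflections preserve total geodesicity, iterating reflections ``must eventually'' produce a nodal hypersurface reaching \(\partial_\infty V^*\). Nothing supports this. Each reflection \(R_V(\partial\mathcal P)\) creates faces that are not among the original ones, and reflections across hypersurfaces meeting at arbitrary angles generate infinitely many new hypersurfaces. There is no dihedral finiteness to appeal to here. Nothing in your procedure says which reflected face to continue from, so the successive nodal pieces could circulate inside a bounded region indefinitely. Your fallback (``take the unbounded component of the nodal set and show it cannot be bounded'') is circular. What is missing is a mechanism that forces progress.

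The paper supplies this mechanism and argues by contradiction rather than constructing \(V^*\). In your orientation, with \(u=u_{\mathcal P,\xi}\) in \(G\), it runs as follows.
\begin{enumerate}
\item Prove your Step 3 first, as a standalone lemma: every Dirichlet totally geodesic hypersurface of \(u\) in \(G\), and hence the whole Dirichlet set \(\mathcal D_u\), lies in a fixed hyperbolic ball.
\item Fix a proper \(C^1\) curve \(\gamma\subset G\) from a point of the first nodal piece to conformal infinity that stays at distance at least \(2r_0\) from \(\partial\mathcal P\).
\item At stage \(j\), let \(x_j=\gamma(t_j)\in\Pi_j\), with \(t_j\) the \emph{last} time \(\gamma\) meets \(\Pi_j\). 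Let \(E_j\) be the component of \(G\cap I_j(G)\) containing \(B_{\mathbb H}(x_j,r_0)\). Then \(u\) is odd on \(E_j\), and \(E_j\) must be bounded, since otherwise \(u\) would vanish on an unbounded piece of \(V_j\).
\item Hence \(\gamma\) leaves \(E_j\). Because \(\gamma\) avoids \(\partial\mathcal P\), it exits through a reflected face \(I_j(\mathcal F)\), through its relative interior after a small perturbation, where \(u=0\) by oddness. This gives a new Dirichlet hypersurface \(\Pi_{j+1}\).
\item The last-hitting-time rule makes the \(\Pi_j\) pairwise distinct and gives \(\operatorname{length}_{\mathbb H}(\gamma|_{[t_j,t_{j+1}]})\ge r_0\).
\item But boundedness of \(\mathcal D_u\) and properness of \(\gamma\) force \(t_j\to t_0<\infty\), so these arc lengths tend to zero. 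This is a contradiction.
\end{enumerate}
No edge or dihedral analysis is needed, and no complete nodal hypersurface is ever constructed.

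A minor point: for \(n=2\), an unbounded nodal piece of a geodesic \(V\) may reach only one endpoint, and that endpoint may be \(\xi\). That case needs the separate observation that \(|u^i|\asymp e^{\frac{n-1}{2}\rho}\) there, while \(u^s\) decays.
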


\medskip
The second result treats the \(\mathsf N\)-type case, where the incidence geometry at conformal infinity gives the optimal threshold of \(n+1\) affinely independent prescribed boundary labels.

\begin{theorem}\label{thm:mainip2}
Let \(\mathcal P\) and \(\mathcal P'\) be two \(\mathsf N\)-type totally geodesic defects in \(\mathbb B^n\).
Let \(\lambda_0>0\) be fixed.
Choose boundary labels
\[
    \xi_1,\ldots,\xi_{n+1}\in\mathbb S^{n-1}=\partial_\infty\mathbb H^n
\]
such that
\[
    \xi_2-\xi_1,\ldots,\xi_{n+1}-\xi_1
\]
are linearly independent in \(\mathbb R^n\).
For \(\ell=1,\ldots,n+1\), let
\[
    u^i_{\xi_\ell}=e_{2\lambda_0,\xi_\ell}
\]
be the corresponding incoming Helgason mode.
If
\[
    u_{\infty,\mathcal P,\xi_\ell}(\hat x)
    =
    u_{\infty,\mathcal P',\xi_\ell}(\hat x)
    \qquad
    \text{for all }\hat x\in\mathbb S^{n-1}
    \text{ and } \ell=1,\ldots,n+1,
\]
then
\[
    \mathcal P=\mathcal P' .
\]
\end{theorem}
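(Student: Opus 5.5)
We argue by contradiction, adapting the classical reflection argument for polyhedral obstacles (Alessandrini--Rondi, Liu--Zou) to hyperbolic geometry. Suppose \(\mathcal P\neq\mathcal P'\) while the \(n+1\) far-field patterns coincide. By the Rellich theorem of the Chen--Liu framework and unique continuation, \(u_{\mathcal P,\xi_\ell}=u_{\mathcal P',\xi_\ell}\) in the unbounded connected component \(\Omega\) of \(\mathbb B^n\setminus(\mathcal P\cup\mathcal P')\) for every \(\ell\). Since \(\mathcal P\neq\mathcal P'\) and both exteriors are connected, we may, after relabeling, find a face \(\mathcal F\) of \(\partial\mathcal P'\), lying in a totally geodesic hypersurface \(V\), such that \(\mathcal F\) contains a relatively open piece \(\Gamma\subset\partial\Omega\) on which \(u_\ell:=u_{\mathcal P,\xi_\ell}\) is real-analytic in a neighbourhood (it sits in \(G\)) and satisfies \(\partial_{\nu_{\mathbb H}}u_\ell=0\) on \(\Gamma\). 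By analyticity this Neumann condition then holds on all of \(V\cap U\), where \(U\) is the connected set reachable within \(G\).

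The core step is the hyperbolic reflection principle to be established in Section~\ref{sec:reflection-principles}. Let \(R_V\) be the hyperbolic reflection across \(V\), an isometry, so \(\mathcal L_{\lambda_0}\) commutes with it. If \(\partial_{\nu_{\mathbb H}}u=0\) on an open piece of \(V\), then \(u\circ R_V\) and \(u\) solve the same equation with the same Cauchy data on that piece, hence \(u=u\circ R_V\) wherever both are defined and connected. We then run the standard iteration. Take the maximal connected region of \(V\cap G\) containing \(\Gamma\); its closure either reaches \(\partial_\infty V\) or meets \(\partial\mathcal P\). In the second case we reflect the geometry across \(V\) and continue along the finitely many faces of \(\mathcal P\). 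Because \(\mathcal P\) is compact and has finitely many totally geodesic faces (so reflected faces are again totally geodesic), this path-argument produces an unbounded component of \(V\) minus a compact set along which \(u_\ell\) is symmetric. We conclude that \(u_\ell=u_\ell\circ R_V\) on a neighbourhood of conformal infinity away from a compact set. Passing to the far field, \(u^s_\ell\) is outgoing and \(u^s_\ell\circ R_V\) is outgoing as well, since \(R_V\) is an isometry and extends to a Möbius map of \(\mathbb S^{n-1}\). Therefore \(u^i_{\xi_\ell}-u^i_{\xi_\ell}\circ R_V\) is both outgoing, being equal to the difference of scattered parts, and an entire generalized eigenfunction. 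By Rellich it vanishes, so \(e_{2\lambda_0,\xi_\ell}\circ R_V=e_{2\lambda_0,\xi_\ell}\).

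Next we use the equivariance of the Helgason kernel under isometries: \(e_{\lambda,\xi}(gx)=|g'(\xi')|^{\ldots}e_{\lambda,g^{-1}\xi}(x)\), up to a Jacobian factor. The kernels \(e_{\lambda,\eta}\) for distinct \(\eta\) are linearly independent; this follows from their distinct singular boundary behaviour at \(\eta\). Hence the identity above forces \(R_V\xi_\ell=\xi_\ell\), that is, \(\xi_\ell\in\partial_\infty V\) for all \(\ell=1,\ldots,n+1\). In the ball model \(\partial_\infty V\) is an \((n-2)\)-sphere, equal to \(\mathbb S^{n-1}\) intersected with an affine hyperplane of \(\mathbb R^n\), either through the origin or the orthogonal sphere's trace. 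Thus all \(\xi_\ell\) would lie in one affine hyperplane, contradicting the linear independence of \(\xi_2-\xi_1,\ldots,\xi_{n+1}-\xi_1\). This gives \(\mathcal P=\mathcal P'\).

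The main obstacle is the second step: the finite-face path argument showing that the symmetry plane \(V\) can be continued to conformal infinity without getting trapped inside \(\mathcal P\cup\mathcal P'\). The difficulty is that reflected faces of \(\mathcal P\) may create new Neumann hypersurfaces, so the argument must control the resulting set of hypersurfaces. I would first try the Liu--Zou device: consider the finite set of totally geodesic hypersurfaces on which some \(u_\ell\) has vanishing normal derivative. Since the composition of reflections in them lies in a group preserving the \(u_\ell\), finiteness and the fact that \(u^i\) is not periodic under a nontrivial isometry force a contradiction if the continuation is obstructed. Hypersurface-supported components require the two-sided trace, which is exactly why the Neumann condition on both sides is imposed.
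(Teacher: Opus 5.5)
The gap is the step you flag as the main obstacle, and the proof depends on it. Your claim that ``because $\mathcal P$ is compact and has finitely many totally geodesic faces, this path-argument produces an unbounded component of $V$ minus a compact set along which $u_\ell$ is symmetric'' is not justified.

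\begin{itemize}
\item Each reflection across $V_j$ moves the Neumann condition to reflected faces $I_{V_j}(\mathcal F)$. These lie on \emph{new} hypersurfaces $V_{j+1}$, not on $V$.
\item The hypersurfaces obtained by iterating reflections of finitely many faces form, in general, an infinite family. So finiteness of the face set gives no bound on the number of steps.
\item Consequently nothing you wrote excludes an infinite chain $\Pi_1,\Pi_2,\dots$ of bounded Neumann hypersurfaces that never reaches $\partial_\infty\mathbb H^n$.
\end{itemize}

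The proposed Liu--Zou-type remedy does not close the gap, for three reasons. You give no reason why the set of Neumann hypersurfaces should be finite; the paper only proves, and only needs, that it is bounded. The evenness $\mathbf U=\mathbf U\circ I_{V_j}$ holds only on one connected component of $G\cap I_{V_j}(G)$, so the reflections do not generate a group acting on the $u_\ell$. Finally, $u^i_\xi$ \emph{is} invariant under nontrivial isometries, for example the reflection across any $V$ with $\xi\in\partial_\infty V$; this is exactly the obstruction in Remark~\ref{rem:hard-sharp-general-class}.

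The missing idea is to reverse the logic and add metric bookkeeping along a fixed curve. First show that no Neumann totally geodesic hypersurface of $\mathbf U'$ is unbounded (Lemma~\ref{lem:neumann-set-bounded}). Your final paragraph does prove this, by a route different from the paper's ray asymptotics. An unbounded Neumann piece of some $V$ gives $\mathbf U'=\mathbf U'\circ I_V$ outside a compact set. Once you check that $u^s\circ I_V$ is still outgoing (e.g.\ via the Green representation, since $I_V$ need not fix the origin), this yields $e_{2\lambda_0,\xi_\ell}\circ I_V=e_{2\lambda_0,\xi_\ell}$, hence $\xi_\ell\in\partial_\infty V$ for every $\ell$.

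With that lemma in hand, the paper's termination argument runs as follows.

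\begin{itemize}
\item Fix one proper $C^1$ curve $\gamma\subset G'$ from $x_1\in\Pi_1$ to infinity, staying at distance at least $2r_0$ from $\partial\mathcal P'$. Let $t_j$ be the \emph{last} time $\gamma$ meets $\Pi_j$.
\item Since $x_j=\gamma(t_j)\in V_j$, the ball $B_{\mathbb H}(x_j,r_0)$ is $I_{V_j}$-invariant and lies in $G'$. Hence it lies in the symmetric component $E_j$ of $G'\cap I_{V_j}(G')$.
\item $E_j$ is bounded, since otherwise $V_j\cap E_j$ would carry an unbounded Neumann piece, contradicting the lemma.
\item So $\gamma$ leaves $E_j$ through a reflected face, but only after covering hyperbolic length at least $r_0$. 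This produces a Neumann hypersurface $\Pi_{j+1}$ with $t_{j+1}>t_j$; the last-time choice makes it distinct from the earlier ones.
\item All $x_j$ lie in the bounded Neumann set, so the increasing $t_j$ stay bounded, while consecutive arcs of $\gamma$ have length at least $r_0$. This contradicts the properness and $C^1$ regularity of $\gamma$.
\end{itemize}

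Without this uniform-radius, last-hit argument or some genuine substitute, your iteration is not shown to terminate.
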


\begin{remark}\label{rem:affine-independence}
The condition that
\[
    \xi_2-\xi_1,\ldots,\xi_{n+1}-\xi_1
\]
are linearly independent means that the boundary labels
\(\xi_1,\ldots,\xi_{n+1}\in\mathbb S^{n-1}\subset\mathbb R^n\)
are affinely independent.
Equivalently, they are not contained in any common affine hyperplane of
\(\mathbb R^n\).

By Definition~\ref{def:asymptotic-boundary}, \(\partial_\infty V\) denotes the ideal boundary of a totally geodesic hypersurface \(V\), namely its set of limiting points at conformal infinity.
Together with Remark~\ref{rem:asymptotic-boundary}, the affine independence assumption is equivalent to
\[
    \{\xi_1,\ldots,\xi_{n+1}\}
    \not\subset
    \partial_\infty V
\]
for every totally geodesic hypersurface \(V\subset\mathbb B^n\).
\end{remark}

\begin{remark}\label{rem:hard-sharp-general-class}
The number \(n+1\) in Theorem~\ref{thm:mainip2} is optimal for the general \(\mathsf N\)-type totally geodesic defect class.
Indeed, if only \(m\leq n\) boundary labels are used, then these labels are contained in \(\partial_\infty V\) for some totally geodesic hypersurface \(V\subset\mathbb B^n\), by Remark~\ref{rem:affine-independence}.
For each such label, the corresponding incoming Helgason mode is even with respect to the reflection across \(V\), and hence
\[
    \partial_{\nu_{\mathbb H}} e_{2\lambda_0,\xi_\ell}=0
    \qquad\text{on }V .
\]
{
Choose a hyperbolic isometry taking \(V\) to
\(V_0=\{x_n=0\}\).  The transformed label \(\eta\) satisfies \(\eta_n=0\),
and the explicit Helgason kernel is therefore even in \(x_n\).  Its
hyperbolic normal derivative vanishes on \(V_0\), and
\eqref{eq:normal-covariance-isometry} transports this condition back to \(V\).
}
Thus, if \(\Gamma\subset V\) is an \(\mathsf N\)-type hypersurface-supported component, then \(u=e_{2\lambda_0,\xi_\ell}\) already satisfies the homogeneous hyperbolic Neumann trace condition on \(\Gamma\).
The outgoing correction is therefore identically zero for these boundary labels.
Consequently, any two distinct \(\mathsf N\)-type hypersurface-supported defects contained in \(V\) produce the same far-field patterns at conformal infinity for the corresponding \(m\) incoming Helgason modes.
Hence uniqueness in the general \(\mathsf N\)-type defect class fails for every \(m\leq n\).
Therefore \(n+1\) incoming Helgason modes are necessary and sufficient for uniqueness in this class, and Theorem~\ref{thm:mainip2} is sharp at this level of generality.
\end{remark}

\subsubsection{Stable determination of totally geodesic defects}

\medskip
We next state the stability results for the far-field map at conformal infinity \eqref{eq:hip1}.
Throughout this subsection, every far-field pattern is understood at conformal infinity.

For \(0<h\leq h_0\), let \(\mathcal A_{\mathbb H}^h\) denote the \(\mathsf D\)-type admissible class, and let \(\mathcal B_{\mathbb H}^h\) denote the \(\mathsf N\)-type admissible class.
Here \(h\) controls the minimal size of the totally geodesic faces in the interaction surface.
The precise admissibility conditions are given in Definition~\ref{def:admissible-tg-defects} for \(\mathcal A_{\mathbb H}^h\) and in Definition~\ref{def:hard-admissible-tg-defects} for \(\mathcal B_{\mathbb H}^h\).
The following theorems give quantitative estimates of the form \eqref{eq:intro-stability}: the hyperbolic Hausdorff distance between two admissible defects is controlled by the discrepancy of their far-field patterns.

In the proof, it is convenient to use the modified boundary distance \(d(\mathcal P,\mathcal P')\), adapted to the hyperbolic-reflection argument.
This distance is compared with the hyperbolic Hausdorff distance in Proposition~\ref{prop:distance-equivalence-hyperbolic}, yielding
\[
    d_{\mathcal H}^{\mathbb H}(\mathcal P,\mathcal P')
    \leq
    \delta^{-1}\!\left(d(\mathcal P,\mathcal P')\right),
\]
where \(\delta\) is the exterior connectedness modulus of the admissible class.

\medskip
The first stability theorem concerns the \(\mathsf D\)-type case.
It shows that, within the admissible class, a \(\mathsf D\)-type totally geodesic defect is stably determined by one far-field pattern generated by a single incoming Helgason mode with fixed spectral parameter and fixed boundary label.

\begin{theorem}\label{thm:main-soft-farfield}
Let \(0<h\leq h_0\), and let
\(
    \mathcal P,\mathcal P'\in\mathcal A_{\mathbb H}^h
\)
be two \(\mathsf D\)-type admissible totally geodesic defects.
Fix \(\lambda_0>0\) and a boundary label
\[
    \xi\in\partial_\infty\mathbb H^n=\mathbb S^{n-1}.
\]
Let \(u_{\infty,\mathcal P,\xi}\) and \(u_{\infty,\mathcal P',\xi}\) be the corresponding far-field patterns generated by the same incoming Helgason mode
\[
    u^i_\xi=e_{2\lambda_0,\xi}.
\]
Set
\[
    \varepsilon
    :=
    \bigl\|
        u_{\infty,\mathcal P,\xi}
        -
        u_{\infty,\mathcal P',\xi}
    \bigr\|_{L^2(\mathbb S^{n-1})}.
\]
There exists \(\varepsilon_{\mathsf D}(h)>0\), depending on \(h\) and on the a priori data, such that if
\[
    0<\varepsilon<\varepsilon_{\mathsf D}(h),
\]
then
\begin{equation}\label{eq:main-soft-modified-distance-stability}
    d(\mathcal P,\mathcal P')
    \leq
    C\bigl(\log\log(1/\varepsilon)\bigr)^{-\alpha},
\end{equation}
and consequently
\begin{equation}\label{eq:main-soft-final-stability}
    d_{\mathcal H}^{\mathbb H}(\mathcal P,\mathcal P')
    \leq
    \delta^{-1}
    \left(
        C\bigl(\log\log(1/\varepsilon)\bigr)^{-\alpha}
    \right),
\end{equation}
where \(\delta\) is the nondecreasing exterior connectedness modulus fixed in the admissible class.
Here, \(C>0\) and \(\alpha>0\) depend only on \(n\), \(\lambda_0\), the \(\mathsf D\)-type a priori data given in Definition~\ref{def:admissible-tg-defects}, and the boundary label \(\xi\).
They are independent of \(\mathcal P,\mathcal P'\), \(h\), and \(\varepsilon\).
\end{theorem}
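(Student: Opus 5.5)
The argument follows the quantitative Rondi-type scheme for polyhedral scatterers, transplanted to the Poincaré ball. Hyperbolic reflections across totally geodesic hypersurfaces play the role of Euclidean reflections across hyperplanes. The proof has three stages: (i) propagate the far-field discrepancy \(\varepsilon\) from conformal infinity into the exterior, near the defects; (ii) use the Dirichlet condition and reflection to obtain a quantitative contradiction if a face of \(\partial\mathcal P\) lies far from \(\mathcal P'\); (iii) convert the resulting bound on \(d(\mathcal P,\mathcal P')\) into a Hausdorff bound via Proposition~\ref{prop:distance-equivalence-hyperbolic}.

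\emph{Stage (i).} Set \(w=u^s_{\mathcal P,\xi}-u^s_{\mathcal P',\xi}\), which equals \(u_{\mathcal P,\xi}-u_{\mathcal P',\xi}\) in the unbounded component \(G\cap G'\). It is outgoing and has far field \(u_{\infty,\mathcal P,\xi}-u_{\infty,\mathcal P',\xi}\), whose \(L^2(\mathbb S^{n-1})\) norm is \(\varepsilon\).

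\begin{enumerate}
\item Expand \(w\) outside a large hyperbolic ball \(B_R\) containing every admissible defect, using spherical harmonics on \(\mathbb S^{n-1}\) and the outgoing radial hyperbolic functions (Legendre-type functions from the Chen--Liu framework \cite{ChenLiu2023}).
\item A priori bounds on \(\|w\|\) come from uniform well-posedness in the admissible class. Together with the growth of the radial functions in the harmonic degree, a truncation argument gives
\[
|w|\leq C\exp\bigl(-c\,(\log(1/\varepsilon))^{1/2}\bigr)
\]
on \(\partial B_{R'}\), for some \(R'\geq R\).
\item Propagate this bound inward toward \(\partial\mathcal P\cup\partial\mathcal P'\) through the exterior. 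Use three-spheres inequalities for \(\mathcal L_{\lambda_0}\) along chains of small balls, with the chain lengths controlled by the exterior connectedness modulus \(\delta\). This costs one further logarithm.
\end{enumerate}

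The outcome is a bound \(|u_{\mathcal P,\xi}-u_{\mathcal P',\xi}|\leq C(\log(1/\varepsilon))^{-\beta}\) at points of \(G\cap G'\) at distance at least \(r\) from the defects. The constant in this bound is uniform in \(r\) once \(r\) is below a threshold depending on \(h\); this threshold is where \(\varepsilon_{\mathsf D}(h)\) enters.

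\emph{Stage (ii).} Let \(\mathcal F\) be a face of \(\partial\mathcal P\), in the sense of Definition~\ref{def:admissible-tg-defects}, at modified distance \(d\) from \(\mathcal P'\). On \(\mathcal F\) we have \(u_{\mathcal P,\xi}=0\), so \(|u_{\mathcal P',\xi}|\) is small near \(\mathcal F\), by Stage (i) and regularity up to the face. Let \(V\) be the totally geodesic hypersurface supporting \(\mathcal F\), and let \(\sigma_V\) be the hyperbolic reflection across \(V\). Since \(\sigma_V\) is an isometry, \(\mathcal L_{\lambda_0}\) commutes with it. Hence \(v=u_{\mathcal P',\xi}+u_{\mathcal P',\xi}\circ\sigma_V\) is a solution that is small in a neighbourhood of \(\mathcal F\) of size comparable to \(\min(h,d)\). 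Quantitative unique continuation then extends this smallness along \(V\), with a logarithmic loss, toward conformal infinity or until \(V\) meets \(\mathcal P'\).

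To reach a contradiction, note that \(u_{\mathcal P',\xi}=u^i_\xi+u^s_{\mathcal P',\xi}\), where \(u^s_{\mathcal P',\xi}\) decays like \(\sinh^{-(n-1)/2}\rho\) by \eqref{eq:us}. Near \(\partial_\infty V\) the incoming mode \(e_{2\lambda_0,\xi}\) has an explicit nonvanishing leading term, so approximate antisymmetry (respectively smallness) of \(u_{\mathcal P',\xi}\) across \(V\) there is impossible. The precise claim is that \(\|e_{2\lambda_0,\xi}+e_{2\lambda_0,\xi}\circ\sigma_V\|\) is bounded below on a fixed region near \(\partial_\infty V\). This is the explicit kernel computation after moving \(V\) to \(V_0=\{x_n=0\}\) by an isometry. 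Combining the lower bound with the propagated smallness, which is of size \(C(\log\log(1/\varepsilon))^{-\cdot}\), forces \(d\leq C(\log\log(1/\varepsilon))^{-\alpha}\). Exchanging the roles of \(\mathcal P\) and \(\mathcal P'\) gives \eqref{eq:main-soft-modified-distance-stability}.

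\emph{Stage (iii).} Then \eqref{eq:main-soft-final-stability} follows immediately from Proposition~\ref{prop:distance-equivalence-hyperbolic}.

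I expect Stage (ii) to be the main obstacle. The reflected function is only defined where both \(x\) and \(\sigma_V(x)\) lie in \(G'\), so the reflection has to be iterated along a chain of totally geodesic pieces to push the smallness out to conformal infinity, while keeping the constants uniform in \(h\). I would first handle the single-face case with an explicit cone-shaped domain avoiding \(\mathcal P'\). The general case would then follow by a finite-step induction on the faces, whose number is bounded in terms of the a priori data, so that each step contributes only a fixed power loss and yields the final exponent \(\alpha\).
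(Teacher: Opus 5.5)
Your Stages (i) and (iii), and the single odd-reflection step, use the same ingredients as the paper (Lemma~\ref{lem:quantitative-rellich}, Lemma~\ref{lem:propagation-chain}, Lemma~\ref{lem:quant-reflection}, Proposition~\ref{prop:distance-equivalence-hyperbolic}). The gap is in the step you defer: the reflected continuation cannot be run as an induction over the faces.

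\emph{Number of reflection stages.} After reflecting across $V_1$, the residual $u'+u'\circ\sigma_{V_1}$ is defined only on a component of $\mathbb B^n\setminus(\mathcal P'\cup\sigma_{V_1}\mathcal P')$, and that component may be bounded. The next approximately Dirichlet hypersurface is then $V_2=\sigma_{V_1}(V_{\mathcal F'})$ for some face $\mathcal F'$ of $\mathcal P'$, then $V_3=\sigma_{V_2}(V_{\mathcal F''})$, and so on. These are images of face hyperplanes under words in the group generated by the face reflections, and the same face can be used at several stages. So there is no reason for the number $J$ of stages to be bounded by $N_0$ or by any other a priori quantity. Even in the proof of Theorem~\ref{thm:mainip1}, termination comes only from boundedness of the Dirichlet set plus a length argument whose step size is the distance of the curve to $\partial\mathcal P'$.

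\emph{What the paper does instead.} It fixes a reference chain of length $O(\log(2eR/d))$ from the far annulus to the first discrepancy. Along this chain the index of the first enlarged ball meeting the reflected defect strictly decreases from stage to stage. This gives $J\le C_{\rm geo}\log(2eR/d)$ and a total of $M\le C_{\rm geo}\log\frac{2eR}{d}\bigl(\log\frac{2eR}{d}+\log\frac{2eR}{\bar h}\bigr)$ elementary steps.

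\emph{Why this breaks your bookkeeping.} Lemma~\ref{lem:logarithmic-reduction} absorbs this quadratic count. With $Y=\log(2eR/\bar h)$ and $M\lesssim Y^2$, the inequality $c\le C^Ms^{\beta^M}$ yields only $\bar h\le C\exp\bigl(-c\sqrt{\log\log(1/s)}\bigr)$. For $s=\Psi(\varepsilon)$ fed through the whole concatenated chain, this is double-logarithmic. If instead, as in your Stage (i), you first degrade the smallness at the face to $s=(\log(1/\varepsilon))^{-\beta}$, you get only $\bar h\lesssim\exp\bigl(-c\sqrt{\log\log\log(1/\varepsilon)}\bigr)$, which does not imply the theorem. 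Your route therefore reaches the stated rate only if $J$ is bounded a priori, and that is exactly the unproved claim.

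\emph{Two further points in Stage (i).} You do not show how $d$ enters. The propagated bound must deteriorate as the target approaches the defects, with chain length of order $\log(2eR/d)+\log(2eR/\bar h)$; an $h$-dependent threshold in $r$ does not capture this. Also, the face you pick must be reachable from the unbounded component $H$. The paper ensures this with a chain in $G$ truncated at its first contact with $\mathcal P'$.

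\emph{The closing lower bound is incomplete.} Along a ray of $V$ ending at $\eta\neq\xi$, $u^i_\xi$ and $u^s_{\mathcal P',\xi}$ decay at the same rate $e^{-\frac{n-1}{2}\rho}$. A lower bound for $e_{2\lambda_0,\xi}+e_{2\lambda_0,\xi}\circ\sigma_V$ alone therefore does not rule out cancellation by the scattered field on the fixed far region where your propagated smallness is available. Lemma~\ref{lem:soft-lower-bound} separates the two by their phases. Uniformly, $e^{\frac{n-1}{2}r}u_{\mathcal P',\xi}(x_r)=A_Ve^{-\mathrm i\lambda_0r}+B_{\mathcal P',V}e^{\mathrm i\lambda_0r}+o(1)$, with $|A_V|\ge a_s$. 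This holds because every terminal $V$ meets a fixed compact ball and so has an endpoint $\eta_V\in\partial_\infty V$ uniformly away from $\xi$. Evaluating at the radii $\rho_a$ and $\rho_a+\pi/(2\lambda_0)$ then gives the lower bound $a_s/2$. Analyticity along $V$, Proposition~\ref{prop:direct-stability}, and compactness of $\mathcal A_{\mathbb H}^h$ and of the family of such $V$ then give a uniform $c_{\mathsf D}>0$ on one fixed annulus. It is on that annulus that the bound can be compared with the propagated $L^\infty$ smallness.
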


\medskip
For the \(\mathsf N\)-type case, we impose a quantitative nondegeneracy condition on the boundary labels.

\begin{definition}\label{def:hard-directions}
Let
\[
    \Xi=(\xi_1,\ldots,\xi_m)
    \in
    \bigl(\partial_\infty\mathbb H^n\bigr)^m=\bigl(\mathbb S^{n-1}\bigr)^m.
\]
For a totally geodesic hypersurface \(V\subset\mathbb B^n\), let \(\partial_\infty V\) denote its ideal boundary in the sense of Definition~\ref{def:asymptotic-boundary}.
We say that \(\Xi\) is admissible if
\begin{equation*}
    a_0(\Xi)
    :=
    \inf_V
    \max_{\ell=1,\ldots,m}
    \operatorname{dist}_{\mathbb S^{n-1}}(\xi_\ell,\partial_\infty V)
    >0,
\end{equation*}
where the infimum is taken over all totally geodesic hypersurfaces \(V\subset\mathbb B^n\).
Here \(\operatorname{dist}_{\mathbb S^{n-1}}\) is the geodesic distance on \(\mathbb S^{n-1}\).
We call \(a_0(\Xi)\) the label nondegeneracy constant of \(\Xi\).
\end{definition}

\begin{remark}\label{rem:hard-directions-section-six}
The condition \(a_0(\Xi)>0\) is the quantitative form of the requirement that the boundary labels are not all contained in \(\partial_\infty V\) for any totally geodesic hypersurface \(V\subset\mathbb B^n\).
Equivalently, for every such \(V\), at least one label in \(\Xi\) stays a uniformly positive distance away from \(\partial_\infty V\).
In the \(\mathsf N\)-type stability theorem, \(a_0(\Xi)\) is part of the a priori data of the label configuration.
In particular, by Remark~\ref{rem:affine-independence}, any \(n+1\) affinely independent points \(\xi_1,\ldots,\xi_{n+1}\in\mathbb S^{n-1}\subset\mathbb R^n\) form an admissible family.
\end{remark}

\medskip
The second stability theorem concerns the \(\mathsf N\)-type case.
It gives a stability result from an admissible finite family of boundary labels, at fixed spectral parameter, for \(\mathsf N\)-type totally geodesic defects.

\begin{theorem}\label{thm:main-hard-farfield-simple}
Let \(0<h\leq h_0\), and let
\(
    \mathcal P,\mathcal P'\in\mathcal B_{\mathbb H}^h
\)
be two \(\mathsf N\)-type admissible totally geodesic defects.
Fix \(\lambda_0>0\).
Let
\[
    \Xi=(\xi_1,\ldots,\xi_m)
    \in
    \bigl(\partial_\infty\mathbb H^n\bigr)^m=\bigl(\mathbb S^{n-1}\bigr)^m
\]
be an admissible family of boundary labels in the sense of Definition~\ref{def:hard-directions}.
For each \(\ell=1,\ldots,m\), let
\(u_{\infty,\mathcal P,\xi_\ell}\) and
\(u_{\infty,\mathcal P',\xi_\ell}\) be the corresponding far-field patterns generated by the same incoming Helgason mode
\[
    u^i_{\xi_\ell}=e_{2\lambda_0,\xi_\ell}.
\]
Set
\begin{equation*}
    \varepsilon
    :=
    \max_{\ell=1,\ldots,m}
    \bigl\|
        u_{\infty,\mathcal P,\xi_\ell}
        -
        u_{\infty,\mathcal P',\xi_\ell}
    \bigr\|_{L^2(\mathbb S^{n-1})}.
\end{equation*}
There exists \(\varepsilon_{\mathsf N}(h)>0\), depending on \(h\), on the \(\mathsf N\)-type a priori data, and on the admissible label family \(\Xi\), such that if
\[
    0<\varepsilon<\varepsilon_{\mathsf N}(h),
\]
then
\begin{equation}\label{eq:main-hard-modified-distance-stability}
    d(\mathcal P,\mathcal P')
    \leq
    C\bigl(\log\log(1/\varepsilon)\bigr)^{-\alpha},
\end{equation}
and consequently
\begin{equation}\label{eq:main-hard-final-stability}
    d_{\mathcal H}^{\mathbb H}(\mathcal P,\mathcal P')
    \leq
    \delta^{-1}
    \left(
        C\bigl(\log\log(1/\varepsilon)\bigr)^{-\alpha}
    \right),
\end{equation}
where \(\delta\) is the nondecreasing exterior connectedness modulus fixed in the admissible class.
Here, \(C>0\) and \(\alpha>0\) depend only on \(n\), \(\lambda_0\), the \(\mathsf N\)-type a priori data given in Definition~\ref{def:hard-admissible-tg-defects}, and \(\Xi\) through \(m\) and \(a_0(\Xi)\).
They are independent of \(\mathcal P,\mathcal P'\), \(h\), and \(\varepsilon\).
\end{theorem}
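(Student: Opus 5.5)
The plan follows the Euclidean polyhedral stability scheme (Rondi, Liu--Petrini--Rondi--Xiao), adapted to hyperbolic reflections and to several labels. Assume \(d(\mathcal P,\mathcal P')=\eta>0\). The modified distance then gives a point \(x_0\) on, say, \(\partial\mathcal P\), at hyperbolic distance \(\geq \eta\) from \(\mathcal P'\), that can be joined to conformal infinity inside \(G\cap G'\) by a path with uniform tubular neighbourhoods. Here \(G\cap G'\) means the exterior of \(\mathcal P\cup\mathcal P'\), and the uniformity comes from the exterior connectedness modulus \(\delta\) and Proposition~\ref{prop:distance-equivalence-hyperbolic}.

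\textbf{Step 1 (propagation of smallness from infinity).} For each label \(\xi_\ell\), set \(w_\ell=u_{\mathcal P,\xi_\ell}-u_{\mathcal P',\xi_\ell}=u^s_{\mathcal P,\xi_\ell}-u^s_{\mathcal P',\xi_\ell}\). This is an outgoing solution of \(\mathcal L_{\lambda_0}w_\ell=0\) in the unbounded component of \(G\cap G'\), with far-field pattern of \(L^2\)-size \(\leq\varepsilon\). I will use the hyperbolic Rellich/far-field expansion of Chen--Liu, via a spherical-harmonic (Legendre-function) expansion outside a large ball \(B_{R_0}\) containing both defects and a priori bounds. This gives \(|w_\ell|+|\nabla w_\ell|\leq C\exp(-c\sqrt{\log(1/\varepsilon)})\)-type smallness on a geodesic sphere \(\{\rho=R_1\}\). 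Quantitative unique continuation along the tubular path (three-spheres chains with hyperbolic-metric constants) then carries this to a ball near \(x_0\). Summing over the chain produces \(|w_\ell|\leq C(\log(1/\varepsilon))^{-\beta}\) in \(B_r(x_0)\cap G'\), for \(r\) comparable to \(\eta\) and \(h\).

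\textbf{Step 2 (transfer to the face and reflection).} Near \(x_0\) we have \(\mathscr B_{\mathcal P'}u_{\mathcal P'}\) irrelevant, since \(\mathcal P'\) is far, while \(u_{\mathcal P}\) satisfies the trace condition on the face \(\mathcal F\ni x_0\) with supporting totally geodesic hypersurface \(V\). Hence \(u_{\mathcal P'}\) is small in the trace sense on a patch of \(V\) of size \(\gtrsim\min(\eta,h)\), with small \(u_{\mathcal P'}\) in the D case or small \(\partial_{\nu_{\mathbb H}}u_{\mathcal P'}\) in the N case. Let \(R_V\) be the hyperbolic reflection across \(V\), an isometry, so \(\mathcal L_{\lambda_0}\) commutes with it. The function \(v=u_{\mathcal P'}+u_{\mathcal P'}\circ R_V\) (D) or \(u_{\mathcal P'}-u_{\mathcal P'}\circ R_V\) (N) solves the equation, and its Cauchy data on the patch are small. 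The Cauchy data are small because \(\mathcal P\cup\mathcal P'\) is away from the patch and reflection preserves normal geometry, cf.\ \eqref{eq:normal-covariance-isometry}. Quantitative continuation then makes \(v\) small on a large set. I will follow reflected path segments to conformal infinity, avoiding \(\mathcal P'\cup R_V\mathcal P'\) as in the quantitative reflection lemma of the Euclidean theory, but with hyperbolic geodesic chains. This gives \(|u_{\mathcal P'}\mp u_{\mathcal P'}\circ R_V|\leq C(\log\log(1/\varepsilon))^{-\beta'}\) near infinity. This second logarithm produces the \(\log\log\).

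\textbf{Step 3 (contradiction from the incident mode).} Near conformal infinity \(u_{\mathcal P'}=u^i+O(\text{outgoing})\). Averaging over large geodesic spheres separates the Helgason mode \(e_{2\lambda_0,\xi}\) from the outgoing part, since incoming and outgoing branches have different oscillation \((\cosh\rho/2)^{\pm2\lambda_0 i}\). This yields \(\|e_{2\lambda_0,\xi_\ell}\mp e_{2\lambda_0,R_V\xi_\ell}\|\) on a boundary patch \(\lesssim(\log\log(1/\varepsilon))^{-\beta'}\), up to a lower-order constant. In the D case the sign is \(+\): the right-hand kernel sum cannot be small, since the two terms have equal modulus with singular behaviour only at \(\xi\) and \(R_V\xi\), giving a uniform lower bound depending on \(\lambda_0,n,\xi\). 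In the N case the sign is \(-\), and the difference is bounded below by \(c\,\mathrm{dist}(\xi_\ell,\partial_\infty V)\). Choosing \(\ell\) with \(\mathrm{dist}(\xi_\ell,\partial_\infty V)\geq a_0(\Xi)\), as Definition~\ref{def:hard-directions} permits, gives a lower bound. Tracking \(\eta\)-dependence of the radii and constants, the resulting inequality \(c(\eta,h)\leq C(\log\log(1/\varepsilon))^{-\beta'}\) forces \(\eta\leq C(\log\log(1/\varepsilon))^{-\alpha}\) once \(\varepsilon<\varepsilon_{\mathsf N}(h)\). The Hausdorff bound then follows from Proposition~\ref{prop:distance-equivalence-hyperbolic}.

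\textbf{Main obstacle.} I expect Step 2 to be hardest in the hyperbolic setting. Near the patch of \(V\) the Neumann data enter through normal derivatives, so only a Hölder-type smallness of Cauchy data is available. The reflected region must also be kept at uniform distance from \(\mathcal P'\cup R_V\mathcal P'\) with constants independent of \(h\), since \(V\) is unbounded and \(R_V\) distorts Euclidean sizes near infinity. My first attempt would be to normalize by an isometry sending \(V\) to \(V_0=\{x_n=0\}\) and the relevant point to near the origin. There \(R_V\) is Euclidean reflection and the a priori bounds are uniform, so the chain-of-balls estimate can be run with explicit hyperbolic constants before transporting back.
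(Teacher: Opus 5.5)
\textbf{The main gap is in Step 2.} You do a single reflection and then propose to continue $v=u_{\mathcal P'}-u_{\mathcal P'}\circ R_V$ along a path to conformal infinity that avoids $\mathcal P'\cup R_V\mathcal P'$. But $v$ solves $\mathcal L_{\lambda_0}v=0$ only on the component $\Omega$ of $\mathbb B^n\setminus(\mathcal P'\cup R_V\mathcal P')$ that contains the face patch, and $\Omega$ may be bounded. For example, let $\mathcal P'$ be a cup lying on one side of $V$ with its rim on $V$, and let the patch lie inside the rim. Then $\mathcal P'\cup R_V\mathcal P'$ is a closed shell around the patch, even though the patch is reachable from infinity in $G\cap G'$ through the open side. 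Neither the original path nor its mirror image helps, because $\gamma(t)\in R_V\mathcal P'$ exactly when $R_V\gamma(t)\in\mathcal P'$.

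This is why the uniqueness proof produces a whole sequence $\Pi_1,\Pi_2,\dots$, and why the paper's stability proof iterates (Step~5 of the proof of Proposition~\ref{prop:hard-quant-reflected-continuation}). The chain leaves $\Omega$ through a reflected face $R_V(\mathcal F'')$ with $\mathcal F''\subset\partial\mathcal P'$. On that face $u_{\mathcal P'}\circ R_V$ has zero Neumann data by \eqref{eq:normal-covariance-isometry}. Smallness of $v$ therefore gives small Neumann data for $u_{\mathcal P'}$ on the new hypersurface $R_V(V'')$, and one reflects again.

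Making this quantitative requires two things your plan does not supply.
(i) A bound on the number of stages. The paper fixes a reference chain along which the first-contact index strictly decreases. This gives $J\le C_{\rm geo}\log(2eR/d)$ stages and $M\le C_{\rm geo}\log\frac{2eR}{d}\bigl(\log\frac{2eR}{d}+\log\frac{2eR}{\bar h}\bigr)$ elementary steps.
(ii) A lower bound that is uniform in the hypersurface. The terminal $V_J$ is no longer the hypersurface through your initial face, so you need a lower bound uniform over \emph{all} totally geodesic $V$ meeting a fixed compact ball (Lemma~\ref{lem:hard-lower-bound}). You also need to show that $V_J$ actually meets such a ball.

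\textbf{Further steps that do not work as written.}

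First, the closing inequality. The step count depends on $d$ and $\bar h$ and enters the exponent. What you actually obtain is $c_{\mathsf N}\bar h\le C^M\Psi(\varepsilon)^{\beta^M}$, not $c(\eta,h)\le C(\log\log(1/\varepsilon))^{-\beta'}$ with $\beta'$ independent of $\eta$. The double-logarithmic rate, with $\alpha$ independent of $h$, comes from the bound on $M$, which is quadratic in $\log(1/\bar h)$, combined with Lemma~\ref{lem:logarithmic-reduction}. Without explicit step counting, no rate follows.

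Second, the choice of $x_0$. A point of $\partial\mathcal P$ at distance $\ge d$ from $\mathcal P'$ need not be joinable to infinity inside $G\cap G'$. The exterior connectedness hypothesis concerns $G$ alone. The paper builds the chain in $G$ and truncates it at the first enlarged ball meeting $\mathcal P'$, then works on the face first contacted.

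Third, the Neumann smallness on the patch. Smallness of $\partial_{\nu_{\mathbb H}}u_{\mathcal P'}$ there does not follow from ``$\mathcal P$ being away from the patch'': the patch lies on $\partial\mathcal P$, and $w_\ell$ is controlled only on one side. You first need the even extension of $u_{\mathcal P}$ across $V$ (Lemma~\ref{lem:one-sided-reflection-extension}). Then $u_{\mathcal P'}$ minus this extension solves the equation in a full ball, and interior $C^1$ estimates apply.

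Minor: $e_{2\lambda_0,\xi}\circ R_V$ is a nonzero constant multiple of $e_{2\lambda_0,R_V\xi}$, not equal to it.
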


\begin{remark}\label{rem:hard-stability-minimal-measurements}
The \(\mathsf N\)-type stability theorem is optimal with respect to the number of boundary labels.
Indeed, by Remark~\ref{rem:hard-sharp-general-class}, uniqueness in the general \(\mathsf N\)-type totally geodesic defect class fails for every family of \(m\leq n\) boundary labels.
Consequently, no stability estimate of the form \eqref{eq:intro-stability} can hold uniformly with \(m\leq n\) boundary labels, since such an estimate would imply uniqueness when the far-field discrepancy is zero.
Thus \(n+1\) affinely independent boundary labels give the minimal label configuration for uniform stability in the general \(\mathsf N\)-type class.
For larger admissible families \(\Xi\), Theorem~\ref{thm:main-hard-farfield-simple} remains valid, with constants depending on the quantitative nondegeneracy \(a_0(\Xi)\).
\end{remark}

\begin{corollary}\label{cor:bulk-defects-double-log}
If, in Theorems~\ref{thm:main-soft-farfield} and
\ref{thm:main-hard-farfield-simple}, the admissible class is restricted to defects with only bulk components and the exterior connectedness modulus \(\delta\) is linear, then, for \(\varepsilon>0\) sufficiently small,
\[
    d_{\mathcal H}^{\mathbb H}(\mathcal P,\mathcal P')
    \leq
    C\bigl(\log\log(1/\varepsilon)\bigr)^{-\alpha}.
\]
\end{corollary}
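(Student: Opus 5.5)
The corollary follows directly from the two stability theorems once we check that, for defects with only bulk components and a linear modulus, composing with \(\delta^{-1}\) loses only a constant. I will deduce it from Theorems~\ref{thm:main-soft-farfield} and \ref{thm:main-hard-farfield-simple}, in the forms \eqref{eq:main-soft-final-stability} and \eqref{eq:main-hard-final-stability}.

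Suppose the exterior connectedness modulus is linear, \(\delta(t)=c_0t\) for some \(c_0>0\) depending only on the a priori data. Then \(\delta\) is a strictly increasing bijection of \([0,\infty)\) onto its range, and \(\delta^{-1}(s)=s/c_0\). Insert this into the final estimate of either theorem. For \(\varepsilon<\varepsilon_{\mathsf D}(h)\), respectively \(\varepsilon<\varepsilon_{\mathsf N}(h)\), this gives
\[
d_{\mathcal H}^{\mathbb H}(\mathcal P,\mathcal P')\le c_0^{-1}C\bigl(\log\log(1/\varepsilon)\bigr)^{-\alpha}.
\]
Renaming \(c_0^{-1}C\) as \(C\) yields the claim. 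The new constant still depends only on \(n\), \(\lambda_0\), the a priori data and, in the \(\mathsf N\)-type case, on \(m\) and \(a_0(\Xi)\).

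The main point to verify is that the restricted class is a legitimate instance of the hypotheses of those theorems. Two things must be checked.

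First, restricting \(\mathcal A_{\mathbb H}^h\) and \(\mathcal B_{\mathbb H}^h\) to defects with only bulk components keeps us inside the admissible classes. It does not change the constants \(C\) and \(\alpha\), since these were shown to be independent of \(\mathcal P\), \(\mathcal P'\) and \(h\).

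Second, \(\delta^{-1}\) must be applied to a value in the range of \(\delta\). If \(\delta\) is linear only on an interval \([0,t_0]\), we shrink \(\varepsilon_{\mathsf D}(h)\) and \(\varepsilon_{\mathsf N}(h)\) so that \(C(\log\log(1/\varepsilon))^{-\alpha}\le c_0t_0\). Taking "\(\varepsilon\) sufficiently small" in the corollary in this sense keeps \(\delta^{-1}\) linear along the bound.

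For bulk defects this is consistent with Proposition~\ref{prop:distance-equivalence-hyperbolic}. The modified distance \(d\) and the Hausdorff distance between the (closures of) solid components are then comparable up to the Lipschitz constant of \(\delta\), because no lower-dimensional faces can be hidden.
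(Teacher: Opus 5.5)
Your argument is correct and is essentially the paper's: the corollary comes from combining the modified-distance estimates \eqref{eq:main-soft-modified-distance-stability} and \eqref{eq:main-hard-modified-distance-stability} with the linear comparison $\widetilde d\le c_{\rm ec}^{-1}d$ in \eqref{eq:Hd2}, which amounts to your observation that $\delta^{-1}(s)\le s/c_{\rm ec}$. One small correction: the paper defines linearity as $\delta(t)\ge c_{\rm ec}t$ on $(0,2R]$ and uses the generalized inverse \eqref{eq:ext}, so you get this inequality rather than the equality $\delta^{-1}(s)=s/c_0$ you wrote; the inequality suffices, and it makes your range-of-$\delta$ caveat and the extra shrinking of the error thresholds unnecessary.
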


\begin{remark}\label{rem:double-log-sharpness}
The stability estimates in Theorems~\ref{thm:main-soft-farfield}--\ref{thm:main-hard-farfield-simple} and Corollary~\ref{cor:bulk-defects-double-log} have double-logarithmic dependence for the present far-field problem at conformal infinity.
The first logarithmic loss comes from quantitative continuation of the far-field data from conformal infinity into the hyperbolic exterior.
The second logarithmic loss comes from propagating this smallness to the interaction surface and converting it into geometric control through hyperbolic reflection.
Thus the double-logarithmic modulus comes from this two-step process of recovering a compact hyperbolic defect from finite far-field data at conformal infinity.
In addition, Remark~\ref{rem:hard-stability-minimal-measurements} shows that the number of boundary labels in the \(\mathsf N\)-type stability theorem is minimal.
\end{remark}

\subsection{Discussion and main contributions}

We emphasize four main contributions of the paper.
They are organized around the role of conformal infinity in the data and the ideal-boundary geometry of totally geodesic hypersurfaces.

\begin{itemize}
\item Motivated by the holographic boundary-to-bulk viewpoint at conformal infinity, we introduce and study a novel class of finite-label inverse scattering problems for compact impenetrable defects in hyperbolic space at one fixed spectral parameter.
The prescribed inputs are a single boundary label, or at most finitely many boundary labels, at \(\partial_\infty\mathbb H^n\).
Each label selects an incoming Helgason mode, and the data are the corresponding far-field patterns at conformal infinity.

   \item We identify the role of ideal-boundary geometry in determining visibility for totally geodesic defects.
The admissible defects have interaction surfaces consisting of finitely many totally geodesic faces.
The class allows both bulk components and hypersurface-supported components.
The incidence relation between the prescribed boundary labels and the ideal boundaries of supporting totally geodesic hypersurfaces determines which components can be detected.

  \item We prove uniqueness results with the optimal number of prescribed boundary labels, equivalently incoming Helgason modes.
In the \(\mathsf D\)-type case, one label is sufficient.
In the general \(\mathsf N\)-type case, \(n+1\) affinely independent labels are necessary and sufficient.
The necessity follows from the invisibility of \(\mathsf N\)-type hypersurface-supported components under degenerate label configurations.

    \item We prove {sharp} quantitative stability estimates in admissible classes.
The hyperbolic Hausdorff distance between two defects is controlled by a double-logarithmic modulus of the discrepancy of their far-field patterns at conformal infinity.
The double-logarithmic modulus reflects the two logarithmic losses in the continuation argument, and the \(\mathsf N\)-type estimate is obtained under the minimal label requirement.
\end{itemize}

The qualitative and quantitative parts of the paper are developed under different levels of admissibility.
For uniqueness, the assumptions are structural: the defect is impenetrable, its exterior is connected, and its interaction surface consists of finitely many totally geodesic faces.
For stability, these assumptions are strengthened to quantitative admissible classes.
The quantitative data include localization in the hyperbolic metric, lower size control of the totally geodesic faces, controlled incidence of faces, and uniform exterior connectedness.
These conditions provide the compactness and uniform constants needed to pass from unique determination to stability for the hyperbolic Hausdorff distance.

The totally geodesic structure enters through the geometry of the problem.
It links the hyperbolic interior to conformal infinity.
The ideal boundary of a supporting totally geodesic hypersurface determines whether a boundary label is transverse or degenerate with respect to a face.
This is why the \(\mathsf D\)-type and \(\mathsf N\)-type cases have different data requirements.
In the \(\mathsf D\)-type case, a single label is enough to force a contradiction through reflection.
In the \(\mathsf N\)-type case, degenerate labels may fail to see hypersurface-supported components, and the admissibility condition on the labels rules out this obstruction.

Another point is the uniform control of the direct problem over varying defects.
For a fixed defect, the radiation condition, Rellich theorem, and far-field expansion are supplied by the hyperbolic Sommerfeld--Rellich framework of Chen--Liu~\cite{ChenLiu2023}.
For the inverse stability problem, one needs estimates that are uniform over the whole admissible class.
We obtain such estimates by combining the hyperbolic radiation theory at conformal infinity with compactness on bounded hyperbolic regions, Mosco convergence of Sobolev spaces, and uniform Sobolev inequalities on varying exterior domains~\cite{mosco1969convergence,fornoni2023mosco}.
This uniform direct theory is what allows the far-field error at conformal infinity to be propagated into the exterior region and then converted into hyperbolic geometric control.

The paper contributes to the hyperbolic inverse scattering program in two ways.
It gives a far-field recovery theory at conformal infinity from a prescribed finite family of boundary labels, rather than an operator-level inverse theory.
It also shows that the data requirements are determined by ideal-boundary incidence, a geometric phenomenon specific to hyperbolic space.
The results therefore provide both a qualitative and a quantitative form of recovery from data at conformal infinity for compact totally geodesic defects.


\medskip
The rest of the paper is organized as follows.
Section~\ref{sec 2} collects the analytic preliminaries for hyperbolic scattering, including the Poincar\'e ball model, the Helgason--Fourier transform, outgoing Green functions, the hyperbolic radiation condition, far-field patterns, and the hyperbolic Rellich theorem.
Section~\ref{sec:reflection-principles} introduces totally geodesic hypersurfaces and totally geodesic defects, and proves reflection principles for solutions of \(\mathcal L_{\lambda_0}u=0\).
Section~\ref{sec:unique-determination} proves the uniqueness results for \(\mathsf D\)-type and \(\mathsf N\)-type totally geodesic defects.
Section~\ref{sec:quant-prelim} introduces the quantitative admissible classes and establishes uniform estimates for the corresponding direct problems.
Section~\ref{sec:quantitative-rellich} develops the quantitative ingredients for stability, including a far-field-to-near-field estimate at conformal infinity, propagation of smallness through hyperbolic chains, and quantitative reflection across totally geodesic hypersurfaces.
Finally, Section~\ref{sec:quantitative-stability} proves the stability estimates for \(\mathsf N\)-type and \(\mathsf D\)-type totally geodesic defects.

\section{Analytic preliminaries for hyperbolic scattering}\label{sec 2}

This section introduces the analytic framework for hyperbolic scattering used throughout the paper.
We work in the Poincar\'e ball model \(\mathbb B^n\) of \(\mathbb H^n\), fix the geometric notation and M\"obius isometries, and present the basic Fourier, Green function, radiation, far-field, and Rellich tools, as well as {the hyperbolic normal derivative and the associated Green identities}.

\subsection{Hyperbolic space and M\"obius transformations}\label{sect:2.1}

We work in the \emph{Poincar\'e ball} model of hyperbolic space $\mathbb{H}^n$, namely the unit ball
\[
    \mathbb B^n=\{x\in\mathbb R^n: |x|<1\},
\]
endowed with the Poincar\'e metric
\begin{equation}\label{eq:hyper metric}
    g_{\mathbb H}=\left(\frac{2}{1-|x|^2}\right)^2g_e,
\end{equation}
where \(g_e\) denotes the Euclidean metric.
The associated volume element is
\begin{equation}\label{eq:hyvolume}
	dV_{\mathbb H}=\left(\frac{2}{1-|x|^2}\right)^n dx.
\end{equation}
The hyperbolic distance from the origin is
\[
    \rho(x)=\log\frac{1+|x|}{1-|x|}.
\]
For \(R>0\), we denote by \(B_{\mathbb H}(x,R)\) the hyperbolic ball centered at \(x\) with hyperbolic radius \(R\).
When the center is omitted, we write \(B_{\mathbb H}(R):=B_{\mathbb H}(0,R)\).

In Euclidean coordinates, the Laplace--Beltrami operator and the hyperbolic gradient are given by
\begin{equation}\label{eq:Laplace}
    \Delta_{\mathbb H}
    =
    \frac{1-|x|^2}{4}
    \left(
        (1-|x|^2)\Delta_{\mathbb R^n}
        +
        2(n-2)\sum_{i=1}^n x_i\frac{\partial}{\partial x_i}
    \right),
    \qquad
    \nabla_{\mathbb H}
    =
    \left(\frac{1-|x|^2}{2}\right)^2\nabla_{\mathbb R^n}.
\end{equation}
We shall also use hyperbolic polar coordinates, given by \(x=\tanh(\rho/2)\xi\), where \(\rho\in(0,+\infty)\) and \(\xi\in\mathbb S^{n-1}\).
In these coordinates, the metric takes the form
\[
    g_{\mathbb H}=d\rho^2+\sinh^2\rho\,g_{\mathbb S^{n-1}},
\]
where \(g_{\mathbb S^{n-1}}\) is the standard metric on \(\mathbb S^{n-1}\).
Moreover,
\begin{equation*}
    \Delta_{\mathbb H}
    =
    \frac{\partial^2}{\partial \rho^2}
    +(n-1)\coth\rho\,\frac{\partial}{\partial \rho}
    +\frac{1}{\sinh^2\rho}\Delta_{\mathbb S^{n-1}},
    \qquad
    \nabla_{\mathbb H}
    =
    \left(
        \frac{\partial}{\partial \rho},
        \frac{1}{\sinh\rho}\nabla_{\mathbb S^{n-1}}
    \right).
\end{equation*}
Here the expression for \(\nabla_{\mathbb H}\) in polar coordinates is understood with respect to the orthonormal polar frame.
Whenever the integrals are well defined, we have
\begin{equation*}
\begin{split}
    \int_{\mathbb B^n} f(x)\,dV_{\mathbb H}
    &=
    \int_0^1\int_{\mathbb S^{n-1}}
    f(r\xi)r^{n-1}
    \left(\frac{2}{1-r^2}\right)^n
    d\sigma(\xi)\,dr                                     \\
    &=
    \int_0^{+\infty}\int_{\mathbb S^{n-1}}
    f\left(\tanh\left(\frac{\rho}{2}\right)\xi\right)
    (\sinh\rho)^{n-1}
    d\sigma(\xi)\,d\rho .
\end{split}
\end{equation*}

For each \(a\in \mathbb B^n\), we define the \emph{M\"obius transformation} \(T_a\) (see \cite{Ahlfors}) by
\[
    T_a(x)
    =
    \frac{|x-a|^2a-(1-|a|^2)(x-a)}
    {1-2x\cdot a+|x|^2|a|^2},
\]
where \(x\cdot a\) denotes the Euclidean scalar product in \(\mathbb R^n\).
A direct calculation gives
\[
    T_a(0)=a,
    \qquad
    T_a(a)=0,
    \qquad
    T_a\circ T_a=\mathrm{Id}.
\]
In particular, \(T_a^{-1}=T_a\).

The Poincar\'e metric is invariant under \(T_a\), and hence \(T_a\) is a hyperbolic isometry.
Consequently, the volume element \(dV_{\mathbb H}\) is invariant under \(T_a\), and for any \(\varphi\in L^1(\mathbb B^n)\),
\[
    \int_{\mathbb B^n}|\varphi\circ T_a|\,dV_{\mathbb H}
    =
    \int_{\mathbb B^n}|\varphi|\,dV_{\mathbb H}.
\]
The Laplace--Beltrami operator commutes with pullback by \(T_a\):
\[
    \Delta_{\mathbb H}(\phi\circ T_a)
    =
    (\Delta_{\mathbb H}\phi)\circ T_a,
    \qquad \phi\in C_c^\infty(\mathbb B^n).
\]
Equivalently,
\[
    \int_{\mathbb B^n}
    -\Delta_{\mathbb H}(\phi\circ T_a)(\phi\circ T_a)\,dV_{\mathbb H}
    =
    \int_{\mathbb B^n}
    -\Delta_{\mathbb H}\phi\cdot \phi\,dV_{\mathbb H}.
\]

The geodesic distance between two points \(x,y\in\mathbb B^n\) can be written as
\begin{equation}\label{eq:distance}
	\rho(x,y)
    =
    \rho(0,T_x(y))
    =
    \log\frac{1+|T_x(y)|}{1-|T_x(y)|}.
\end{equation}
We use the following notation for the hyperbolic distance from a point to a set.
For \(x\in\mathbb B^n\) and a nonempty set \(E\subset\mathbb B^n\), we set
\begin{equation}\label{eq:distH}
	   \rho(x,E)
    :=
    \inf_{y\in E}\rho(x,y).
\end{equation}
Using the M\"obius transformations, we define the convolution of measurable functions \(f\) and \(g\) on \(\mathbb B^n\) (see \cite{liu}) by
\[
    (f\ast g)(x)
    =
    \int_{\mathbb B^n}
    f(y)g(T_x(y))\,dV_{\mathbb H}(y),
\]
whenever the integral is well defined.

\subsection{The Helgason--Fourier transform on hyperbolic space}

We record the basic facts about the Helgason--Fourier transform that will be used below.
For further background on Helgason--Fourier analysis on hyperbolic spaces, we refer to \cite{Helgason1984,Helgason2008,LuYangQ1,LuYangQ2,LuYangQ3}.

Set
\begin{equation}\label{eq:e}
    e_{\lambda,\xi}(x)
    =
    \left(
        \frac{\sqrt{1-|x|^2}}{|x-\xi|}
    \right)^{n-1+\mathrm i\lambda},
    \qquad
    x\in \mathbb B^n,\quad
    \lambda\in\mathbb R,\quad
    \xi\in\mathbb S^{n-1}.
\end{equation}
Then \(e_{\lambda,\xi}\) is a generalized eigenfunction of \(-\Delta_{\mathbb H}\), namely
\[
    -\Delta_{\mathbb H}e_{\lambda,\xi}
    =
    \frac{(n-1)^2+\lambda^2}{4}e_{\lambda,\xi}.
\]
Moreover, \(e_{-\lambda,\xi}=\overline{e_{\lambda,\xi}}\).
In particular, the incoming Helgason mode in \eqref{eq:incoming-helgason-mode} is \(u^i_\xi=e_{2\lambda_0,\xi}\).

For \(f\in C_c^\infty(\mathbb B^n)\), its Helgason--Fourier transform is defined by
\begin{equation}\label{eq:FT}
	  \widehat f(\lambda,\xi)
    =
    \int_{\mathbb B^n}
    f(x)e_{-\lambda,\xi}(x)\,dV_{\mathbb H}(x).
\end{equation}
The inversion formula is
\begin{equation}\label{eq:IFT}
	   f(x)
    =
    D_n
    \int_{-\infty}^{\infty}
    \int_{\mathbb S^{n-1}}
    \widehat f(\lambda,\xi)e_{\lambda,\xi}(x)
    |c(\lambda)|^{-2}\,d\sigma(\xi)\,d\lambda,
\end{equation}
where
\(
    D_n
    =
    \left(2^{3-n}\pi|\mathbb S^{n-1}|\right)^{-1},
\)
and \(c(\lambda)\) is the Harish--Chandra function (see \cite{liu}).

The transform extends to \(L^2(\mathbb B^n,dV_{\mathbb H})\) in the Plancherel sense.
For \(g,h\in L^2(\mathbb B^n,dV_{\mathbb H})\), one has
\[
    \int_{\mathbb B^n}
    g(x)\overline{h(x)}\,dV_{\mathbb H}(x)
    =
    D_n
    \int_{-\infty}^{\infty}
    \int_{\mathbb S^{n-1}}
    \widehat g(\lambda,\xi)\overline{\widehat h(\lambda,\xi)}
    |c(\lambda)|^{-2}\,d\sigma(\xi)\,d\lambda .
\]
In particular,
\[
    \|g\|_{L^2(\mathbb B^n,dV_{\mathbb H})}^2
    =
    D_n
    \int_{-\infty}^{\infty}
    \int_{\mathbb S^{n-1}}
    |\widehat g(\lambda,\xi)|^2
    |c(\lambda)|^{-2}\,d\sigma(\xi)\,d\lambda .
\]

\subsection{Heat kernel and Green function of the shifted Laplacian}

Consider the heat equation
\begin{equation}\label{eq:heat}
\begin{cases}
    \partial_t u(x,t)-\Delta_{\mathbb H}u(x,t)=0,
    & (x,t)\in \mathbb B^n\times \mathbb R^+, \\
    u(x,0)=f(x),
    & f\in C_c^\infty(\mathbb B^n).
\end{cases}
\end{equation}
Applying the Helgason--Fourier transform \eqref{eq:FT} to \eqref{eq:heat} gives
\[
    \partial_t\widehat u(\lambda,\xi,t)
    +
    \frac{(n-1)^2+\lambda^2}{4}
    \widehat u(\lambda,\xi,t)
    =
    0,
    \qquad
    \widehat u(\lambda,\xi,0)=\widehat f(\lambda,\xi).
\]
Solving this ODE, we obtain
\[
    \widehat u(\lambda,\xi,t)
    =
    e^{-\frac{(n-1)^2+\lambda^2}{4}t}
    \widehat f(\lambda,\xi).
\]
Using the inversion formula \eqref{eq:IFT}, we obtain
\[
    u(x,t)
    =
    e^{t\Delta_{\mathbb H}}f(x)
    =
    \int_{\mathbb B^n}
    P_t(T_x(y))f(y)\,dV_{\mathbb H}(y).
\]
Here, \(P_t\) is the \emph{heat kernel} on hyperbolic space and admits the spectral representation
\[
    P_t(x)
    =
    D_n
    \int_{-\infty}^{\infty}
    \int_{\mathbb S^{n-1}}
    e^{-\frac{(n-1)^2+\lambda^2}{4}t}
    e_{\lambda,\xi}(x)
    |c(\lambda)|^{-2}
    d\sigma(\xi)\,d\lambda.
\]
The kernel \(P_t\) is radial, and hence we write \(P_t(x)=P_t(\rho(x))\).
The explicit formula for \(P_t\) is
\[
P_t(\rho)
=
(2\pi)^{-\frac{n+1}{2}}t^{-\frac12}
e^{-\frac{(n-1)^2}{4}t}
\int_{\rho}^{+\infty}
\frac{\sinh r}{\sqrt{\cosh r-\cosh \rho}}
\left(
    -\frac{1}{\sinh r}\frac{\partial}{\partial r}
\right)^m
e^{-\frac{r^2}{4t}}\,dr
\]
when \(n=2m\), and
\[
P_t(\rho)
=
2^{-m-1}\pi^{-m-\frac12}t^{-\frac12}
e^{-\frac{(n-1)^2}{4}t}
\left(
    -\frac{1}{\sinh \rho}\frac{\partial}{\partial \rho}
\right)^m
e^{-\frac{\rho^2}{4t}}
\]
when \(n=2m+1\) (see \cite{Davis, Gri}).

Since \(\operatorname{Spec}(-\Delta_{\mathbb H})=[(n-1)^2/4,+\infty)\), the Mellin formula gives the Bessel--Green--Riesz representation of the shifted resolvent:
\begin{equation}\label{eq:Mellin}
\begin{split}
\left(-\Delta_{\mathbb H}-\frac{(n-1)^2}{4}+k^2\right)^{-1}
&=
\int_{0}^{+\infty}
e^{\left(\frac{(n-1)^2}{4}-k^2\right)t}
e^{t\Delta_{\mathbb H}}\,dt .
\end{split}
\end{equation}
The corresponding integral kernel of \eqref{eq:Mellin} is radial by the homogeneity and isotropy of hyperbolic space.
We denote this \emph{Green function} by \(G_k(\rho)\), where \(\rho=\rho(x,y)\).
For \(n\geq 3\), \(G_k\) is given by (see \cite{li, LuYangQ2, LuYangQ3, Mat}):
\[
G_k(\rho)
=
(2\pi)^{-\frac{n}{2}}
(\sinh \rho)^{-\frac{n-2}{2}}
e^{-\frac{(n-2)\pi}{2}\mathrm i}
Q_{k-\frac{1}{2}}^{\frac{n-2}{2}}(\cosh \rho),
\]
where \(Q_{k-\frac{1}{2}}^{\frac{n-2}{2}}\) is the Legendre function of the second kind.
Moreover, it satisfies (see \cite{Erd})
\[
e^{-\frac{(n-2)\pi}{2}\mathrm i}
Q_{k-\frac{1}{2}}^{\frac{n-2}{2}}(\cosh \rho)
=
\frac{\Gamma(\frac{n-1}{2}+k)}
{2^{k+\frac{1}{2}}\Gamma(k+\frac{1}{2})\sinh^{\frac{n-2}{2}}\rho}
\int_{0}^{\pi}
(\cosh \rho+\cos t)^{\frac{n-3}{2}-k}
(\sin t)^{2k}\,dt .
\]
Hence, one has
\begin{equation}\label{eq:shifted-green-kernel}
	G_k(\rho)
=
\frac{A_{n,k}}{(\sinh \rho)^{n-2}}
\int_{0}^{\pi}
(\cosh \rho+\cos t)^{\frac{n-3}{2}-k}
(\sin t)^{2k}\,dt ,
\end{equation}
where
\[
A_{n,k}
=
(2\pi)^{-\frac{n}{2}}
\frac{\Gamma(\frac{n-1}{2}+k)}
{2^{k+\frac{1}{2}}\Gamma(k+\frac{1}{2})}.
\]

\subsection{Green function of the shifted Helmholtz operator on hyperbolic space}

Let \(\mu>0\). We define the outgoing and incoming Green functions of the shifted Helmholtz operator
\[
    -\Delta_{\mathbb H}
    -
    \frac{(n-1)^2}{4}
    -
    \mu^2
\]
by the limiting resolvents
\[
\begin{split}
G_{-\mu\mathrm i}
&:=
\lim_{\epsilon\to0^+}
\left(
-\Delta_{\mathbb H}
-\frac{(n-1)^2}{4}
-(\mu+\epsilon\mathrm i)^2
\right)^{-1},                                      \\
G_{\mu\mathrm i}
&:=
\lim_{\epsilon\to0^+}
\left(
-\Delta_{\mathbb H}
-\frac{(n-1)^2}{4}
-(\mu-\epsilon\mathrm i)^2
\right)^{-1}.
\end{split}
\]
{
For \(n\geq3\), these Green functions are obtained from \(G_k\) in
\eqref{eq:shifted-green-kernel} by analytic continuation to
\(k=-\mu\mathrm i\) and \(k=\mu\mathrm i\), respectively.
}

For \(n\geq 3\), writing \(\rho=\rho(x)\), the outgoing Green function with pole at the origin is
\begin{equation}\label{eq:Green-}
\begin{split}
G_{-\mu \mathrm i}(\rho)
&=
A_{n,-\mu \mathrm i}
\frac{
    (\cosh \rho)^{\frac{n-3}{2}+\mu \mathrm i}
}{
    (\sinh \rho)^{n-2}
}                                                  \\
&\quad \times
\int_{0}^{\pi}
\left(
    1+\frac{\cos t}{\cosh \rho}
\right)^{\frac{n-3}{2}+\mu \mathrm i}
(\sin t)^{-2\mu \mathrm i}\,dt .
\end{split}
\end{equation}
The incoming Green function is
\begin{equation}\label{eq:Green+}
\begin{split}
G_{\mu \mathrm i}(\rho)
&=
A_{n,\mu \mathrm i}
\frac{
    (\cosh \rho)^{\frac{n-3}{2}-\mu \mathrm i}
}{
    (\sinh \rho)^{n-2}
}                                                  \\
&\quad \times
\int_{0}^{\pi}
\left(
    1+\frac{\cos t}{\cosh \rho}
\right)^{\frac{n-3}{2}-\mu \mathrm i}
(\sin t)^{2\mu \mathrm i}\,dt .
\end{split}
\end{equation}
Here, \(A_{n,\pm\mu\mathrm i}\) is obtained from \(A_{n,k}\) in
\eqref{eq:shifted-green-kernel} by replacing \(k\) with \(\pm\mu\mathrm i\).
{
For \(n=2\), the same limiting absorption gives (see \cite{Mat,DLMF})
\begin{equation}\label{eq:Green-two-dimensional}
    G_{\pm\mu\mathrm i}(\rho)
    =
    \frac{1}{2\pi}
    Q_{-\frac12\pm\mu\mathrm i}(\cosh\rho).
\end{equation}
The large-\(\rho\) expansion of \(Q\) yields the asymptotic and radiation
formulas below and, for \(j=0,1,2\),
\[
    \left|\partial_\rho^jG_{\pm\mu\mathrm i}(\rho)\right|
    \leq Ce^{-\rho/2},
    \qquad \rho\geq1.
\]
}

By M\"obius invariance, the Green functions with pole at \(y\in\mathbb B^n\)
are obtained by replacing \(\rho(x)\) with \(\rho(x,y)\).
Thus \(G_{-\mu\mathrm i}(\rho(x,y))\) and \(G_{\mu\mathrm i}(\rho(x,y))\) are Green functions of the shifted Helmholtz operator on \(\mathbb H^n\), with singularity at \(y\).
In the scattering problem below, \(G_{-\mu\mathrm i}\) is the outgoing Green function.

\subsection{Radiation condition, far-field pattern and hyperbolic Rellich theorem}\label{subsec:radition}

For fixed \(y\in \mathbb{B}^n\), write \(x=\tanh(\rho/2)\hat{x}\), where \(\rho=\rho(x)\) and \(\hat{x}\in \mathbb S^{n-1}\).
Then the Green function \(G_{-\mu \mathrm i}(\rho(x,y))\) may be regarded as a function \(G(\rho,\hat{x},y)\).
Set
\[
    I_\mu:=\int_{0}^{\pi}(\sin t)^{-2\mu \mathrm i}\,dt.
\]
As \(\rho\rightarrow +\infty\), the outgoing Green function \(G_{-\mu\mathrm i}\) admits the asymptotic expansion
\begin{equation}
\begin{split}
G_{-\mu \mathrm i}(\rho(x,y))
=G(\rho,\hat{x},y)
&=
\frac{A_{n,-\mu \mathrm i}}{2^{\frac{n-1}{2}-\mu\mathrm i}}
\frac{\left(\cosh(\frac{\rho}{2})\right)^{2\mu \mathrm i}}
{\cosh^{n-1}(\frac{\rho}{2})}
\frac{\left(\cosh(\frac{\rho(y)}{2})\right)^{2\mu \mathrm i}}
{\cosh^{n-1}(\frac{\rho(y)}{2})}                                      \\
&\quad \times
\left(1-2\hat{x}\cdot y+|y|^2\right)^{\mu\mathrm i-\frac{n-1}{2}}
I_\mu
+
O\left(\frac{1}{\sinh^{\frac{n+1}{2}}(\rho)}\right).
\end{split}
\end{equation}
Here the \(O\)-term is uniform for \(\hat{x}\in\mathbb S^{n-1}\) and for \(y\) in compact subsets of \(\mathbb B^n\).
A direct computation gives
\begin{equation*}
\begin{split}
\frac{\partial G}{\partial \rho}
=
\left(\mu \mathrm i-\frac{n-1}{2}\right)
\tanh\left(\frac{\rho}{2}\right)G
+
O\left(\frac{1}{\sinh^{\frac{n+1}{2}}(\rho)}\right)
\end{split}
\end{equation*}
as \(\rho(x)\rightarrow +\infty\).
Hence the Green function \(G_{-\mu \mathrm i}(\rho,\hat{x},y)\) satisfies
\begin{equation}\label{radiation}
\frac{\partial G}{\partial \rho}
-
\left(\mu \mathrm i-\frac{n-1}{2}\right)
\tanh\left(\frac{\rho}{2}\right)G
=
O\left(\frac{1}{\sinh^{\frac{n+1}{2}}(\rho)}\right)
\end{equation}
as \(\rho\rightarrow +\infty\).
We call the condition \eqref{radiation} the \emph{hyperbolic radiation condition}.

Consider the shifted Helmholtz equation on \(\mathbb H^n\) with source \(f\in C_c^\infty(\mathbb{B}^n)\)
\begin{equation}\label{eq:source-shifted-helmholtz}
\left(-\Delta_{\mathbb{H}}-\frac{(n-1)^2}{4}-\mu^2\right)u=f(x),
\qquad x\in \mathbb{B}^n.
\end{equation}
If \(u\) satisfies the hyperbolic radiation condition, then the outgoing solution is given by
\[
    u(x)
    =
    \int_{\mathbb{B}^n}
    G_{-\mu \mathrm i}(\rho(x,y))f(y)\,dV_{\mathbb{H}}(y).
\]

Define
\[
    C_{n,\mu}
    :=
    \frac{A_{n,-\mu \mathrm i}}{2^{\frac{n-1}{2}-\mu\mathrm i}}I_\mu .
\]
The \emph{far-field pattern} of \(u\) is
\begin{equation}\label{eq:ffp}
\begin{split}
u_{\infty}(\hat{x}, \mu)
&=
C_{n,\mu}
\int_{\mathbb{B}^n}
\frac{\left(\cosh(\frac{\rho(y)}{2})\right)^{2\mu \mathrm i}}
{\cosh^{n-1}(\frac{\rho(y)}{2})}
\left(1-2\hat{x}\cdot y+|y|^2\right)^{\mu \mathrm i-\frac{n-1}{2}}
f(y)\,dV_{\mathbb{H}}(y)\\
&=
C_{n,\mu}
\int_{\mathbb{B}^n}
e_{-2\mu, \hat{x}}(y)f(y)\,dV_{\mathbb{H}}(y)\\
&=
C_{n,\mu}\widehat{f}(2\mu,\hat{x}).
\end{split}
\end{equation}
Consequently, at the fixed spectral parameter \(\mu>0\), the far-field pattern determines the Helgason--Fourier transform of \(f\) on the spectral shell \(\lambda=2\mu\):
\[
    \widehat{f}(2\mu,\hat{x})
    =
    C_{n,\mu}^{-1}u_{\infty}(\hat{x},\mu).
\]

The radiation condition \eqref{radiation} singles out the outgoing solution, while the far-field pattern \eqref{eq:ffp} gives its leading term at conformal infinity.
We shall use the following hyperbolic Rellich theorem, proved in \cite{ChenLiu2023}.

\begin{lemma}\label{thm3}
Let \(B_{\mathbb{H}}(0,R_0)\) denote the hyperbolic ball centered at the origin with radius \(R_0\).
Assume that \(u\in C^{2}(\mathbb{B}^n\setminus B_{\mathbb{H}}(0,R_0))\) satisfies
\[
    -\Delta_{\mathbb{H}}u-\frac{(n-1)^2}{4}u-\mu^2 u=0
    \quad
    \text{in } \mathbb{B}^n\setminus B_{\mathbb{H}}(0,R_0).
\]
If
\[
    \lim_{R\rightarrow +\infty}
    \int_{\partial B_{\mathbb{H}}(0,R)}
    |u|^2\,d\sigma_{\mathbb{H}}
    =
    0,
\]
then \(u=0\) in \(\mathbb{B}^n\setminus B_{\mathbb{H}}(0,R_0)\).
\end{lemma}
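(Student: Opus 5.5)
The plan is to follow the classical Rellich strategy, adapted to the hyperbolic polar form of the operator. Write \(x=\tanh(\rho/2)\hat x\) and expand \(u\) on each sphere \(\partial B_{\mathbb H}(0,\rho)\), \(\rho>R_0\), in spherical harmonics:
\[
    u(\rho,\hat x)=\sum_{k\ge0}\sum_{j}u_{k,j}(\rho)Y_{k,j}(\hat x).
\]
Here \(\{Y_{k,j}\}\) is an \(L^2(\mathbb S^{n-1})\)-orthonormal basis with \(-\Delta_{\mathbb S^{n-1}}Y_{k,j}=k(k+n-2)Y_{k,j}\). Since \(u\in C^2\), each coefficient \(u_{k,j}(\rho)=\int_{\mathbb S^{n-1}}u(\rho,\hat x)\overline{Y_{k,j}}\,d\sigma\) is \(C^2\) on \((R_0,\infty)\). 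Using the polar expression for \(\Delta_{\mathbb H}\) and integrating by parts on the sphere, each coefficient satisfies the radial ODE
\[
    v''+(n-1)\coth\rho\,v'-\frac{k(k+n-2)}{\sinh^2\rho}v+\Bigl(\frac{(n-1)^2}{4}+\mu^2\Bigr)v=0,
    \qquad \rho>R_0 .
\]
By Parseval, \(\int_{\partial B_{\mathbb H}(0,\rho)}|u|^2d\sigma_{\mathbb H}=\sinh^{n-1}\rho\sum_{k,j}|u_{k,j}(\rho)|^2\). The hypothesis therefore gives \(\sinh^{(n-1)/2}\rho\,|u_{k,j}(\rho)|\to0\) for every \((k,j)\).

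Next I remove the first-order term. Set \(w(\rho)=\sinh^{(n-1)/2}\rho\;v(\rho)\). A direct computation converts the ODE into
\[
    w''+\Bigl(\mu^2-\frac{q_k}{\sinh^2\rho}\Bigr)w=0,
\]
where \(q_k=k(k+n-2)+\tfrac{(n-1)(n-3)}{4}\). Here the shift \((n-1)^2/4\) exactly cancels the constant produced by \(\coth^2\rho\), using \(\coth^2=1+\sinh^{-2}\). The hypothesis now says \(w(\rho)\to0\) as \(\rho\to\infty\). The potential \(q_k\sinh^{-2}\rho\) decays exponentially, so the equation is an integrable perturbation of \(w''+\mu^2w=0\) with \(\mu>0\).

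By Levinson's theorem, or by a Volterra integral equation
\[
    w_\pm(\rho)=e^{\pm\mathrm i\mu\rho}+\int_\rho^\infty\frac{\sin\mu(\rho-s)}{\mu}\frac{q_k}{\sinh^2 s}w_\pm(s)\,ds
\]
solved for \(\rho\) large, there are two solutions \(w_\pm(\rho)=e^{\pm\mathrm i\mu\rho}(1+o(1))\) forming a fundamental system. Hence \(w=\alpha e^{\mathrm i\mu\rho}+\beta e^{-\mathrm i\mu\rho}+o(1)\). The condition \(w\to0\) forces \(\alpha e^{2\mathrm i\mu\rho}+\beta\to0\), and since \(\mu>0\) the function \(e^{2\mathrm i\mu\rho}\) does not converge, so \(\alpha=\beta=0\). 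Then \(w\equiv0\) for large \(\rho\), and by ODE uniqueness \(w\equiv0\) on all of \((R_0,\infty)\). So every coefficient vanishes, and completeness of the spherical harmonics gives \(u=0\) in \(\mathbb B^n\setminus B_{\mathbb H}(0,R_0)\).

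The main obstacle is making the asymptotic step rigorous and uniform enough. This means constructing \(w_\pm\) with the stated asymptotics and checking their independence, for example through the Wronskian \(W(w_+,w_-)=-2\mathrm i\mu\), which is constant and can be evaluated at infinity. Since the argument works one mode at a time, uniformity in \(k\) is not needed: each mode only needs some large \(\rho_k\) where the Volterra map is a contraction. A minor point is justifying term-by-term differentiation of the coefficients. I would avoid it by deriving the ODE weakly: pair the equation with \(Y_{k,j}\) on spheres and use the self-adjointness of \(\Delta_{\mathbb S^{n-1}}\), which requires only \(u\in C^2\).
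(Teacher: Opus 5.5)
Your argument is correct and complete. Note that the paper does not prove this lemma itself: it quotes it from \cite{ChenLiu2023} as part of the Sommerfeld--Rellich framework. The natural internal comparison is therefore with the radial analysis the paper carries out later, in Step~6 of the proof of Lemma~\ref{lem:quantitative-rellich}.

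Your reduction is exactly that one. Your \(w=\sinh^{(n-1)/2}\rho\,v\) is \eqref{eq:rellich-liouville-transform}, and your \(q_k=k(k+n-2)+\tfrac{(n-1)(n-3)}{4}\) equals \(\nu_k^2-\tfrac14\) with \(\nu_k=k+\tfrac{n-2}{2}\).

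The difference lies in how the fundamental system is obtained. The paper identifies the outgoing radial solution in closed form as \(P^{\mathrm i\lambda_0}_{\nu_\ell-1/2}(\coth\rho)\), which behaves like \(e^{\mathrm i\lambda_0\rho}\) as \(\rho\to\infty\). Together with \(P^{-\mathrm i\lambda_0}_{\nu_\ell-1/2}(\coth\rho)\), which behaves like \(e^{-\mathrm i\lambda_0\rho}\), this gives your pair \(w_\pm\) explicitly, with \(\lambda_0\) replaced by \(\mu\). You instead build \(w_\pm\) from a Volterra (Jost-solution) equation, which needs only \(\sinh^{-2}\rho\in L^1\) near infinity and no special-function asymptotics.

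For this qualitative lemma your route is the more elementary and self-contained one. The explicit Legendre representation only pays off when bounds uniform in the angular degree are needed, as in Lemma~\ref{lem:quantitative-rellich}. Your mode-by-mode argument rightly avoids needing them.

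One small slip: with the kernel \(\sin\mu(\rho-s)/\mu\), your integral equation produces solutions of \(w''+(\mu^2+q_k\sinh^{-2}\rho)w=0\). For \(w''+\mu^2w=q_k\sinh^{-2}\rho\,w\) the kernel should be \(\sin\mu(s-\rho)/\mu\). This is harmless. The construction, and the asymptotics \(w_\pm=e^{\pm\mathrm i\mu\rho}+o(1)\) and \(w_\pm'=\pm\mathrm i\mu e^{\pm\mathrm i\mu\rho}+o(1)\) that you need to evaluate the Wronskian \(-2\mathrm i\mu\) at infinity, do not depend on the sign of an integrable potential.
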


{
\subsection{Hyperbolic normal derivatives and Green identities}\label{subsec:out nor}

Let \(\Omega\) be a bounded domain in \(\mathbb B^n\), and set
\[
    \alpha(x):=\frac{2}{1-|x|^2},
    \qquad
    \widetilde u:=\alpha^{\frac n2-1}u,
    \qquad
    \widetilde v:=\alpha^{\frac n2-1}v.
\]
Let \(\nu\) be the Euclidean outward unit normal to \(\partial\Omega\).
We derive the hyperbolic normal derivative along \(\partial\Omega\) by
integration by parts.  Using the relation between the conformal Laplacian
\(-\Delta_{\mathbb H}-n(n-2)/4\) and the Euclidean Laplacian gives
\begin{equation*}
\begin{split}
\int_\Omega(-\Delta_{\mathbb H}u)v\,dV_{\mathbb H}
&=
\int_\Omega
\left(-\Delta_{\mathbb H}u-\frac{n(n-2)}4u\right)v\,dV_{\mathbb H}
+\frac{n(n-2)}4\int_\Omega uv\,dV_{\mathbb H}
\\
&=
\int_\Omega(-\Delta_{\mathbb R^n}\widetilde u)\widetilde v\,dx
+\frac{n(n-2)}4\int_\Omega uv\,dV_{\mathbb H}
\\
&=
\int_\Omega(-\Delta_{\mathbb H}v)u\,dV_{\mathbb H}
+
\int_{\partial\Omega}
\left(
\frac{\partial\widetilde v}{\partial\nu}\widetilde u
-
\frac{\partial\widetilde u}{\partial\nu}\widetilde v
\right)d\sigma.
\end{split}
\end{equation*}
The boundary term satisfies
\[
\frac{\partial\widetilde v}{\partial\nu}\widetilde u
-
\frac{\partial\widetilde u}{\partial\nu}\widetilde v
=
\alpha^{n-2}
\left(
\frac{\partial v}{\partial\nu}u
-
\frac{\partial u}{\partial\nu}v
\right).
\]
If we define
\begin{equation}\label{eq:outnormal}
    \frac{\partial u}{\partial\nu_{\mathbb H}}
    :=
    \left\langle\nabla_{\mathbb H}u,\nu_{\mathbb H}\right\rangle_{\mathbb H}
    =
    \alpha^{-1}\frac{\partial u}{\partial\nu}
    =
    \frac{1-|x|^2}{2}\frac{\partial u}{\partial\nu},
\end{equation}
then, since \(d\sigma_{\mathbb H}=\alpha^{n-1}d\sigma\),
\[
\int_{\partial\Omega}
\left(
\frac{\partial\widetilde v}{\partial\nu}\widetilde u
-
\frac{\partial\widetilde u}{\partial\nu}\widetilde v
\right)d\sigma
=
\int_{\partial\Omega}
\left(
\frac{\partial v}{\partial\nu_{\mathbb H}}u
-
\frac{\partial u}{\partial\nu_{\mathbb H}}v
\right)d\sigma_{\mathbb H},
\]
which gives the second Green identity.

To verify the first Green identity, we compute
\begin{equation*}
\begin{split}
\int_\Omega(-\Delta_{\mathbb H}u)v\,dV_{\mathbb H}
&=
\int_\Omega
\left(-\Delta_{\mathbb H}u-\frac{n(n-2)}4u\right)v\,dV_{\mathbb H}
+\frac{n(n-2)}4\int_\Omega uv\,dV_{\mathbb H}
\\
&=
\int_\Omega\nabla\widetilde u\cdot\nabla\widetilde v\,dx
+\frac{n(n-2)}4\int_\Omega uv\,dV_{\mathbb H}
-
\int_{\partial\Omega}
\frac{\partial\widetilde u}{\partial\nu}\widetilde v\,d\sigma.
\end{split}
\end{equation*}
Expanding \(\widetilde u\) and \(\widetilde v\) gives
\[
\begin{split}
&\int_\Omega\nabla\widetilde u\cdot\nabla\widetilde v\,dx
+\frac{n(n-2)}4\int_\Omega uv\,dV_{\mathbb H}
\\
&=
\int_\Omega\alpha^{n-2}\nabla u\cdot\nabla v\,dx
+\frac{n-2}{2}\int_\Omega
\alpha^{n-3}\nabla\alpha\cdot\nabla(uv)\,dx
\\
&\quad+
\frac{(n-2)^2}{4}\int_\Omega
\alpha^{n-4}|\nabla\alpha|^2uv\,dx
+\frac{n(n-2)}4\int_\Omega\alpha^nuv\,dx.
\end{split}
\]
Since \(\alpha=2/(1-|x|^2)\),
\[
\alpha\Delta\alpha+\frac{n-4}{2}|\nabla\alpha|^2
=\frac n2\alpha^4.
\]
Therefore,
\begin{equation*}\tag{i}
\begin{split}
&\int_\Omega\nabla\widetilde u\cdot\nabla\widetilde v\,dx
+\frac{n(n-2)}4\int_\Omega uv\,dV_{\mathbb H}
\\
&=
\int_\Omega\alpha^{n-2}\nabla u\cdot\nabla v\,dx
+\frac{n-2}{2}\int_\Omega
\operatorname{div}\!\left(\alpha^{n-3}uv\nabla\alpha\right)dx
\\
&=
\int_\Omega
\left\langle\nabla_{\mathbb H}u,\nabla_{\mathbb H}v\right\rangle_{\mathbb H}
\,dV_{\mathbb H}
\\
&\quad+
\frac{n-2}{2}\int_{\partial\Omega}
\alpha^{n-3}\frac{\partial\alpha}{\partial\nu}uv\,d\sigma,
\end{split}
\end{equation*}
Moreover,
\begin{equation*}\tag{ii}
\frac{\partial\widetilde u}{\partial\nu}\widetilde v
=
\alpha^{n-2}\frac{\partial u}{\partial\nu}v
+
\frac{n-2}{2}\alpha^{n-3}
\frac{\partial\alpha}{\partial\nu}uv.
\end{equation*}
Substituting (i) and (ii) into the preceding identity, the two
terms containing \(\partial\alpha/\partial\nu\) cancel.  Moreover,
\eqref{eq:outnormal} gives
\[
\int_{\partial\Omega}
\alpha^{n-2}\frac{\partial u}{\partial\nu}v\,d\sigma
=
\int_{\partial\Omega}
\frac{\partial u}{\partial\nu_{\mathbb H}}v\,d\sigma_{\mathbb H}.
\]
Therefore,
\begin{equation}\label{eq:first-green-hyperbolic}
\begin{split}
\int_\Omega
(-\Delta_{\mathbb H}u)v\,dV_{\mathbb H}
&=
\int_\Omega
\left\langle\nabla_{\mathbb H}u,\nabla_{\mathbb H}v\right\rangle_{\mathbb H}
\,dV_{\mathbb H}
\\
&\quad-
\int_{\partial\Omega}
\frac{\partial u}{\partial\nu_{\mathbb H}}v\,d\sigma_{\mathbb H}.
\end{split}
\end{equation}
}

\section{Totally geodesic defects and hyperbolic reflections}\label{sec:reflection-principles}

This section fixes the geometry of totally geodesic defects in hyperbolic space.
We also establish the reflection principles that will be used in the uniqueness proofs.

\medskip
Hyperbolic space admits several standard models, including the Poincar\'e ball model, the upper half-space model, and the hyperboloid model.
Since the scattering theory used in this paper has been established in the Poincar\'e ball model in \cite{ChenLiu2023}, we continue to work in this model for the inverse scattering problem.
We now introduce the geometric objects needed in the sequel, namely totally geodesic hypersurfaces and totally geodesic defects.

Although the scattering analysis is carried out in the Poincar\'e ball model, it is convenient to describe totally geodesic hypersurfaces first in the hyperboloid model, where they are given by intersections with Lorentzian hyperplanes and where the corresponding foliations and reflections have explicit linear forms.
We then transfer these objects to the Poincar\'e ball model by the isometry \(\phi\).

\subsection{Totally geodesic hypersurfaces in the hyperboloid model}

Let \(\mathbb{R}^{n,1}=(\mathbb{R}^{n+1},g_{\rm L})\) be the Minkowski space, where the metric is given by
\[
    ds^2=dx_1^2+\cdots+dx_n^2-dx_{n+1}^2 .
\]
The hyperboloid model of hyperbolic space \(\mathbb{H}^n\) is the submanifold
\begin{equation}\label{eq:hyperboloid}
	 \mathbb H^n
    =
    \left\{
    x\in \mathbb{R}^{n,1}:
    x_1^2+\cdots+x_n^2-x_{n+1}^2=-1,\quad x_{n+1}>0
    \right\}.
\end{equation}
The totally geodesic hypersurface with normal in the \(x_n\)-direction and passing through the point \(o=(0,\ldots,0,1)\) is defined by
\[
    U_{x_n}
    =
    \{x\in \mathbb{H}^n:\ x_n=0\}.
\]
The family of totally geodesic hypersurfaces with the same normal direction can be generated by hyperbolic rotations.
More precisely, define
\[
    A_t^{x_n}
    =
    \operatorname{Id}_{\mathbb R^{n-1}}\oplus \widetilde A_t,
\]
where \(\widetilde A_t\) is the hyperbolic rotation on \(\mathbb R^{1,1}\) given by
\[
    \widetilde A_t
    =
    \begin{pmatrix}
    \cosh t & \sinh t\\
    \sinh t & \cosh t
    \end{pmatrix}.
\]
Here \(\widetilde A_t\) acts on the \((x_n,x_{n+1})\)-variables.
Then \(U_{x_n}^{t}:=A_t^{x_n}(U_{x_n})\), \( t\in\mathbb R\), is a family of totally geodesic hypersurfaces with normal in the \(x_n\)-direction.
These hypersurfaces are pairwise disjoint and foliate the whole hyperbolic space \(\mathbb H^n\).

Similarly, for any \(\nu\in \mathbb S^{n-1}\), the totally geodesic hypersurface with normal direction \(\nu\) and passing through \(o\) is defined by
\begin{equation}\label{eq:Unu}
	U_{\nu}
    =
    \{x\in \mathbb{H}^n:\ (x_1,\ldots,x_n)\cdot\nu=0\}.
\end{equation}
For \(x'=(x_1,\ldots,x_n)\in\mathbb R^n\), write \(x'=(x'\cdot\nu)\nu+y'\), \(y'\perp\nu.\)
Let \(A_t^\nu\) be the hyperbolic rotation acting on the two-dimensional Lorentzian plane spanned by \(\nu\) and \(x_{n+1}\), and acting as the identity on \(\nu^\perp\).
Equivalently,
\[
    A_t^\nu(x',x_{n+1})
    =
    \left(
    \big((x'\cdot\nu)\cosh t+x_{n+1}\sinh t\big)\nu+y',
    (x'\cdot\nu)\sinh t+x_{n+1}\cosh t
    \right).
\]
We define
\(
    U_\nu^t:=A_t^\nu(U_\nu).
\)
Since \(x'\cdot\nu=0\) on \(U_\nu\), a direct calculation gives
\[
    U_\nu^t
    =
    \left\{
    \left(
    \sinh t\,x_{n+1}\nu+y',
    \cosh t\,x_{n+1}
    \right):
    (y',x_{n+1})\in U_\nu
    \right\}.
\]
The hypersurfaces \(U_\nu^t\), \(t\in\mathbb R\), are pairwise disjoint and foliate \(\mathbb H^n\).

\subsection{Totally geodesic hypersurfaces in the Poincar\'e ball model}

Let \(\phi\) be the isometric map from the hyperboloid model $\mathbb H^n$ defined in \eqref{eq:hyperboloid} to the Poincar\'e ball model $\mathbb B^n$.
The map \(\phi\) is obtained by stereographic projection from the hyperboloid to the plane \(\{x_{n+1}=0\}\), with the projection vertex chosen as \((0,0,\ldots,-1)\).
More precisely, we write
\[
    \phi:x\in\mathbb H^n\mapsto \frac{x'}{x_{n+1}+1}\in\mathbb B^n .
\]
Under the map \(\phi\), a direct calculation shows that the totally geodesic hypersurface \(U_{\nu}\) in the hyperboloid model given in \eqref{eq:Unu} is mapped to
\[
    \phi(U_{\nu})=\{x\in\mathbb B^n:x\cdot\nu=0\}.
\]
In the two-dimensional Poincar\'e disk \(\mathbb B^2\), \(\phi(U_{\nu})\) is a geodesic line.

We recall the M\"obius transformations in the Poincar\'e ball.
For each \(a\in\mathbb B^n\), define
\begin{equation}\label{eq:Mobius}
	T_a(x)=\frac{|x-a|^2a-(1-|a|^2)(x-a)}{1-2x\cdot a+|x|^2|a|^2},
\end{equation}
where \(x\cdot a\) denotes the Euclidean scalar product in \(\mathbb R^n\).
The maps \(T_a\) are hyperbolic isometries of \(\mathbb B^n\).
Euclidean rotations are also hyperbolic isometries of the Poincar\'e ball.

\begin{definition}\label{def:tg-hypersurface-ball}
A subset \(V\subset\mathbb B^n\) is called a \emph{totally geodesic hypersurface} in \(\mathbb B^n\) if there exist \(a\in\mathbb B^n\) and \(\nu\in\mathbb S^{n-1}\) such that
\begin{equation}\label{eq:V}
	V=T_a(\phi(U_\nu)).
\end{equation}
Here, \(U_\nu\) denotes a totally geodesic hypersurface in the hyperboloid model and \(\phi\) is the isometry from the hyperboloid model to the Poincar\'e ball model.
\end{definition}

Since Euclidean rotations are hyperbolic isometries, the above definition is independent of the choice of the reference direction.
Equivalently, the family
\[
    \{T_a(\phi(U_\nu)):a\in\mathbb B^n,\ \nu\in\mathbb S^{n-1}\}
\]
gives the class of all totally geodesic hypersurfaces in \(\mathbb B^n\).

\begin{remark}
A totally geodesic hypersurface in \(\mathbb B^n\) is, in particular, a minimal hypersurface in the hyperbolic metric.
\end{remark}

\begin{definition}\label{def:asymptotic-boundary}
Let \(V\subset\mathbb B^n\) be a totally geodesic hypersurface.
We define its \emph{ideal boundary}, also called its asymptotic boundary, by
\[
    \partial_\infty V
    :=
    \overline V^{\,\overline{\mathbb B^n}}\cap\mathbb S^{n-1},
\]
where the closure is taken in the closed unit ball.
\end{definition}

\begin{remark}\label{rem:asymptotic-boundary}
For the standard totally geodesic hypersurface
\[
    V_\nu=\{x\in\mathbb B^n:x\cdot\nu=0\},
\]
one has
\[
    \partial_\infty V_\nu
    =
    \{\eta\in\mathbb S^{n-1}:\eta\cdot\nu=0\}.
\]
Hence \(\partial_\infty V_\nu\) is an \((n-2)\)-dimensional great sphere in \(\mathbb S^{n-1}\).
For a general totally geodesic hypersurface \(V=T_a(\phi(U_\nu))\), its ideal boundary \(\partial_\infty V\) is the image of such a great sphere under the boundary action of the M\"obius transformation \(T_a\).
\end{remark}

We now give the precise geometric definition of the defects considered below.
A bulk component is an impenetrable compact component with nonempty interior.
A hypersurface-supported component is an impenetrable hypersurface-type compact component with empty interior.
In both cases, the trace condition is imposed on the interaction surface of the defect.

\begin{definition}\label{def:totally-geodesic-defect}
A subset \(\mathcal F\subset\mathbb B^n\) is called a \emph{totally geodesic face} if there exist \(a\in\mathbb B^n\) and \(\nu\in\mathbb S^{n-1}\) such that, with
\[
    V=T_a(\phi(U_\nu)),
\]
where \(V\) is a totally geodesic hypersurface in the sense of Definition~\ref{def:tg-hypersurface-ball}, the set \(\mathcal F\) is the relative closure in \(V\) of a nonempty, relatively compact, simply connected open subset of \(V\).

A nonempty compact set \(\mathcal P\subset\mathbb B^n\) is called a \emph{totally geodesic defect} if its exterior
\(\mathbb B^n\setminus\mathcal P\) is connected and its interaction surface, denoted by \(\partial\mathcal P\), is the union of finitely many totally geodesic faces \(\mathcal F_j\), \(j=1,\ldots,J\), with pairwise disjoint relative interiors.
That is,
\[
    \partial\mathcal P
    =
    \bigcup_{j=1}^{J}\mathcal F_j,
    \qquad
    J\geq1,
    \qquad
    \mathcal F_j
    \text{ is a totally geodesic face for }j=1,\ldots,J .
\]
A connected component of \(\mathcal P\) with nonempty interior is called a bulk component.
A connected component of \(\mathcal P\) with empty interior is called a hypersurface-supported component.
Thus this definition includes both bulk components and hypersurface-supported components.
\end{definition}

\subsection{\texorpdfstring{Reflection about the totally geodesic hypersurface in hyperbolic space \(\mathbb B^n\)}{Reflection about the totally geodesic hypersurface in hyperbolic space Bn}}

For the totally geodesic hypersurface
\[
    \phi(U_{x_n})
    =
    \{x\in\mathbb B^n:\ x_n=0\}
\]
along the \(x_n\)-direction in \(\mathbb B^n\), we define the reflection \(I_{x_n}\) about \(\phi(U_{x_n})\) by
\[
    I_{x_n}(x_1,x_2,\ldots,x_n)
    =
    (x_1,x_2,\ldots,-x_n),
    \qquad
    x=(x_1,x_2,\ldots,x_n)\in\mathbb B^n.
\]
Clearly, \(I_{x_n}(\phi(U_{x_n}))=\phi(U_{x_n})\).
In the two-dimensional case, using the equation of geodesic lines, one can directly verify that \(I_{x_n}\) maps geodesic lines to geodesic lines.

In the \(n\)-dimensional case, \(I_{x_n}\) maps totally geodesic hypersurfaces to totally geodesic hypersurfaces in \(\mathbb B^n\).
Indeed, \(I_{x_n}\) is an orthogonal transformation of \(\mathbb R^n\), and hence a hyperbolic isometry of the Poincar\'e ball.
Moreover, for M\"obius transformation \eqref{eq:Mobius}, one has
\[
    I_{x_n}\circ T_a
    =
    T_{I_{x_n}(a)}\circ I_{x_n}.
\]
Therefore, if \(V=T_a(\phi(U_{\nu_1}))\), then
\[
    I_{x_n}(V)
    =
    T_{I_{x_n}(a)}
    \bigl(I_{x_n}(\phi(U_{\nu_1}))\bigr).
\]
Since \(\phi(U_{\nu_1})\) is the Euclidean hyperplane through the origin orthogonal to \(\nu_1\), the reflection \(I_{x_n}\) maps it onto another Euclidean hyperplane through the origin.
Thus there exists \(\nu_2\in\mathbb S^{n-1}\) such that
\[
    I_{x_n}(\phi(U_{\nu_1}))=\phi(U_{\nu_2}),
\]
and hence
\[
    I_{x_n}(V)
    =
    T_{I_{x_n}(a)}(\phi(U_{\nu_2})).
\]
Thus \(I_{x_n}(V)\) is again a totally geodesic hypersurface.

Similarly, one can define the reflection \(I_\nu\) about \(\phi(U_\nu)\) for any \(\nu\in\mathbb S^{n-1}\).
If \(x=(x\cdot\nu)\nu+y\), where \(y\in\nu^\perp\), then
\[
    I_\nu(x)
    =
    -(x\cdot\nu)\nu+y.
\]
Equivalently,
\[
    I_\nu(x)=x-2(x\cdot\nu)\nu.
\]
Clearly, \(I_\nu(\phi(U_\nu))=\phi(U_\nu)\).
The same argument as above shows that \(I_\nu\) maps totally geodesic hypersurfaces to totally geodesic hypersurfaces.

\begin{definition}\label{def:reflection-general-hypersurface}
Let \(V\subset\mathbb B^n\) be a totally geodesic hypersurface.
By \eqref{eq:V}, \(V\) can be written as \(V=T_a(\phi(U_\nu))\) for some \(a\in\mathbb B^n\) and some \(\nu\in\mathbb S^{n-1}\).
The reflection about \(V=T_a(\phi(U_\nu))\) is defined by
\begin{equation}
\label{eq:reflection}
    I_\nu^a(x)
    :=
    (T_a\circ I_\nu\circ T_a)(x),
\end{equation}
where we have used \(T_a^{-1}=T_a\).
Then \(I_\nu^a\) fixes \(V\) pointwise.
Indeed, if \(x=T_a(z)\in V\) with \(z\in\phi(U_\nu)\), then
\[
    I_\nu^a(x)
    =
    T_a\circ I_\nu\circ T_a(T_a(z))
    =
    T_a\circ I_\nu(z)
    =
    T_a(z)
    =
    x.
\]
Moreover, \(I_\nu^a\) maps totally geodesic hypersurfaces to totally geodesic hypersurfaces, since \(T_a\) and \(I_\nu\) both map totally geodesic hypersurfaces to totally geodesic hypersurfaces.
Equivalently, it is the unique hyperbolic isometry that fixes \(V\) pointwise
and reverses its unit normal.  Hence the reflection depends only on \(V\), not
on the representation \(V=T_a(\phi(U_\nu))\), and we denote it by \(I_V:=I_\nu^a\).
\end{definition}

\subsection{\texorpdfstring{Reflection principles in \(\mathbb B^n\)}{Reflection principles in the Poincare ball}}\label{subsec reflection}

{
Recall from \eqref{eq:outnormal} that the hyperbolic normal derivative along a smooth
hypersurface \(S\subset\mathbb B^n\) is
\[
    \frac{\partial u}{\partial\nu_{\mathbb H}}
    :=
    \bigl\langle\nabla_{\mathbb H}u,\nu_{\mathbb H}\bigr\rangle_{\mathbb H}
    =du(\nu_{\mathbb H}),
\]
where \(\nu_{\mathbb H}\) is a unit hyperbolic normal to \(S\).
The following covariance formula gives the transformation law needed for
reflection.

\begin{lemma}\label{lem:out-normal}
Let \(M:\mathbb B^n\to\mathbb B^n\) be a hyperbolic isometry, let \(S\) be a
smooth hypersurface, and set \(S':=M(S)\).  If \(\nu_{\mathbb H}\) is a unit
hyperbolic normal along \(S\), orient \(S'\) by
\[
    \nu'_{\mathbb H}(Mx):=dM_x\nu_{\mathbb H}(x),
    \qquad x\in S.
\]
Then, for every \(u\in C^1\) in a neighborhood of \(S'\),
\begin{equation}\label{eq:normal-covariance-isometry}
    \left(
        \frac{\partial u}{\partial\nu'_{\mathbb H}}
    \right)\circ M
    =
    \frac{\partial(u\circ M)}{\partial\nu_{\mathbb H}}
    \qquad\text{on }S.
\end{equation}
In particular, if \(V\) is a totally geodesic hypersurface and \(I_V\) is its
hyperbolic reflection, then \(I_V\) fixes \(V\) pointwise and
\[
    d(I_V)_x\nu_{\mathbb H}(x)=-\nu_{\mathbb H}(x),
    \qquad x\in V.
\]
Consequently,
\begin{equation}\label{eq:normal-reflection}
    \frac{\partial(u\circ I_V)}{\partial\nu_{\mathbb H}}
    =
    -\frac{\partial u}{\partial\nu_{\mathbb H}}
    \qquad\text{on }V.
\end{equation}
\end{lemma}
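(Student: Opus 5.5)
The plan is to reduce everything to the chain rule together with the fact that an isometry preserves the hyperbolic metric. Fix \(x\in S\) and set \(y=Mx\in S'\). First I will check that \(\nu'_{\mathbb H}\) is a legitimate unit hyperbolic normal along \(S'\). Since \(M\) is a diffeomorphism, \(dM_x\) maps \(T_xS\) onto \(T_yS'\). Since \(M^*g_{\mathbb H}=g_{\mathbb H}\), \(dM_x\) is a linear isometry \((T_x\mathbb B^n,g_{\mathbb H})\to(T_y\mathbb B^n,g_{\mathbb H})\). It therefore preserves orthogonality and length, so \(dM_x\nu_{\mathbb H}(x)\) is orthogonal to \(T_yS'\) and has unit hyperbolic length. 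It is also smooth along \(S'\), because it equals \(dM\circ\nu_{\mathbb H}\circ M^{-1}\).

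With this in hand, \eqref{eq:normal-covariance-isometry} follows from the expression \(\partial u/\partial\nu_{\mathbb H}=du(\nu_{\mathbb H})\) recalled before the lemma. The chain rule gives
\[
\frac{\partial(u\circ M)}{\partial\nu_{\mathbb H}}(x)=d(u\circ M)_x\nu_{\mathbb H}(x)=du_{Mx}\bigl(dM_x\nu_{\mathbb H}(x)\bigr)=\frac{\partial u}{\partial\nu'_{\mathbb H}}(Mx).
\]
This identity is purely differential-topological; the isometry property was used only to guarantee that \(\nu'_{\mathbb H}\) is a unit normal. It is also consistent with the Euclidean expression \eqref{eq:outnormal}, but I will not need that form.

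For the reflection statement, I would write \(V=T_a(\phi(U_\nu))\) and \(I_V=T_a\circ I_\nu\circ T_a\) as in Definition~\ref{def:reflection-general-hypersurface}. That definition already shows that \(I_V\) fixes \(V\) pointwise, so \(d(I_V)_x\) restricts to the identity on \(T_xV\) for \(x\in V\). Because \(d(I_V)_x\) is a hyperbolic linear isometry fixing the hyperplane \(T_xV\), it maps the orthogonal complement line to itself with eigenvalue \(\pm1\).

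To exclude \(+1\), I will compute in the model case. For \(I_\nu(z)=z-2(z\cdot\nu)\nu\), the differential is the Euclidean reflection. On \(\phi(U_\nu)=\{z\cdot\nu=0\}\) the Euclidean and hyperbolic normals are both parallel to \(\nu\), since the metric is conformal. Hence \(d(I_\nu)\nu=-\nu\). The general case follows by conjugation. Take \(z\in\phi(U_\nu)\) with \(x=T_az\), and note that \(dT_a\) carries normals of \(\phi(U_\nu)\) to normals of \(V\) by the first step. Then
\[
d(I_V)_x=d(T_a)_z\, d(I_\nu)_z\, d(T_a)_x ,
\]
and applying this to \(\nu_{\mathbb H}(x)=\pm d(T_a)_z\tilde\nu\), with \(\tilde\nu\) the unit normal at \(z\), gives \(d(I_V)_x\nu_{\mathbb H}(x)=-\nu_{\mathbb H}(x)\). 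Here I use \(d(T_a)_x d(T_a)_z=\mathrm{Id}\), which follows from \(T_a\circ T_a=\mathrm{Id}\).

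Finally, I will apply \eqref{eq:normal-covariance-isometry} with \(M=I_V\) and \(S=S'=V\). The induced orientation is then \(\nu'_{\mathbb H}=-\nu_{\mathbb H}\), and since \(I_V x=x\) on \(V\),
\[
\frac{\partial(u\circ I_V)}{\partial\nu_{\mathbb H}}=-\frac{\partial u}{\partial\nu_{\mathbb H}}\quad\text{on }V,
\]
which is \eqref{eq:normal-reflection}. The only step needing genuine care is ruling out eigenvalue \(+1\) in the normal direction, and the model computation handles it. An alternative is to note that \(d(I_V)_x=\mathrm{Id}\) would force \(I_V=\mathrm{Id}\), by rigidity of isometries, which is false.
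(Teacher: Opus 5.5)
Your proof is correct and follows the paper's route: the chain rule, together with the fact that \(dM_x\) is a hyperbolic linear isometry, gives the covariance identity, which you then apply with \(M=I_V\). The only difference is that you check explicitly, via the model reflection \(I_\nu\) and conjugation by \(T_a\), that \(d(I_V)_x\) acts as \(-1\) on the normal line, whereas the paper simply asserts this for a hyperbolic reflection.
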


\begin{proof}
Since \(M\) is a hyperbolic isometry, \(dM_x\nu_{\mathbb H}(x)\) is a unit
hyperbolic normal to \(S'\) at \(Mx\).  The chain rule gives
\[
\begin{split}
    \frac{\partial(u\circ M)}{\partial\nu_{\mathbb H}}(x)
    & =d(u\circ M)_x\bigl(\nu_{\mathbb H}(x)\bigr) \\
    & =du_{Mx}\bigl(dM_x\nu_{\mathbb H}(x)\bigr) \\
    & =\frac{\partial u}{\partial\nu'_{\mathbb H}}(Mx),
\end{split}
\]
which proves \eqref{eq:normal-covariance-isometry}.  For a hyperbolic
reflection, \(d(I_V)_x\) is the identity on \(T_xV\) and multiplication by
\(-1\) on the normal line.  Applying the covariance identity with \(M=I_V\)
and using \(I_Vx=x\) on \(V\) proves \eqref{eq:normal-reflection}.
\end{proof}
}

\begin{lemma}\label{lem3}
Let \(a\in\mathbb B^n\). Assume that \(\Omega\) is a domain in \(\mathbb{B}^n\) and \(u\) satisfies
\[
    -\Delta_{\mathbb{H}}u=f(x),
    \qquad x\in \Omega.
\]
Then \(u\circ I_{x_n}^a\) satisfies
\[
    -\Delta_{\mathbb{H}}(u\circ I_{x_n}^a)
    =
    f\circ I_{x_n}^a,
    \qquad
    x\in (I_{x_n}^a)^{-1}(\Omega).
\]
\end{lemma}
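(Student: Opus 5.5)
The plan is to show that \(I_{x_n}^a=T_a\circ I_{x_n}\circ T_a\) is a hyperbolic isometry of \(\mathbb B^n\), and then use the fact that the Laplace--Beltrami operator commutes with pullback by isometries. Both ingredients are already essentially recorded in Section~\ref{sect:2.1}. There it is stated that \(\Delta_{\mathbb H}(\phi\circ T_a)=(\Delta_{\mathbb H}\phi)\circ T_a\) for the M\"obius maps \(T_a\). The same identity holds for \(I_{x_n}\), since it is a Euclidean orthogonal map preserving \(|x|\). Hence it preserves the conformal factor \(\alpha(x)=2/(1-|x|^2)\), so it is a hyperbolic isometry. Its commutation with \(\Delta_{\mathbb H}\) can also be seen directly from \eqref{eq:Laplace}: \(\Delta_{\mathbb R^n}\) commutes with orthogonal maps, and the radial field \(\sum_i x_i\partial_{x_i}\) and the factor \(1-|x|^2\) are invariant.

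The steps, in order, are as follows.
\begin{enumerate}
\item Set \(M:=I_{x_n}^a\) and note that \(M=T_a\circ I_{x_n}\circ T_a\) is a composition of isometries. It is therefore a smooth hyperbolic isometry with \(M^{-1}=M\), because \(T_a\) and \(I_{x_n}\) are both involutions.
\item Apply the commutation identity three times, using the chain of compositions. For \(v\in C^2(\Omega)\),
\[
\Delta_{\mathbb H}(v\circ T_a\circ I_{x_n}\circ T_a)=\bigl(\Delta_{\mathbb H}(v\circ T_a\circ I_{x_n})\bigr)\circ T_a=\cdots=(\Delta_{\mathbb H}v)\circ M
\]
on \(M^{-1}(\Omega)\).
\item With \(v=u\), this gives \(-\Delta_{\mathbb H}(u\circ M)=(-\Delta_{\mathbb H}u)\circ M=f\circ M\) on \(M^{-1}(\Omega)\).
\end{enumerate}

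The only point needing care is that the commutation identity in Section~\ref{sect:2.1} is stated for \(\phi\in C_c^\infty(\mathbb B^n)\), whereas here \(u\) is only defined on \(\Omega\). I would handle this by localization. The identity \(\Delta_{\mathbb H}(\phi\circ T)=(\Delta_{\mathbb H}\phi)\circ T\) is a pointwise statement about a second-order differential operator. It follows from the invariance of the metric, because \(\Delta_{\mathbb H}\) is determined by \(g_{\mathbb H}\) through the coordinate formula \(|g|^{-1/2}\partial_i(|g|^{1/2}g^{ij}\partial_j)\), and isometries preserve this expression. So it holds for any \(C^2\) function on any open set, with no compact support needed. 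Equivalently, one can multiply \(u\) by a cutoff equal to \(1\) near a given point and apply the \(C_c^\infty\) statement after mollification. If \(u\) is only a weak (distributional) solution, the same conclusion follows by testing against \(\varphi\circ M\). One uses the invariance of \(dV_{\mathbb H}\) under \(M\) and the smooth-case identity applied to the test function.

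I do not expect a genuine obstacle here. The main thing to check is that \(I_{x_n}\) really is a hyperbolic isometry commuting with \(\Delta_{\mathbb H}\), which is immediate from \eqref{eq:hyper metric} and \eqref{eq:Laplace}. The rest is bookkeeping of domains under the involution \(M\).
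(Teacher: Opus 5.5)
Your proposal is correct and follows the same route as the paper: \(I_{x_n}^a=T_a\circ I_{x_n}\circ T_a\) is a composition of hyperbolic isometries, so \(\Delta_{\mathbb H}\) commutes with pullback by it, which gives the stated equation on \((I_{x_n}^a)^{-1}(\Omega)\). Your remark that the commutation identity is pointwise and needs no compact support is a valid refinement the paper leaves implicit, and so is your extension to weak solutions.
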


\begin{proof}
The map \(I_{x_n}\) is an isometry of the Poincar\'e ball model, and \(T_a\) is also a hyperbolic isometry.
Hence
\[
    I_{x_n}^a=T_a\circ I_{x_n}\circ T_a
\]
is a hyperbolic isometry.
Therefore the Laplace--Beltrami operator commutes with pullback by \(I_{x_n}^a\):
\[
    \Delta_{\mathbb H}(u\circ I_{x_n}^a)
    =
    (\Delta_{\mathbb H}u)\circ I_{x_n}^a.
\]
The desired equation follows immediately.

The proof is complete.
\end{proof}

\begin{lemma}\label{lem:reflection-dirichlet}
Assume that \(\Omega\) is a connected domain, bounded or unbounded, in
\(\mathbb B^n\), and is invariant under \(I_{x_n}^a\), the hyperbolic
reflection across the totally geodesic hypersurface
\[
    V=T_a(\phi(U_{x_n})).
\]
Let \(\Gamma\subset V\cap\Omega\) be a nonempty relatively open subset.
Assume that \(u\) satisfies
\[
    -\Delta_{\mathbb H}u-\frac{(n-1)^2}{4}u=\lambda_0^2u
    \qquad \text{in }\Omega,
\]
and
\[
    u=0
    \qquad \text{on }\Gamma .
\]
Then \(u\) is odd under \(I_{x_n}^a\), that is,
\[
    u(x)=-(u\circ I_{x_n}^a)(x),
    \qquad x\in\Omega .
\]
\end{lemma}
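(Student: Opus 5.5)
The plan is to set \(w:=u+u\circ I_{x_n}^a\) on \(\Omega\) and show \(w\equiv0\) by unique continuation, after reducing to a model hypersurface with the isometry \(T_a\).

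\emph{Step 1: equation for \(w\).} Because \(\Omega\) is invariant under \(I_{x_n}^a\), we have \((I_{x_n}^a)^{-1}(\Omega)=\Omega\). Lemma~\ref{lem3}, applied with \(f=\bigl(\frac{(n-1)^2}{4}+\lambda_0^2\bigr)u\), then gives \(\mathcal L_{\lambda_0}(u\circ I_{x_n}^a)=0\) in \(\Omega\). Hence \(\mathcal L_{\lambda_0}w=0\) in \(\Omega\). Since \(\mathcal L_{\lambda_0}\) is elliptic with real-analytic coefficients, \(u\) and \(w\) are real analytic in \(\Omega\).

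\emph{Step 2: Cauchy data of \(w\) on \(\Gamma\).} The reflection \(I_{x_n}^a\) fixes \(V\) pointwise. So on \(\Gamma\) we get \(w=u+u=2u=0\). For the normal derivative, \eqref{eq:normal-reflection} in Lemma~\ref{lem:out-normal} gives \(\partial_{\nu_{\mathbb H}}(u\circ I_{x_n}^a)=-\partial_{\nu_{\mathbb H}}u\) on \(V\cap\Omega\). Therefore \(\partial_{\nu_{\mathbb H}}w=0\) on \(\Gamma\). Thus \(w\) has vanishing Cauchy data on the nonempty relatively open piece \(\Gamma\) of the analytic hypersurface \(V\).

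\emph{Step 3: unique continuation.} To make this step concrete, pull back by \(T_a\). Then \(\tilde w:=w\circ T_a\) solves the same equation near \(T_a(\Gamma)\subset\{x_n=0\}\), with zero Cauchy data on a relatively open subset of that Euclidean hyperplane. This uses \eqref{eq:normal-covariance-isometry} and the fact that the hyperbolic and Euclidean normal derivatives differ only by the positive factor in \eqref{eq:outnormal}. The hyperplane is non-characteristic for the operator in \eqref{eq:Laplace}, whose principal part is a positive multiple of \(\Delta_{\mathbb R^n}\). Cauchy–Kovalevskaya uniqueness (Holmgren) then gives \(\tilde w\equiv0\) in a neighborhood of a point of \(T_a(\Gamma)\). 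Alternatively, one can argue directly: all derivatives of \(\tilde w\) vanish there by using the equation to solve for \(\partial_{x_n}^2\tilde w\) iteratively, and analyticity does the rest.

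Transported back, \(w\) vanishes on an open subset of \(\Omega\). Because \(\Omega\) is connected and \(w\) is real analytic, \(w\equiv0\) in \(\Omega\), which is exactly \(u=-u\circ I_{x_n}^a\).

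The main obstacle is regularity at \(\Gamma\): we need \(u\in C^1\) up to \(\Gamma\) so that the normal derivatives are defined. This holds automatically, because \(\Gamma\subset\Omega\) lies in the interior, so \(u\) is smooth, indeed analytic, near \(\Gamma\). The only genuine input is therefore the analytic unique continuation from Cauchy data, which I would take as standard.
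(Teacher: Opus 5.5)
Your proof is correct and follows the paper's argument. You form \(w=u+u\circ I_{x_n}^a\), use Lemma~\ref{lem3} for the equation, use the pointwise fixing of \(V\) together with \eqref{eq:normal-reflection} to get vanishing Cauchy data on \(\Gamma\), and conclude by unique continuation; your reduction to \(\{x_n=0\}\) via \(T_a\) and the appeal to Holmgren uniqueness and analyticity just spell out the unique-continuation step that the paper cites without detail.
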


\begin{proof}
Since \(\Omega\) is invariant under \(I_{x_n}^a\), the function
\(u\circ I_{x_n}^a\) is defined in \(\Omega\).
By Lemma~\ref{lem3}, \(u\circ I_{x_n}^a\) satisfies the same shifted Helmholtz equation in \(\Omega\).
Set
\[
    w:=u+u\circ I_{x_n}^a .
\]
Then \(w\) satisfies
\[
    -\Delta_{\mathbb H}w-\frac{(n-1)^2}{4}w=\lambda_0^2w
    \qquad \text{in }\Omega .
\]
Since \(I_{x_n}^a\) fixes \(V\) pointwise, we have \(w=2u=0\) on \(\Gamma\).
Moreover, by \eqref{eq:normal-reflection},
\[
    \frac{\partial w}{\partial\nu_{\mathbb H}}
    =
    \frac{\partial u}{\partial\nu_{\mathbb H}}
    +
    \frac{\partial(u\circ I_{x_n}^a)}{\partial\nu_{\mathbb H}}
    =
    0
    \qquad \text{on }\Gamma .
\]
By unique continuation from Cauchy data on the nonempty analytic hypersurface piece \(\Gamma\), we obtain \(w=0\) in \(\Omega\).
Therefore
\[
    u+u\circ I_{x_n}^a=0
    \qquad \text{in }\Omega ,
\]
which proves the odd symmetry.
\end{proof}

\begin{lemma}\label{lem:reflection-neumann}
Assume that \(\Omega\) is a connected domain, bounded or unbounded, in
\(\mathbb B^n\), and is invariant under \(I_{x_n}^a\), the hyperbolic
reflection across the totally geodesic hypersurface
\[
    V=T_a(\phi(U_{x_n})).
\]
Let \(\Gamma\subset V\cap\Omega\) be a nonempty relatively open subset.
Assume that \(u\) satisfies
\[
    -\Delta_{\mathbb H}u-\frac{(n-1)^2}{4}u=\lambda_0^2u
    \qquad \text{in }\Omega,
\]
and
\[
    \frac{\partial u}{\partial\nu_{\mathbb H}}=0
    \qquad \text{on }\Gamma .
\]
Then \(u\) is even under \(I_{x_n}^a\), that is,
\[
    u(x)=(u\circ I_{x_n}^a)(x),
    \qquad x\in\Omega .
\]
\end{lemma}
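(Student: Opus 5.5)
The plan mirrors the proof of Lemma~\ref{lem:reflection-dirichlet}, with the roles of the Dirichlet and Neumann traces exchanged. Since \(\Omega\) is invariant under \(I_{x_n}^a\), the function \(u\circ I_{x_n}^a\) is defined on \(\Omega\). By Lemma~\ref{lem3}, and because the constant shift \(\frac{(n-1)^2}{4}+\lambda_0^2\) commutes with composition by an isometry, \(u\circ I_{x_n}^a\) solves the same shifted Helmholtz equation in \(\Omega\). We therefore set
\[
    w:=u-u\circ I_{x_n}^a ,
\]
which satisfies \(\mathcal L_{\lambda_0}w=0\) in \(\Omega\).

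Next we compute the Cauchy data of \(w\) on \(\Gamma\). Since \(I_{x_n}^a\) fixes \(V\) pointwise (Definition~\ref{def:reflection-general-hypersurface}), we have \(w=u-u=0\) on \(\Gamma\). For the normal derivative we use \eqref{eq:normal-reflection} from Lemma~\ref{lem:out-normal}, which gives \(\partial_{\nu_{\mathbb H}}(u\circ I_{x_n}^a)=-\partial_{\nu_{\mathbb H}}u\) on \(V\). Hence
\[
    \frac{\partial w}{\partial\nu_{\mathbb H}}
    =
    \frac{\partial u}{\partial\nu_{\mathbb H}}
    +\frac{\partial u}{\partial\nu_{\mathbb H}}
    =2\frac{\partial u}{\partial\nu_{\mathbb H}}=0
    \qquad\text{on }\Gamma .
\]
This step requires \(u\) to be \(C^1\) up to \(\Gamma\) from both sides. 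Because \(\Gamma\subset\Omega\) lies in the interior of the domain of the equation, \(u\) is real-analytic there by elliptic regularity (the operator has analytic coefficients in the ball), so this holds automatically. Note also that the vanishing of the normal derivative does not depend on the choice of sign of the unit normal.

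Finally, \(w\) has vanishing Cauchy data on the nonempty analytic hypersurface piece \(\Gamma\). We apply unique continuation for the second-order elliptic operator \(\mathcal L_{\lambda_0}\) with analytic coefficients. Concretely, near a point of \(\Gamma\) the Cauchy--Kovalevskaya/Holmgren uniqueness theorem gives \(w\equiv0\) in a neighbourhood. The weak unique continuation property, or simply analyticity of \(w\), then propagates this through the connected set \(\Omega\), so \(w\equiv0\) in \(\Omega\), which is exactly \(u=u\circ I_{x_n}^a\). The only point needing care is this unique continuation step: we must ensure that \(\Gamma\) is noncharacteristic, which holds for any hypersurface because the operator is elliptic, and that \(\Omega\) is connected, which is assumed. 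Everything else is a direct transcription of the Dirichlet case.
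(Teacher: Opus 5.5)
Your proposal is correct and follows the paper's proof essentially verbatim: you form \(w=u-u\circ I_{x_n}^a\), use that \(I_{x_n}^a\) fixes \(V\) pointwise and \eqref{eq:normal-reflection} to get vanishing Cauchy data of \(w\) on \(\Gamma\), and conclude \(w\equiv0\) on the connected domain \(\Omega\) by unique continuation. Your added remarks on interior analyticity of \(u\) near \(\Gamma\) and on the noncharacteristic nature of \(\Gamma\) simply make explicit what the paper leaves implicit.
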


\begin{proof}
Since \(\Omega\) is invariant under \(I_{x_n}^a\), the function
\(u\circ I_{x_n}^a\) is defined in \(\Omega\).
By Lemma~\ref{lem3}, \(u\circ I_{x_n}^a\) satisfies the same shifted Helmholtz equation in \(\Omega\).
Set
\[
    w:=u-u\circ I_{x_n}^a .
\]
Then \(w\) satisfies
\[
    -\Delta_{\mathbb H}w-\frac{(n-1)^2}{4}w=\lambda_0^2w
    \qquad \text{in }\Omega .
\]
Since \(I_{x_n}^a\) fixes \(V\) pointwise, we have \(w=0\) on \(\Gamma\).
Moreover, by \eqref{eq:normal-reflection},
\[
    \frac{\partial(u\circ I_{x_n}^a)}{\partial\nu_{\mathbb H}}
    =
    -
    \frac{\partial u}{\partial\nu_{\mathbb H}}
    \qquad \text{on }\Gamma .
\]
Hence
\[
    \frac{\partial w}{\partial\nu_{\mathbb H}}
    =
    2\frac{\partial u}{\partial\nu_{\mathbb H}}
    =
    0
    \qquad \text{on }\Gamma .
\]
By unique continuation from Cauchy data on the nonempty analytic hypersurface piece \(\Gamma\), we obtain \(w=0\) in \(\Omega\).
Therefore
\[
    u-u\circ I_{x_n}^a=0
    \qquad \text{in }\Omega ,
\]
which proves the even symmetry.
\end{proof}

{
\begin{lemma}
\label{lem:one-sided-reflection-extension}
Let \(V\subset\mathbb B^n\) be a totally geodesic hypersurface, let \(I_V\) be
its hyperbolic reflection, and let \(\Omega\) be a domain invariant under
\(I_V\).  Set \(\Gamma:=\Omega\cap V\), and suppose that
\[
    \Omega=\Omega_+\cup\Gamma\cup\Omega_-,
    \qquad
    \Omega_-:=I_V(\Omega_+),
\]
where \(\Omega_+\) and \(\Omega_-\) are the two sides of \(V\) in \(\Omega\).
Let
\[
    \Lambda_0:=\frac{(n-1)^2}{4}+\lambda_0^2.
\]

Assume first that
\(u\in H^1_{\mathbb H,\mathrm{loc}}(\Omega_+\cup\Gamma)\) is a weak solution
of
\[
    -\Delta_{\mathbb H}u-\Lambda_0u=0
    \qquad\text{in }\Omega_+,
\]
and has zero trace on \(\Gamma\).  Then its odd extension
\[
    \mathcal E_{\mathsf D}u(x):=
    \begin{cases}
        u(x), & x\in\Omega_+,\\
        0, & x\in\Gamma,\\
        -u(I_Vx), & x\in\Omega_-,
    \end{cases}
\]
belongs to \(H^1_{\mathbb H,\mathrm{loc}}(\Omega)\) and is a weak solution of
\[
    -\Delta_{\mathbb H}(\mathcal E_{\mathsf D}u)
    -\Lambda_0\mathcal E_{\mathsf D}u=0
    \qquad\text{in }\Omega.
\]

Assume instead that \(u\) satisfies the homogeneous hyperbolic Neumann condition
on \(\Gamma\) in the weak sense, that is,
\[
    \int_{\Omega_+}
    \left(
        \langle\nabla_{\mathbb H}u,\nabla_{\mathbb H}\psi\rangle_{\mathbb H}
        -\Lambda_0u\psi
    \right)dV_{\mathbb H}=0
\]
for every \(H^1\)-test function \(\psi\) compactly supported in
\(\Omega_+\cup\Gamma\).  Then its even extension
\[
    \mathcal E_{\mathsf N}u(x):=
    \begin{cases}
        u(x), & x\in\Omega_+\cup\Gamma,\\
        u(I_Vx), & x\in\Omega_-,
    \end{cases}
\]
belongs to \(H^1_{\mathbb H,\mathrm{loc}}(\Omega)\) and satisfies the same
shifted Helmholtz equation weakly in \(\Omega\).  In either case, interior
elliptic regularity implies that the reflected extension is smooth, and
indeed real analytic, in \(\Omega\).
\end{lemma}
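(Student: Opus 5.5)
The plan is to reduce to the model hypersurface and then run the classical reflection argument for weak solutions. First I reduce to \(V=V_0=\{x_n=0\}\), with \(I_V=I_{x_n}\). Choose a hyperbolic isometry \(M\) (a composition \(T_a\circ R\), with \(R\) a rotation) taking \(V_0\) onto \(V\). Then \(M\circ I_{x_n}\circ M^{-1}=I_V\), by the uniqueness statement in Definition~\ref{def:reflection-general-hypersurface}. Pullback by \(M\) preserves three things: the \(H^1_{\mathbb H}\) norms (since \(M\) preserves \(dV_{\mathbb H}\) and \(\langle\nabla_{\mathbb H}\cdot,\nabla_{\mathbb H}\cdot\rangle_{\mathbb H}\)), the weak formulation of \(-\Delta_{\mathbb H}-\Lambda_0\) (Lemma~\ref{lem3}), and traces on \(V\). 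So it suffices to treat \(V_0\), with \(\Omega\) symmetric under \(x_n\mapsto -x_n\) and \(\Omega_\pm=\Omega\cap\{\pm x_n>0\}\).

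Membership in \(H^1_{\mathbb H,\mathrm{loc}}(\Omega)\) comes next. Because \(I_{x_n}\) is an isometry, \(u\circ I_{x_n}\in H^1_{\mathbb H,\mathrm{loc}}(\Omega_-\cup\Gamma)\). For the even extension, both one-sided traces on \(\Gamma\) coincide, since \(I_{x_n}\) fixes \(\Gamma\). For the odd extension, both traces vanish. Locally the metric is a smooth, bounded conformal multiple of the Euclidean one, so the standard Euclidean gluing lemma applies: piecewise \(H^1\) functions with matching traces across a smooth hypersurface lie in \(H^1\). This gives \(\mathcal E u\in H^1_{\mathbb H,\mathrm{loc}}(\Omega)\), with weak gradient equal to the piecewise gradient.

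The weak equation is the key step. Fix \(\psi\in C_c^\infty(\Omega)\) and split
\[
\int_\Omega\bigl(\langle\nabla_{\mathbb H}\mathcal Eu,\nabla_{\mathbb H}\psi\rangle_{\mathbb H}-\Lambda_0\,\mathcal Eu\,\psi\bigr)dV_{\mathbb H}
\]
into integrals over \(\Omega_+\) and \(\Omega_-\). Change variables \(x\mapsto I_{x_n}x\) in the \(\Omega_-\) part, which is legitimate because the map is an isometry preserving the volume and the inner product of gradients. This rewrites the whole expression as \(\int_{\Omega_+}(\langle\nabla_{\mathbb H}u,\nabla_{\mathbb H}\phi\rangle_{\mathbb H}-\Lambda_0u\phi)\,dV_{\mathbb H}\), where \(\phi=\psi+\psi\circ I_{x_n}\) in the \(\mathsf N\) case and \(\phi=\psi-\psi\circ I_{x_n}\) in the \(\mathsf D\) case.

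In the \(\mathsf N\) case, \(\phi|_{\Omega_+\cup\Gamma}\) is an admissible test function compactly supported in \(\Omega_+\cup\Gamma\), so the integral vanishes by the weak Neumann hypothesis. In the \(\mathsf D\) case, \(\phi\) is Lipschitz, vanishes on \(\Gamma\), and has compact support in \(\Omega_+\cup\Gamma\). So I must show the weak equation in \(\Omega_+\) holds against such \(\phi\), not merely against \(C_c^\infty(\Omega_+)\).

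I expect this density step to be the main, if routine, obstacle. My plan is to use the cutoff \(\phi_k=\phi\,\chi(k x_n)\), where \(\chi\) vanishes near \(0\) and equals \(1\) on \([2,\infty)\). The error term is \(\int\langle\nabla u,\phi\,k\chi'(kx_n)e_n\rangle\). It is bounded by \(\|\nabla u\|_{L^2(\{0<x_n<2/k\}\cap\operatorname{supp}\phi)}\cdot\|k\phi\|_{L^2(\{x_n<2/k\})}\). Since \(|\phi|\le Cx_n\), the second factor is \(O(k^{-1/2})\), and the first factor tends to \(0\) as \(k\to\infty\), so the error vanishes. Note that this route does not even need the zero trace of \(u\), only the vanishing of \(\phi\) on \(\Gamma\); the trace of \(u\) was already used in the gluing step.

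Finally, \(\mathcal Eu\) is a weak \(H^1_{\mathrm{loc}}\) solution of an elliptic equation with real-analytic coefficients (see \eqref{eq:Laplace}). By interior elliptic regularity it is smooth, and by the analyticity theorem for such equations (Morrey–Nirenberg) it is real analytic. Transporting back by \(M\) finishes the proof.
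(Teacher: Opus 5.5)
Your proof is correct and follows the paper's argument. You glue the one-sided pieces using matching traces, change variables by the isometry $I_V$ on $\Omega_-$ to reduce the weak form to $\Omega_+$ tested against $\psi\mp\psi\circ I_V$, and then conclude from the weak Dirichlet/Neumann hypotheses and elliptic regularity. The differences are minor: the paper works directly with $I_V$ without normalizing to $\{x_n=0\}$, and it simply asserts that the weak equation in $\Omega_+$ may be tested against $\psi-\psi\circ I_V$ because it vanishes on $\Gamma$, whereas your cutoff argument with $\phi\,\chi(kx_n)$ actually justifies that density step.
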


\begin{proof}
The zero trace in the Dirichlet case makes the two traces of the odd extension
agree on \(\Gamma\), while the two traces of the even extension agree there
automatically.  Hence the corresponding extension belongs to
\(H^1_{\mathbb H,\mathrm{loc}}(\Omega)\).

Let \(\varphi\in C_c^\infty(\Omega)\).  Since \(I_V\) is a hyperbolic
isometry, changing variables \(x=I_Vy\) on \(\Omega_-\) shows in the
Dirichlet case that
\[
\begin{split}
&\int_{\Omega}
\left(
    \langle\nabla_{\mathbb H}(\mathcal E_{\mathsf D}u),
    \nabla_{\mathbb H}\varphi\rangle_{\mathbb H}
    -\Lambda_0(\mathcal E_{\mathsf D}u)\varphi
\right)dV_{\mathbb H} \\
&\qquad=
\int_{\Omega_+}
\left(
    \left\langle\nabla_{\mathbb H}u,
    \nabla_{\mathbb H}(\varphi-\varphi\circ I_V)\right\rangle_{\mathbb H}
    -\Lambda_0u(\varphi-\varphi\circ I_V)
\right)dV_{\mathbb H}.
\end{split}
\]
The test function \(\varphi-\varphi\circ I_V\) vanishes on \(\Gamma\), so the
last integral is zero by the weak Dirichlet equation in \(\Omega_+\).

For the even extension, the same change of variables gives
\[
\begin{split}
&\int_{\Omega}
\left(
    \langle\nabla_{\mathbb H}(\mathcal E_{\mathsf N}u),
    \nabla_{\mathbb H}\varphi\rangle_{\mathbb H}
    -\Lambda_0(\mathcal E_{\mathsf N}u)\varphi
\right)dV_{\mathbb H} \\
&\qquad=
\int_{\Omega_+}
\left(
    \left\langle\nabla_{\mathbb H}u,
    \nabla_{\mathbb H}(\varphi+\varphi\circ I_V)\right\rangle_{\mathbb H}
    -\Lambda_0u(\varphi+\varphi\circ I_V)
\right)dV_{\mathbb H}.
\end{split}
\]
This vanishes by the weak formulation of the homogeneous hyperbolic Neumann
condition.  Thus both extensions
solve the shifted Helmholtz equation weakly in \(\Omega\), and the final
assertion follows from interior elliptic regularity and analyticity for the
analytic coefficients of the hyperbolic Laplacian.
\end{proof}
}

The Dirichlet and Neumann reflection principles above are stated for
\(V=T_a(\phi(U_{x_n}))\) only to simplify notation.
By conjugating with an orthogonal transformation sending \(\nu\) to \(e_n\), the same statements hold for every totally geodesic hypersurface
\[
    V=T_a(\phi(U_\nu)), \qquad \nu\in\mathbb S^{n-1},
\]
with \(I_{x_n}^a\) replaced by \(I_\nu^a\). 
We shall use this general form below.

\section{Unique determination of totally geodesic defects}\label{sec:unique-determination}

In this section we prove the uniqueness results for totally geodesic defects.
Let \(\mathcal P\) be a totally geodesic defect in the sense of
Definition~\ref{def:totally-geodesic-defect}, endowed with either the Dirichlet
or the Neumann boundary condition.
We write
\[
    G:=\mathbb B^n\setminus \mathcal P,
\]
and let \(u\) be the exterior solution of \eqref{eq:hhelm1} in \(G\).

We first introduce the Dirichlet and Neumann sets associated with totally geodesic
hypersurfaces.

\begin{definition}\label{def:dirichlet-neumann-set}
Let \(\mathscr V_x\) denote the family of all totally geodesic hypersurfaces in
\(\mathbb B^n\) passing through \(x\).

The \emph{Dirichlet set} of \(u\) in \(G\) is defined by
\[
\mathcal D_u
:=
\left\{
x\in G:\ 
u \text{ vanishes locally near }x\text{ on }V\cap G
\text{ for some }V\in\mathscr V_x
\right\}.
\]

Let \(\mathbf U=(u_1,\ldots,u_m)\) be a finite family of solutions in \(G\).
The \emph{Neumann set} of \(\mathbf U\) in \(G\) is defined by
\[
\mathcal N_{\mathbf U}
:=
\left\{
x\in G:\ 
\partial_{\nu_{\mathbb H}}\mathbf U=0
\text{ locally near }x\text{ on }V\cap G
\text{ for some }V\in\mathscr V_x
\right\},
\]
where
\[
\partial_{\nu_{\mathbb H}}\mathbf U=0
\quad\text{means}\quad
\partial_{\nu_{\mathbb H}}u_\ell=0,\qquad \ell=1,\ldots,m.
\]
For a single solution \(u\), we write \(\mathcal N_u\).
\end{definition}

\begin{lemma}\label{lem:analytic-continuation-hypersurface}
Let \(V\) be a totally geodesic hypersurface in \(\mathbb B^n\), and let \(C\) be a connected component of \(V\cap G\).
If \(u=0\) on a nonempty relatively open subset of \(C\), then
\[
u=0
\qquad\text{on } C.
\]
Similarly, if \(\partial_{\nu_{\mathbb H}}\mathbf U=0\) on a nonempty relatively open subset of \(C\), then
\[
\partial_{\nu_{\mathbb H}}\mathbf U=0
\qquad\text{on } C.
\]
\end{lemma}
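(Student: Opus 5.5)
The plan is to use real analyticity of solutions in \(G\) together with the analyticity of the hypersurface \(C\subset V\). Since \(u\) (and each \(u_\ell\)) solves \(\mathcal L_{\lambda_0}u=0\) in the open set \(G\), and the hyperbolic Laplacian has real-analytic coefficients in the Poincar\'e ball, interior elliptic regularity shows that \(u\) is real analytic in \(G\). The set \(C\) is a connected component of \(V\cap G\), hence a connected, relatively open subset of \(V\). By Definition~\ref{def:tg-hypersurface-ball}, \(V=T_a(\phi(U_\nu))\) is the image of the Euclidean hyperplane piece \(\{x\cdot\nu=0\}\cap\mathbb B^n\) under the real-analytic diffeomorphism \(T_a\). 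Therefore \(C\) is a connected real-analytic embedded hypersurface, and we can parametrize it analytically by \(z\mapsto T_a(z)\) with \(z\) ranging over a connected open subset \(C_0:=T_a(C)\) of the hyperplane.

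For the Dirichlet statement, we consider the function \(g(z):=u(T_a(z))\) on \(C_0\). It is a composition of real-analytic maps, so it is real analytic on the connected open set \(C_0\subset\nu^\perp\cong\mathbb R^{n-1}\). By hypothesis, \(g\) vanishes on a nonempty open subset of \(C_0\). The identity theorem for real-analytic functions on a connected domain then gives \(g\equiv0\) on \(C_0\), that is, \(u=0\) on \(C\).

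For the Neumann statement, we first check that \(\partial_{\nu_{\mathbb H}}u_\ell\) restricted to \(C\) is real analytic. We choose the unit hyperbolic normal field along \(V\) as the pushforward \(\nu'_{\mathbb H}=d(T_a)\nu_{\mathbb H}\), where \(\nu_{\mathbb H}=\frac{1-|z|^2}{2}\nu\) is the normal to the flat piece. This is an analytic, nonvanishing, continuous choice on all of \(V\). By Lemma~\ref{lem:out-normal}, \((\partial_{\nu'_{\mathbb H}}u_\ell)\circ T_a=\partial_{\nu_{\mathbb H}}(u_\ell\circ T_a)=\frac{1-|z|^2}{2}\,\nu\cdot\nabla(u_\ell\circ T_a)\) on \(C_0\). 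This expression is real analytic in \(z\in C_0\). Whether it vanishes does not depend on the sign of the normal, so the hypothesis gives vanishing on a nonempty open subset of \(C_0\). The identity theorem then yields vanishing on all of \(C_0\), for each \(\ell=1,\ldots,m\).

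The only point requiring care is that \(C\) lies in the open set \(G\), so analyticity of \(u\) holds in a full neighborhood of every point of \(C\), and the restriction to \(C\) is analytic without any boundary regularity issues. The faces of \(\partial\mathcal P\) are excluded from \(G\), so the component \(C\) never touches them. I expect no real obstacle. The mildest subtlety is fixing a globally consistent analytic normal on the connected set \(C\), which is handled by transporting the constant Euclidean normal \(\nu\) through \(T_a\) as above.
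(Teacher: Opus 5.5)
Your proposal is correct and takes the same route as the paper: solutions are real analytic in \(G\), the totally geodesic hypersurface is real analytic, and the identity theorem applies on the connected set \(C\). The paper leaves the details implicit. You supply them with the explicit parametrization \(z\mapsto T_a(z)\) and the analytic normal field transported by Lemma~\ref{lem:out-normal}.
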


\begin{proof}
Totally geodesic hypersurfaces are real analytic, and solutions of the shifted Helmholtz equation are real analytic in \(G\).
Thus the restrictions \(u|_C\) and \((\partial_{\nu_{\mathbb H}}u_\ell)|_C\), \(\ell=1,\ldots,m\), are real analytic on \(C\).
The conclusions follow from the identity theorem for real analytic functions.
\end{proof}

If \(x\in\mathcal D_u\), then by Lemma~\ref{lem:analytic-continuation-hypersurface}, \(u\) vanishes on the connected component of \(V\cap G\) containing \(x\), where \(V\in\mathscr V_x\) is the hypersurface appearing in Definition~\ref{def:dirichlet-neumann-set}.
We call this connected component a \emph{Dirichlet totally geodesic hypersurface} of \(u\).

Similarly, if \(x\in\mathcal N_{\mathbf U}\), then the corresponding connected component of \(V\cap G\) is called a \emph{Neumann totally geodesic hypersurface} of \(\mathbf U\).
\subsection{Proof of Theorem~\ref{thm:mainip1}}

We first prove a boundedness lemma for the Dirichlet set.

\begin{lemma}\label{lem:mainip1}
The Dirichlet set \(\mathcal D_u\) and all Dirichlet totally geodesic hypersurfaces are bounded.
\end{lemma}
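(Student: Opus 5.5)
The plan is to prove the second assertion first: every Dirichlet totally geodesic hypersurface is bounded. The boundedness of \(\mathcal D_u\) will then follow from a simple geometric remark. Throughout, fix \(R_0>0\) with \(\mathcal P\subset B_{\mathbb H}(0,R_0)\). The argument is by contradiction.

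\emph{Geometric reduction.} Let \(V\) be a totally geodesic hypersurface and \(x\in V\) with \(\rho(x)>R_0\). Let \(p\in V\) be the point of \(V\) closest to the origin; by convexity of \(V\), the function \(\rho\) restricted to \(V\) increases along geodesic rays of \(V\) emanating from \(p\). Hence the ray of \(V\) from \(p\) through \(x\), continued beyond \(x\), stays in \(\{\rho>R_0\}\subset G\) and tends to conformal infinity. Consequently every connected component of \(V\cap G\) containing a point outside \(B_{\mathbb H}(0,R_0)\) is unbounded. In particular, once all Dirichlet hypersurfaces are known to be bounded, every point of \(\mathcal D_u\) lies in \(\overline{B_{\mathbb H}(0,R_0)}\), so \(\mathcal D_u\) is bounded.

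\emph{Main step: no unbounded Dirichlet hypersurface.} Suppose \(C\subset V\cap G\) is an unbounded Dirichlet totally geodesic hypersurface, so \(u=0\) on \(C\). Pick \(p\in V\) and \(R\) large so that \(B_{\mathbb H}(p,R)\supset B_{\mathbb H}(0,R_0)\). Set \(\Omega:=\mathbb B^n\setminus\overline{B_{\mathbb H}(p,R)}\). This set is connected and \(I_V\)-invariant because \(I_V\) is an isometry fixing \(p\). Moreover, \(\Omega\cap C\) contains a nonempty relatively open piece of \(V\). Lemma~\ref{lem:reflection-dirichlet}, in its general form with \(I_\nu^a=I_V\), then gives \(u=-u\circ I_V\) in \(\Omega\). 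Writing \(u=u^i+u^s\), we obtain
\[
    w:=u^i+u^i\circ I_V=-(u^s+u^s\circ I_V)\qquad\text{in }\Omega .
\]
The left side is an entire solution of \(\mathcal L_{\lambda_0}w=0\) in \(\mathbb B^n\) by Lemma~\ref{lem3}. The right side is outgoing. Here \(u^s\circ I_V\) is outgoing because \(I_V\) is an isometry. The radiation condition \eqref{radiation} and the expansion \eqref{eq:us}, taken with respect to the center \(p=I_Vp\), are equivalent to those with respect to the origin, via the representation by \(G_{-\lambda_0\mathrm i}(\rho(x,y))\) and its uniform asymptotics in \(y\). Thus \(w\) is an entire outgoing solution. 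Using Green's representation on large balls together with the radiation condition, its far-field pattern vanishes, so \(\int_{\partial B_{\mathbb H}(0,R)}|w|^2\,d\sigma_{\mathbb H}\to0\). Lemma~\ref{thm3} then gives \(w\equiv0\) in \(\mathbb B^n\).

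\emph{Contradiction from \(w\equiv0\).} Pulling a Helgason kernel back by an isometry gives a multiple of another kernel: \(e_{2\lambda_0,\xi}\circ I_V=c\,e_{2\lambda_0,\eta}\) with \(\eta=I_V\xi\in\mathbb S^{n-1}\) and a constant \(c\neq0\). Hence \(e_{2\lambda_0,\xi}=-c\,e_{2\lambda_0,\eta}\).
\begin{itemize}
\item If \(\eta\neq\xi\), approach \(\xi\) radially. Then \(|e_{2\lambda_0,\xi}(r\xi)|=(1-r)^{-(n-1)/2}(1+r)^{(n-1)/2}\to\infty\), while \(e_{2\lambda_0,\eta}(r\xi)\to0\). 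This is impossible.
\item If \(\eta=\xi\), i.e.\ \(\xi\in\partial_\infty V\), then \(u^i\) is even under \(I_V\), as in Remark~\ref{rem:hard-sharp-general-class}. Then \(w=2u^i\neq0\), again impossible.
\end{itemize}
Hence no unbounded Dirichlet hypersurface exists.

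The step I expect to need the most care is justifying that \(u^s\circ I_V\) is again outgoing, and that an entire outgoing solution vanishes. I would handle both through the Green-function representation of \(u^s\) outside a large ball and the uniform asymptotic expansion of \(G_{-\lambda_0\mathrm i}(\rho(x,y))\) for \(y\) in compact sets.
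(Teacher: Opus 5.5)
Your proof is correct, but it takes a genuinely different route from the paper. The paper argues directly along a geodesic ray of the unbounded piece $\Pi$ with endpoint $\eta\in\partial_\infty V$, on which $u^i=-u^s$. If $\eta=\xi$, the incoming mode grows like $e^{\frac{n-1}{2}\rho}$ while $u^s$ decays. If $\eta\neq\xi$, the two sides carry the opposite phases $e^{-\mathrm i\lambda_0\rho}$ and $e^{\mathrm i\lambda_0\rho}$, with a nonzero incoming coefficient, so they cannot cancel.

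You instead apply Lemma~\ref{lem:reflection-dirichlet} on the $I_V$-invariant neighborhood of infinity $\{\rho(\cdot,p)>R\}$. This turns the question into the Rellich-type fact that an entire outgoing solution vanishes. The extra input is invariance of the outgoing class under isometries. The paper itself uses this in the proof of Lemma~\ref{lem:neumann-set-bounded}, and your Green-representation justification of it is the right one. In fact, Green's second identity inside the ball already gives $w=0$ outside it, so Lemma~\ref{thm3} is not needed.

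What each route buys:
\begin{itemize}
\item Your route avoids the far-field expansion of $u^s$ along a possibly non-radial ray and makes the mechanism transparent.
\item With even reflection, your route reproduces the Neumann incidence condition exactly: $u^i=u^i\circ I_V$ forces $I_V\xi=\xi$, i.e.\ $\xi\in\partial_\infty V$, by your kernel-transformation and blow-up argument.
\item The paper's route is the one that quantifies. Its two-radius phase argument is reused almost verbatim in Lemmas~\ref{lem:soft-lower-bound} and~\ref{lem:hard-lower-bound}. Your global odd symmetry would instead need quantitative unique continuation and a quantitative Rellich estimate.
\end{itemize}

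Two small points. First, in the Dirichlet case your final case split is unnecessary. On $V$ the reflection $I_V$ is the identity, so $w\equiv0$ gives $2u^i=0$ on $V$, contradicting $|e_{2\lambda_0,\xi}|>0$. Second, the monotonicity of $\rho$ along rays of $V$ from the foot point $p$ does not come from convexity of $V$. It comes from $\cosh\rho(x)=\cosh\rho(p)\cosh\rho(p,x)$, or equivalently from convexity of the distance function along geodesics.
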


\begin{proof}
Since
\[
u^i=e_{2\lambda_0,\xi}(x)
=
\left(\cosh\frac{\rho}{2}\right)^{-2\lambda_0\mathrm i-(n-1)}
\left|\tanh\frac{\rho}{2}\hat{x}-\xi\right|^{-2\lambda_0\mathrm i-(n-1)},
\]
a direct calculation gives
\[
\begin{split}
\frac{\partial u^{i}}{\partial \rho}
&=
\left(-\lambda_0\mathrm i-\frac{n-1}{2}\right)
\left(\cosh\frac{\rho}{2}\right)^{-2\lambda_0\mathrm i-(n-1)}
\tanh\frac{\rho}{2}
\left|\tanh\frac{\rho}{2}\hat{x}-\xi\right|^{-2\lambda_0\mathrm i-(n-1)}
\\
&\quad+
\left(\cosh\frac{\rho}{2}\right)^{-2\lambda_0\mathrm i-(n+1)}
\left(-\lambda_0\mathrm i-\frac{n-1}{2}\right)
\left|\tanh\frac{\rho}{2}\hat{x}-\xi\right|^{-2\lambda_0\mathrm i-(n+1)}
\left(\tanh\frac{\rho}{2}\hat{x}-\xi\right)\cdot\hat{x}.
\end{split}
\]
Hence
\[
\begin{split}
&\frac{\partial u^{i}}{\partial \rho}
-
\left(-\lambda_0\mathrm i-\frac{n-1}{2}\right)
\tanh\frac{\rho}{2}\,u^i
\\
&\quad =
\left(\cosh\frac{\rho}{2}\right)^{-2\lambda_0\mathrm i-(n+1)}
\left(-\lambda_0\mathrm i-\frac{n-1}{2}\right)
\left|\tanh\frac{\rho}{2}\hat{x}-\xi\right|^{-2\lambda_0\mathrm i-(n+1)}
\left(\tanh\frac{\rho}{2}\hat{x}-\xi\right)\cdot\hat{x}.
\end{split}
\]
This yields
\begin{equation}\label{eq:incoming-radial-identity}
\frac{\partial u^{i}}{\partial \rho}
-
\left(-\lambda_0\mathrm i-\frac{n-1}{2}\right)
\tanh\frac{\rho}{2}\,u^i
=
O\left(\sinh^{-\frac{n+1}{2}}\rho\right)
\end{equation}
for any \(\hat{x}\neq\xi\).
On the other hand, the outgoing correction satisfies the outgoing radiation condition
\[
\frac{\partial u^{s}}{\partial \rho}
-
\left(\lambda_0\mathrm i-\frac{n-1}{2}\right)
\tanh\frac{\rho}{2}\,u^s
=
O\left(\sinh^{-\frac{n+1}{2}}\rho\right).
\]

Suppose that there exists an unbounded Dirichlet totally geodesic hypersurface \(\Pi\), and let \(V\) be the totally geodesic hypersurface supporting \(\Pi\).
Since \(\Pi\) is an unbounded connected portion of \(V\), it contains a geodesic ray \(x=x(\rho)\) with \(\rho(x)=\rho\to+\infty\) and endpoint \(\eta\in\partial_\infty V\subset\mathbb S^{n-1}\).
On \(\Pi\), we have
\[
u^i=-u^s.
\]
If \(\eta=\xi\), then
\[
|u^i(x)|\asymp e^{\frac{n-1}{2}\rho},
\qquad
u^s(x)=O\left(e^{-\frac{n-1}{2}\rho}\right),
\]
which is impossible.
If \(\eta\neq\xi\), then \eqref{eq:incoming-radial-identity} and the outgoing expansion of \(u^s\) give, along this ray,
\[
u^i(x)
=
e^{-\frac{n-1}{2}\rho}e^{-\mathrm i\lambda_0\rho}(a_\eta+o(1)),
\qquad
u^s(x)
=
e^{-\frac{n-1}{2}\rho}e^{\mathrm i\lambda_0\rho}(b_\eta+o(1)),
\]
where \(a_\eta\neq0\) and \(b_\eta\) is finite.
Thus \(u^i+u^s=0\) implies
\[
a_\eta e^{-\mathrm i\lambda_0\rho}
+
b_\eta e^{\mathrm i\lambda_0\rho}
+
o(1)
=0,
\]
which is impossible as \(\rho\to+\infty\).
Therefore every Dirichlet totally geodesic hypersurface is bounded.

Finally, choose \(R>0\) such that
\[
\mathcal P\subset B_{\mathbb{H}}(R).
\]
If there existed \(x\in\mathcal D_u\) with \(\rho(x)>R\), then the local vanishing of \(u\) on some totally geodesic hypersurface \(V\) through \(x\) would extend, by Lemma~\ref{lem:analytic-continuation-hypersurface}, to the connected component of \(V\cap G\) containing \(x\).
Since
\[
V\setminus B_{\mathbb{H}}(R)\subset V\cap G,
\]
this connected component is unbounded, contradicting the boundedness proved above.
Hence
\[
\mathcal D_u\subset B_{\mathbb{H}}(R).
\]

The proof is complete.
\end{proof}

\begin{proof}[Proof of Theorem~\ref{thm:mainip1}]
We argue by contradiction.
Assume that there exist two \(\mathsf D\)-type totally geodesic defects \(\mathcal P\) and \(\mathcal P'\) such that \(\mathcal P\neq \mathcal P'\), but they generate the same far-field pattern for one fixed incoming Helgason mode \(u^i=e_{2\lambda_0,\xi}\).
Let \(u,u'\), \(u^s,(u^s)'\), and
\(u_{\infty,\mathcal P,\xi},u_{\infty,\mathcal P',\xi}\)
denote respectively the exterior solutions, the outgoing corrections, and the far-field patterns associated with
\(\mathcal P,\mathcal P'\).
Thus
\begin{equation}\label{eq:arg1}
u_{\infty, \mathcal P, \xi}(\hat x)
=
u_{\infty, \mathcal P', \xi}(\hat x),
\qquad
\hat x\in\mathbb S^{n-1}.
\end{equation}

\medskip
\noindent\emph{Step 1. Equality of the exterior solutions in the common exterior.}

Set
\[
    G:=\mathbb B^n\setminus \mathcal P,
    \qquad
    G':=\mathbb B^n\setminus \mathcal P'.
\]
Let \(\Omega_0\) be the connected component of
\[
    \mathbb B^n\setminus
    \left(
    \overline{\mathcal P}\cup\overline{\mathcal P'}
    \right)
\]
which is unbounded with respect to the hyperbolic distance, namely
\(\sup_{x\in\Omega_0}\rho(x)=+\infty\).
Since the incoming Helgason mode is the same for both defects, one has
\[
    u-u'=u^s-(u^s)'
    \qquad
    \text{in } \Omega_0 .
\]
By \eqref{eq:arg1} and the hyperbolic Rellich theorem in Lemma~\ref{thm3}, we have \(u^s=(u^s)'\) in \(\Omega_0\).
Hence \(u=u'\) in \(\Omega_0\).

\medskip
\noindent\emph{Step 2. Production of the first Dirichlet totally geodesic hypersurface.}

We show that the difference between the two defects produces a Dirichlet totally geodesic hypersurface in the exterior of one of them.
If every point of \(\partial\Omega_0\) belonged to \(\overline{\mathcal P}\cap \overline{\mathcal P'}\), then, by the connectedness of \(G\) and \(G'\), one would have \(G=\Omega_0=G'\).
Consequently,
\[
    \overline{\mathcal P}
    =
    \mathbb B^n\setminus G
    =
    \mathbb B^n\setminus G'
    =
    \overline{\mathcal P'},
\]
contradicting \(\mathcal P\neq\mathcal P'\).
Therefore, after interchanging \(\mathcal P\) and \(\mathcal P'\) if necessary, there exist a totally geodesic face \(\mathcal F_1\subset \partial\mathcal P\) and a nonempty relatively open subset \(\Gamma_1\subset \mathcal F_1\) such that
\[
    \Gamma_1\subset \partial\Omega_0\cap G' .
\]

Since \(\mathcal P\) is \(\mathsf D\)-type, we have \(u=0\) on \(\Gamma_1\).
Since \(u=u'\) in \(\Omega_0\), taking the trace from the common exterior side gives \(u'=0\) on \(\Gamma_1\).
Applying Lemma~\ref{lem:analytic-continuation-hypersurface} to \(u'\) in \(G'\), we obtain \(u'=0\) on the connected component of \(V_1\cap G'\) containing \(\Gamma_1\).
We denote this component by \(\Pi_1\).
Then \(\Pi_1\) is a Dirichlet totally geodesic hypersurface of \(u'\) in \(G'\).

\medskip
\noindent\emph{Step 3. Choice of a proper curve and a fixed hyperbolic scale.}

By Lemma~\ref{lem:mainip1}, the Dirichlet set \(\mathcal D_{u'}\) and all Dirichlet totally geodesic hypersurfaces of \(u'\) are bounded.
In particular, \(\Pi_1\) is bounded.
Hence \(G'\setminus\Pi_1\) has an unbounded connected component.
Choose \(x_1\in\Pi_1\) on the boundary of such an unbounded component.

We choose a proper regular curve \(\gamma:[0,\infty)\to G'\) such that
\[
    \gamma(0)=x_1,
    \qquad
    \gamma(t)\in G'\setminus\Pi_1 \quad \text{for }t>0,
\]
and
\[
    \rho(\gamma(t))\to+\infty
    \qquad
    \text{as }t\to+\infty.
\]
Set \(t_1=0\).
Moreover, \(\gamma\) may be chosen so that
\[
    \rho(\gamma,\partial\mathcal P')
    :=
    \inf_{\substack{t\geq0\\y\in\partial\mathcal P'}}
    \rho(\gamma(t),y)
    >0 .
\]
Indeed, one first connects \(x_1\) to a point outside a large hyperbolic ball containing \(\overline{\mathcal P'}\), and then appends a geodesic ray staying outside a slightly larger ball.
The compact initial part stays in \(G'\), and the final ray stays a positive distance away from \(\partial\mathcal P'\).

Set
\[
    r_0:=\frac12 \rho(\gamma,\partial\mathcal P')>0.
\]
Then
\begin{equation}\label{eq:ball-in-G2}
    \overline{B_{\mathbb H}(\gamma(t),r_0)}
    \subset G',
    \qquad t\geq0.
\end{equation}

\medskip
\noindent\emph{Step 4. One reflected-continuation step.}

Suppose that for some \(j\geq1\) we have already obtained a Dirichlet totally geodesic hypersurface \(\Pi_j\) and a parameter \(t_j\) such that
\(x_j:=\gamma(t_j)\in\Pi_j\), where \(t_j\) is chosen as the last parameter for which \(\gamma\) meets \(\Pi_j\).
Let \(V_j\) be the totally geodesic hypersurface containing \(\Pi_j\).
By Definition~\ref{def:tg-hypersurface-ball}, we may write
\[
   V_j=T_{a_j}(\phi(U_{\nu_j}))
\]
for some \(a_j\in\mathbb B^n\).
Let \(I_j:=I_{\nu_j}^{a_j}\) be the hyperbolic reflection with respect to \(V_j\), defined in \eqref{eq:reflection}.

Since \(x_j\in V_j\), the hyperbolic ball \(B_{\mathbb H}(x_j,r_0)\) is invariant under \(I_j\).
By \eqref{eq:ball-in-G2},
\[
    B_{\mathbb H}(x_j,r_0)\subset G',
    \qquad
    I_j(B_{\mathbb H}(x_j,r_0))
    =
    B_{\mathbb H}(x_j,r_0)\subset G'.
\]
Hence \(B_{\mathbb H}(x_j,r_0)\subset G'\cap I_j(G')\).
Let \(E_j\) be the connected component of \(G'\cap I_j(G')\) which contains \(B_{\mathbb H}(x_j,r_0)\).
Then \(E_j\) is symmetric with respect to \(I_j\), namely \(I_j(E_j)=E_j\).

On the nonempty analytic hypersurface piece
\[
    \Pi_j\cap B_{\mathbb H}(x_j,r_0)\subset V_j\cap E_j,
\]
we have \(u'=0\).
Applying Lemma~\ref{lem:reflection-dirichlet} in the symmetric connected domain \(E_j\), with
\[
    \Gamma=\Pi_j\cap B_{\mathbb H}(x_j,r_0),
\]
we obtain
\begin{equation}\label{eq:odd-symmetry-j}
	u'(x)=-u'(I_jx),
    \qquad x\in E_j .
\end{equation}

We claim that \(E_j\) is bounded.
Since \(E_j\) is a connected component of \(G'\cap I_j(G')\),
\[
    \partial E_j
    \subset
    \partial G'\cup I_j(\partial G')
    =
    \partial\mathcal P'\cup I_j(\partial\mathcal P').
\]
The right-hand side is compact in \(\mathbb B^n\), because \(\mathcal P'\) is compact and \(I_j\) is a hyperbolic isometry.
Hence there exists \(R_j>0\) such that \(\partial E_j\subset B_{\mathbb H}(0,R_j)\).
If \(E_j\) were unbounded, then the connectedness of \(\mathbb B^n\setminus B_{\mathbb H}(0,R_j)\) would imply that the whole exterior region
\[
    \mathbb B^n\setminus B_{\mathbb H}(0,R_j)
\]
is contained in \(E_j\).
In particular, \(E_j\cap V_j\) would contain an unbounded connected portion of \(V_j\).
Since \(I_jx=x\) for \(x\in V_j\), the odd symmetry \eqref{eq:odd-symmetry-j} gives \(u'=0\) on \(E_j\cap V_j\).
Thus \(u'\) vanishes on an unbounded connected portion of a totally geodesic hypersurface.
By Lemma~\ref{lem:analytic-continuation-hypersurface}, this produces an unbounded Dirichlet totally geodesic hypersurface of \(u'\), contradicting Lemma~\ref{lem:mainip1}.
Therefore, \(E_j\) is bounded.

Since \(\rho(\gamma(t))\to+\infty\), the curve \(\gamma\) must leave \(E_j\).
Define
\[
    \widetilde t_{j+1}
    :=
    \inf\left\{
    t>t_j:\ \rho(\gamma(t),x_j)=r_0
    \right\}.
\]
Then \(\widetilde t_{j+1}\) is finite, and by \eqref{eq:ball-in-G2},
\[
    \gamma(t)\in E_j
    \qquad
    \text{for }t_j\leq t\leq \widetilde t_{j+1}.
\]
Since \(E_j\) is bounded and \(\gamma\) is proper, there exists a first exit time \(s_{j+1}>\widetilde t_{j+1}\) such that \(\gamma(s_{j+1})\in\partial E_j\).

The boundary \(\partial E_j\) is contained in finitely many totally geodesic faces of \(\partial\mathcal P'\) and their reflected images under \(I_j\).
Since \(\gamma\) stays a positive distance away from \(\partial\mathcal P'\), the point \(\gamma(s_{j+1})\) does not belong to \(\partial\mathcal P'\).
After a standard small perturbation of \(\gamma\), preserving the properties above, we may assume that \(\gamma(s_{j+1})\) lies in the relative interior of an \((n-1)\)-dimensional reflected totally geodesic face.
Denote this reflected face by \(\mathcal F_{j+1}\), and choose a nonempty relatively open subset
\[
    \Gamma_{j+1}\subset \mathcal F_{j+1}\cap G'
\]
containing \(\gamma(s_{j+1})\).

On \(\Gamma_{j+1}\), one has \(u'=0\).
Indeed, \(\mathcal F_{j+1}\) is the reflected image \(I_j(\mathcal F)\) of a totally geodesic face \(\mathcal F\subset\partial\mathcal P'\).
Since \(u'=0\) on \(\mathcal F\) by the \(\mathsf D\)-type boundary condition, the odd relation \eqref{eq:odd-symmetry-j} gives \(u'=0\) on \(I_j(\mathcal F)\cap E_j\), hence on \(\Gamma_{j+1}\).

Let \(V_{j+1}\) be the totally geodesic hypersurface containing \(\mathcal F_{j+1}\).
Applying Lemma~\ref{lem:analytic-continuation-hypersurface} to \(u'\) in \(G'\), we obtain \(u'=0\) on the connected component of \(V_{j+1}\cap G'\) which contains \(\Gamma_{j+1}\).
We denote this component by \(\Pi_{j+1}\).
Then \(\Pi_{j+1}\) is a Dirichlet totally geodesic hypersurface of \(u'\).

Since \(\gamma(s_{j+1})\in\Pi_{j+1}\), the set
\[
    \{t>0:\gamma(t)\in\Pi_{j+1}\}
\]
is nonempty.
It is bounded because \(\Pi_{j+1}\) is bounded by Lemma~\ref{lem:mainip1} and \(\gamma\) is proper.
We choose
\[
    t_{j+1}
    :=
    \max\{t>0:\gamma(t)\in\Pi_{j+1}\},
\]
and set \(x_{j+1}:=\gamma(t_{j+1})\).
Then \(t_{j+1}>s_{j+1}>\widetilde t_{j+1}>t_j\).
Moreover,
\[
\begin{split}
\operatorname{length}_{\mathbb H}
\left(
\gamma|_{[t_j,t_{j+1}]}
\right)
&\geq
\operatorname{length}_{\mathbb H}
\left(
\gamma|_{[t_j,\widetilde t_{j+1}]}
\right)
\\
&\geq r_0.
\end{split}
\]
Thus, for every \(j\geq1\),
\begin{equation}\label{eq:length-lower-bound}
    \operatorname{length}_{\mathbb H}
    \left(
    \gamma|_{[t_j,t_{j+1}]}
    \right)
    \geq r_0.
\end{equation}

The hypersurfaces \(\Pi_j\) are pairwise distinct.
Indeed, \(t_j\) is chosen to be the last parameter for which \(\gamma\) meets \(\Pi_j\).
If \(\Pi_{j+1}=\Pi_\ell\) for some \(\ell\leq j\), then \(\gamma(t_{j+1})\in\Pi_\ell\) with \(t_{j+1}>t_\ell\), contradicting the definition of \(t_\ell\).

\medskip
\noindent\emph{Step 5. Iteration and contradiction.}

Repeating Step~4, we obtain pairwise distinct Dirichlet totally geodesic hypersurfaces
\[
    \Pi_1,\Pi_2,\Pi_3,\ldots
\]
and a strictly increasing sequence \(0=t_1<t_2<t_3<\cdots\) such that
\[
    x_j:=\gamma(t_j)\in\Pi_j,
    \qquad j=1,2,3,\ldots,
\]
and the lower bound \eqref{eq:length-lower-bound} holds for every \(j\).

We now use the boundedness of the Dirichlet set from Lemma~\ref{lem:mainip1}.
Since \(x_j\in\Pi_j\subset\mathcal D_{u'}\), and \(\mathcal D_{u'}\) is bounded, the sequence \(\{x_j\}\) is contained in a fixed bounded subset of \(G'\).
On the other hand, \(\rho(\gamma(t))\to+\infty\) as \(t\to+\infty\).
Hence the increasing sequence \(\{t_j\}\) cannot tend to \(+\infty\).
Therefore \(t_j\to t_0<+\infty\).
Since \(\gamma\) is \(C^1\), its hyperbolic speed is bounded in a neighborhood of \(t_0\).
Consequently,
\[
\begin{split}
\operatorname{length}_{\mathbb H}
\left(
\gamma|_{[t_j,t_{j+1}]}
\right)
&=
\int_{t_j}^{t_{j+1}}|\dot\gamma(t)|_{\mathbb H}\,dt
\\
&\longrightarrow0
\qquad
\text{as }j\to\infty.
\end{split}
\]
This contradicts \eqref{eq:length-lower-bound}.
Thus the assumption \(\mathcal P\neq\mathcal P'\) is false, and hence \(\mathcal P=\mathcal P'\).

The proof is complete.
\end{proof}
\subsection{Proof of Theorem~\ref{thm:mainip2}}

We first prove the boundedness of the Neumann set, which is the key ingredient in the proof of Theorem~\ref{thm:mainip2}.

\begin{lemma}\label{lem:neumann-set-bounded}
Let
\[
\mathbf U=(u_1,\ldots,u_m)
\]
be the exterior solutions to \eqref{eq:hhelm1} corresponding to the incoming Helgason modes
\[
u^i_\ell=e_{2\lambda_0,\xi_\ell},
\qquad
\xi_\ell\in\mathbb S^{n-1},
\qquad
\ell=1,\ldots,m.
\]
Assume that the boundary labels are not simultaneously contained in the ideal boundary defined in Definition~\ref{def:asymptotic-boundary} of any totally geodesic hypersurface, that is,
\[
\{\xi_1,\ldots,\xi_m\}
\not\subset
\partial_\infty V
\]
for every totally geodesic hypersurface \(V\subset\mathbb B^n\). Then, the Neumann set \(\mathcal N_{\mathbf U}\) and all Neumann totally geodesic hypersurfaces of \(\mathbf U\) are bounded.
\end{lemma}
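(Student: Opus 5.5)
The plan follows the proof of Lemma~\ref{lem:mainip1}: replace the Dirichlet trace by the hyperbolic normal derivative and compare incoming and outgoing asymptotics along a ray escaping to conformal infinity. First I reduce to unbounded hypersurfaces. Choose \(R\) with \(\mathcal P\subset B_{\mathbb H}(R)\). If some \(x\in\mathcal N_{\mathbf U}\) had \(\rho(x)>R\), then Lemma~\ref{lem:analytic-continuation-hypersurface} propagates \(\partial_{\nu_{\mathbb H}}\mathbf U=0\) to the connected component of \(V\cap G\) containing \(x\). This component contains the unbounded set \(V\setminus B_{\mathbb H}(R)\), so it is unbounded. Hence it suffices to prove that no Neumann totally geodesic hypersurface \(\Pi\) is unbounded.

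Suppose \(\Pi\subset V\) is unbounded. Then \(\Pi\supset V\setminus B_{\mathbb H}(R)\), and this set accumulates on all of \(\partial_\infty V\). By hypothesis some label, say \(\xi_\ell\), satisfies \(\xi_\ell\notin\partial_\infty V\). Fix this \(\ell\) and write \(u=u_\ell=u^i+u^s\) with \(u^i=e_{2\lambda_0,\xi_\ell}\). Pick \(\eta\in\partial_\infty V\), and follow a geodesic ray \(x(\rho)\subset V\) ending at \(\eta\) with \(\eta\ne\xi_\ell\). Since \(V\) is a totally geodesic hypersurface, its unit normal along the ray is perpendicular to the radial direction asymptotically. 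In the frame of hyperbolic polar coordinates \(\nu_{\mathbb H}\) is then essentially a tangential unit vector. Modulo decaying corrections, the normal derivative is therefore \((\sinh\rho)^{-1}\) times a spherical derivative of the leading coefficient.

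Concretely, write \(u^i=(\cosh\frac\rho2)^{-2\lambda_0\mathrm i-(n-1)}|\tanh\frac\rho2\hat x-\xi_\ell|^{-2\lambda_0\mathrm i-(n-1)}\). Differentiating in the angular direction \(e=\nu_{\mathbb H}\) gives
\[
\partial_{\nu_{\mathbb H}}u^i\approx e^{-\frac{n-1}{2}\rho}e^{-\mathrm i\lambda_0\rho}\bigl(a_\eta+o(1)\bigr),
\]
where \(a_\eta\) is a constant multiple of \((\eta-\xi_\ell)\cdot e\,|\eta-\xi_\ell|^{-2\lambda_0\mathrm i-(n+1)}\). One should double-check the weights: the \(\sinh^{-1}\rho\) from the frame is compensated by the factor \(\cosh^{2}\frac\rho2\) arising from \(\alpha^{-1}\partial_\nu\). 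Similarly, differentiating the far-field expansion \eqref{eq:us} of \(u^s\), assuming it may be differentiated in angle with the same error, gives
\[
\partial_{\nu_{\mathbb H}}u^s\approx e^{-\frac{n-1}{2}\rho}e^{\mathrm i\lambda_0\rho}\bigl(b_\eta+o(1)\bigr).
\]
Setting \(\partial_{\nu_{\mathbb H}}u=0\) and comparing the oscillations \(e^{\mp\mathrm i\lambda_0\rho}\) yields \(a_\eta=b_\eta=0\), exactly as in Lemma~\ref{lem:mainip1}. So \(a_\eta=0\), that is, \((\eta-\xi_\ell)\cdot N(\eta)=0\) for every \(\eta\in\partial_\infty V\setminus\{\xi_\ell\}\). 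Here \(N(\eta)\) is the Euclidean normal of the sphere or plane \(\overline V\) at \(\eta\), which is also tangent to \(\mathbb S^{n-1}\).

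The final step is geometric. Say \(\overline V\) is a Euclidean sphere \(|x-c|=r\) orthogonal to \(\mathbb S^{n-1}\) (a plane is a limiting case, with \(N\) constant). Then \((\eta-\xi_\ell)\cdot(\eta-c)=0\) combined with \(|\eta-c|=r\) and \(|\eta|=1\) gives \(|\xi_\ell-c|^2=r^2+|\eta-\xi_\ell|^2\). The point \(\xi_\ell\) is fixed, while \(|\eta-\xi_\ell|\) varies with \(\eta\) over the \((n-2)\)-sphere \(\partial_\infty V\) (a pair of points when \(n=2\), still non-constant unless the pair is symmetric about \(\xi_\ell\)). This is a contradiction.

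The main obstacle is the symmetric situation: if \(\xi_\ell\) is equidistant from all of \(\partial_\infty V\), the identity does not immediately fail. That case has to be excluded by a direct computation, or avoided by choosing \(\eta\) with \(a_\eta\ne0\); the latter should be possible because \(\xi_\ell\notin\partial_\infty V\) means \(u^i\) is not even under \(I_V\) (compare Remark~\ref{rem:hard-sharp-general-class}). A second point needing care is justifying the differentiation of the far-field expansion of \(u^s\) in tangential directions with a uniform error. For this I would use the Green-representation formula for \(u^s\) outside \(B_{\mathbb H}(R)\) together with the uniform Green function asymptotics of Section~\ref{subsec:radition}.
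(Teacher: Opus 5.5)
There is a genuine gap: your leading coefficient $a_\eta$ is computed incorrectly, and everything after it depends on it. You drop the radial component of $\nu_{\mathbb H}$ as a decaying correction, but it enters at leading order. Write $\overline V=\{|x-c|=r\}$ with $|c|^2=1+r^2$. On $V$ one has $x\cdot c=(1+|x|^2)/2$, so for $\nu=(x-c)/r$
\[
\langle\nu_{\mathbb H},\partial_\rho\rangle_{\mathbb H}=\hat x\cdot\nu=-\frac{1-|x|^2}{2r|x|}\sim-\frac{2}{r}\,e^{-\rho}.
\]
This decays at the same rate as the factor $(\sinh\rho)^{-1}$ in front of the angular derivative, while $\partial_\rho u^i$ is comparable to $u^i$. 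Your weights hide this: $\partial_{\nu_{\mathbb H}}u^i$ is of order $e^{-\frac{n+1}{2}\rho}$, not $e^{-\frac{n-1}{2}\rho}$, because $\alpha^{-1}$ contributes $\cosh^{-2}\frac{\rho}{2}$ and nothing is compensated. The exact identity on $V$ is
\[
\partial_{\nu_{\mathbb H}}u^i=\frac{n-1+2\lambda_0\mathrm i}{2}\,(1-|x|^2)\Bigl[\frac{1}{2r}-\frac{(x-\xi)\cdot\nu}{|x-\xi|^2}\Bigr]u^i .
\]
Your $a_\eta$ keeps only the second term of the bracket. The bracket actually tends to $\frac{1-\xi\cdot c}{2r(1-\xi\cdot\eta)}$ as $x\to\eta\in\partial_\infty V$.

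This is not a harmless approximation, as the case $\xi\in\partial_\infty V$ (that is, $\xi\cdot c=1$) shows. There $(x-\xi)\cdot(x-c)=\frac12|x-\xi|^2$ on $V$, so the bracket vanishes identically and $\partial_{\nu_{\mathbb H}}e_{2\lambda_0,\xi}\equiv0$ on $V$, consistent with Remark~\ref{rem:hard-sharp-general-class}. Your $a_\eta$, by contrast, is a nonzero multiple of $1/(2r)$. So the condition ``$(\eta-\xi_\ell)\cdot N(\eta)=0$ for all $\eta$'' is not what the phase comparison yields. Your sphere-geometry step, and the worry about the symmetric configuration, therefore address the wrong equation. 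That step also needs every $\eta\in\partial_\infty V$, which is unavailable for $n=2$ when $V$ meets $\mathcal P$, since $\Pi$ then contains only one end of $V$.

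Your formula is exact only when $V$ passes through the origin, where $x\cdot\nu\equiv0$ and $(\eta-\xi)\cdot N=-\xi\cdot N$ is independent of $\eta$. That is exactly the paper's normalization. Move $V$ to $V_0=\{x_n=0\}$ by an isometry; this transports the Neumann condition via \eqref{eq:normal-covariance-isometry} and sends Helgason kernels to multiples of Helgason kernels. On $V_0$ the coefficient is a nonzero multiple of $\xi_n$ for every $\eta\neq\xi$. Alternatively, keep a general $V$ but retain the radial term: the correct coefficient is nonzero for every $\eta\neq\xi$ exactly when $\xi\cdot c\neq1$, i.e.\ $\xi\notin\partial_\infty V$. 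Either way a single ray in $\Pi$ suffices, and no geometric step is needed.
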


\begin{proof}
We first prove that no Neumann totally geodesic hypersurface can be unbounded.

Suppose, to the contrary, that there exists an unbounded Neumann totally geodesic hypersurface
\[
    \Pi\subset V\cap G .
\]
Then \(\partial_{\nu_{\mathbb H}}u_\ell=0\) on \(\Pi\) for every \(\ell=1,\ldots,m\).
Equivalently,
\[
    \partial_{\nu_{\mathbb H}}u^i_\ell
    =
    -\partial_{\nu_{\mathbb H}}u^s_\ell
    \qquad
    \text{on } \Pi .
\]

We claim that this forces
\[
    \xi_\ell\in \partial_\infty V,
    \qquad
    \ell=1,\ldots,m,
\]
where \(\partial_\infty V\) is the ideal boundary of \(V\) defined in Definition~\ref{def:asymptotic-boundary}.
{
It is enough to prove the claim for the standard totally geodesic hypersurface
\[
    V_0:=\{x\in\mathbb B^n:x_n=0\}.
\]
Indeed, choose a hyperbolic isometry \(M\) with \(M(V)=V_0\) and set
\(\widetilde u_\ell:=u_\ell\circ M^{-1}\).  If the unit hyperbolic normals are chosen
compatibly, \eqref{eq:normal-covariance-isometry} gives
\[
    \left(\partial_{\nu_{\mathbb H,V_0}}\widetilde u_\ell\right)\circ M
    =\partial_{\nu_{\mathbb H,V}}u_\ell,
\]
so the homogeneous hyperbolic Neumann condition is preserved.  The shifted
Helmholtz operator and the outgoing class are also invariant under \(M\), while
a Helgason kernel is transformed into a nonzero constant multiple of the
kernel with transformed boundary label.  Finally,
\(M_\infty(\partial_\infty V)=\partial_\infty V_0\), where \(M_\infty\)
denotes the boundary action induced by \(M\).  Thus the claim for
\(V_0\) implies the claim for every \(V\).
}

Let \(V=V_0=\{x_n=0\}\). Then
\[
    \partial_\infty V_0
    =
    \{\eta\in\mathbb S^{n-1}:\eta_n=0\}.
\]
Assume that, for some \(\ell\), \(\xi_\ell\notin\partial_\infty V_0\). Then \((\xi_\ell)_n\neq0\).
Choose \(R>0\) sufficiently large so that \(\mathcal P\subset B_{\mathbb H}(R)\).
Since \(\Pi\subset V_0\cap G\) is unbounded, it intersects \(V_0\setminus B_{\mathbb H}(R)\).
Moreover,
\[
    V_0\setminus B_{\mathbb H}(R)\subset V_0\cap G.
\]
Let \(C\) be the connected component of \(V_0\setminus B_{\mathbb H}(R)\) which intersects \(\Pi\).
Since \(\Pi\) is the connected component of \(V_0\cap G\) on which the Neumann condition holds, we have \(C\subset \Pi\).

Recall that, in the Poincar\'e ball model,
\[
    \rho(x)=\log\frac{1+|x|}{1-|x|},
    \qquad
    |x|=\tanh\left(\frac{\rho(x)}{2}\right).
\]
Since \(V_0=\{x\in\mathbb B^n:x_n=0\}\), the unbounded component \(C\) contains a radial geodesic ray of the form
\[
    x_\rho=\tanh\left(\frac{\rho}{2}\right)\eta,
    \qquad
    \rho>R_1,
\]
for some \(R_1>R\) and some \(\eta\in\mathbb S^{n-1}\) with \(\eta_n=0\).
In particular, \(\eta\in \overline{V_0}^{\,\overline{\mathbb B^n}}\cap\mathbb S^{n-1}\).
Since \(\eta_n=0\) and \((\xi_\ell)_n\neq0\), we have \(\eta\neq\xi_\ell\).

For simplicity, write \(\xi=\xi_\ell\) and \(u^i=e_{2\lambda_0,\xi}\).
{Under \eqref{eq:outnormal}, on \(V_0\), the hyperbolic normal derivative is}
\[
    \partial_{\nu_{\mathbb H}}
    =
    \frac{1-|x|^2}{2}\partial_{x_n}.
\]
By \eqref{eq:e},
\[
    u^i(x)
    =
    \left(
    \frac{\sqrt{1-|x|^2}}{|x-\xi|}
    \right)^{n-1+2\lambda_0\mathrm i}.
\]
A direct calculation at points \(x\in V_0\) gives
\[
    \partial_{x_n}u^i(x)
    =
    (n-1+2\lambda_0\mathrm i)
    \frac{\xi_n}{|x-\xi|^2}
    u^i(x).
\]
Hence, along \(x_\rho=\tanh(\rho/2)\eta\),
\[
\begin{split}
\partial_{\nu_{\mathbb H}}u^i(x_\rho)
&=
\frac{1-|x_\rho|^2}{2}
(n-1+2\lambda_0\mathrm i)
\frac{\xi_n}{|x_\rho-\xi|^2}
u^i(x_\rho)
\\
&=
A_{\xi,\eta}(\rho)
\left(\cosh\frac{\rho}{2}\right)^{-(n+1)-2\lambda_0\mathrm i},
\end{split}
\]
where
\[
    A_{\xi,\eta}(\rho)
    =
    \frac{n-1+2\lambda_0\mathrm i}{2}\,
    \xi_n
    \left|
    \tanh\left(\frac{\rho}{2}\right)\eta-\xi
    \right|^{-(n+1)-2\lambda_0\mathrm i}.
\]
Since \(\eta\neq\xi\) and \(\xi_n\neq0\), we have
\[
    A_{\xi,\eta}(\rho)\to A_{\xi,\eta}^0\neq0
    \qquad
    \text{as }\rho\to+\infty .
\]

On the other hand, the outgoing correction \(u^s_\ell\) is outgoing.
Its far-field expansion \eqref{eq:us} gives, after differentiating tangentially to the sphere and using elliptic regularity outside a compact set,
\[
    \partial_{\nu_{\mathbb H}}u^s_\ell(x_\rho)
    =
    B_{\ell,\eta}(\rho)
    \left(\cosh\frac{\rho}{2}\right)^{-(n+1)+2\lambda_0\mathrm i},
\]
where \(B_{\ell,\eta}(\rho)\to B_{\ell,\eta}^0\) for some finite complex number \(B_{\ell,\eta}^0\).

Using \(\partial_{\nu_{\mathbb H}}u^i_\ell=-\partial_{\nu_{\mathbb H}}u^s_\ell\) on \(\Pi\), we get
\[
    A_{\xi,\eta}(\rho)
    \left(\cosh\frac{\rho}{2}\right)^{-(n+1)-2\lambda_0\mathrm i}
    +
    B_{\ell,\eta}(\rho)
    \left(\cosh\frac{\rho}{2}\right)^{-(n+1)+2\lambda_0\mathrm i}
    =
    0 .
\]
Multiplying by \(\left(\cosh\frac{\rho}{2}\right)^{n+1+2\lambda_0\mathrm i}\), we obtain
\[
    A_{\xi,\eta}(\rho)
    +
    B_{\ell,\eta}(\rho)
    \left(\cosh\frac{\rho}{2}\right)^{4\lambda_0\mathrm i}
    =
    0.
\]
Letting \(\rho\to+\infty\), this is impossible.
Indeed, if \(B_{\ell,\eta}^0=0\), then the above identity gives \(A_{\xi,\eta}^0=0\), which contradicts \(A_{\xi,\eta}^0\neq0\).
If \(B_{\ell,\eta}^0\neq0\), then \(\left(\cosh\frac{\rho}{2}\right)^{4\lambda_0\mathrm i}\) would have to converge to a finite limit, which is impossible because \(\lambda_0>0\) and
\[
    \log\left(\cosh\frac{\rho}{2}\right)\to+\infty .
\]
Therefore, our assumption \(\xi_\ell\notin\partial_\infty V_0\) is false.

Thus, for the standard hypersurface \(V_0\), one has \(\xi_\ell\in\partial_\infty V_0\) for every \(\ell=1,\ldots,m\).
By the isometric reduction above, the same conclusion holds for a general totally geodesic hypersurface \(V\):
\[
    \xi_\ell\in\partial_\infty V,
    \qquad
    \ell=1,\ldots,m.
\]
This contradicts the assumption that
\[
    \{\xi_1,\ldots,\xi_m\}
    \not\subset
    \partial_\infty V
\]
for every totally geodesic hypersurface \(V\subset\mathbb B^n\).
Hence no Neumann totally geodesic hypersurface can be unbounded.

It remains to prove that the Neumann set \(\mathcal N_{\mathbf U}\) is bounded.
Choose \(R>0\) such that \(\mathcal P\subset B_{\mathbb H}(R)\).
If there existed \(x\in\mathcal N_{\mathbf U}\) with \(\rho(x)>R\), then, by the definition of \(\mathcal N_{\mathbf U}\), there would exist a totally geodesic hypersurface \(V\ni x\) such that \(\partial_{\nu_{\mathbb H}}\mathbf U=0\) locally near \(x\) on \(V\cap G\).
By Lemma~\ref{lem:analytic-continuation-hypersurface}, this Neumann condition extends to the connected component of \(V\cap G\) containing \(x\).
Since
\[
    V\setminus B_{\mathbb H}(R)\subset V\cap G,
\]
the component of \(V\setminus B_{\mathbb H}(R)\) containing \(x\) is unbounded.
Hence we obtain an unbounded Neumann totally geodesic hypersurface, contradicting the first part of the proof.
Therefore \(\mathcal N_{\mathbf U}\subset B_{\mathbb H}(R)\), and the Neumann set is bounded.
The boundedness of all Neumann totally geodesic hypersurfaces follows immediately, since each of them is contained in \(\mathcal N_{\mathbf U}\).

The proof is complete.
\end{proof}

\begin{proof}[Proof of Theorem~\ref{thm:mainip2}]
Set \(m:=n+1\). By Remark~\ref{rem:affine-independence}, the prescribed
boundary labels are not simultaneously contained in the ideal boundary of
any totally geodesic hypersurface.

We argue by contradiction.
Assume that \(\mathcal P\neq\mathcal P'\), while the far-field patterns corresponding to the prescribed incoming Helgason modes
\[
u^i_\ell=e_{2\lambda_0,\xi_\ell},
\qquad \ell=1,\ldots,m,
\]
coincide.

Let
\[
\mathbf U=(u_1,\ldots,u_m),
\qquad
\mathbf U'=(u'_1,\ldots,u'_m)
\]
be the corresponding exterior solutions.
By the hyperbolic Rellich theorem and unique continuation, we have
\[
\mathbf U=\mathbf U'
\]
in the unbounded connected component of
\[
\mathbb B^n\setminus
\left(
\overline{\mathcal P}\cup\overline{\mathcal P'}
\right).
\]

As in the proof of Theorem~\ref{thm:mainip1}, the assumption \(\mathcal P\neq\mathcal P'\) yields a totally geodesic face
\[
    \mathcal F_1\subset\partial\mathcal P
\]
and a nonempty relatively open subset \(\Gamma_1\subset\mathcal F_1\cap G'\), where
\[
    G':=\mathbb B^n\setminus\overline{\mathcal P'},
\]
such that \(\Gamma_1\) lies on the common exterior boundary.
Since \(\mathcal P\) is \(\mathsf N\)-type,
\[
\partial_{\nu_{\mathbb H}}\mathbf U=0
\qquad
\text{on }\Gamma_1.
\]
Using \(\mathbf U=\mathbf U'\) in the common exterior, we obtain
\[
\partial_{\nu_{\mathbb H}}\mathbf U'=0
\qquad
\text{on }\Gamma_1.
\]
By Lemma~\ref{lem:analytic-continuation-hypersurface}, this condition extends to the connected component of \(V_1\cap G'\) containing \(\Gamma_1\), where \(V_1\) is the totally geodesic hypersurface containing \(\mathcal F_1\).
Denote this component by \(\Pi_1\).
Then \(\Pi_1\) is a Neumann totally geodesic hypersurface of \(\mathbf U'\).

	Starting from \(\Pi_1\), we repeat the reflected-continuation construction used in the proof of Theorem~\ref{thm:mainip1}, with \(\mathcal D_{u'}\) replaced by \(\mathcal N_{\mathbf U'}\) and with the Dirichlet reflection principle replaced by the Neumann reflection principle.
The geometric construction is unchanged: at each step the curve exits the
current reflected region through a reflected totally geodesic face.
{
The Neumann condition is transported to every reflected face as follows.
In each reflected region \(E_j\), Lemma~\ref{lem:reflection-neumann} gives
\[
    \mathbf U'(x)=\mathbf U'(I_jx).
\]
In particular, \eqref{eq:normal-reflection} gives
\(\partial_{\nu_{\mathbb H,V_j}}\mathbf U'=0\) on \(E_j\cap V_j\).  If
\(\mathcal F\) is an original Neumann face and
\(\mathcal F_{j+1}:=I_j(\mathcal F)\), choose the unit hyperbolic normals by
\[
    \nu_{\mathbb H,\mathcal F_{j+1}}(I_jx)
    =d(I_j)_x\nu_{\mathbb H,\mathcal F}(x).
\]
Then the covariance formula \eqref{eq:normal-covariance-isometry} and the even
symmetry imply
\[
    \left(
      \partial_{\nu_{\mathbb H,\mathcal F_{j+1}}}\mathbf U'
    \right)(I_jx)
    =
    \partial_{\nu_{\mathbb H,\mathcal F}}(\mathbf U'\circ I_j)(x)
    =
    \partial_{\nu_{\mathbb H,\mathcal F}}\mathbf U'(x)
    =0.
\]
Thus the homogeneous hyperbolic Neumann condition holds on the reflected face
with no additional zeroth-order term.
}
The same boundedness argument, now using Lemma~\ref{lem:neumann-set-bounded}, shows that each \(E_j\) is bounded and produces a new Neumann totally geodesic hypersurface \(\Pi_{j+1}\).

Therefore we obtain pairwise distinct Neumann totally geodesic hypersurfaces
\[
\Pi_1,\Pi_2,\Pi_3,\ldots
\]
and a strictly increasing sequence
\[
0=t_1<t_2<t_3<\cdots
\]
such that
\[
x_j:=\gamma(t_j)\in\Pi_j\subset\mathcal N_{\mathbf U'},
\]
and
\[
\operatorname{length}_{\mathbb H}
\big(\gamma|_{[t_j,t_{j+1}]}\big)
\ge r_0>0.
\]

Since \(\mathcal N_{\mathbf U'}\) is bounded by Lemma~\ref{lem:neumann-set-bounded}, the sequence \(\{x_j\}\) is bounded.
However,
\[
\rho(\gamma(t))\to+\infty
\qquad
\text{as }t\to+\infty.
\]
Hence \(t_j\to t_0<+\infty\).
Since \(\gamma\) is \(C^1\),
\[
\operatorname{length}_{\mathbb H}
\big(\gamma|_{[t_j,t_{j+1}]}\big)
\to0,
\]
which contradicts the lower bound above.
Therefore
\[
\mathcal P=\mathcal P'.
\]

The proof is complete.
\end{proof}

\section{Quantitative preliminaries in hyperbolic space for stability}\label{sec:quant-prelim}

The preceding sections established the analytic framework for hyperbolic scattering, the reflection principles for totally geodesic hypersurfaces, and the corresponding uniqueness results for totally geodesic defects.
We now lay the groundwork for the quantitative stability analysis by specifying the admissible classes and developing uniform estimates for the associated direct problems. 
More precisely, this section introduces admissible classes of totally geodesic defects and proves the uniform compactness properties, \emph{a priori} bounds, and decay estimates for the corresponding direct problems. 
These estimates will serve as the main analytic input for the subsequent stability proof of the inverse problem.

\subsection{Classes of admissible defects}\label{subsec:admissible-class}

For the qualitative uniqueness results, it was sufficient to work with the broad class of totally geodesic defects introduced in Section~\ref{sec:reflection-principles}.
The stability analysis, however, requires quantitative a priori control of the geometry, in order to obtain compactness and uniform estimates for the corresponding direct problems.
We therefore introduce the admissible classes of defects that will be used throughout the stability analysis.
Before giving the definitions, we fix some notation.

\medskip
We recall the notation for balls in the Poincar\'e ball model \(\mathbb B^n\).
We denote by \(B_{\mathbb H}(x,r)\) the hyperbolic ball centered at \(x\) with radius \(r\), where the radius is measured using the hyperbolic distance \(\rho(\cdot,\cdot)\) defined in \eqref{eq:distance}.
As above, \(B_{\mathbb H}(r)=B_{\mathbb H}(0,r)\) for \(r>0\).
For a subset \(A\subset\mathbb B^n\), we also set
\[
    B_{\mathbb H}(A,r):=\bigcup_{x\in A}B_{\mathbb H}(x,r).
\]

We next define the hyperbolic cones used below.
For \(x\in\mathbb B^n\), let \(S_x\mathbb B^n\) denote the unit sphere in \(T_x\mathbb B^n\) with respect to the hyperbolic metric \eqref{eq:hyper metric}.
Given \(x\in\mathbb B^n\), \(\omega\in S_x\mathbb B^n\), \(r>0\), and \(0<\theta<\pi/2\), the \emph{hyperbolic open cone} with vertex \(x\), axis \(\omega\), hyperbolic radius \(r\), and aperture angle \(\theta\) is defined by
\[
    \mathcal C_{\mathbb H}(x,\omega,r,\theta)
    :=
    \left\{
    \exp_x(t\zeta):
    0<t<r,\ 
    \zeta\in S_x\mathbb B^n,\ 
    \langle \zeta,\omega\rangle_{g_{\mathbb H,x}}>\cos\theta
    \right\}.
\]
Here \(\exp_x\) is the hyperbolic exponential map at \(x\), and \(\langle\cdot,\cdot\rangle_{g_{\mathbb H,x}}\) is the inner product on \(T_x\mathbb B^n\) induced by the hyperbolic metric.
If \(V\subset\mathbb B^n\) is a totally geodesic hypersurface, \(x\in V\),
\(\tau\in S_xV\), \(r>0\), and \(0<\theta<\pi/2\), we define the relative
hyperbolic cone in \(V\) by
\[
    \mathcal C_{\mathbb H}^{V}(x,\tau,r,\theta)
    :=
   \mathcal C_{\mathbb H}(x,\tau,r,\theta)\cap V.
\]

Equivalently, let \(\gamma_{x,y}\) be the unit-speed geodesic from \(x\) to \(y\), parametrized by \(\gamma_{x,y}(0)=x\).
Then, \(y\in \mathcal C_{\mathbb H}(x,\omega,r,\theta)\) if and only if
\[
    0<\rho(x,y)<r
    \quad\text{and}\quad
    \left\langle \dot\gamma_{x,y}(0),\omega\right\rangle_{g_{\mathbb H,x}}>\cos\theta.
\]
We call \(x\) the vertex of the cone.
The geodesic \(t\mapsto\exp_x(t\omega)\), \(0<t<r\), is called the axis geodesic.
The parameter \(r\) is the radius, and \(\theta\) is the aperture angle.
The opposite cone is denoted by \(\mathcal C_{\mathbb H}(x,-\omega,r,\theta)\).

We shall also use the following convention for angles between totally geodesic hypersurfaces.
If \(V_1\) and \(V_2\) are two totally geodesic hypersurfaces meeting at a point \(x\), their angle at \(x\) is the angle between the tangent hyperplanes \(T_xV_1\) and \(T_xV_2\) in \(T_x\mathbb B^n\), computed with respect to the hyperbolic metric \eqref{eq:hyper metric}.
Equivalently, if \(\nu_1\) and \(\nu_2\) are unit normal vectors to \(V_1\) and \(V_2\) at \(x\), then
\[
    \angle_{\mathbb H}(V_1,V_2)(x)
    :=
    \arccos |\langle \nu_1,\nu_2\rangle_{g_{\mathbb H,x}}|.
\]
This definition is independent of the choices of the normal directions.
Since the Poincar\'e metric is conformal to the Euclidean metric, this angle
coincides with the Euclidean angle in the ball model.
When we write
\[
    \angle_{\mathbb H}(V_1,V_2)\geq \alpha,
\]
we mean that this angle is at least \(\alpha\) at every point of
\(V_1\cap V_2\).

\medskip
We define the admissible classes of totally geodesic defects for stability analysis.
These classes include both bulk components and hypersurface-supported components as in Definition~\ref{def:totally-geodesic-defect}.
The definitions are quantitative and rely only on geometric parameters.
The minimal face size is controlled by \(h>0\), while all other constants are fixed a priori.

\begin{definition}\label{def:admissible-tg-defects}
Fix positive constants
\(R,r_0,r_{\rm cone},L,c_0,h_0\), angles
\(0<\alpha_0,\theta_0,\theta_{\rm cone}<\pi/2\), an integer \(N_0\), and a nondecreasing
left-continuous function
\(\delta:(0,+\infty)\to(0,+\infty)\).
Let \(0<h\leq h_0\).
A nonempty compact set \(\mathcal P\subset\mathbb B^n\) belongs to \(\mathcal A_{\mathbb H}^h\) if:

\begin{enumerate}
    \item \textbf{Boundedness and connected exterior.}  
    \(\mathcal P\subset \overline{B_{\mathbb H}(R)}\), and \(G := \mathbb B^n \setminus \mathcal P\) is connected.

    \item \textbf{Totally geodesic face decomposition.}  
    \(\partial\mathcal P = \bigcup_{j=1}^{N_{\mathcal P}} \mathcal F_j\),
    \(1\leq N_{\mathcal P}\leq N_0\),
    where each \(\mathcal F_j\) is a totally geodesic face contained in a totally geodesic hypersurface \(V_j\subset \mathbb B^n\) and is the relative closure in \(V_j\) of a connected relatively open subset of \(V_j\).

    \item \textbf{Uniform \(h\)-regularity of faces.}
Each \(\mathcal F_j\) is the relative closure of a Lipschitz domain in \(V_j\), with Lipschitz constant \(L\) and Lipschitz scale at least \(h\).
In addition, there exist \(y_j\in\mathcal F_j\) and a unit tangent vector \(\tau_j\in S_{y_j}V_j\) such that
\[
    \mathcal C_{\mathbb H}^{V_j}(y_j,\tau_j,c_0h,\theta_0)
    \subset \mathcal F_j .
\]

    \item \textbf{Uniform exterior cone condition.}
For every \(x\in\partial\mathcal P\), there exists
\(\omega_x\in S_x\mathbb B^n\) such that
\[
    \mathcal C_{\mathbb H}
    (x,\omega_x,r_{\rm cone},\theta_{\rm cone})
    \subset
    \mathbb B^n\setminus\mathcal P .
\]

       \item \textbf{Controlled incidence of faces.}
The relative interiors of the faces are pairwise disjoint.
If \(\mathcal F_i\cap\mathcal F_j\neq\emptyset\), then
\(\mathcal F_i\cap\mathcal F_j\) is contained in the relative boundaries of both faces.
Moreover, whenever \(V_i\neq V_j\) and
\(\mathcal F_i\cap\mathcal F_j\neq\emptyset\), one has
\[
    \angle_{\mathbb H}(V_i,V_j)\geq \alpha_0.
\]
We use the convention that \(\rho(x,\emptyset)=+\infty\).
Finally, for every \(x\in\mathcal F_i\),
\[
    \rho
    \left(
        x,\bigcup_{j\neq i}\mathcal F_j
    \right)
    \geq
    c_0\,
    \rho
    \left(
        x,\partial_{V_i}\mathcal F_i
    \right),
\]
whenever
\[
    c_0\,
    \rho
    \left(
        x,\partial_{V_i}\mathcal F_i
    \right)
    < r_0 .
\]
Here \(\partial_{V_i}\mathcal F_i\) denotes the relative boundary of \(\mathcal F_i\) in \(V_i\).

    \item \textbf{Uniform exterior connectedness.}
For every \(t>0\), for any \(x_1,x_2\in G\) such that
\[
    B_{\mathbb H}(x_1,t)\subset G,
    \qquad
    B_{\mathbb H}(x_2,t)\subset G,
\]
and for every \(0<s<\delta(t)\), there exists a \(C^1\) curve
\(\gamma:[0,1]\to G\) with
\[
    \gamma(0)=x_1,
    \qquad
    \gamma(1)=x_2,
    \qquad
    B_{\mathbb H}(\gamma,s)\subset G .
\]
\end{enumerate}
\end{definition}

\medskip
\noindent
The \(\mathsf N\)-type admissible class is obtained from \(\mathcal A_{\mathbb H}^h\) by imposing an additional \emph{local hyperbolic collar condition}, used only for Neumann boundary conditions.

\begin{definition}\label{def:hard-admissible-tg-defects}
Fix, in addition, a nondecreasing left-continuous function
\(\omega_{\rm col}:(0,+\infty)\to(0,+\infty)\).
Let \(0<h\leq h_0\).
A defect \(\mathcal P\in\mathcal A_{\mathbb H}^h\) belongs to the class \(\mathcal B_{\mathbb H}^h\) if the following local hyperbolic collar condition holds.

For every \(x\in\partial\mathcal P\) and every connected component \(U\) of
\((\mathbb B^n\setminus\mathcal P)\cap B_{\mathbb H}(x,r_0)\) such that
\(x\in\partial U\), there exist an open set
\(U'\subset\mathbb B^n\setminus\mathcal P\) and a
\(W^{1,\infty}\)-homeomorphism whose inverse is also \(W^{1,\infty}\)
\[
    T:Q^+\to U',
    \qquad
    Q^+:=(-1,1)^{n-1}\times(0,1),
\]
such that the following conditions are satisfied.
We set $\Gamma:=[-1,1]^{n-1}\times\{0\}$.

\begin{enumerate}
	\item \textbf{Local half-cylinder parametrization.}
One has
\[
    U\cap B_{\mathbb H}(x,r_0/2)
    \subset
    U'
    \subset
    \mathbb B^n\setminus\mathcal P.
\]

	\item \textbf{Boundary correspondence.}
The map \(T\) admits a Lipschitz extension to \(\overline{Q^+}\), still denoted by \(T\).
Moreover,
\[
    T(0)=x,
\]
and
\[
    \partial U\cap B_{\mathbb H}(x,r_0/2)
    \subset
    T(\Gamma)
    \subset
    \partial(\mathbb B^n\setminus\mathcal P).
\]

\item\textbf{Quantitative collar separation.}
For every \(0<s<r_0/2\) and every
\[
    y\in U\cap B_{\mathbb H}(x,r_0/2-s),
\]
one has
\[
    \operatorname{dist}_{\mathbb R^n}
    \left(
        T^{-1}(y),\partial Q^+\setminus\Gamma
    \right)
    \geq
    \omega_{\rm col}(s).
\]

\item \textbf{Uniform chart bounds.}
The \(W^{1,\infty}\)-norms and the Lipschitz constants of \(T\) and \(T^{-1}\), computed in local hyperbolic charts, are bounded by \(L\).
The function \(\omega_{\rm col}\) is regarded as part of the a priori data of the class \(\mathcal B_{\mathbb H}^h\).
\end{enumerate}
\end{definition}

\begin{remark}
The local hyperbolic collar condition in Definition~\ref{def:hard-admissible-tg-defects} is imposed only in the \(\mathsf N\)-type case.
Here a hyperbolic collar means a one-sided neighborhood of the interaction surface inside the exterior domain, measured with respect to hyperbolic balls and parametrized in local hyperbolic charts by a fixed half-cylinder.
For \(\mathsf D\)-type defects, the homogeneous Dirichlet condition is stable under zero extension.
For \(\mathsf N\)-type defects, this collar condition gives uniform local control of the exterior hyperbolic geometry near the boundary, and is used to obtain Mosco convergence and uniform Sobolev estimates for the Neumann direct problem.
\end{remark}

\medskip
We next introduce three distances for comparing admissible defects in the hyperbolic space.
The quantities \(\widetilde d\) and \(\widehat d\) measure the hyperbolic Hausdorff distances between the defects and between their boundaries, respectively.
The third quantity \(d\) is a modified boundary distance adapted to the
first-contact propagation argument in the common exterior.

\begin{definition}\label{def:hyperbolic-distances}
Let \(\mathcal P,\mathcal P'\) be two nonempty compact defects contained in
\(\overline{B_{\mathbb H}(R)}\).
First, we define the Hausdorff distance in hyperbolic space between the defects by
\begin{equation}\label{eq:d1}
    \widetilde d
    :=
    d_{\mathcal H}^{\mathbb H}(\mathcal P,\mathcal P')
    :=
    \max\left\{
    \sup_{x\in\mathcal P}
    \rho(x,\mathcal P'),
    \sup_{y\in\mathcal P'}
    \rho(y,\mathcal P)
    \right\}.
\end{equation}
Second, we define the Hausdorff distance in hyperbolic space between their boundaries by
\begin{equation}\label{eq:d2}
    \widehat d
    :=
    d_{\mathcal H}^{\mathbb H}(\partial\mathcal P,\partial\mathcal P')
    :=
    \max\left\{
    \sup_{x\in\partial\mathcal P}
    \rho(x,\partial\mathcal P'),
    \sup_{y\in\partial\mathcal P'}
    \rho(y,\partial\mathcal P)
    \right\}.
\end{equation}
Third, we define the modified boundary distance by
\begin{equation}\label{eq:d3}
    d
    :=
    \max\left\{
    \sup_{x\in\partial\mathcal P\setminus\mathcal P'}
    \rho(x,\partial\mathcal P'),
    \sup_{y\in\partial\mathcal P'\setminus\mathcal P}
    \rho(y,\partial\mathcal P)
    \right\}.
\end{equation}
Here, the supremum over \(\emptyset\) is understood to be zero, and \(\rho\) is defined
in the sense of \eqref{eq:distH}.
\end{definition}

We introduce the exterior connectedness modulus $\delta$, which controls the thickness of connecting curves in the exterior domain. 
This modulus is part of the a priori data of both the \(\mathsf D\)-type class $\mathcal A_{\mathbb H}^h$ and the \(\mathsf N\)-type class $\mathcal B_{\mathbb H}^h$ in Definitions~\ref{def:admissible-tg-defects} and~\ref{def:hard-admissible-tg-defects}.

Without loss of generality, we assume
$\delta(t) \le t$ for all $t>0$, which can always be achieved by
replacing $\delta$ with $\min\{t,\delta(t)\}$.
We also set $\delta(0) := 0$.
For later reference, we define its generalized inverse by
\begin{equation}\label{eq:ext}
	\delta^{-1}(s) := \sup\{ t \in [0,2R] : \delta(t) \le s\}, \qquad s \ge 0,
\end{equation}
which is nondecreasing and depends only on the a priori exterior connectedness data.
This function will be used to relate the modified boundary distance \eqref{eq:d3} to the Hausdorff distances \eqref{eq:d1} and \eqref{eq:d2} in the stability estimates, see Proposition~\ref{prop:distance-equivalence-hyperbolic}.

\begin{proposition}\label{prop:distance-equivalence-hyperbolic}
Let \(\mathcal P,\mathcal P'\) belong to the same admissible class
\(\mathcal A_{\mathbb H}^h\) or \(\mathcal B_{\mathbb H}^h\), and let
\(d\), \(\widehat d\), and \(\widetilde d\) be defined as in
Definition~\ref{def:hyperbolic-distances}.
Assume the standing normalization \(\delta(t)\le t\) for \(t>0\), and let
\(\delta^{-1}\) be the generalized inverse defined in \eqref{eq:ext}.
Then there exists a constant \(c_*>0\), depending only on the a priori geometric
data of the class, such that
\begin{equation}\label{eq:Hd1}
	   d \le \widehat d,
    \qquad
    c_*\widehat d
    \le \widetilde d
    \le \delta^{-1}(d)
    \le \delta^{-1}(\widehat d).
\end{equation}
If the exterior connectedness modulus \(\delta\) fixed in the admissible class is
linear, namely \(\delta(t)\ge c_{\rm ec}t\) for \(0<t\le 2R\), then
\begin{equation}\label{eq:Hd2}
	  c_*\widehat d
    \le \widetilde d
    \le c_{\rm ec}^{-1}d.
\end{equation}
In particular, in the linear case, \(d\), \(\widehat d\), and \(\widetilde d\) are
linearly comparable up to constants depending only on the a priori data.
\end{proposition}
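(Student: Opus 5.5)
We prove the four inequalities in \eqref{eq:Hd1} separately; \eqref{eq:Hd2} will then follow directly.

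\emph{The inequality \(d\le\widehat d\).} This is immediate from Definition~\ref{def:hyperbolic-distances}. The two suprema defining \(d\) run over subsets of \(\partial\mathcal P\) and of \(\partial\mathcal P'\), respectively. The integrands are the same as those in \(\widehat d\). With the convention that the supremum over \(\emptyset\) is zero, we get \(d\le\widehat d\).

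\emph{The inequality \(\delta^{-1}(d)\le\delta^{-1}(\widehat d)\).} This follows from \(d\le\widehat d\), since \(\delta^{-1}\) in \eqref{eq:ext} is nondecreasing.

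\emph{The inequality \(c_*\widehat d\le\widetilde d\).} Take \(x\in\partial\mathcal P\). We must bound \(\rho(x,\partial\mathcal P')\) by \(C\widetilde d\).

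If \(x\notin\mathcal P'\), let \(y\in\mathcal P'\) be a nearest point. The geodesic segment from \(x\) to \(y\) first hits \(\mathcal P'\) at a point of \(\partial\mathcal P'\). Hence \(\rho(x,\partial\mathcal P')\le\rho(x,\mathcal P')\le\widetilde d\).

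If \(x\in\mathcal P'\), we use the uniform exterior cone condition (4) for \(\mathcal P\). The cone \(\mathcal C_{\mathbb H}(x,\omega_x,r_{\rm cone},\theta_{\rm cone})\) lies in \(\mathbb B^n\setminus\mathcal P\). Take the point \(z\) on its axis at distance \(t\) from \(x\). Then \(B_{\mathbb H}(z,c\,t)\) lies in the cone, with \(c=\sin\theta_{\rm cone}\) up to a hyperbolic comparison factor for \(t\le r_{\rm cone}\). Every point of this ball is at distance at least \(\rho(\cdot,\mathcal P)\) from \(\mathcal P\), so points of \(\mathcal P'\) there have distance at most \(\widetilde d\) to \(\mathcal P\).

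Choose \(t\) so that \(c\,t>\widetilde d\), namely \(t=2\widetilde d/c\), which is allowed when \(\widetilde d\) is small. Then \(z\notin\mathcal P'\). The segment from \(x\in\mathcal P'\) to \(z\) therefore meets \(\partial\mathcal P'\) within distance \(t\) of \(x\). This gives \(\rho(x,\partial\mathcal P')\le C\widetilde d\).

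When \(\widetilde d\) is not small, we use \(\widehat d\le 2R\le C\widetilde d\), since everything lies in \(\overline{B_{\mathbb H}(R)}\). If \(\widetilde d=0\), the defects coincide. Finally, we exchange the roles of \(\mathcal P\) and \(\mathcal P'\).

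\emph{The inequality \(\widetilde d\le\delta^{-1}(d)\).} This is the main step. Suppose, without loss of generality, that \(x\in\mathcal P\) with \(t_*:=\rho(x,\mathcal P')\) close to \(\widetilde d\). We show \(\delta(t)\le d\) for every \(t<t_*\), so that \(t_*\le\delta^{-1}(d)\).

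Fix \(t<t_*\), so that \(B_{\mathbb H}(x,t)\subset G'\).
\begin{itemize}
\item If \(B_{\mathbb H}(x,t)\) contains no point of \(\partial\mathcal P\), then the ball lies in \(\mathcal P\) (it contains \(x\in\mathcal P\) and is connected), hence in the interior of \(\mathcal P\).
\item If it does contain a point of \(\partial\mathcal P\), we shrink to a smaller ball centered at such a boundary point lying outside \(\mathcal P'\). That point lies in \(\partial\mathcal P\setminus\mathcal P'\), and this case gives \(d\ge t_*-t\) directly. We combine the two cases with a limiting argument.
\end{itemize}
In the main case a ball \(B_{\mathbb H}(x,t)\) sits in \(G'\) while being contained in \(\mathcal P\). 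We pick a far point \(x_2\) with \(B_{\mathbb H}(x_2,t)\subset G'\), for example outside \(B_{\mathbb H}(R+t)\). Uniform exterior connectedness (6) for \(\mathcal P'\), applied with any \(s<\delta(t)\), gives a curve \(\gamma\) from \(x\) to \(x_2\) with \(B_{\mathbb H}(\gamma,s)\subset G'\).

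Since \(\gamma\) starts inside \(\mathcal P\) and ends in \(G\), it crosses \(\partial\mathcal P\) at some point \(w\). This point satisfies \(B_{\mathbb H}(w,s)\subset G'\). Hence \(w\in\partial\mathcal P\setminus\mathcal P'\) and \(\rho(w,\partial\mathcal P')\ge s\), so \(d\ge s\). Letting \(s\uparrow\delta(t)\) gives \(\delta(t)\le d\) for all \(t<t_*\). Left-continuity and monotonicity of \(\delta\), together with the definition \eqref{eq:ext}, then yield \(t_*\le\delta^{-1}(d)\). Taking the supremum over \(x\) and symmetrizing gives \(\widetilde d\le\delta^{-1}(d)\).

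We expect the main obstacle to be the bookkeeping in this step: the boundary point \(w\) must be shown to be genuinely outside \(\mathcal P'\), and hypersurface-supported components must be handled. There the crossing point is detected via \(\partial\mathcal P\) as interaction surface rather than as a topological boundary.

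\emph{The linear case.} If \(\delta(t)\ge c_{\rm ec}t\), then \(\delta^{-1}(s)\le c_{\rm ec}^{-1}s\) directly from \eqref{eq:ext}. This gives \eqref{eq:Hd2}.
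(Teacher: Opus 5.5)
Your proposal is correct and follows essentially the same route as the paper: the interior ball of the exterior cone of $\mathcal P$ at a boundary point forces $\partial\mathcal P'$ to lie within $O(\widetilde d)$ (giving $c_*\widehat d\le\widetilde d$), and the uniform-exterior-connectedness curve starting at a point of one defect at maximal distance from the other must leave that defect through a boundary point which is at distance at least $s$ from the other defect (giving $\widetilde d\le\delta^{-1}(d)$). One cleanup is needed in that last step. Drop the case split on whether $B_{\mathbb H}(x,t)$ meets $\partial\mathcal P$: the ``limiting argument'' in its second bullet would not by itself give $\delta(t)\le d$, since it only yields $d\ge t_*-t$. This does not matter, because your curve argument uses only $x\in\mathcal P$ and $B_{\mathbb H}(x,t)\subset G'$, so it applies verbatim to every $t<t_*$.
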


\begin{proof}
The inequality \(d\le \widehat d\) follows directly from \eqref{eq:d2} and \eqref{eq:d3}.
Indeed, each supremum in the definition of \(d\) is taken over a subset of the corresponding set used in the definition of \(\widehat d\).
Taking the maximum gives \(d\le \widehat d\).

\smallskip
We prove the left side \(c_*\widehat d\le\widetilde d\) of \eqref{eq:Hd1} and \eqref{eq:Hd2}.
If \(\widehat d=0\), this case is trivial.
After interchanging \(\mathcal P\) and \(\mathcal P'\) if necessary, choose \(x\in\partial\mathcal P\) such that \(\widehat d=\rho(x,\partial\mathcal P')\).
If \(\widetilde d\ge \widehat d/2\), the claim follows.
Therefore, we assume \(\widetilde d<\widehat d/2\).

Since \(x\in\mathcal P\), there exists \(x'\in\mathcal P'\) such that
\[
    \rho(x,x')\le \widetilde d<\widehat d/2.
\]
Hence \(B_{\mathbb H}(x,\widehat d)\) meets \(\mathcal P'\).
By the definition of \(\widehat d\), this ball does not meet \(\partial\mathcal P'\).
Since \(B_{\mathbb H}(x,\widehat d)\) is connected and disjoint from
\(\partial\mathcal P'\), it must lie entirely in one connected component of
\(\mathbb B^n\setminus\partial\mathcal P'\).
As it intersects \(\mathcal P'\), it cannot lie in the exterior of \(\mathcal P'\).
Therefore
\[
    B_{\mathbb H}(x,\widehat d)
    \subset \operatorname{int}(\mathcal P').
\]

By the uniform exterior cone condition for \(\mathcal P\) at \(x\), there is a hyperbolic cone
\[
    \mathcal C_{\mathbb H}(x,\omega_x,r_{\rm cone},\theta_{\rm cone})
    \subset \mathbb B^n\setminus\mathcal P .
\]
The elementary interior-ball property of hyperbolic cones gives \(a_{\rm cone}\in(0,1)\), depending only on \(r_{\rm cone}\) and \(\theta_{\rm cone}\), such that for \(0<\ell\le r_{\rm cone}/2\) and \(y=\exp_x(\ell\omega_x)\),
\[
    B_{\mathbb H}(y,a_{\rm cone}\ell)
    \subset \mathbb B^n\setminus\mathcal P .
\]
Choose
\[
    \ell=\frac12\min\{\widehat d,r_{\rm cone}\},
    \qquad
    y=\exp_x(\ell\omega_x).
\]
Since \(\ell<\widehat d\), we have \(y\in B_{\mathbb H}(x,\widehat d)\subset\mathcal P'\).
Therefore
\[
    \widetilde d
    \ge \rho(y,\mathcal P)
    \ge a_{\rm cone}\ell
    =
    \frac{a_{\rm cone}}2\min\{\widehat d,r_{\rm cone}\}.
\]
If \(\widehat d\le r_{\rm cone}\), this gives \(\widetilde d\ge a_{\rm cone}\widehat d/2\).
If \(\widehat d>r_{\rm cone}\), then \(\widehat d\le 2R\), and hence
\[
    \widetilde d
    \ge
    \frac{a_{\rm cone}r_{\rm cone}}2
    \ge
    \frac{a_{\rm cone}r_{\rm cone}}{4R}\widehat d .
\]
Together with the case \(\widetilde d\ge \widehat d/2\), this proves \(\widetilde d\ge c_*\widehat d\) for some \(c_*>0\) depending only on the a priori geometric data.
Thus \(c_*\widehat d\le\widetilde d\).

\smallskip

We now prove \(\widetilde d\le\delta^{-1}(d)\).
If \(\widetilde d=0\), there is nothing to prove.
Assume \(\widetilde d>0\).
After interchanging \(\mathcal P\) and \(\mathcal P'\) if necessary, choose
\(x\in\mathcal P'\) such that
\[
    \widetilde d=\rho(x,\mathcal P).
\]
Set \(G:=\mathbb B^n\setminus\mathcal P\), and fix \(0<t<\widetilde d\).
Then
\[
    B_{\mathbb H}(x,t)\subset G.
\]
Choose \(z\in\mathbb B^n\) with \(\rho(0,z)>R+t\).
Since \(\mathcal P,\mathcal P'\subset\overline{B_{\mathbb H}(R)}\), we have
\(z\notin\mathcal P'\), and the triangle inequality gives
\[
    B_{\mathbb H}(z,t)\subset G.
\]

If \(x\in\partial\mathcal P'\), then
\(x\in\partial\mathcal P'\setminus\mathcal P\), and
\(\rho(x,\mathcal P)=\rho(x,\partial\mathcal P)\).
Thus \(d\ge\widetilde d\), and so \(\delta(t)\le t<\widetilde d\le d\).

If \(x\in\operatorname{int}(\mathcal P')\), let \(0<s<\delta(t)\).
By the uniform exterior connectedness of \(G\), there exists a \(C^1\) curve
\(\gamma\subset G\) joining \(x\) to \(z\) such that
\(B_{\mathbb H}(\gamma,s)\subset G\).
Since \(x\in\operatorname{int}(\mathcal P')\) and \(z\notin\mathcal P'\), the curve
\(\gamma\) meets \(\partial\mathcal P'\) at some point \(y\).
Then \(y\in\partial\mathcal P'\setminus\mathcal P\), and
\(\rho(y,\partial\mathcal P)=\rho(y,\mathcal P)\ge s\).
Hence \(d\ge s\).
Letting \(s\uparrow\delta(t)\), we obtain \(d\ge\delta(t)\).

Thus \(\delta(t)\le d\) for every \(0<t<\widetilde d\).
By the definition of the generalized inverse, \(t\le\delta^{-1}(d)\) for every
\(0<t<\widetilde d\).
Letting \(t\uparrow\widetilde d\), we get \(\widetilde d\le\delta^{-1}(d)\).
Since \(d\le\widehat d\) and \(\delta^{-1}\) is nondecreasing, we have
\begin{equation*}
    d\le \widehat d,
    \qquad
    c_*\widehat d
    \le \widetilde d
    \le \delta^{-1}(d)
    \le \delta^{-1}(\widehat d).
\end{equation*}

Finally, assume that \(\delta(t)\ge c_{\rm ec}t\) for \(0<t\le 2R\).
Since \(\delta(t)\le d\) for every \(0<t<\widetilde d\), we have
\(c_{\rm ec}t\le d\) for every \(0<t<\widetilde d\).
Letting \(t\uparrow\widetilde d\), we obtain
\begin{equation}\label{eq:distance-comparison-linear}
    c_*\widehat d
    \le \widetilde d
    \le c_{\rm ec}^{-1}d .
\end{equation}
This proves the general comparison. 
Under the additional linearity assumption on \(\delta\), \eqref{eq:distance-comparison-linear} gives the asserted linear comparison.

The proof is complete.
\end{proof}

We conclude this subsection by recording the compactness of the admissible classes.
This property will be used repeatedly in the stability analysis when passing to limits of
defects.

\begin{proposition}\label{prop:compactness-admissible-classes}
Let \(0<h\leq h_0\) be fixed.
Then \(\mathcal A_{\mathbb H}^h\) is compact with respect to
\(d_{\mathcal H}^{\mathbb H}\).
Moreover, \(\mathcal B_{\mathbb H}^h\) is compact with respect to the same distance.
\end{proposition}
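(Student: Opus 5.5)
We argue by sequential compactness, since \((\cdot,d_{\mathcal H}^{\mathbb H})\) restricted to nonempty compact subsets of the compact ball \(\overline{B_{\mathbb H}(R)}\) is a compact metric space by the Blaschke selection theorem. Given a sequence \(\mathcal P_k\in\mathcal A_{\mathbb H}^h\), we first pass to a subsequence with \(\mathcal P_k\to\mathcal P\) in \(d_{\mathcal H}^{\mathbb H}\) for some nonempty compact \(\mathcal P\subset\overline{B_{\mathbb H}(R)}\). Since \(N_{\mathcal P_k}\le N_0\), we may also assume \(N_{\mathcal P_k}\equiv N\) is constant. Each face \(\mathcal F_j^k\) lies in \(V_j^k=T_{a_j^k}(\phi(U_{\nu_j^k}))\), and we normalize \(a_j^k\) to be the point of \(V_j^k\) nearest the origin. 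Because the faces meet \(\overline{B_{\mathbb H}(R)}\), \(a_j^k\) stays in a fixed compact set. We therefore extract limits \(a_j^k\to a_j\) and \(\nu_j^k\to\nu_j\), and further subsequences with \(\mathcal F_j^k\to\mathcal F_j\) in Hausdorff distance. Then \(\mathcal F_j\subset V_j:=T_{a_j}(\phi(U_{\nu_j}))\), since the hypersurfaces converge locally uniformly. The cone points \(y_j^k\) and directions \(\tau_j^k\) converge as well.

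The next step is to verify each admissibility condition for the limit, in order.
\begin{enumerate}
\item \textbf{Uniform Lipschitz regularity.} With constant \(L\) and scale \(h\), regularity passes to Hausdorff limits in the usual way: graphs of \(L\)-Lipschitz functions over fixed-size patches converge uniformly by Arzel\`a--Ascoli. Each \(\mathcal F_j\) is therefore the closure of a Lipschitz domain in \(V_j\).
\item \textbf{Relative cone.} The cone \(\mathcal C^{V_j}_{\mathbb H}(y_j,\tau_j,c_0h,\theta_0)\) is contained in \(\mathcal F_j\), because open cones are stable under convergence of vertex and axis.
\item \textbf{Exterior cone condition.} For \(x\in\partial\mathcal P\), choose \(x_k\in\partial\mathcal P_k\) with \(x_k\to x\) and take limits of the axes \(\omega_{x_k}\). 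Any point of the open limit cone lies in the cones for large \(k\) and hence avoids \(\mathcal P_k\). Such a point cannot belong to \(\mathcal P\), since points of \(\mathcal P\) are limits of points of \(\mathcal P_k\) and the cones have uniform size.
\item \textbf{Controlled incidence.} The angle bound \(\ge\alpha_0\) and the inequality \(\rho(x,\bigcup_{j\ne i}\mathcal F_j)\ge c_0\rho(x,\partial_{V_i}\mathcal F_i)\) are closed conditions once the faces converge. Disjointness of relative interiors follows from the separation inequality together with the cone condition, which together prevent overlapping faces in the limit.
\item \textbf{Exterior connectedness.} Fix \(x_1,x_2\) with \(t\)-balls in \(G\) and \(s<\delta(t)\). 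For large \(k\), the balls \(B_{\mathbb H}(x_i,t')\) with \(t'<t\) lie in \(G_k\). Pick \(s<s'<\delta(t')\), which left-continuity of \(\delta\) permits. The curves \(\gamma_k\) with \(B_{\mathbb H}(\gamma_k,s')\subset G_k\) can be replaced by piecewise geodesic curves of bounded length, again \(C^1\) after smoothing. Arzel\`a--Ascoli then gives a limit curve \(\gamma\) with \(B_{\mathbb H}(\gamma,s)\subset G\). The same argument yields connectedness of \(G\).
\end{enumerate}

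The main obstacle is the identification \(\partial\mathcal P=\bigcup_j\mathcal F_j\). The inclusion \(\partial\mathcal P\subset\bigcup_j\mathcal F_j\) is standard for Hausdorff limits. For the reverse inclusion, the danger is that a face becomes interior to \(\mathcal P\), for example two bulk pieces merging across it. The uniform exterior cone condition at points of \(\mathcal F_j^k\), transported to the limit, shows that every point of \(\mathcal F_j\) carries an exterior cone of size \(r_{\rm cone}\), so it cannot be interior. A point of \(\mathcal F_j\) also lies in \(\mathcal P\), since \(\mathcal F_j^k\subset\mathcal P_k\). Hence \(\mathcal F_j\subset\partial\mathcal P\).

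A related subtlety concerns hypersurface-supported components, which have empty interior and hence two-sided interaction surfaces. In that case we use the two-sided version of the cone argument. We also invoke the connected-exterior property of the limit, established above, to rule out the interaction surface degenerating.

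For \(\mathcal B_{\mathbb H}^h\), we additionally pass the collar charts to the limit. Fix \(x\in\partial\mathcal P\) and a component \(U\), approximate them by \(x_k\) and components \(U_k\), and use Arzel\`a--Ascoli on the uniformly bi-Lipschitz maps \(T_k\). The limit \(T\) is again bi-Lipschitz with constant \(L\). The boundary correspondence, the inclusion in the exterior, and the separation bound \(\omega_{\rm col}(s)\) then pass to the limit by continuity, using left-continuity of \(\omega_{\rm col}\) in the same manner as for \(\delta\).
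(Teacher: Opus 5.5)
Your route is the same as the paper's: Blaschke selection, limits of the supporting hypersurfaces and faces, the exterior cone condition to keep limiting faces on \(\partial\mathcal P\), and Arzel\`a--Ascoli for the collar charts. There is, however, a genuine gap in item~4. The paper passes over the same point when it says that the controlled incidence conditions ``pass to the limit''.

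Two of the incidence requirements are conditional on contact. First, \(\mathcal F_i\cap\mathcal F_j\) must lie in the relative boundaries of both faces; you never address this requirement. Second, \(\angle_{\mathbb H}(V_i,V_j)\ge\alpha_0\) is imposed only when \(\mathcal F_i\cap\mathcal F_j\neq\emptyset\). Conditions of this kind are not closed under Hausdorff convergence: faces that are disjoint for every \(k\) can touch in the limit, and then nothing is inherited from the sequence. The separation inequality does not prevent this, because it only constrains points with \(c_0\rho(x,\partial_{V_i}\mathcal F_i)<r_0\), i.e.\ points near a relative boundary.

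Here is a concrete counterexample in \(\mathbb B^2\).
Let \(S\) be the segment of \(\{x_2=0\}\) of hyperbolic half-length \(a\) centred at \(0\). Let \(T_k\) be the segment of \(\{x_1=0\}\) between hyperbolic heights \(t_k\) and \(t_k+b\), with \(t_k\downarrow0\). Set \(\mathcal P_k=S\cup T_k\), two hypersurface-supported components. Take data with \(c_0\le1\), \(\theta_{\rm cone}<\pi/4\), \(a\ge r_0/c_0+r_0\), \(b\ge2h\), \(R>a+b+1\), \(N_0=2\) and \(\delta(t)=t\). Then every \(\mathcal P_k\) lies in \(\mathcal A^h_{\mathbb H}\):

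\begin{itemize}
\item The two faces are disjoint.
\item Separation holds. A point \(x\in S\) with \(c_0\rho(x,\partial S)<r_0\) satisfies \(\rho(x,T_k)\ge\rho(x,0)>r_0\). A point \(x\in T_k\) at distance \(\sigma\) from the tip satisfies \(\rho(x,S)=t_k+\sigma\ge c_0\rho(x,\partial T_k)\) (relative boundaries).
\item Exterior cones exist. At points of \(S\) use the downward normal cone. At points of \(T_k\) use the cone with axis in the direction \((1,1)\). Since \(\sinh\) of the signed distance to a geodesic satisfies \(f''=f\) along geodesics, these rays never return to \(\{x_1=0\}\) and never reach \(\{x_2=0\}\).
\item Exterior connectedness holds with \(\delta(t)=t\). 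For each \(s>0\), the closed \(s\)-neighbourhood of \(S\cup T_k\) is a union of two compact convex sets, so its complement is connected.
\end{itemize}

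Yet the Hausdorff limit \(S\cup T\) has \(T\) touching \(S\) at \(0\in\operatorname{relint}S\). The only two-face decomposition violates the incidence requirement, and splitting \(S\) at \(0\) needs three faces. For larger \(N_0\), add \(N_0-2\) small far-away segments.

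Similarly, two faces whose endpoints approach each other at an angle \(\alpha<\alpha_0\) remain admissible when \(c_0\) is small compared with \(\sin\alpha\), while their limit violates the angle bound.

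So for such data \(\mathcal A^h_{\mathbb H}\) is not closed. This step needs an extra quantitative hypothesis that prevents faces from approaching each other without meeting. For instance, the separation inequality with \(\min\{\rho(x,\partial_{V_i}\mathcal F_i),r_0/c_0\}\) on the right, imposed for all \(x\), would exclude the first example. A near-contact version of the angle bound would also be needed.

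Your argument for \(\mathcal B^h_{\mathbb H}\) relies on the \(\mathcal A^h_{\mathbb H}\) step. The collar condition does exclude this particular example: the connected set \(T(\Gamma)\subset\partial\mathcal P_k\) would have to contain points of both \(S\) and \(T_k\). So for \(\mathcal B^h_{\mathbb H}\), a non-contact property would have to be extracted from the collar hypothesis rather than from \(\mathcal A^h_{\mathbb H}\).

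A minor point: in item~5 you need neither Arzel\`a--Ascoli nor a length bound. If \(B_{\mathbb H}(\gamma_k,s')\subset G_k\) and \(d_{\mathcal H}^{\mathbb H}(\mathcal P_k,\mathcal P)<s'-s\), then \(B_{\mathbb H}(\gamma_k,s)\subset G\) already.
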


\begin{proof}
We first prove the compactness of \(\mathcal A_{\mathbb H}^h\) by the admissibility
conditions in Definition~\ref{def:admissible-tg-defects}.
Let \(\{\mathcal P_m\}_{m=1}^{\infty}\subset\mathcal A_{\mathbb H}^h\).
Since all \(\mathcal P_m\) are contained in the compact set
\(\overline{B_{\mathbb H}(R)}\), the Blaschke selection theorem
(cf.\cite{beer1993topologies}) yields, up to a subsequence, a compact set
\(\mathcal P\subset\overline{B_{\mathbb H}(R)}\) such that
\(d_{\mathcal H}^{\mathbb H}(\mathcal P_m,\mathcal P)\to0\).

We show that \(\mathcal P\in\mathcal A_{\mathbb H}^h\).
The boundedness condition is immediate.
The connectedness of \(\mathbb B^n\setminus\mathcal P\), and more precisely the
same uniform exterior connectedness condition with modulus \(\delta\), follows from
the stability of this condition under Hausdorff convergence, using the monotonicity
and left-continuity of \(\delta\). Indeed, one first applies the condition to
\(\mathcal P_m\) with a slightly smaller radius \(t'<t\), and then lets \(t'\uparrow t\).

It remains to check the face structure required in Definition~\ref{def:admissible-tg-defects}.
After extracting a further subsequence and relabeling the faces, we may write
\[
    \partial\mathcal P_m=\bigcup_{j=1}^{N}\mathcal F_{m,j},
    \qquad N\leq N_0 .
\]
Some limiting faces may coincide or become redundant.
In that case we merge coincident faces and discard redundant labels at the end.
The number of remaining faces is still bounded by \(N_0\).

For each \(j\), let \(V_{m,j}\) be the totally geodesic hypersurface containing
\(\mathcal F_{m,j}\).
Since these hypersurfaces given by Definition~\ref{def:tg-hypersurface-ball} meet the
fixed compact ball \(\overline{B_{\mathbb H}(R)}\), we may assume that \(V_{m,j}\)
converges locally smoothly to a totally geodesic hypersurface \(V_j\).
By compactness of compact subsets of \(\overline{B_{\mathbb H}(R)}\), we may also
assume that \(\mathcal F_{m,j}\) converges in the Hausdorff distance to a compact set
\(\mathcal F_j\subset V_j\).

The relative Lipschitz regularity, the \(h\)-regularity, and the controlled incidence conditions pass to the limit by standard compactness and lower semicontinuity.
After merging coincident faces and discarding redundant labels, each remaining \(\mathcal F_j\) is the relative closure in \(V_j\) of a connected relatively open set.
The relative interiors are pairwise disjoint, and the angle and separation conditions are preserved.
Moreover, the uniform exterior cone condition, which is part of the admissible class, prevents the creation of artificial limiting faces in the interior of \(\mathcal P\).
Hence
\[
    \partial\mathcal P=\bigcup_{j=1}^{N_*}\mathcal F_j,
    \qquad N_*\leq N_0 .
\]
The exterior cone condition and the exterior connectedness condition pass to the limit with the same constants and the same modulus \(\delta\).
Thus, \(\mathcal P\in\mathcal A_{\mathbb H}^h\), and \(\mathcal A_{\mathbb H}^h\) is compact.

We now consider the \(\mathsf N\)-type class defined by
Definition~\ref{def:hard-admissible-tg-defects}.
Since \(\mathcal B_{\mathbb H}^h\subset\mathcal A_{\mathbb H}^h\), it is enough to
show that \(\mathcal B_{\mathbb H}^h\) is closed in
\(\mathcal A_{\mathbb H}^h\).
Let \(\mathcal P_m\in\mathcal B_{\mathbb H}^h\) and assume that
\(d_{\mathcal H}^{\mathbb H}(\mathcal P_m,\mathcal P)\to0\).
By the first part, \(\mathcal P\in\mathcal A_{\mathbb H}^h\).
We verify the local collar condition for \(\mathcal P\).

Fix \(x\in\partial\mathcal P\) and a connected component \(U\) of
\((\mathbb B^n\setminus\mathcal P)\cap B_{\mathbb H}(x,r_0)\) such that
\(x\in\partial U\). Choose \(x_m\in\partial\mathcal P_m\) with \(x_m\to x\).
Let \(U_m\) be the local exterior component for \(\mathcal P_m\) corresponding to
\(U\), namely the component which intersects every compact subset of
\(U\cap B_{\mathbb H}(x,r_0)\) for all sufficiently large \(m\).
For each \(m\), the definition of \(\mathcal B_{\mathbb H}^h\) gives an open set
\(U'_m\subset\mathbb B^n\setminus\mathcal P_m\) and a bi-\(W^{1,\infty}\)
homeomorphism \(T_m:Q^+\to U'_m\), with constants bounded by \(L\).

By the uniform \(W^{1,\infty}\) bounds, the Arzel\`a--Ascoli theorem, see for instance
\cite{rudin1976principles}, and weak-\(*\) compactness in \(W^{1,\infty}\), a
subsequence of \(T_m\) converges locally uniformly and weakly-\(*\) in
\(W^{1,\infty}\) to a map \(T:Q^+\to U'\). Passing to the inverses as well, the limit
map is again a bi-\(W^{1,\infty}\) homeomorphism with the same uniform bounds.
The Hausdorff convergence, together with the collar separation condition and the choice
of the corresponding components \(U_m\), gives
\(U\cap B_{\mathbb H}(x,r_0/2)\subset U'\subset\mathbb B^n\setminus\mathcal P\).

The boundary correspondence passes to the limit, namely \(T(0)=x\) and
\[
    \partial U\cap B_{\mathbb H}(x,r_0/2)
    \subset
    T(\Gamma)
    \subset
    \partial(\mathbb B^n\setminus\mathcal P).
\]
Finally, the collar separation condition is preserved by the convergence of the charts:
for every \(0<s<r_0/2\) and every
\(y\in U\cap B_{\mathbb H}(x,r_0/2-s)\), one has
\[
    \operatorname{dist}_{\mathbb R^n}
    \left(
        T^{-1}(y),\partial Q^+\setminus\Gamma
    \right)
    \geq
    \omega_{\rm col}(s).
\]
Here one first applies the corresponding estimate for \(T_m\) with a slightly smaller
radius loss and then lets this loss tend to zero, using the left-continuity of
\(\omega_{\rm col}\).
Therefore, \(\mathcal P\in\mathcal B_{\mathbb H}^h\).
Thus, \(\mathcal B_{\mathbb H}^h\) is closed in the compact class
\(\mathcal A_{\mathbb H}^h\), and hence it is compact.

The proof is complete.
\end{proof}

\subsection{Stability for the direct problem and uniform hyperbolic bounds}\label{subsec:direct-uniform-bound}

We now establish estimates for the direct problem \eqref{eq:hhelm1} on the admissible classes introduced in Subsection~\ref{subsec:admissible-class}.
The compactness properties, a priori bounds, and far-field decay estimates established below are uniform for \(\mathcal P\in\mathcal A_{\mathbb H}^h\) or \(\mathcal P\in\mathcal B_{\mathbb H}^h\), and also with respect to the boundary label.
They provide the analytic input for the quantitative stability analysis of the inverse problem \eqref{eq:hip1}.

For a fixed admissible totally geodesic defect \(\mathcal P\), the well-posedness of \eqref{eq:hhelm1}, the outgoing Green representation, the radiation condition, the Rellich theorem, and the far-field expansion follow from the analytic preliminaries in Section~\ref{sec 2}.
The new issue is the behavior of the corresponding solutions and estimates when the defect is perturbed in the hyperbolic Hausdorff distance.
We prove that the relevant compactness, a priori bounds, and far-field decay estimates remain stable and uniform along such perturbations.

The defects are perturbed, but the analysis is performed on fixed hyperbolic balls.
In the Poincar\'e ball model, the metric \eqref{eq:hyper metric} is uniformly equivalent to the Euclidean metric in finitely many charts covering \(\overline{B_{\mathbb H}(R_*)}\).
The \(\mathsf D\)-type case is handled by zero extension and the closedness of the homogeneous trace condition.
The \(\mathsf N\)-type case uses the collar condition in Definition~\ref{def:hard-admissible-tg-defects}, which gives Mosco convergence in local charts and uniform Sobolev constants on the exterior domains; see~\cite{mosco1969convergence,fornoni2023mosco}.
{
Unless otherwise stated, the constants are uniform in \(\mathcal P\) and \(\xi\);
the parameter \(h\) is fixed throughout this subsection.
}

We use the following notation throughout this subsection.
All balls \(B_{\mathbb H}(r)\) are centered at the origin.
The operator \(\mathcal L_{\lambda_0}\) has been fixed in the formulation of the direct problem.
We recall the notation
\begin{equation}\label{eq:Lambda0-definition}
    \Lambda_0
    :=
    \frac{(n-1)^2}{4}
    +
    \lambda_0^2 .
\end{equation}
With this notation, the homogeneous equation is simply written as
$
    \mathcal L_{\lambda_0}u=0.
$

Here and below, \(R\) denotes the a priori radius fixed in Definition~\ref{def:admissible-tg-defects}.
For \(R_*>R\) and an admissible defect \(\mathcal P\), we define the bounded exterior part by
\begin{equation}\label{eq:local-exterior-domain}
    D_{\mathcal P}(R_*)
    :=
    B_{\mathbb H}(R_*)\setminus \mathcal P .
\end{equation}
If \(\mathcal P_m\to\mathcal P\) in the hyperbolic Hausdorff distance, we also write
\begin{equation}\label{eq:local-exterior-domain-sequence}
    D_m(R_*):=D_{\mathcal P_m}(R_*),
    \qquad
    D(R_*):=D_{\mathcal P}(R_*) .
\end{equation}
Unless otherwise specified, all \(L^p\) and \(H^1\) norms on these domains are taken with respect to the hyperbolic volume element \eqref{eq:hyvolume}, and the \(H^1\) norm is defined using the hyperbolic gradient in \eqref{eq:Laplace}.

Recall that \(\mathscr B_{\mathcal P}\) denotes the trace operator associated with the chosen \(\mathsf D\)- or \(\mathsf N\)-type trace condition:
\begin{equation*}
    \mathscr B_{\mathcal P} u :=
\begin{cases}
u|_{\partial\mathcal P}, & \text{in the \(\mathsf D\)-type case},\\
\partial_{\nu_{\mathbb H}}u|_{\partial\mathcal P}, & \text{in the \(\mathsf N\)-type case}.
\end{cases}
\end{equation*}
{
In the \(\mathsf N\)-type case, \(\partial_{\nu_{\mathbb H}}\) is the
hyperbolic normal derivative defined in \eqref{eq:outnormal}.  For weak
solutions, the homogeneous condition is understood in the usual variational
sense on each local exterior side.
}
Thus \(\mathscr B_{\mathcal P}u=0\) denotes either the homogeneous Dirichlet condition or the homogeneous Neumann condition.

For \(\xi\in\mathbb S^{n-1}\), we denote the incoming Helgason mode and the corresponding exterior solution by
\begin{equation*}
    u^i_\xi:=e_{2\lambda_0,\xi},
    \qquad
    u_{\mathcal P,\xi}=u^i_\xi+u^s_{\mathcal P,\xi}.
\end{equation*}
Here \(u^s_{\mathcal P,\xi}\) is the outgoing correction.
{
We also denote by \(G^+_{\lambda_0}(x,y)\) the outgoing Green function
associated with \(\mathcal L_{\lambda_0}\), defined by \eqref{eq:Green-} for
\(n\geq3\) and by the minus-sign branch \(G_{-\lambda_0\mathrm i}\) in
\eqref{eq:Green-two-dimensional} for \(n=2\).
}

Before stating the technical lemmas, we recall the notion of Mosco convergence and state it in the setting of hyperbolic Sobolev spaces; see~\cite{mosco1969convergence}.

\begin{definition}\label{def:Mosco}
Let \(H\) be a Hilbert space, and let \(\{X_m\}_{m\in\mathbb N}\) be a sequence of closed subspaces of \(H\).
We say that \(X_m\) converges to a closed subspace \(X\subset H\) in the sense of Mosco, written as
\[
    X_m \xrightarrow[M]{} X,
\]
if the following two conditions hold:
\begin{enumerate}
    \item \textbf{Weak closedness.}
    If \(x_m\in X_m\) and \(x_m\rightharpoonup x\) weakly in \(H\), then \(x\in X\).

    \item \textbf{Recovery.}
    For every \(x\in X\), there exists \(x_m\in X_m\) such that \(x_m\to x\) strongly in \(H\).
\end{enumerate}
\end{definition}

In the applications below, the spaces \(H^1_{\mathbb H}(D_{\mathcal P}(R_*))\) are regarded as closed subspaces of the fixed Hilbert space
\[
    L^2\big(B_{\mathbb H}(R_*);\mathbb C^{n+1}\big)
\]
through the isometric embedding
\[
    u\mapsto (u,\nabla_{\mathbb H}u),
\]
where both \(u\) and \(\nabla_{\mathbb H}u\) are extended by zero on
\(B_{\mathbb H}(R_*)\setminus D_{\mathcal P}(R_*)\).
Mosco convergence in \eqref{eq:mosco-convergence-H1-hard} is understood in this fixed Hilbert space.

To prove uniform direct estimates on varying admissible defects, we first record the basic compactness properties of the associated Sobolev spaces.
These properties describe how the \(\mathsf D\)-type and \(\mathsf N\)-type boundary conditions behave under Hausdorff convergence.

\begin{lemma}\label{lem:chartwise-mosco-sobolev}
Let \(R_*>R\), and let \(0<h\leq h_0\) be fixed.
In the \(\mathsf D\)-type case, let \(\mathcal P_m\in\mathcal A_{\mathbb H}^h\), while in the \(\mathsf N\)-type case, let \(\mathcal P_m\in\mathcal B_{\mathbb H}^h\).
Assume that
\begin{equation}
\label{eq:Hausdorff-convergence-admissible-exteriors}
    d_{\mathcal H}^{\mathbb H}(\mathcal P_m,\mathcal P)\to0 .
\end{equation}
Then, \(\mathcal P\) belongs to the same admissible class.

For an admissible totally geodesic defect \(\mathcal P\), define
\[
    \mathcal V_{\mathcal P}(R_*) :=
	\begin{cases}
	H^1_{\mathbb H,0}\bigl(D_{\mathcal P}(R_*);\partial\mathcal P\bigr), & \text{in the \(\mathsf D\)-type case},\\
	H^1_{\mathbb H}\bigl(D_{\mathcal P}(R_*)\bigr), & \text{in the \(\mathsf N\)-type case}.
	\end{cases}
\]
Here, \(H^1_{\mathbb H,0}
(D_{\mathcal P}(R_*);\partial\mathcal P)\) denotes the subspace of
\(H^1_{\mathbb H}(D_{\mathcal P}(R_*))\) with zero trace on
\(\partial\mathcal P\), and no condition is imposed on
\(\partial B_{\mathbb H}(R_*)\). The set \(D_{\mathcal P}(R_*)\) is the exterior domain of \(\mathcal P\) defined by \eqref{eq:local-exterior-domain}.

The following properties hold.

\begin{enumerate}
 \item \textbf{\(\mathsf D\)-type closedness.}
In the \(\mathsf D\)-type case, let \(\chi\in C_c^\infty(B_{\mathbb H}(R_*))\), and let
\[
    w_m\in
    H^1_{\mathbb H,0}
    \big(D_{\mathcal P_m}(R_*);\partial\mathcal P_m\big).
\]
Regard \(\chi w_m\) as a function on the fixed ball \(B_{\mathbb H}(R_*)\) by setting it equal to zero on \(B_{\mathbb H}(R_*)\setminus D_{\mathcal P_m}(R_*)\).
Assume that these functions converge weakly in \(H^1_{\mathbb H}(B_{\mathbb H}(R_*))\) to a function \(W\).
Then there exists
\[
    w\in
    H^1_{\mathbb H,0}
    \big(D_{\mathcal P}(R_*);\partial\mathcal P\big)
\]
such that \(W=\chi w\) in \(D_{\mathcal P}(R_*)\) and \(W=0\) in \(B_{\mathbb H}(R_*)\setminus D_{\mathcal P}(R_*)\).
In particular, the homogeneous \(\mathsf D\)-type boundary condition is closed under the convergence in \eqref{eq:Hausdorff-convergence-admissible-exteriors}.

 \item \textbf{\(\mathsf N\)-type Mosco convergence.}
In the \(\mathsf N\)-type case, the two Sobolev spaces displayed below converge in the sense of Mosco:
\begin{equation}
\label{eq:mosco-convergence-H1-hard}
    H^1_{\mathbb H}
    \big(D_{\mathcal P_m}(R_*)\big)
    \xrightarrow[M]{}
    H^1_{\mathbb H}
    \big(D_{\mathcal P}(R_*)\big) .
\end{equation}
That is, every weak limit of functions from \(H^1_{\mathbb H}\big(D_{\mathcal P_m}(R_*)\big)\) belongs to \(H^1_{\mathbb H}\big(D_{\mathcal P}(R_*)\big)\), and every function in \(H^1_{\mathbb H}\big(D_{\mathcal P}(R_*)\big)\) can be approximated strongly by functions from \(H^1_{\mathbb H}\big(D_{\mathcal P_m}(R_*)\big)\).

   \item \textbf{Uniform Sobolev inequality.}
There exist \(p>2\) and \(C_S>0\), depending only on \(R_*\) and the fixed a priori data of the admissible class, such that
\begin{equation}
\label{eq:uniform-Sobolev-admissible-exteriors}
    \|w\|_{L^p_{\mathbb H}(D_{\mathcal P}(R_*))}
    \leq
    C_S
    \|w\|_{H^1_{\mathbb H}(D_{\mathcal P}(R_*))}
\end{equation}
for every admissible \(\mathcal P\) and every
$
    w\in\mathcal V_{\mathcal P}(R_*) .
$
For every measurable set \(E\subset D_{\mathcal P}(R_*)\), write
\( |E|_{\mathbb H}:=\int_E dV_{\mathbb H}\). Consequently,
\begin{equation}
\label{eq:uniform-L2-small-set-control}
    \|w\|_{L^2_{\mathbb H}(E)}
    \leq
    C_S
    |E|_{\mathbb H}^{\frac12-\frac1p}
    \|w\|_{H^1_{\mathbb H}(D_{\mathcal P}(R_*))}.
\end{equation}
\end{enumerate}
\end{lemma}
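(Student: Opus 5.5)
The plan is to reduce everything to a finite atlas of Euclidean charts covering \(\overline{B_{\mathbb H}(R_*)}\). On each chart the Poincar\'e metric \eqref{eq:hyper metric} and the volume element \eqref{eq:hyvolume} are uniformly equivalent to their Euclidean counterparts, so hyperbolic \(H^1\), \(L^p\) and Hausdorff convergence can be replaced by Euclidean ones with constants depending only on \(R_*\). The first assertion, that \(\mathcal P\) lies in the same admissible class, is Proposition~\ref{prop:compactness-admissible-classes}: \(\mathcal A_{\mathbb H}^h\) is compact and \(\mathcal B_{\mathbb H}^h\) is closed in it, so Hausdorff limits stay in the class.

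\textbf{(1) \(\mathsf D\)-type closedness.} The weak limit \(W\) lies in \(H^1_{\mathbb H}(B_{\mathbb H}(R_*))\), and \(\chi w_m\), being extended by zero, vanishes a.e.\ on \(B_{\mathbb H}(R_*)\setminus D_{\mathcal P_m}(R_*)\supset\mathcal P_m\). By Rellich compactness, \(\chi w_m\to W\) strongly in \(L^2\).

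If \(\mathcal P\) has nonempty interior, Hausdorff convergence puts every compact subset of \(\operatorname{int}\mathcal P\) inside \(\mathcal P_m\) for large \(m\), so \(W=0\) a.e.\ on \(\operatorname{int}\mathcal P\). The zero trace on the face set \(\partial\mathcal P\), which includes hypersurface-supported components, needs a separate argument. Take a point of the relative interior of a face \(\mathcal F_j\subset V_j\). The faces \(\mathcal F_{m,j}\subset V_{m,j}\) converge, and the \(V_{m,j}\) converge locally smoothly, as in the proof of Proposition~\ref{prop:compactness-admissible-classes}. The \(h\)-regularity cones ensure that the \(\mathcal F_{m,j}\) eventually cover any compact piece of the relative interior of \(\mathcal F_j\). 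We then flatten \(V_{m,j}\) by a smooth diffeomorphism converging to the one flattening \(V_j\). After this flattening, the one-sided traces of \(\chi w_m\) on the flattened face vanish, and traces are weakly continuous under \(H^1\) weak convergence, so the traces of \(W\) from each side vanish on the limit face.

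Finally we set \(w:=W/\chi\) where \(\chi\ne0\); where \(\chi=0\) the statement is only about \(W\). One can rephrase the conclusion as \(W\in\chi\cdot H^1_{\mathbb H,0}\), which is what is used later.

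\textbf{(2) \(\mathsf N\)-type Mosco convergence.} This is the heart of the lemma. We follow the scheme of \cite{fornoni2023mosco} and localize with a partition of unity subordinate to balls \(B_{\mathbb H}(x,r_0/2)\), \(x\in\partial\mathcal P\), together with interior balls.

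For \emph{recovery}, take \(v\in H^1(D_{\mathcal P}(R_*))\). On each local exterior component \(U\) we pull back by the collar chart \(T\) to \(Q^+\), reflect evenly across \(\Gamma\) to get an \(H^1\) function on the full cube, and push forward by the chart \(T_m\) of the approximating component \(U_m\). The uniform bi-Lipschitz bounds give uniform \(H^1\) control. The charts converge (as in the proof of Proposition~\ref{prop:compactness-admissible-classes}), and the collar separation \(\omega_{\rm col}\) prevents distinct local components from merging. Together these give strong convergence after density of smooth functions in \(H^1(Q^+)\).

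For \emph{weak closedness}, if \(v_m\rightharpoonup(v,g)\), then on every compact \(K\subset D_{\mathcal P}(R_*)\) we have \(K\subset D_{\mathcal P_m}(R_*)\) for large \(m\), so \(g=\nabla_{\mathbb H}v\) on \(K\), hence \(v\in H^1_{\mathbb H}(D_{\mathcal P}(R_*))\). It remains to show that \((v,g)\) vanishes on \(\mathcal P\). For the bulk part this follows because \(|\mathcal P\setminus\mathcal P_m|\) is small and the extensions are zero there. For faces, which have measure zero, nothing more is needed.

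The main obstacle is matching local components \(U_m\leftrightarrow U\) near hypersurface-supported faces and edges, where two exterior sides must be kept separate in the limit. Here I would rely on the collar separation estimate and the angle bound \(\alpha_0\).

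\textbf{(3) Uniform Sobolev inequality.} We construct a uniformly bounded extension operator \(H^1(D_{\mathcal P}(R_*))\to H^1(\widetilde B)\) onto a slightly larger ball \(\widetilde B\). In the \(\mathsf N\) case this is built chartwise by even reflection through the collar charts \(T\); the bounds depend only on \(L\), \(r_0\) and \(N_0\), and finitely many charts suffice by compactness. Hypersurface components are handled side by side and glued with the partition of unity. In the \(\mathsf D\) case the extension is by zero. The Euclidean Sobolev embedding on \(\widetilde B\) then gives \eqref{eq:uniform-Sobolev-admissible-exteriors} with \(p=2n/(n-2)\) for \(n\ge3\) and any finite \(p>2\) for \(n=2\). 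Estimate \eqref{eq:uniform-L2-small-set-control} follows by H\"older: \(\|w\|_{L^2(E)}\le|E|^{1/2-1/p}\|w\|_{L^p}\).
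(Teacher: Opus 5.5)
Your overall architecture matches the paper's. You get membership of \(\mathcal P\) from Proposition~\ref{prop:compactness-admissible-classes}, work in a finite atlas on \(\overline{B_{\mathbb H}(R_*)}\), pass the zero trace to the limit faces in the \(\mathsf D\)-type case, and prove a chartwise Sobolev estimate followed by H\"older. Your direct proof of Mosco weak closedness (identify \(g=\nabla_{\mathbb H}v\) on compact subsets of \(D_{\mathcal P}(R_*)\), then check that \((v,g)\) vanishes on \(\mathcal P\)) is correct, and more elementary than the paper's citation.

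The genuine gap is the recovery half of (2). You transport \(v\circ T\) by the collar charts \(T_m\) and assert strong convergence. Even if \(T\) is taken to be the subsequential limit of the \(T_m\), the only convergence available is the one extracted in Proposition~\ref{prop:compactness-admissible-classes}: locally uniform plus weak-\(*\) in \(W^{1,\infty}\). The energy \(\int|\nabla(g\circ T_m^{-1})|^2\,dy=\int_{Q^+}|DT_m^{-\top}\nabla g|^2|\det DT_m|\,dz\) depends nonlinearly on \(DT_m\), so even for smooth \(g\) you only get \(\nabla v_m\rightharpoonup\nabla v\) weakly. For example, \(T_m(z)=z+m^{-1}\phi(mz_1)e_1\), with \(\phi\) periodic and \(|\phi'|<1/2\), converges to the identity in this sense, yet the Dirichlet energy of \(z_1\circ T_m^{-1}\) does not converge to that of \(z_1\). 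Mosco recovery needs strong convergence of \((v_m,\nabla_{\mathbb H}v_m)\) in the fixed ambient space, and density of smooth functions in \(H^1(Q^+)\) does not repair this. The even reflection across \(\Gamma\) plays no role, since \(T_m\) only sees \(Q^+\). The matching of the two sides of hypersurface-supported faces and of edges, which you flag as the main obstacle, is left unresolved.

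The paper does not build the recovery sequence by hand. It checks that the uniform collar charts, the face bound \(N_0\) and the controlled incidence put the local exterior domains in the class covered by \cite[Theorem~3.13]{fornoni2023mosco}, and gets both Mosco conditions from that theorem through a finite partition of unity. To make your route self-contained, transport by maps converging in \(C^1\) rather than by the \(W^{1,\infty}\) collar charts. For instance, use hyperbolic isometries \(M_m\to\mathrm{Id}\) with \(M_m(V_j)=V_{m,j}\), which exist because the supporting hypersurfaces converge smoothly. Apply them locally to bounded functions vanishing near the relative boundaries of the faces. This class is dense in \(H^1_{\mathbb H}(D_{\mathcal P}(R_*))\), because the union of those relative boundaries has finite \((n-2)\)-dimensional measure and hence zero capacity.

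Three smaller points:
\begin{itemize}
\item In (3), no extension operator \(H^1_{\mathbb H}(D_{\mathcal P}(R_*))\to H^1(\widetilde B)\) exists when \(\mathcal P\) has a hypersurface-supported component, because such functions may jump across it. The local estimate you fall back on is essentially the paper's chartwise argument, and it suffices: cut off inside \(B_{\mathbb H}(x,r_0/4)\), use the collar separation so the pull-back vanishes near \(\partial Q^+\setminus\Gamma\), reflect, embed, and sum over the finite cover.
\item You use in (1) and in your weak-closedness argument that compact subsets of \(\operatorname{int}\mathcal P\) lie in \(\mathcal P_m\) for large \(m\). This does not follow from Hausdorff convergence alone; it follows from the uniform exterior cone condition, as in the proof of Proposition~\ref{prop:distance-equivalence-hyperbolic}.
\item Like the paper, what you actually prove is \(W|_{D_{\mathcal P}(R_*)}\in H^1_{\mathbb H,0}(D_{\mathcal P}(R_*);\partial\mathcal P)\) with \(W=0\) on \(\mathcal P\). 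The quotient \(W/\chi\) need not be in \(H^1\) near \(\{\chi=0\}\), but this weaker statement is all that Lemma~\ref{lem:direct-compactness} uses.
\end{itemize}
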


\begin{proof}
By Proposition~\ref{prop:compactness-admissible-classes}, the admissible classes are compact with respect to the hyperbolic Hausdorff distance.
Hence the limit \(\mathcal P\) belongs to the same admissible class as the sequence \(\mathcal P_m\).
It remains to prove the closedness property in the \(\mathsf D\)-type case, the Mosco convergence in the \(\mathsf N\)-type case, and the uniform Sobolev estimates.

All three assertions are proved on the fixed compact domain \(\overline{B_{\mathbb H}(R_*)}\).
On this domain, the Poincar\'e metric \eqref{eq:hyper metric} has uniformly controlled coefficients in finitely many coordinate charts.
Thus the local Sobolev, trace, and compactness estimates used below have constants depending only on \(R_*\) and the a priori data.
This allows us to apply the standard Mosco and Sobolev compactness results in local charts, while keeping the constants uniform for the admissible defects.

\smallskip
We first prove the \(\mathsf D\)-type closedness.
Let \(\chi\in C_c^\infty(B_{\mathbb H}(R_*))\), and let
\[
    w_m\in
    H^1_{\mathbb H,0}
    \big(D_{\mathcal P_m}(R_*);\partial\mathcal P_m\big).
\]
Let \(\widetilde{\chi w_m}\) be the extension of \(\chi w_m\) by zero from \(D_{\mathcal P_m}(R_*)\) to the fixed ball \(B_{\mathbb H}(R_*)\).
Assume that
\[
    \widetilde{\chi w_m}
    \rightharpoonup W
    \quad
    \text{weakly in }H^1_{\mathbb H}(B_{\mathbb H}(R_*)).
\]
We show that \(W\), restricted to the limiting exterior domain \(D_{\mathcal P}(R_*)\), has zero trace on \(\partial\mathcal P\).

By Definition~\ref{def:admissible-tg-defects} and Proposition~\ref{prop:compactness-admissible-classes}, the local face decompositions of \(\partial\mathcal P_m\) converge to the local face decomposition of \(\partial\mathcal P\), after passing to the fixed hyperbolic charts.
The relative Lipschitz constants, the exterior cone constants, and the incidence constants are uniform along the sequence.
Hence the local trace estimates on the faces are uniform.
Since \(\widetilde{\chi w_m}\) has zero trace on \(\partial\mathcal P_m\), this zero-trace condition passes to the weak \(H^1\)-limit on the limiting faces.
Therefore \(W\) has zero trace on \(\partial\mathcal P\).
Equivalently,
\[
    W|_{D_{\mathcal P}(R_*)}
    \in
    H^1_{\mathbb H,0}
    \big(D_{\mathcal P}(R_*);\partial\mathcal P\big).
\]
This proves the \(\mathsf D\)-type closedness property.

\smallskip
We next prove the \(\mathsf N\)-type Mosco convergence.
{
We apply the domain-convergence result in
\cite[Theorem~3.13]{fornoni2023mosco}.  On the fixed ball
\(B_{\mathbb H}(R_*)\), the hyperbolic and Euclidean Sobolev norms are
uniformly equivalent in finitely many charts.  The uniform collar charts,
finite face bound, and controlled incidence conditions place the local exterior
domains in the class covered by that theorem.  A finite partition of unity
therefore gives both weak closedness and recovery in the fixed ambient Hilbert
space specified above, and hence \eqref{eq:mosco-convergence-H1-hard}.  Multiplication
by a fixed smooth cutoff also preserves recovery while keeping the approximants
compactly supported away from \(\partial B_{\mathbb H}(R_*)\).
}

\smallskip
It remains to prove the uniform Sobolev inequality.
We use the same finite covering as above.
On the interior charts, this follows from the standard Sobolev inequality on fixed balls.
On \(\mathsf D\)-type boundary charts, the zero trace on \(\partial\mathcal P\) allows a zero extension with uniformly controlled \(H^1\)-norm.
On \(\mathsf N\)-type boundary charts, the collar parametrization reduces the estimate to the fixed half-cylinder \(Q^+\).
Hence, for some \(p>2\), the local estimate
\[
    \|w\|_{L^p}
    \leq
    C\|w\|_{H^1}
\]
holds in each chart, with \(C\) independent of the particular defect.
Since the number of charts and their overlaps are controlled by the a priori data, summing the local estimates and returning to the hyperbolic metric gives
\[
    \|w\|_{L^p_{\mathbb H}(D_{\mathcal P}(R_*))}
    \leq
    C_S
    \|w\|_{H^1_{\mathbb H}(D_{\mathcal P}(R_*))}.
\]
This proves \eqref{eq:uniform-Sobolev-admissible-exteriors}.

This is the usual finite-chart Sobolev argument for nonsmooth domains, using the same local principle as in~\cite[Theorem~4.6]{fornoni2023mosco}.
In the present setting, the required chart bounds and overlap bounds follow from Definitions~\ref{def:admissible-tg-defects} and~\ref{def:hard-admissible-tg-defects}.

Finally, for every measurable set \(E\subset D_{\mathcal P}(R_*)\), H\"older's inequality and \eqref{eq:uniform-Sobolev-admissible-exteriors} give
\[
    \|w\|_{L^2_{\mathbb H}(E)}
    \leq
    |E|_{\mathbb H}^{\frac12-\frac1p}
    \|w\|_{L^p_{\mathbb H}(D_{\mathcal P}(R_*))}
    \leq
    C_S
    |E|_{\mathbb H}^{\frac12-\frac1p}
    \|w\|_{H^1_{\mathbb H}(D_{\mathcal P}(R_*))}.
\]

This proves \eqref{eq:uniform-L2-small-set-control}.
\end{proof}

We next study the convergence of direct scattering solutions as the admissible
defects vary in the Hausdorff distance.
The following lemma (Lemma~\ref{lem:direct-compactness}) establishes the compactness
of these solutions and their strong local convergence in both the \(\mathsf D\)-type and \(\mathsf N\)-type cases.
This result serves as a fundamental building block for the subsequent propositions:
it allows us to prove uniform local estimates (Proposition~\ref{prop:uniform-local-bound}),
then the continuity of the direct problem under Hausdorff convergence
(Proposition~\ref{prop:direct-stability}),
and finally the uniform decay of the outgoing corrections
(Proposition~\ref{prop:uniform-decay}).
By making explicit the subsequential limits and boundary conditions, Lemma~\ref{lem:direct-compactness}
ensures that all later arguments can be carried out on the limiting defect in a rigorous way.

\begin{lemma}\label{lem:direct-compactness}
Let \(R_*>R\) and \(0<h\leq h_0\).
In the \(\mathsf D\)-type case, assume that
\(\mathcal P_m\in\mathcal A_{\mathbb H}^h\), and in the \(\mathsf N\)-type case assume that
\(\mathcal P_m\in\mathcal B_{\mathbb H}^h\).
Suppose that
\begin{equation}
\label{eq:direct-compactness-Hausdorff}
    d_{\mathcal H}^{\mathbb H}(\mathcal P_m,\mathcal P)\to0 .
\end{equation}
Let \(\xi_m\to\xi\) in \(\mathbb S^{n-1}\) and \(\alpha_m\to\alpha\) in \(\mathbb C\).
For each \(m\), let $u_m=\alpha_m u^i_{\xi_m}+u_m^s$ be an outgoing solution of the following system
\begin{equation*}
\begin{cases}
    \mathcal L_{\lambda_0}u_m=0,
        & \text{in }\mathbb B^n\setminus\mathcal P_m,\\
    \mathscr B_{\mathcal P_m}u_m=0,
        & \text{on }\partial\mathcal P_m,\\
    u_m^s \text{ is outgoing}.
\end{cases}
\end{equation*}
Assume moreover that
\begin{equation}
\label{eq:direct-compactness-H1-bound}
    \sup_{m\in\mathbb N}
    \|u_m\|_{H^1_{\mathbb H}(D_{\mathcal P_m}(R_*))}
    <+\infty .
\end{equation}
Then, \(\mathcal P\) belongs to the same admissible class, and $u_m\to \alpha u_{\mathcal P,\xi}$ in $H^1_{\mathrm{loc}}(\mathbb B^n\setminus\mathcal P).$
More explicitly, for every compact set
\(K\Subset\mathbb B^n\setminus\mathcal P\), one has
\(K\Subset\mathbb B^n\setminus\mathcal P_m\) for all sufficiently large \(m\), and
\begin{equation}
\label{eq:direct-compactness-H1loc-convergence}
    \|u_m-\alpha u_{\mathcal P,\xi}\|_{H^1_{\mathbb H}(K)}
    \to0 .
\end{equation}
Moreover, for every integer \(q\geq0\),
\begin{equation}
\label{eq:direct-compactness-Cq-convergence}
    \|u_m-\alpha u_{\mathcal P,\xi}\|_{C^q(K)}
    \to0 .
\end{equation}
The outgoing corrections satisfy the corresponding convergence:
\begin{equation}
\label{eq:direct-compactness-outgoing-correction-convergence}
    \|u_m^s-\alpha u^s_{\mathcal P,\xi}\|_{H^1_{\mathbb H}(K)}
    +
    \|u_m^s-\alpha u^s_{\mathcal P,\xi}\|_{C^q(K)}
    \to0
\end{equation}
for every \(K\Subset\mathbb B^n\setminus\mathcal P\) and every integer \(q\geq0\).

Finally, for every \(R_0\) with \(R<R_0<R_*\), one has
\begin{equation}
\label{eq:direct-compactness-varying-L2}
    \|u_m-\alpha u_{\mathcal P,\xi}\|_
    {L^2_{\mathbb H}
    (D_{\mathcal P_m}(R_0)\cap D_{\mathcal P}(R_0))}
    \to0 ,
\end{equation}
and the \(L^2\)-mass on the moving domains converges:
\begin{equation}
\label{eq:direct-compactness-norm-convergence}
    \int_{D_{\mathcal P_m}(R_0)}
    |u_m|^2\,dV_{\mathbb H}
    \to
    \int_{D_{\mathcal P}(R_0)}
    |\alpha u_{\mathcal P,\xi}|^2\,dV_{\mathbb H}.
\end{equation}
\end{lemma}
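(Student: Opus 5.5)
The proof will be a weak compactness argument on the fixed ball \(B_{\mathbb H}(R_*)\), followed by identification of the limit through uniqueness for the direct problem. By Proposition~\ref{prop:compactness-admissible-classes}, \(\mathcal P\) lies in the same admissible class.

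\emph{Extraction of a weak limit.} Extend \((u_m,\nabla_{\mathbb H}u_m)\) by zero to \(B_{\mathbb H}(R_*)\setminus D_{\mathcal P_m}(R_*)\). The bound \eqref{eq:direct-compactness-H1-bound} then lets us extract a subsequence converging weakly in the fixed Hilbert space \(L^2(B_{\mathbb H}(R_*);\mathbb C^{n+1})\). In the \(\mathsf N\)-type case, weak closedness in the Mosco convergence \eqref{eq:mosco-convergence-H1-hard} gives a limit \(u\in H^1_{\mathbb H}(D_{\mathcal P}(R_*))\). In the \(\mathsf D\)-type case, the incoming part \(\alpha_m u^i_{\xi_m}\) is smooth and converges in \(C^q\) on \(\overline{B_{\mathbb H}(R_*)}\), so it suffices to treat a cutoff version of \(u_m^s\); the closedness property in Lemma~\ref{lem:chartwise-mosco-sobolev}(1) then shows that the limit satisfies \(u=0\) on \(\partial\mathcal P\).

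Next, fix \(K\Subset\mathbb B^n\setminus\mathcal P\). Hausdorff convergence gives \(K\Subset\mathbb B^n\setminus\mathcal P_m\) for all large \(m\). Interior elliptic estimates for \(\mathcal L_{\lambda_0}\) on a slightly larger compact set, which are uniform because the operator has analytic coefficients on fixed compacts, upgrade weak \(H^1\) convergence to convergence in \(C^q(K)\) for every \(q\). This shows that \(\mathcal L_{\lambda_0}u=0\) in \(D_{\mathcal P}(R_*)\).

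To check the weak boundary condition, take a test function \(\psi\) in the appropriate space on \(D_{\mathcal P}(R_*)\) with compact support away from \(\partial B_{\mathbb H}(R_*)\). Use the Mosco recovery sequence \(\psi_m\to\psi\) in the \(\mathsf N\)-type case, or a smooth approximation vanishing near \(\mathcal P\) in the \(\mathsf D\)-type case. Then pass to the limit in \(\int(\langle\nabla_{\mathbb H}u_m,\nabla_{\mathbb H}\psi_m\rangle-\Lambda_0u_m\psi_m)=0\) using weak–strong convergence.

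\emph{Outgoing behaviour and identification.} Outside \(\overline{B_{\mathbb H}(R)}\), represent \(u^s_m\) by the Green formula on a sphere \(\partial B_{\mathbb H}(R_1)\), \(R<R_1<R_*\), with the outgoing kernel \(G^+_{\lambda_0}\). The Cauchy data of \(u_m^s\) on this sphere converge in \(C^q\). The representation therefore converges uniformly, together with its radiation-condition remainder, on \(\mathbb B^n\setminus B_{\mathbb H}(R_1)\), so the limit \(u^s:=u-\alpha u^i_\xi\) is outgoing. Hence \(u\) solves the direct problem for \((\mathcal P,\alpha u^i_\xi)\). Uniqueness for this problem, which follows from the Rellich theorem (Lemma~\ref{thm3}) together with the Green identity \eqref{eq:first-green-hyperbolic} and unique continuation, gives \(u=\alpha u_{\mathcal P,\xi}\). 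Since the limit is independent of the subsequence, the whole sequence converges. This proves \eqref{eq:direct-compactness-H1loc-convergence} and \eqref{eq:direct-compactness-Cq-convergence}, and \eqref{eq:direct-compactness-outgoing-correction-convergence} follows by subtracting \(\alpha_mu^i_{\xi_m}\to\alpha u^i_\xi\).

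\emph{Moving-domain estimates (main obstacle).} For \eqref{eq:direct-compactness-varying-L2} and \eqref{eq:direct-compactness-norm-convergence}, split \(D_{\mathcal P_m}(R_0)\cap D_{\mathcal P}(R_0)\) into a compact part \(K_\eta=\{x:\rho(x,\mathcal P)\ge\eta\}\cap \overline{B_{\mathbb H}(R_0)}\) and a thin layer \(E_{m,\eta}\) within distance \(\eta\) of \(\mathcal P\cup\mathcal P_m\). On \(K_\eta\), uniform convergence applies. On the layer, the Lipschitz face structure with at most \(N_0\) faces gives \(|E_{m,\eta}|_{\mathbb H}\le C\eta\) uniformly in \(m\). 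The uniform small-set bound \eqref{eq:uniform-L2-small-set-control}, applied to both \(u_m\) and \(u\), then controls the \(L^2\) mass there by \(C\eta^{1/2-1/p}\). The same splitting applied to the symmetric differences \(D_{\mathcal P_m}(R_0)\triangle D_{\mathcal P}(R_0)\) handles the norm convergence. I expect the hardest part to be verifying that the layer volume is uniformly small, including near hypersurface-supported components, where both sides of a face belong to the exterior. I would derive this from the relative Lipschitz regularity of the faces and the bound on their number, using the fact that the hyperbolic metric is comparable to the Euclidean one on \(\overline{B_{\mathbb H}(R_*)}\).
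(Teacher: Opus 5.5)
Your proposal is correct and follows essentially the same route as the paper. The shared steps are weak compactness on $B_{\mathbb H}(R_*)$, Mosco weak closedness and recovery (respectively Dirichlet closedness) for the boundary condition, and the interior elliptic upgrade to $C^q$. They continue with passage to the limit in the outgoing Green representation on a fixed sphere, uniqueness to identify the limit, and the $K_\eta$/thin-layer splitting with \eqref{eq:uniform-L2-small-set-control} for the moving-domain statements.

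One small correction in the $\mathsf D$-type case: apply the closedness property of Lemma~\ref{lem:chartwise-mosco-sobolev}(1) to $\chi u_m$ itself, not to a cutoff of $u_m^s$. It is the total field that has zero trace on $\partial\mathcal P_m$, while $u_m^s$ has trace $-\alpha_m u^i_{\xi_m}\neq 0$ there and so does not lie in $H^1_{\mathbb H,0}$. Applying it to $\chi u_m$ is exactly what the paper does.
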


\begin{proof}
We first prove a subsequential compactness statement.
Namely, every subsequence of \(\{u_m\}\) contains a further subsequence converging to \(\alpha u_{\mathcal P,\xi}\).
Since this possible limit is unique, the whole sequence must converge to the same limit.
By Proposition~\ref{prop:compactness-admissible-classes}, the Hausdorff limit \(\mathcal P\) belongs to the same admissible class as the sequence \(\mathcal P_m\).

\smallskip
We first extract a local limit in \(B_{\mathbb H}(R_*)\).
Let \(K\Subset B_{\mathbb H}(R_*)\setminus\mathcal P\).
By \eqref{eq:direct-compactness-Hausdorff}, one has \(K\Subset B_{\mathbb H}(R_*)\setminus\mathcal P_m\) for all sufficiently large \(m\).
Hence \eqref{eq:direct-compactness-H1-bound} implies that \(u_m|_K\) is uniformly bounded in \(H^1_{\mathbb H}(K)\).

On the fixed compact set \(K\), the hyperbolic metric is uniformly controlled in finitely many coordinate charts.
Thus the Rellich--Kondrachov compactness theorem, see for instance \cite[Theorem~6.3]{adams2003sobolev}, gives compactness in \(L^2_{\mathbb H}(K)\).
Taking a compact exhaustion of \(B_{\mathbb H}(R_*)\setminus\mathcal P\) and applying a diagonal argument, we obtain a subsequence, still denoted by \(u_m\), and a function \(v\in H^1_{\mathbb H,\mathrm{loc}}(B_{\mathbb H}(R_*)\setminus\mathcal P)\) such that
\begin{equation}\label{eq:direct-compactness-weak-H1-inner}
    u_m\rightharpoonup v
    \quad
    \text{weakly in }H^1_{\mathbb H,\mathrm{loc}}
    (B_{\mathbb H}(R_*)\setminus\mathcal P),
\end{equation}
and
\begin{equation}
\label{eq:direct-compactness-strong-L2-inner}
    u_m\to v
    \quad
    \text{strongly in }L^2_{\mathbb H,\mathrm{loc}}
    (B_{\mathbb H}(R_*)\setminus\mathcal P).
\end{equation}

We also need strong \(L^2\)-convergence on bounded regions whose boundaries depend on \(m\).
Fix \(R_0\) with \(R<R_0<R_*\).
For \(\eta>0\), set
\[
    K_\eta
    :=
    \overline{B_{\mathbb H}(R_0-\eta)}
    \cap
    \{x\in \mathbb B^n:\rho(x,\mathcal P)\geq \eta\}.
\]
For \(\eta\) small, \(K_\eta\Subset B_{\mathbb H}(R_*)\setminus\mathcal P\).
Thus \eqref{eq:direct-compactness-strong-L2-inner} gives
\begin{equation}
\label{eq:direct-compactness-Keta-L2}
    \|u_m-v\|_{L^2_{\mathbb H}(K_\eta)}\to0
    \quad
    \text{as }m\to\infty .
\end{equation}

The sets \((D_m(R_0)\cap D(R_0))\setminus K_\eta\) are contained, for all sufficiently large \(m\), in the union of \(B_{\mathbb H}(R_0)\setminus B_{\mathbb H}(R_0-\eta)\) and a \(C\eta\)-neighborhood of \(\mathcal P\).
Here \(C\) depends only on the a priori data of the admissible class, and \(D_m(R_0)\), \(D(R_0)\) are defined in \eqref{eq:local-exterior-domain-sequence}.
By the finite face structure and the uniform face regularity in Definition~\ref{def:admissible-tg-defects}, the hyperbolic volume of this union tends to zero as \(\eta\to0\), uniformly for all large \(m\).

Applying \eqref{eq:uniform-L2-small-set-control} to \(u_m\), and using \eqref{eq:direct-compactness-H1-bound}, we obtain
\begin{equation}
\label{eq:direct-compactness-um-small-region}
    \lim_{\eta\to0}
    \limsup_{m\to\infty}
    \|u_m\|_{L^2_{\mathbb H}
    ((D_m(R_0)\cap D(R_0))\setminus K_\eta)}
    =
    0 .
\end{equation}
Moreover, \eqref{eq:direct-compactness-weak-H1-inner} and lower semicontinuity on \(K_\eta\) imply that \(v\in H^1_{\mathbb H}(D(R_0))\).
Hence
\begin{equation}
\label{eq:direct-compactness-v-small-region}
    \lim_{\eta\to0}
    \|v\|_{L^2_{\mathbb H}(D(R_0)\setminus K_\eta)}
    =
    0 .
\end{equation}
Combining \eqref{eq:direct-compactness-Keta-L2}, \eqref{eq:direct-compactness-um-small-region}, and \eqref{eq:direct-compactness-v-small-region}, we obtain
\begin{equation}
\label{eq:direct-compactness-local-L2-prelimit}
    u_m\to v
    \quad
    \text{strongly in }
    L^2_{\mathbb H}(D_m(R_0)\cap D(R_0)).
\end{equation}

We now prove convergence in stronger norms away from the defect.
Let \(K\Subset B_{\mathbb H}(R_*)\setminus\mathcal P\), and choose
\(K'\) such that
\[
    K\Subset K'\Subset B_{\mathbb H}(R_*)\setminus\mathcal P .
\]
For all sufficiently large \(m\), \(K'\Subset B_{\mathbb H}(R_*)\setminus\mathcal P_m\).
Since both \(u_m\) and \(v\) solve \(\mathcal L_{\lambda_0}w=0\) in \(K'\), the difference
\(u_m-v\) also solves the same equation in \(K'\).
Choose an integer \(N_q>q+n/2\). The interior elliptic estimate gives
\[
    \|u_m-v\|_{H^{N_q}(K)}
    \leq
    C_{K,K',q}
    \|u_m-v\|_{L^2_{\mathbb H}(K')}.
\]
By \eqref{eq:direct-compactness-strong-L2-inner}, the right-hand side tends to zero.
Sobolev embedding then yields
\begin{equation}\label{eq:direct-compactness-Cq-inner}
	\|u_m-v\|_{C^q(K)}\to0 .
\end{equation}

\smallskip
We next identify the boundary condition satisfied by \(v\).
In the \(\mathsf D\)-type case, each \(u_m\) has zero trace on \(\partial\mathcal P_m\).
Let \(\chi\in C_c^\infty(B_{\mathbb H}(R_*))\).
The zero extension of \(\chi u_m\) from \(D_m(R_*)\) to \(B_{\mathbb H}(R_*)\) is bounded in \(H^1_{\mathbb H}(B_{\mathbb H}(R_*))\), and it converges weakly to the zero extension of \(\chi v\).
By the \(\mathsf D\)-type closedness part of Lemma~\ref{lem:chartwise-mosco-sobolev},
\[
    \chi v
    \in
    H^1_{\mathbb H,0}
    \big(D(R_*);\partial\mathcal P\big).
\]
Since \(\chi\) is arbitrary, \(v\) has zero trace on \(\partial\mathcal P\).

In the \(\mathsf N\)-type case, let \(\varphi\in H^1_{\mathbb H}(D(R_0))\) have compact support in \(B_{\mathbb H}(R_0)\).
{
Write \(E_mw=(w,\nabla_{\mathbb H}w)\), with both components extended by
zero to the fixed ball.  By \eqref{eq:direct-compactness-H1-bound}, after
passing to a subsequence, \(E_mu_m\rightharpoonup(v,Z)\) in the fixed ambient
Hilbert space specified above.  The local convergence identifies the first component
with \(v\), and the Mosco weak-closedness identifies
\(Z=\nabla_{\mathbb H}v\).  Thus
\(E_mu_m\rightharpoonup(v,\nabla_{\mathbb H}v)\).
}
By the recovery part of the Mosco convergence in Lemma~\ref{lem:chartwise-mosco-sobolev}, there exist \(\varphi_m\in H^1_{\mathbb H}(D_m(R_0))\), with compact support in \(B_{\mathbb H}(R_0)\), such that
\begin{equation}
\label{eq:direct-compactness-test-recovery}
    \varphi_m\to\varphi
    \quad
    \text{strongly in }H^1_{\mathbb H}.
\end{equation}
Using \(\varphi_m\) as a test function in the weak formulation for \(u_m\), we have
\[
    \int_{D_m(R_0)}
    \langle\nabla_{\mathbb H}u_m,\nabla_{\mathbb H}\varphi_m\rangle_{\mathbb H}
    \,dV_{\mathbb H}
    -
    \Lambda_0
    \int_{D_m(R_0)}
    u_m\varphi_m\,dV_{\mathbb H}
    =
    0,
\]
where \(\Lambda_0\) is defined in \eqref{eq:Lambda0-definition}.
Passing to the limit gives
\[
    \int_{D(R_0)}
    \langle\nabla_{\mathbb H}v,\nabla_{\mathbb H}\varphi\rangle_{\mathbb H}
    \,dV_{\mathbb H}
    -
    \Lambda_0
    \int_{D(R_0)}
    v\varphi\,dV_{\mathbb H}
    =
    0 .
\]
{
Here compact support of the recovery sequence is obtained by multiplying by a
fixed cutoff that equals one near \(\operatorname{supp}\varphi\).  The gradient
term converges by the weak convergence of \(E_mu_m\) in the fixed ambient
Hilbert space and the strong recovery convergence in
\eqref{eq:direct-compactness-test-recovery}; the zeroth-order term converges by
\eqref{eq:direct-compactness-local-L2-prelimit}.
}
{
By the definition of the weak hyperbolic normal trace, the limiting
variational identity is precisely
\[
    \partial_{\nu_{\mathbb H}}v=0
    \quad
    \text{on }\partial\mathcal P
\]
on each local exterior side.
}

It remains to show that the limiting outgoing correction is outgoing.
Since \(\xi_m\to\xi\) and \(\alpha_m\to\alpha\), we have
\begin{equation}
\label{eq:incoming-mode-part-convergence-scaled}
    \alpha_m u^i_{\xi_m}
    \to
    \alpha u^i_\xi
    \quad
    \text{in }C^\infty_{\mathrm{loc}}(\mathbb B^n).
\end{equation}
Choose \(R_1\) with \(R<R_1<R_*\).
For all sufficiently large \(m\), \(\mathcal P_m\cup\mathcal P\subset B_{\mathbb H}(R_1)\).
The outgoing corrections \(u_m^s=u_m-\alpha_m u^i_{\xi_m}\) satisfy
\[
    \mathcal L_{\lambda_0}u_m^s=0
    \quad
    \text{in }
    \mathbb B^n\setminus \overline{B_{\mathbb H}(R_1)},
\]
and they are outgoing.
By the outgoing Green representation recalled in Section~\ref{sec 2}, for
\(\rho(x)>R_1\) we have
{
\begin{equation}\label{eq:direct-compactness-green-representation}
    u_m^s(x)
    =
    \int_{\partial B_{\mathbb H}(R_1)}
    \left(
        u_m^s(y)\partial_{\nu_{\mathbb H,y}}G^+_{\lambda_0}(x,y)
        -
        \partial_{\nu_{\mathbb H,y}}u_m^s(y)G^+_{\lambda_0}(x,y)
    \right)
    d\sigma_{\mathbb H}(y),
\end{equation}
where \(\nu_{\mathbb H,y}\) is the outward unit hyperbolic normal to
\(\partial B_{\mathbb H}(R_1)\) at \(y\), and the normal derivative is defined by
\eqref{eq:outnormal}.
}
By \eqref{eq:direct-compactness-Cq-inner} and \eqref{eq:incoming-mode-part-convergence-scaled},
\[
    u_m^s\to v-\alpha u^i_\xi,
    \qquad
    \partial_{\nu_{\mathbb H}}u_m^s
    \to
    \partial_{\nu_{\mathbb H}}(v-\alpha u^i_\xi)
    \quad\text{uniformly on }\partial B_{\mathbb H}(R_1).
\]
Passing to the limit in \eqref{eq:direct-compactness-green-representation}, we obtain the same outgoing Green representation for \(v^s:=v-\alpha u^i_\xi\).
Hence, \(v^s\) satisfies the hyperbolic radiation condition.

The same representation also gives convergence of \(u_m^s\) to \(v^s\) in \(C^q\) on every compact subset of \(\mathbb B^n\setminus \overline{B_{\mathbb H}(R_1)}\), for every integer \(q\geq0\).
Together with \eqref{eq:direct-compactness-Cq-inner}, this gives convergence on every compact subset of \(\mathbb B^n\setminus\mathcal P\).

We have proved that \(v\) solves the direct problem for \(\mathcal P\) with incoming Helgason mode \(\alpha u^i_\xi\).
By uniqueness of the fixed-defect direct problem and by linearity, one has $v=\alpha u_{\mathcal P,\xi}$.
Therefore every subsequence contains a further subsequence converging to \(\alpha u_{\mathcal P,\xi}\).
Hence the whole sequence converges, and \eqref{eq:direct-compactness-H1loc-convergence} and \eqref{eq:direct-compactness-Cq-convergence} follow.

The convergence of the outgoing corrections follows from
\[
    u_m^s-\alpha u^s_{\mathcal P,\xi}
    =
    \big(u_m-\alpha u_{\mathcal P,\xi}\big)
    -
    \big(\alpha_m u^i_{\xi_m}-\alpha u^i_\xi\big)
\]
and from \eqref{eq:incoming-mode-part-convergence-scaled}.
This proves \eqref{eq:direct-compactness-outgoing-correction-convergence}.

Finally, we prove the convergence of the \(L^2\)-norms on the varying bounded domains.
The strong convergence on the common part is exactly \eqref{eq:direct-compactness-local-L2-prelimit}, with \(v=\alpha u_{\mathcal P,\xi}\).
It remains to control the two differences \(D_m(R_0)\setminus D(R_0)\) and \(D(R_0)\setminus D_m(R_0)\).
By Hausdorff convergence, both sets are contained, for all large \(m\), in a tubular neighborhood of the admissible face structure whose hyperbolic volume tends to zero as \(m\to\infty\).
Applying \eqref{eq:uniform-L2-small-set-control} to \(u_m\), and using \eqref{eq:direct-compactness-H1-bound}, gives
\[
    \|u_m\|_{L^2_{\mathbb H}(D_m(R_0)\setminus D(R_0))}
    \to0 .
\]
Since \(\alpha u_{\mathcal P,\xi}\in H^1_{\mathbb H}(D(R_0))\), the same small-volume argument gives
\[
    \|\alpha u_{\mathcal P,\xi}\|_
    {L^2_{\mathbb H}(D(R_0)\setminus D_m(R_0))}
    \to0 .
\]
Combining these two estimates with \eqref{eq:direct-compactness-local-L2-prelimit} gives \eqref{eq:direct-compactness-varying-L2}.
The convergence of the norms, \eqref{eq:direct-compactness-norm-convergence}, follows immediately.

The proof is complete.
\end{proof}

\begin{proposition}\label{prop:uniform-local-bound}
Let \(R_*>R\) and \(0<h\leq h_0\).
In the \(\mathsf D\)-type case, let \(\mathcal P\in\mathcal A_{\mathbb H}^h\), and in the \(\mathsf N\)-type case, let \(\mathcal P\in\mathcal B_{\mathbb H}^h\).
For \(\xi\in\mathbb S^{n-1}\), let \(u_{\mathcal P,\xi}\) be the exterior solution associated with the incoming Helgason mode \(u^i_\xi\).

{
Then there exist constants \(C_{R_*}>0\) and \(E_{R_*}>0\), depending only on \(R_*\), \(\lambda_0\), the a priori data of the corresponding admissible class, and the fixed \(h\), such that
}
\begin{equation}
\label{eq:uniform-local-L2}
    \|u_{\mathcal P,\xi}\|_{L^2_{\mathbb H}(D_{\mathcal P}(R_*))}
    \leq
    C_{R_*},
\end{equation}
\begin{equation}
\label{eq:uniform-local-H1}
    \|u_{\mathcal P,\xi}\|_{H^1_{\mathbb H}(D_{\mathcal P}(R_*))}
    \leq
    C_{R_*},
\end{equation}
and
\begin{equation}
\label{eq:uniform-local-Linfty}
    \|u_{\mathcal P,\xi}\|_{L^\infty(D_{\mathcal P}(R_*))}
    \leq
    E_{R_*}.
\end{equation}
The estimates hold for every admissible defect \(\mathcal P\) in the corresponding class and every \(\xi\in\mathbb S^{n-1}\).

{
More generally, for every compact set \(K\Subset\mathbb B^n\), there exists a constant \(E_K>0\), depending only on \(K\), \(\lambda_0\), the a priori data of the admissible class, and the fixed \(h\), such that
}
\begin{equation}
\label{eq:uniform-local-Linfty-compact}
    \|u_{\mathcal P,\xi}\|_{L^\infty(K\setminus\mathcal P)}
    \leq
    E_K .
\end{equation}
\end{proposition}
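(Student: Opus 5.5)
We argue by contradiction combined with compactness, using Lemma~\ref{lem:direct-compactness} as the main tool. Suppose \eqref{eq:uniform-local-H1} fails. Then there are admissible defects \(\mathcal P_m\) and labels \(\xi_m\) with
\[
    M_m:=\|u_{\mathcal P_m,\xi_m}\|_{H^1_{\mathbb H}(D_{\mathcal P_m}(R_*))}\to+\infty .
\]
By Proposition~\ref{prop:compactness-admissible-classes} and compactness of \(\mathbb S^{n-1}\), we may pass to a subsequence with \(\mathcal P_m\to\mathcal P\) in \(d_{\mathcal H}^{\mathbb H}\) and \(\xi_m\to\xi\). Consider the normalized functions \(w_m:=M_m^{-1}u_{\mathcal P_m,\xi_m}\). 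Each \(w_m\) solves the direct problem for \(\mathcal P_m\) with incoming part \(\alpha_m u^i_{\xi_m}\), where \(\alpha_m:=M_m^{-1}\to0\), and satisfies \(\|w_m\|_{H^1_{\mathbb H}(D_{\mathcal P_m}(R_*))}=1\).

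Lemma~\ref{lem:direct-compactness}, applied with \(\alpha=0\), gives \(w_m\to0\) in \(H^1_{\mathrm{loc}}(\mathbb B^n\setminus\mathcal P)\). By \eqref{eq:direct-compactness-norm-convergence}, we also get \(\|w_m\|_{L^2_{\mathbb H}(D_{\mathcal P_m}(R_0))}\to0\) for every \(R<R_0<R_*\). The contradiction must come from showing that the full \(H^1\) norm on \(D_{\mathcal P_m}(R_*)\) also tends to zero.

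\emph{Gradient near the outer sphere.} To avoid working up to \(\partial B_{\mathbb H}(R_*)\), we apply the lemma with \(R_*\) replaced by a larger radius \(R_{**}\). For the normalization we use the \(H^1\) norm on \(D(R_{**})\), which only strengthens the contradiction hypothesis. The region \(R_0\le\rho\le R_*\) then lies compactly inside the exterior, where \(C^q\) convergence holds.

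\emph{Gradient on the bounded part.} Near \(\mathcal P_m\) we use a Caccioppoli-type energy identity. Test the weak formulation with \(\chi^2\overline{w_m}\), where \(\chi\) is a cutoff equal to one on \(B_{\mathbb H}(R_0)\). In the \(\mathsf D\) case \(\chi^2\overline{w_m}\) is an admissible test function because of the zero trace; in the \(\mathsf N\) case all test functions are allowed. This gives
\[
    \|\chi\nabla_{\mathbb H}w_m\|_{L^2}^2
    \le
    C\bigl(\|w_m\|_{L^2(\operatorname{supp}\chi)}^2+\|w_m\|_{L^2}\|\chi\nabla_{\mathbb H}w_m\|_{L^2}\bigr),
\]
so the local gradient norm is controlled by the \(L^2\) norm, which tends to zero. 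Hence \(\|w_m\|_{H^1}\to0\), contradicting the normalization \(\|w_m\|_{H^1}=1\). This proves \eqref{eq:uniform-local-H1}, and \eqref{eq:uniform-local-L2} follows immediately.

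\emph{Where the proof is delicate.} The main obstacle is that Lemma~\ref{lem:direct-compactness} assumes the uniform \(H^1\) bound, so we cannot apply it directly to \(u_m\). The normalization trick above handles this. The remaining subtlety is the limiting uniqueness step, which now uses the homogeneous problem \(\alpha=0\), i.e.\ uniqueness of the fixed-defect direct problem (Rellich). We must also check that \(u^i_{\xi_m}\) stays bounded on \(B_{\mathbb H}(R_*)\) uniformly in \(\xi\). This holds because \(|x-\xi|\ge 1-|x|\) is bounded below there.

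\emph{The \(L^\infty\) bounds.} For \eqref{eq:uniform-local-Linfty} and \eqref{eq:uniform-local-Linfty-compact}, we again argue by contradiction and compactness, now using the \(H^1\) bound just proved, so that Lemma~\ref{lem:direct-compactness} applies directly.

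Away from \(\mathcal P\), the interior elliptic estimate together with the \(C^q\) convergence gives uniform bounds.

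Near \(\partial\mathcal P\) we need boundary \(L^\infty\) estimates that are uniform over the class, and this is the genuinely hard point.
\begin{itemize}
\item In the \(\mathsf D\) case, extend \(|u|\) by zero; it is subharmonic-type up to the zeroth-order term \(\Lambda_0\). De Giorgi--Moser iteration on hyperbolic balls then bounds \(\sup|u|\) by the \(L^2\) norm, with constants depending only on the chart geometry.
\item In the \(\mathsf N\) case, transfer to the half-cylinder \(Q^+\) via the collar map \(T\) from Definition~\ref{def:hard-admissible-tg-defects}. There \(u\circ T\) solves a divergence-form equation with bounded measurable coefficients, ellipticity controlled by \(L\), and a conormal condition on \(\Gamma\). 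Even reflection across \(\Gamma\) gives an interior equation on the doubled cylinder, and De Giorgi--Nash--Moser yields a uniform \(L^\infty\) bound from the \(L^2\) bound.
\end{itemize}
Covering \(\overline{B_{\mathbb H}(R_*)}\) (or \(K\)) by finitely many such charts, with the number controlled by the a priori data, completes the proof.
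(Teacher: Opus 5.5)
Your argument is correct. For the $L^2/H^1$ bounds it takes a somewhat different route from the paper; for the $L^\infty$ bounds it follows the paper.

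\textbf{The $L^2/H^1$ bounds.} The paper normalizes by the $L^2$ norm on $D_{\mathcal P_m}(R_1)$. It then needs the forward Caccioppoli estimate \eqref{eq:uniform-Caccioppoli-local} to produce the $H^1$ hypothesis of Lemma~\ref{lem:direct-compactness} on a smaller radius. It also uses the Green-representation estimate \eqref{eq:fixed-annulus-L2-control} to keep a fixed fraction of the normalized mass in $D_{\mathcal P_m}(r_0)$. Then \eqref{eq:direct-compactness-varying-L2} gives the contradiction, and the $H^1$ bound is derived afterwards from the $L^2$ bound on a larger ball.

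You normalize by the $H^1$ norm itself, so the lemma's hypothesis holds by construction. You then use Caccioppoli in the reverse direction: the gradient near $\mathcal P_m$ is controlled by an $L^2$ mass that tends to zero. This removes the need for \eqref{eq:fixed-annulus-L2-control}. The reason is that \eqref{eq:direct-compactness-Cq-convergence} already gives $C^q$ convergence on every $K\Subset\mathbb B^n\setminus\mathcal P$, in particular on the closed annulus $\{R_0\le\rho\le R_*\}$.

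Your route is shorter. The paper's ordering has the modest advantage of isolating a quantitative, reusable reason why mass cannot leak to the outer annulus.

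\textbf{The $R_{**}$ device.} It is unnecessary, and as written it does not close the argument. If you normalize on $D_{\mathcal P_m}(R_{**})$, the contradiction needs the $H^1$ norm on all of $D_{\mathcal P_m}(R_{**})$ to vanish. Your text only controls the region $\rho\le R_*$. The fix is simply to normalize on $D_{\mathcal P_m}(R_*)$. The annulus $\{R_0\le\rho\le R_*\}$ touches $\partial B_{\mathbb H}(R_*)$ but is compact in $\mathbb B^n\setminus\mathcal P$, so the lemma covers it.

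\textbf{The $L^\infty$ bounds.} Your chartwise local-boundedness argument is essentially the paper's \eqref{eq:uniform-local-boundedness-estimate}: zero extension in the $\mathsf D$ case, and in the $\mathsf N$ case the collar chart to $Q^+$, even reflection, and De Giorgi--Moser with bounded measurable coefficients. The contradiction-and-compactness framing there adds nothing. Compactness gives no control in a neighbourhood of $\partial\mathcal P_m$, and the uniform boundary estimates together with the $H^1$ bound already give the result directly.
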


\begin{proof}
We first record two elementary estimates which will be used in the proof.

\smallskip
Let \(R<r<s\).
There exists a constant \(C=C(r,s,\lambda_0)>0\), independent of \(\mathcal P\) and \(\xi\), such that every solution \(w\) of
\[
    \mathcal L_{\lambda_0}w=0
    \quad
    \text{in }D_{\mathcal P}(s),
    \qquad
    \mathscr B_{\mathcal P}w=0
    \quad
    \text{on }\partial\mathcal P,
\]
satisfies the Caccioppoli estimate
\begin{equation}
\label{eq:uniform-Caccioppoli-local}
    \|w\|_{H^1_{\mathbb H}(D_{\mathcal P}(r))}
    \leq
    C
    \|w\|_{L^2_{\mathbb H}(D_{\mathcal P}(s))}.
\end{equation}
This is the standard Caccioppoli inequality for elliptic equations in local charts; see, for example, \cite[Chapter~8]{gilbarg2001elliptic}.
For completeness, we include the short argument.
Choose \(\chi\in C_c^\infty(B_{\mathbb H}(s))\) such that \(\chi\equiv1\) in \(B_{\mathbb H}(r)\).
Using \(\chi^2\overline w\) as a test function in the weak formulation gives
\[
    \int_{D_{\mathcal P}(s)}
    \chi^2|\nabla_{\mathbb H}w|_{\mathbb H}^2\,dV_{\mathbb H}
    =
    \Lambda_0
    \int_{D_{\mathcal P}(s)}
    \chi^2|w|^2\,dV_{\mathbb H}
    -
    2\operatorname{Re}
    \int_{D_{\mathcal P}(s)}
    \chi\overline w
    \langle\nabla_{\mathbb H}w,\nabla_{\mathbb H}\chi\rangle_{\mathbb H}
    \,dV_{\mathbb H}.
\]
The test function is admissible in the \(\mathsf D\)-type case because \(w\) has zero trace on \(\partial\mathcal P\), and it is admissible in the \(\mathsf N\)-type case through the weak formulation of the Neumann condition.
Young's inequality then yields \eqref{eq:uniform-Caccioppoli-local}.

\smallskip
We shall also use the following estimate on fixed annuli.
Let \(R<r<s\).
There exists a constant \(C=C(r,s,\lambda_0)>0\), independent of \(\mathcal P\), \(\zeta\), and \(\beta\), such that, if
\[
    w=\beta u^i_\zeta+w^s,
    \qquad
    \zeta\in\mathbb S^{n-1},
\]
solves the direct problem with coefficient \(\beta\in\mathbb C\) multiplying the incoming Helgason mode, then
\begin{equation}
\label{eq:fixed-annulus-L2-control}
    \|w\|_{L^2_{\mathbb H}(D_{\mathcal P}(s))}
    \leq
    C
    \left(
        \|w\|_{L^2_{\mathbb H}(D_{\mathcal P}(r))}
        +
        |\beta|
    \right).
\end{equation}

Indeed, choose \(r_0\) with \(R<r_0<r\).
Since every admissible defect is contained in \(\overline{B_{\mathbb H}(R)}\), the fixed annulus $B_{\mathbb H}(r)\setminus \overline{B_{\mathbb H}(r_0)}$ does not meet \(\mathcal P\).
Interior elliptic estimates on this annulus control the Cauchy data of \(w\) on \(\partial B_{\mathbb H}(r_0)\) by \(\|w\|_{L^2_{\mathbb H}(D_{\mathcal P}(r))}\).
Moreover, \(u^i_\zeta\) is uniformly bounded in \(C^1\) on fixed compact sets, uniformly in \(\zeta\).
Hence the Cauchy data of \(w^s=w-\beta u^i_\zeta\) on \(\partial B_{\mathbb H}(r_0)\) are bounded by $
    C
    \left(
        \|w\|_{L^2_{\mathbb H}(D_{\mathcal P}(r))}
        +
        |\beta|
    \right).$
The outgoing Green representation in \(\mathbb B^n\setminus \overline{B_{\mathbb H}(r_0)}\) then gives the same bound for \(w^s\) in the fixed annulus \(B_{\mathbb H}(s)\setminus B_{\mathbb H}(r_0)\).
Adding the incoming Helgason-mode part gives \eqref{eq:fixed-annulus-L2-control}.

\smallskip
We now prove the uniform \(L^2\)-bound.
Fix \(R_1>R\).
We show that
\begin{equation}
\label{eq:uniform-L2-any-radius}
    \|u_{\mathcal P,\xi}\|_{L^2_{\mathbb H}(D_{\mathcal P}(R_1))}
    \leq
    C_{R_1}
\end{equation}
with \(C_{R_1}\) independent of \(\mathcal P\) and \(\xi\).

Suppose that \eqref{eq:uniform-L2-any-radius} fails.
Then there exist admissible defects \(\mathcal P_m\) and boundary labels \(\xi_m\in\mathbb S^{n-1}\) such that, with
\[
    u_m:=u_{\mathcal P_m,\xi_m},
    \qquad
    a_m:=
    \|u_m\|_{L^2_{\mathbb H}(D_{\mathcal P_m}(R_1))},
\]
one has \(a_m\to+\infty\).
Set
\begin{equation*}
    v_m:=\frac{u_m}{a_m}.
\end{equation*}
Then
\begin{equation*}
    \|v_m\|_{L^2_{\mathbb H}(D_{\mathcal P_m}(R_1))}
    =
    1 .
\end{equation*}
Moreover,
\[
    v_m
    =
    \frac{1}{a_m}u^i_{\xi_m}
    +
    \frac{1}{a_m}u^s_{\mathcal P_m,\xi_m}.
\]
Thus the coefficient of the incoming Helgason mode is \(\alpha_m:=1/a_m\to0\).

Choose \(r_0\) with \(R<r_0<R_1\).
Applying \eqref{eq:fixed-annulus-L2-control} to \(v_m\), with \(r=r_0\) and \(s=R_1\), gives
\[
    1
    =
    \|v_m\|_{L^2_{\mathbb H}(D_{\mathcal P_m}(R_1))}
    \leq
    C
    \left(
        \|v_m\|_{L^2_{\mathbb H}(D_{\mathcal P_m}(r_0))}
        +
        \alpha_m
    \right).
\]
Since \(\alpha_m\to0\), there exists \(c_0>0\) such that
\begin{equation}
\label{eq:normalised-inner-mass-lower-bound}
    \|v_m\|_{L^2_{\mathbb H}(D_{\mathcal P_m}(r_0))}
    \geq
    c_0
\end{equation}
for all sufficiently large \(m\).

Choose \(r\) with \(r_0<r<R_1\).
By the Caccioppoli estimate \eqref{eq:uniform-Caccioppoli-local}, applied to \(v_m\) on the radii \(r<R_1\), we have
\[
    \|v_m\|_{H^1_{\mathbb H}(D_{\mathcal P_m}(r))}
    \leq
    C
    \|v_m\|_{L^2_{\mathbb H}(D_{\mathcal P_m}(R_1))}
    =
    C .
\]
After passing to a subsequence, Proposition~\ref{prop:compactness-admissible-classes} gives
\[
    d_{\mathcal H}^{\mathbb H}(\mathcal P_m,\mathcal P)\to0
\]
for some admissible \(\mathcal P\).
Passing to a further subsequence, we also have \(\xi_m\to\xi_\infty\) in \(\mathbb S^{n-1}\).

We now apply Lemma~\ref{lem:direct-compactness} to \(v_m\) on \(D_{\mathcal P_m}(r)\), with coefficient \(\alpha_m=1/a_m\to0\) multiplying the incoming Helgason mode.
The limit is therefore an outgoing solution of the homogeneous direct problem for \(\mathcal P\).
By the hyperbolic Rellich theorem and unique continuation,
\[
    v\equiv0
    \quad
    \text{in }\mathbb B^n\setminus\mathcal P.
\]
The \(L^2\)-convergence on varying domains in Lemma~\ref{lem:direct-compactness} then yields
\[
    \|v_m\|_{L^2_{\mathbb H}(D_{\mathcal P_m}(r_0))}
    \to
    0 .
\]
This contradicts \eqref{eq:normalised-inner-mass-lower-bound}.
Hence \eqref{eq:uniform-L2-any-radius} holds.
Taking \(R_1=R_*\) gives \eqref{eq:uniform-local-L2}.

\smallskip
We next prove the \(H^1\)-estimate.
Choose \(R_1>R_*\).
By the \(L^2\)-estimate already proved,
\[
    \|u_{\mathcal P,\xi}\|_{L^2_{\mathbb H}(D_{\mathcal P}(R_1))}
    \leq
    C_{R_1}.
\]
Applying the Caccioppoli estimate \eqref{eq:uniform-Caccioppoli-local} with
\(r=R_*\) and \(s=R_1\), we obtain
\[
    \|u_{\mathcal P,\xi}\|_{H^1_{\mathbb H}(D_{\mathcal P}(R_*))}
    \leq
    C_{R_*}.
\]
This proves \eqref{eq:uniform-local-H1}.

It remains to prove the \(L^\infty\)-estimate.
Choose \(R_1>R_*\).
By \eqref{eq:uniform-local-H1}, applied with radius \(R_1\), we have
\begin{equation}
\label{eq:H1-before-Linfty}
    \|u_{\mathcal P,\xi}\|_{H^1_{\mathbb H}(D_{\mathcal P}(R_1))}
    \leq
    C_{R_1}.
\end{equation}

We use the following local boundedness estimate.
There exists \(C>0\), depending only on \(R_*,R_1,\lambda_0\), and the a priori data,
such that every weak solution \(w\) of
\[
    \mathcal L_{\lambda_0}w=0
    \quad
    \text{in }D_{\mathcal P}(R_1),
    \qquad
    \mathscr B_{\mathcal P}w=0
    \quad
    \text{on }\partial\mathcal P,
\]
satisfies
\begin{equation}
\label{eq:uniform-local-boundedness-estimate}
    \|w\|_{L^\infty(D_{\mathcal P}(R_*))}
    \leq
    C
    \|w\|_{H^1_{\mathbb H}(D_{\mathcal P}(R_1))}.
\end{equation}
Indeed, away from \(\partial\mathcal P\), this is the standard local boundedness
estimate for uniformly elliptic equations with smooth coefficients.
Near \(\partial\mathcal P\), the admissible charts reduce the estimate to uniformly
controlled Euclidean charts.
In the \(\mathsf D\)-type case, the zero trace condition allows zero extension across the local face.
In the \(\mathsf N\)-type case, the collar condition in
Definition~\ref{def:hard-admissible-tg-defects} reduces the estimate to the fixed
half-cylinder \(Q^+\) with a homogeneous Neumann condition on the flat face.
The Sobolev constants needed in the iteration are uniform by
Lemma~\ref{lem:chartwise-mosco-sobolev}.
Since the number of charts and their overlaps are controlled by the a priori data,
the local estimates give \eqref{eq:uniform-local-boundedness-estimate}.

Applying \eqref{eq:uniform-local-boundedness-estimate} to
\(w=u_{\mathcal P,\xi}\), and using \eqref{eq:H1-before-Linfty}, gives
\[
    \|u_{\mathcal P,\xi}\|_{L^\infty(D_{\mathcal P}(R_*))}
    \leq
    E_{R_*}.
\]
This proves \eqref{eq:uniform-local-Linfty}.

Finally, if \(K\Subset\mathbb B^n\), choose \(R_*>R\) such that
\(K\subset B_{\mathbb H}(R_*)\).
Then \eqref{eq:uniform-local-Linfty-compact} follows from \eqref{eq:uniform-local-Linfty}.

The proof is complete.
\end{proof}

The uniform local estimates remove the extra \(H^1\)-bound assumed in Lemma~\ref{lem:direct-compactness}.
Thus the preceding compactness result gives continuity of the direct problem under hyperbolic Hausdorff convergence.

\begin{proposition}\label{prop:direct-stability}
Let \(0<h\leq h_0\).
In the \(\mathsf D\)-type case, let
\(\mathcal P_m\in\mathcal A_{\mathbb H}^h\), and in the \(\mathsf N\)-type case, let
\(\mathcal P_m\in\mathcal B_{\mathbb H}^h\).
Assume that
\begin{equation*}
    d_{\mathcal H}^{\mathbb H}(\mathcal P_m,\mathcal P)\to0 .
\end{equation*}
Let \(\xi_m\to\xi\) in \(\mathbb S^{n-1}\), and set
\[
    u_m:=u_{\mathcal P_m,\xi_m},
    \qquad
    u:=u_{\mathcal P,\xi}.
\]
Then
\begin{equation}
\label{eq:direct-continuity-H1loc}
    u_m\to u
    \quad
    \text{in }H^1_{\mathbb H,\mathrm{loc}}(\mathbb B^n\setminus\mathcal P).
\end{equation}
Moreover, for every compact set
\(K\Subset\mathbb B^n\setminus\mathcal P\) and every integer \(q\geq0\),
\begin{equation}
\label{eq:direct-continuity-Cq}
    u_m\to u
    \quad
    \text{in }C^q(K).
\end{equation}
The same convergence holds for the outgoing corrections
\[
    u_m^s:=u_m-u^i_{\xi_m},
    \qquad
    u^s:=u-u^i_\xi .
\]
Furthermore, for every \(R_0>R\),
\begin{equation}
\label{eq:direct-continuity-varying-L2}
    \|u_m-u\|_
    {L^2_{\mathbb H}(D_{\mathcal P_m}(R_0)\cap D_{\mathcal P}(R_0))}
    \to0 ,
\end{equation}
and
\begin{equation}
\label{eq:direct-continuity-norm-convergence}
    \int_{D_{\mathcal P_m}(R_0)}
    |u_m|^2\,dV_{\mathbb H}
    \to
    \int_{D_{\mathcal P}(R_0)}
    |u|^2\,dV_{\mathbb H}.
\end{equation}
\end{proposition}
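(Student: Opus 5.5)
The plan is to obtain Proposition~\ref{prop:direct-stability} as a direct corollary of Lemma~\ref{lem:direct-compactness}, with the missing hypothesis \eqref{eq:direct-compactness-H1-bound} supplied by Proposition~\ref{prop:uniform-local-bound}. We set \(\alpha_m:=1\) and \(\alpha:=1\), so that \(u_m=u^i_{\xi_m}+u^s_m\) is exactly the outgoing solution considered in Lemma~\ref{lem:direct-compactness}, with coefficient sequence \(\alpha_m\to\alpha\) and labels \(\xi_m\to\xi\).

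Given \(R_0>R\), we choose some \(R_*>R_0\). By \eqref{eq:uniform-local-H1}, applied to each \(\mathcal P_m\) in the relevant admissible class with label \(\xi_m\), we have
\[
\sup_m\|u_m\|_{H^1_{\mathbb H}(D_{\mathcal P_m}(R_*))}\le C_{R_*},
\]
and this bound is uniform in \(m\) because \(C_{R_*}\) depends only on \(R_*\), \(\lambda_0\), the a priori data and \(h\). This is precisely \eqref{eq:direct-compactness-H1-bound}. Lemma~\ref{lem:direct-compactness} then gives
\begin{itemize}
\item \eqref{eq:direct-continuity-H1loc} and \eqref{eq:direct-continuity-Cq}, from \eqref{eq:direct-compactness-H1loc-convergence} and \eqref{eq:direct-compactness-Cq-convergence};
\item convergence of the outgoing corrections, from \eqref{eq:direct-compactness-outgoing-correction-convergence};
\item \eqref{eq:direct-continuity-varying-L2} and \eqref{eq:direct-continuity-norm-convergence} for this \(R_0\), from \eqref{eq:direct-compactness-varying-L2} and \eqref{eq:direct-compactness-norm-convergence}, since \(R<R_0<R_*\).
\end{itemize}
Because \(R_0>R\) was arbitrary and \(R_*\) can always be chosen larger, these statements hold for every \(R_0>R\).

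The \(H^1_{\mathrm{loc}}\) and \(C^q\) statements are stated on arbitrary compact sets \(K\Subset\mathbb B^n\setminus\mathcal P\), not only inside \(B_{\mathbb H}(R_*)\). Lemma~\ref{lem:direct-compactness} already covers this: it asserts convergence on every compact set \(K\), handling the far region through the outgoing Green representation. If one prefers, one may also take \(R_*\) large enough that \(K\subset B_{\mathbb H}(R_*)\) and argue locally.

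The only point needing care is that the whole sequence converges and not merely a subsequence. Lemma~\ref{lem:direct-compactness} already states convergence of the whole sequence: its subsequence argument identifies the unique limit \(\alpha u_{\mathcal P,\xi}\) through uniqueness for the fixed-defect direct problem. So no real obstacle remains; the substantive work lies in the uniform bound of Proposition~\ref{prop:uniform-local-bound}, which we take as given.
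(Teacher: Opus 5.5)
Your proposal is correct and is essentially the paper's own proof. The paper also obtains the uniform bound \eqref{eq:direct-compactness-H1-bound} from \eqref{eq:uniform-local-H1}, applies Lemma~\ref{lem:direct-compactness} with \(\alpha_m=\alpha=1\), and handles an arbitrary \(R_0>R\) by choosing \(R_*>R_0\) and invoking the last part of that lemma.
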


\begin{proof}
By Proposition~\ref{prop:compactness-admissible-classes}, the limit
\(\mathcal P\) belongs to the same admissible class.
Fix \(R_*>R\).
The uniform local estimate \eqref{eq:uniform-local-H1} gives
\[
    \sup_m
    \|u_m\|_{H^1_{\mathbb H}(D_{\mathcal P_m}(R_*))}
    <+\infty .
\]
Applying Lemma~\ref{lem:direct-compactness} with
\(\alpha_m=\alpha=1\) yields
\eqref{eq:direct-continuity-H1loc} and
\eqref{eq:direct-continuity-Cq}, as well as the corresponding convergence of the
outgoing corrections.

Finally, given \(R_0>R\), choose \(R_*>R_0\) and apply the last part of
Lemma~\ref{lem:direct-compactness}.
This gives
\eqref{eq:direct-continuity-varying-L2} and
\eqref{eq:direct-continuity-norm-convergence}.
\end{proof}

We also need a uniform decay estimate for the outgoing corrections.
Since all admissible defects lie in \(\overline{B_{\mathbb H}(R)}\), this estimate follows from Cauchy data on a fixed hyperbolic sphere and the outgoing Green representation outside that sphere. 

\begin{proposition}\label{prop:uniform-decay}
Let \(0<h\leq h_0\).
In the \(\mathsf D\)-type case, let \(\mathcal P\in\mathcal A_{\mathbb H}^h\).
In the \(\mathsf N\)-type case, let \(\mathcal P\in\mathcal B_{\mathbb H}^h\).
{
Then there exist constants \(R_{\rm dec}>R\) and \(E>0\), depending only on
\(n\), \(\lambda_0\), the a priori data of the admissible class, and the
fixed \(h\), but not on \(\mathcal P\) or \(\xi\), such that
}
\begin{equation}
\label{eq:uniform-decay-outgoing-correction}
    |u^s_{\mathcal P,\xi}(x)|
    +
    |\nabla_{\mathbb H}u^s_{\mathcal P,\xi}(x)|_{\mathbb H}
    \leq
    E e^{-\frac{n-1}{2}\rho(x)}
\end{equation}
whenever
\[
    \rho(x)\geq R_{\rm dec}.
\]
\end{proposition}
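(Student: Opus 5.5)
We will use the outgoing Green representation outside a fixed hyperbolic sphere, as the remark preceding the statement suggests. Fix radii $R<R_1<R_2$ and set $R_{\rm dec}:=R_2+1$. Since every admissible defect lies in $\overline{B_{\mathbb H}(R)}$, the closed annulus $A:=\overline{B_{\mathbb H}(R_2)}\setminus B_{\mathbb H}(R_1')$, with $R<R_1'<R_1$, is disjoint from $\mathcal P$ for every $\mathcal P$ in the class. By Proposition~\ref{prop:uniform-local-bound}, applied with some $R_*>R_2$, $\|u_{\mathcal P,\xi}\|_{L^\infty(A)}\le E_{R_*}$ uniformly in $\mathcal P$ and $\xi$. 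Interior elliptic estimates for $\mathcal L_{\lambda_0}$ on a slightly smaller compact annulus around $\partial B_{\mathbb H}(R_1)$ then bound $u_{\mathcal P,\xi}$ and $\nabla_{\mathbb H}u_{\mathcal P,\xi}$ there. Since $u^i_\xi=e_{2\lambda_0,\xi}$ is bounded in $C^1$ on this fixed compact set uniformly in $\xi$, the Cauchy data $(u^s,\partial_{\nu_{\mathbb H}}u^s)$ on $\partial B_{\mathbb H}(R_1)$ are bounded by a constant $M$ independent of $\mathcal P$ and $\xi$.

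For $\rho(x)>R_1$, the representation \eqref{eq:direct-compactness-green-representation} gives
\[
u^s_{\mathcal P,\xi}(x)=\int_{\partial B_{\mathbb H}(R_1)}\Bigl(u^s(y)\,\partial_{\nu_{\mathbb H,y}}G^+_{\lambda_0}(x,y)-\partial_{\nu_{\mathbb H}}u^s(y)\,G^+_{\lambda_0}(x,y)\Bigr)d\sigma_{\mathbb H}(y).
\]
We differentiate this in $x$ under the integral sign. It then suffices to show that for $y\in\partial B_{\mathbb H}(R_1)$ and $\rho(x)\ge R_{\rm dec}$,
\[
|G^+_{\lambda_0}|+|\nabla_{\mathbb H,y}G^+_{\lambda_0}|+|\nabla_{\mathbb H,x}G^+_{\lambda_0}|+|\nabla_{\mathbb H,x}\nabla_{\mathbb H,y}G^+_{\lambda_0}|\le Ce^{-\frac{n-1}{2}\rho(x)} .
\]
Since $G^+_{\lambda_0}$ depends only on $r=\rho(x,y)$, all these quantities are bounded by $|\partial_r^jG^+_{\lambda_0}(r)|$ for $j=0,1,2$, up to bounded factors. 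Here we use that the gradients of the distance function have unit hyperbolic norm and that its Hessian is bounded by $\coth r\le C$ for $r\ge1$. The triangle inequality gives $r\ge\rho(x)-R_1\ge1$.

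The main step is the radial bound $|\partial_r^jG^+_{\lambda_0}(r)|\le Ce^{-\frac{n-1}{2}r}$ for $r\ge1$ and $j\le2$.

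For $n\ge3$, we read it off from \eqref{eq:Green-}. The prefactor $(\cosh r)^{\frac{n-3}{2}+\lambda_0\mathrm i}(\sinh r)^{-(n-2)}$ is $O(e^{-\frac{n-1}{2}r})$, and the same holds after each $r$-derivative. The integral $\int_0^\pi(1+\cos t/\cosh r)^{\frac{n-3}{2}+\lambda_0\mathrm i}(\sin t)^{-2\lambda_0\mathrm i}dt$ is bounded for $r\ge1$. This is because $1+\cos t/\cosh r\ge1-1/\cosh1>0$, so the integrand is smooth in $r$. Its $r$-derivatives only bring down factors $\cos t\,\sinh r/\cosh^2 r$ and their derivatives, which are bounded.

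For $n=2$, the paper already records $|\partial_\rho^jG_{\pm\mu\mathrm i}|\le Ce^{-\rho/2}$ for $\rho\ge1$ and $j=0,1,2$, which is exactly the claimed rate since $\frac{n-1}{2}=\frac12$.

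We then bound the boundary integral by $M\,|\partial B_{\mathbb H}(R_1)|_{\mathbb H}\,Ce^{-\frac{n-1}{2}(\rho(x)-R_1)}$. This yields \eqref{eq:uniform-decay-outgoing-correction} with $E$ depending only on $n$, $\lambda_0$, $R_1$, and the constants of Proposition~\ref{prop:uniform-local-bound}.

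The only delicate point is the uniformity in $\mathcal P$. It enters solely through the Cauchy-data bound, and Proposition~\ref{prop:uniform-local-bound} supplies this uniformity.
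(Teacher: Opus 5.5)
Your proposal is correct and is essentially the paper's proof: uniform Cauchy data for $u^s_{\mathcal P,\xi}$ on a fixed sphere outside $\overline{B_{\mathbb H}(R)}$, obtained from Proposition~\ref{prop:uniform-local-bound}, interior elliptic estimates and the uniform $C^1$ bound on $u^i_\xi$, followed by the outgoing Green representation, the radial bound $|\partial_r^jG_{-\lambda_0\mathrm i}(r)|\le Ce^{-\frac{n-1}{2}r}$ for $r\ge1$, $j\le 2$, and bounded derivatives of $\rho(x,y)$. The differences are cosmetic: you use two radii $R_1<R_2$ where the paper uses a single $R_0$ with $R_{\rm dec}=R_0+1$. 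Also, strictly speaking, the mixed term $\nabla_x\nabla_y\rho(x,y)$ is bounded by $1/\sinh r$, not by the one-variable Hessian bound $\coth r$ you cite, but it is bounded for $r\ge1$ either way.
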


\begin{proof}
Choose \(R_0>R+2\), and set
\begin{equation*}
    A_0
    :=
    B_{\mathbb H}(R_0+1)
    \setminus
    \overline{B_{\mathbb H}(R_0-1)} .
\end{equation*}
Since every admissible defect is contained in \(\overline{B_{\mathbb H}(R)}\), the annulus \(A_0\) is contained in \(\mathbb B^n\setminus\mathcal P\) for every admissible \(\mathcal P\).

By \eqref{eq:uniform-local-Linfty}, applied with radius \(R_0+1\),
\[
    \|u_{\mathcal P,\xi}\|_{L^\infty(A_0)}
    \leq C .
\]
The incoming Helgason modes \(u^i_\xi=e_{2\lambda_0,\xi}\) are uniformly bounded in \(C^2(A_0)\), uniformly for \(\xi\in\mathbb S^{n-1}\).
Hence
\[
    \|u^s_{\mathcal P,\xi}\|_{L^\infty(A_0)}
    \leq C .
\]
Since \(\mathcal L_{\lambda_0}u^s_{\mathcal P,\xi}=0\) in the fixed annulus \(A_0\), the interior elliptic estimate gives
\begin{equation}
\label{eq:uniform-Cauchy-data-fixed-sphere}
    \|u^s_{\mathcal P,\xi}\|_{L^\infty(\partial B_{\mathbb H}(R_0))}
    +
    \|\partial_{\nu_{\mathbb H}}u^s_{\mathcal P,\xi}\|_{L^\infty(\partial B_{\mathbb H}(R_0))}
    \leq C_0 .
\end{equation}

Let
\[
    G^+_{\lambda_0}(x,y)
    :=
    G_{-\lambda_0\mathrm i}(\rho(x,y))
\]
{
be the outgoing Green function associated with \(\mathcal L_{\lambda_0}\),
defined by \eqref{eq:Green-} for \(n\geq3\) and by
\eqref{eq:Green-two-dimensional} for \(n=2\).  From these formulas, for
\(s\geq1\),
}
\[
    |G_{-\lambda_0\mathrm i}(s)|
    +
    |G'_{-\lambda_0\mathrm i}(s)|
    +
    |G''_{-\lambda_0\mathrm i}(s)|
    \leq
    C e^{-\frac{n-1}{2}s}.
\]
For \(n\geq3\), this follows because the factor
\[
    \frac{(\cosh s)^{\frac{n-3}{2}+\mathrm i\lambda_0}}
    {(\sinh s)^{n-2}}
\]
has modulus comparable to \(e^{-\frac{n-1}{2}s}\) as \(s\to+\infty\), and the integral factor in \eqref{eq:Green-}, together with its first two \(s\)-derivatives, is uniformly bounded for \(s\geq1\).
{
For \(n=2\), the same estimate is precisely the derivative bound following
\eqref{eq:Green-two-dimensional}.
}

Now let \(y\in\partial B_{\mathbb H}(R_0)\) and \(\rho(x)\geq R_0+1\).
Then \(\rho(x,y)\geq \rho(x)-R_0\geq1\).
{
Since \(\rho(x,y)\geq1\), the hyperbolic Hessian formula for the distance
function gives, uniformly for \(y\in\partial B_{\mathbb H}(R_0)\),
\[
    |\nabla_x^{\mathbb H}\rho(x,y)|_{\mathbb H}
    +|\partial_{\nu_{\mathbb H,y}}\rho(x,y)|
    +|\nabla_x^{\mathbb H}\partial_{\nu_{\mathbb H,y}}\rho(x,y)|_{\mathbb H}
    \leq C .
\]
}
Therefore
{
\begin{equation}
\label{eq:green-kernel-decay}
\begin{split}
    &|G^+_{\lambda_0}(x,y)|
    +
    |\nabla_x^{\mathbb H}G^+_{\lambda_0}(x,y)|_{\mathbb H}
    +
    |\partial_{\nu_{\mathbb H,y}}G^+_{\lambda_0}(x,y)|  \\
    &\quad
    +
    |\nabla_x^{\mathbb H}\partial_{\nu_{\mathbb H,y}}G^+_{\lambda_0}(x,y)|_{\mathbb H}
    \leq
    C_1 e^{-\frac{n-1}{2}\rho(x)} .
\end{split}
\end{equation}
}

For \(\rho(x)>R_0\), the outgoing Green representation in
\(\mathbb B^n\setminus\overline{B_{\mathbb H}(R_0)}\) gives
{
\begin{equation}
\label{eq:green-representation-decay}
    u^s_{\mathcal P,\xi}(x)
    =
    \int_{\partial B_{\mathbb H}(R_0)}
    \left(
        u^s_{\mathcal P,\xi}(y)
        \partial_{\nu_{\mathbb H,y}}G^+_{\lambda_0}(x,y)
        -
        G^+_{\lambda_0}(x,y)
        \partial_{\nu_{\mathbb H,y}}u^s_{\mathcal P,\xi}(y)
    \right)
    d\sigma_{\mathbb H}(y),
\end{equation}
where \(\nu_{\mathbb H,y}\) is the outward unit hyperbolic normal to
\(\partial B_{\mathbb H}(R_0)\).
}
This representation follows from Green's identity on \(B_{\mathbb H}(S)\setminus\overline{B_{\mathbb H}(R_0)}\), followed by letting \(S\to+\infty\).
The boundary term on \(\partial B_{\mathbb H}(S)\) vanishes by the outgoing radiation condition for both \(u^s_{\mathcal P,\xi}\) and \(G^+_{\lambda_0}\).

Combining \eqref{eq:uniform-Cauchy-data-fixed-sphere}, \eqref{eq:green-kernel-decay}, and \eqref{eq:green-representation-decay}, we obtain
\[
    |u^s_{\mathcal P,\xi}(x)|
    \leq
    C e^{-\frac{n-1}{2}\rho(x)}
    \quad
    \text{for }\rho(x)\geq R_0+1 .
\]
Differentiating \eqref{eq:green-representation-decay} with respect to \(x\), and using \eqref{eq:green-kernel-decay} again, gives
\[
    |\nabla_{\mathbb H}u^s_{\mathcal P,\xi}(x)|_{\mathbb H}
    \leq
    C e^{-\frac{n-1}{2}\rho(x)}
    \quad
    \text{for }\rho(x)\geq R_0+1 .
\]
Taking \(R_{\rm dec}:=R_0+1\) and enlarging the constant \(E\) proves \eqref{eq:uniform-decay-outgoing-correction}.
\end{proof}


\section{Hyperbolic far-field continuation and quantitative hyperbolic reflection}\label{sec:quantitative-rellich}

This section collects the quantitative estimates that convert far-field smallness into near-field control.
We first prove a quantitative version of the hyperbolic Rellich theorem.

\subsection{Quantitative Rellich estimate at conformal infinity}\label{subsec:quantitative-rellich}

We now prove a quantitative version of the hyperbolic Rellich theorem in Lemma~\ref{lem:quantitative-rellich}.
The hyperbolic Rellich theorem recalled in Lemma~\ref{thm3} gives the qualitative statement that an outgoing solution of \(\mathcal L_{\lambda_0}w=0\) outside a hyperbolic ball is uniquely determined by its far-field pattern.
In particular, if the far-field pattern vanishes, then the outgoing solution vanishes in the exterior region.
The next result gives the corresponding quantitative statement.

This is an estimate at conformal infinity for the free exterior equation \(\mathcal L_{\lambda_0}w=0\).
It does not use the boundary condition on the defect or the local face structure of its boundary.
Indeed, once two admissible defects \(\mathcal P,\mathcal P'\in\mathcal A_{\mathbb H}^h\) or \(\mathcal P,\mathcal P'\in\mathcal B_{\mathbb H}^h\) are contained in \(\overline{B_{\mathbb H}(R)}\), the difference of their outgoing corrections is an outgoing solution of \(\mathcal L_{\lambda_0}w=0\) in \(\mathbb B^n\setminus\overline{B_{\mathbb H}(R)}\).
Thus, the constants depend only on \(n\), \(\lambda_0\), the exterior radius, the chosen annulus, the uniform direct bounds from Subsection~\ref{subsec:direct-uniform-bound}, and the a priori parameters in Definitions~\ref{def:admissible-tg-defects} and~\ref{def:hard-admissible-tg-defects}, but not on the particular defects.

The proof is based on the hyperbolic Sommerfeld--Rellich structure developed in Section~\ref{sec 2}. 
We use the outgoing Green representation \eqref{eq:Green-}--\eqref{eq:Green+}
{(with \eqref{eq:Green-two-dimensional} when \(n=2\))}, the hyperbolic radiation condition \eqref{radiation}, and the far-field formula \eqref{eq:ffp} to identify the leading coefficient of an outgoing solution at conformal infinity. 
The coefficient-interpolation estimate for analytic expansions used below is
proved directly by the low--high frequency decomposition in the argument.
All scattering-theoretic ingredients, including the radiation condition, the far-field pattern, and the Rellich uniqueness mechanism, come from the hyperbolic framework recalled above. 
Hence the estimate below should be viewed as a quantitative version of the hyperbolic Rellich theorem proved in Section~\ref{sec 2}.

\medskip
For \(0<\rho_1<\rho_2\), we write
\begin{equation}
\label{eq:fixed-hyperbolic-annulus}
    A_{\rho_1,\rho_2}
    :=
    B_{\mathbb H}(\rho_2)\setminus\overline{B_{\mathbb H}(\rho_1)} .
\end{equation}
We also set
\begin{equation}\label{eq:tau-function}
    \tau(\rho)
    :=
    \int_{\rho}^{+\infty}\frac{ds}{\sinh s}
    =
    \log\coth\frac{\rho}{2},
    \qquad
    \rho>0 .
\end{equation}
Then, \(\tau\) is positive and strictly decreasing on \((0,+\infty)\).

\begin{lemma}\label{lem:quantitative-rellich}
Let \(\mathcal P\) and \(\mathcal P'\) belong either both to
\(\mathcal A_{\mathbb H}^h\) in the \(\mathsf D\)-type case, or both to
\(\mathcal B_{\mathbb H}^h\) in the \(\mathsf N\)-type case. Assume that
\[
    \mathcal P\cup\mathcal P'\subset \overline{B_{\mathbb H}(R)}.
\]
Let \(A_{\rho_1,\rho_2}\) be the annulus defined in \eqref{eq:fixed-hyperbolic-annulus}, with \(R<\rho_1<\rho_2\).
For a fixed boundary label \(\xi\in\mathbb S^{n-1}\), assume that
\begin{equation}
\label{eq:far-field-error-assumption}
    \bigl\|
        u_{\infty,\mathcal P,\xi}
        -
        u_{\infty,\mathcal P',\xi}
    \bigr\|_{L^2(\mathbb S^{n-1})}
    \leq
    \varepsilon .
\end{equation}
{
Then there exists \(c>0\), depending only on the a priori data and the annulus
\(A_{\rho_1,\rho_2}\), such that, for each fixed \(h\), there are constants
\(C>0\) and \(\varepsilon_0>0\), possibly depending on \(h\), such that whenever
\(0<\varepsilon<\varepsilon_0\),
}
\begin{equation}\label{eq:quantitative-rellich-estimate}
    \bigl\|
        u_{\mathcal P,\xi}
        -
        u_{\mathcal P',\xi}
    \bigr\|_{C^1(A_{\rho_1,\rho_2})}
    \leq
    C\exp\!\left[-c(-\log \varepsilon)^{1/2}\right]:=\Psi(\varepsilon).
\end{equation}
\end{lemma}

\begin{proof}
Throughout the proof, \(C>0\) may change from line to line, whereas \(c>0\)
has the uniform dependence specified above.

\medskip
\noindent\emph{Step 1. Reduction to a free outgoing exterior solution.}
Set
\begin{equation*}
    w_\xi
    :=
    u^s_{\mathcal P,\xi}
    -
    u^s_{\mathcal P',\xi}.
\end{equation*}
Since the two exterior solutions are generated by the same incoming Helgason mode \(u^i_\xi\), we have, in the common exterior domain,
\begin{equation}
\label{eq:rellich-exterior-solution-difference}
    u_{\mathcal P,\xi}
    -
    u_{\mathcal P',\xi}
    =
    w_\xi .
\end{equation}
Moreover, since
\(
    \mathcal P\cup\mathcal P'
    \subset
    B_{\mathbb H}(R),
\)
the region
\(
    \mathbb B^n\setminus\overline{B_{\mathbb H}(R)}
\)
lies in the exterior of both defects.
Hence \(w_\xi\) satisfies the free equation
\begin{equation}\label{eq:rellich-free-exterior-equation}
    \mathcal L_{\lambda_0}w_\xi=0
    \quad\text{in }
    \mathbb B^n\setminus\overline{B_{\mathbb H}(R)} .
\end{equation}
Since both outgoing corrections satisfy the outgoing radiation condition and the radiation condition is linear,
\(w_\xi\) is outgoing as well.
Its far-field pattern is given by
\begin{equation*}
    (w_\xi)_\infty
    =
    u_{\infty,\mathcal P,\xi}
    -
    u_{\infty,\mathcal P',\xi}.
\end{equation*}

\medskip
\noindent\emph{Step 2. Cutoff reduction to a compactly supported source problem.}
Choose \(R_0\) and \(\sigma>0\) such that \(R<R_0<R_0+\sigma<\rho_1<\rho_2\).
Let \(\chi=\chi(\rho)\in C^\infty(\mathbb B^n)\) satisfy
\begin{equation*}
    \chi=0 \quad\text{for }\rho<R_0,
    \qquad
    \chi=1 \quad\text{for }\rho>R_0+\sigma .
\end{equation*}
Define
\begin{equation*}
    v_\xi:=\chi w_\xi,
    \qquad
    f_\xi:=\mathcal L_{\lambda_0}v_\xi .
\end{equation*}
Since \(\chi=0\) in \(B_{\mathbb H}(R_0)\), the function \(v_\xi\) can be extended by zero across the defects and is therefore defined on all of \(\mathbb B^n\).
The transition region of \(\chi\) is contained in the fixed annulus \(A_{R_0,R_0+\sigma}\), which lies outside \(B_{\mathbb H}(R)\).
Hence, by \eqref{eq:rellich-free-exterior-equation} and a direct commutator calculation, we have
\begin{equation}\label{eq:rellich-cutoff-commutator}
    f_\xi
    =
    \mathcal L_{\lambda_0}(\chi w_\xi)
    =
    [\mathcal L_{\lambda_0},\chi]w_\xi,
    \qquad
    \operatorname{supp} f_\xi
    \subset
    \overline{A_{R_0,R_0+\sigma}}.
\end{equation}
Thus \(v_\xi\) is an outgoing solution of the compactly supported source problem
\begin{equation}\label{eq:rellich-source-problem}
    \mathcal L_{\lambda_0}v_\xi=f_\xi
    \quad\text{in }\mathbb B^n .
\end{equation}
This is a special case of the source equation \eqref{eq:source-shifted-helmholtz} recalled in Section~\ref{sec 2}, with \(\mu=\lambda_0\).

Since \(\chi=1\) near conformal infinity, the two outgoing solutions \(v_\xi\) and \(w_\xi\) have the same far-field pattern.
We denote this common far-field pattern by
\begin{equation*}
    (v_\xi)_\infty
    =
    (w_\xi)_\infty
    =
    w_{\infty,\xi}
    :=
    u_{\infty,\mathcal P,\xi}
    -
    u_{\infty,\mathcal P',\xi}.
\end{equation*}
Applying the outgoing Green representation
\eqref{eq:Green-}--\eqref{eq:Green+}
{(with \eqref{eq:Green-two-dimensional} when \(n=2\))} and the far-field formula \eqref{eq:ffp} to
the source problem \eqref{eq:rellich-source-problem}, we express
\(w_{\infty,\xi}\) in terms of the Helgason--Fourier transform of
\(f_\xi\) on the spectral shell \(\mu=\lambda_0\).

\medskip
\noindent\emph{Step 3. Boundary spherical harmonic expansion of the far-field pattern.}
In the Poincar\'e ball model, conformal infinity is identified with the unit sphere by
\begin{equation*}
    \partial_\infty\mathbb H^n
    =
    \partial\mathbb B^n
    =
    \mathbb S^{n-1},
    \qquad
    x=\tanh\frac{\rho}{2}\,\theta,
    \quad
    \theta\in\mathbb S^{n-1}.
\end{equation*}
Under this identification, the far-field formula \eqref{eq:ffp} gives
\(w_{\infty,\xi}\) as a function on \(\mathbb S^{n-1}\).
By the far-field assumption \eqref{eq:far-field-error-assumption}, we have
\begin{equation}
\label{eq:rellich-farfield-L2}
    w_{\infty,\xi}\in L^2(\mathbb S^{n-1}),
    \qquad
    \|w_{\infty,\xi}\|_{L^2(\mathbb S^{n-1})}
    \leq
    \varepsilon .
\end{equation}

Let
\(
    \{Y_{\ell m}\}_{\ell\geq0,\;1\leq m\leq d_\ell}
\)
be an orthonormal basis of spherical harmonics on \(\mathbb S^{n-1}\), normalized
in \(L^2(\mathbb S^{n-1})\), such that
\begin{equation*}
    -\Delta_{\mathbb S^{n-1}}Y_{\ell m}
    =
    \ell(\ell+n-2)Y_{\ell m}.
\end{equation*}
We expand
\begin{equation}\label{eq:rellich-farfield-expansion}
    w_{\infty,\xi}(\theta)
    =
    \sum_{\ell=0}^{\infty}
    \sum_{m=1}^{d_\ell}
    a_{\ell m}Y_{\ell m}(\theta),
\end{equation}
where
\begin{equation*}
    a_{\ell m}
    =
    \int_{\mathbb S^{n-1}}
    w_{\infty,\xi}(\theta)
    \overline{Y_{\ell m}(\theta)}
    \,d\sigma(\theta) .
\end{equation*}
By Parseval's identity and \eqref{eq:rellich-farfield-L2},
\begin{equation}
\label{eq:rellich-coeff-small}
    \sum_{\ell=0}^{\infty}
    \sum_{m=1}^{d_\ell}
    |a_{\ell m}|^2
    =
    \|w_{\infty,\xi}\|_{L^2(\mathbb S^{n-1})}^2
    \leq
    \varepsilon^2 .
\end{equation}

\medskip
\noindent\emph{Step 4. Separated expansion of the outgoing exterior solution.}
For \(\rho>R_0+\sigma\), the function \(w_\xi\) solves the free shifted Helmholtz equation.
In hyperbolic polar coordinates, the Laplacian has the following form:
\begin{equation}\label{eq:rellich-polar-laplacian}
    \Delta_{\mathbb H}
    =
    \partial_\rho^2
    +(n-1)\coth\rho\,\partial_\rho
    +
    \sinh^{-2}\rho\,\Delta_{\mathbb S^{n-1}}.
\end{equation}
For each fixed \(\rho>R_0+\sigma\), we expand \(w_\xi(\rho,\cdot)\) in spherical
harmonics. Since the equation is free in this exterior region and
\eqref{eq:rellich-polar-laplacian} separates the variables \(\rho\) and \(\theta\),
the outgoing condition leaves only the outgoing radial mode. Hence
\begin{equation}
\label{eq:rellich-outgoing-expansion-pre}
    w_\xi(\rho,\theta)
    =
    \sum_{\ell=0}^{\infty}
    \sum_{m=1}^{d_\ell}
    c_{\ell m}\Phi_\ell^+(\rho)Y_{\ell m}(\theta),
    \qquad
    \rho>R_0+\sigma .
\end{equation}
Here, $\Phi_\ell^+$ denotes the unique outgoing radial solution
normalized to have unit far-field coefficient, namely
\begin{equation}\label{eq:rellich-radial-normalization}
    \lim_{\rho\to+\infty}
    \left(\cosh\frac{\rho}{2}\right)^{(n-1)-2\mathrm i\lambda_0}
    \Phi_\ell^+(\rho)
    =
    1 .
\end{equation}
By the definition of the far-field pattern in \eqref{eq:ffp}, the
limit in \eqref{eq:rellich-radial-normalization} is precisely the
far-field coefficient of the radial mode. Therefore, the far-field
pattern of the right-hand side of
\eqref{eq:rellich-outgoing-expansion-pre} is exactly
\[
    \sum_{\ell=0}^{\infty}
    \sum_{m=1}^{d_\ell}
    c_{\ell m}Y_{\ell m}(\theta).
\]
Comparison with \eqref{eq:rellich-farfield-expansion} gives
\[
    c_{\ell m}=a_{\ell m}.
\]
Consequently,
\begin{equation}\label{eq:rellich-outgoing-expansion}
    w_\xi(\rho,\theta)
    =
    \sum_{\ell=0}^{\infty}
    \sum_{m=1}^{d_\ell}
    a_{\ell m}\Phi_\ell^+(\rho)Y_{\ell m}(\theta),
    \qquad
    \rho>R_0+\sigma .
\end{equation}

\medskip
\noindent\emph{Step 5. A weighted a priori bound for the far-field
coefficients.}

Choose
\[
    R_*:=\frac{R_0+\sigma+\rho_1}{2},
\]
and set
\[
    \tau_0:=\tau(R_0+\sigma),
    \qquad
    \tau_*:=\tau(R_*),
    \qquad
    \tau_1:=\tau(\rho_1),
\]
where function $\tau$ is defined in \eqref{eq:tau-function}.
Since $\tau$ is strictly decreasing and
\[
    R_0+\sigma<R_*<\rho_1,
\]
we have
\begin{equation*}
    0<\tau_1<\tau_*<\tau_0.
\end{equation*}

Let $\Pi_\ell$ denote the orthogonal projection of
$L^2(\mathbb S^{n-1})$ onto the spherical harmonics of degree
$\ell$, and set
\begin{equation}\label{eq:rellich-bell-definition}
    b_\ell^2
    :=
    \|\Pi_\ell w_{\infty,\xi}\|_{L^2(\mathbb S^{n-1})}^2
    =
    \sum_{m=1}^{d_\ell}|a_{\ell m}|^2.
\end{equation}

By the far-field formula \eqref{eq:ffp},
\begin{equation}\label{eq:rellich-farfield-kernel-representation}
    w_{\infty,\xi}(\theta)
    =
    C_{n,\lambda_0}
    \int_{A_{R_0,R_0+\sigma}}
        \mathcal K_{\lambda_0}(y,\theta)
        f_\xi(y)\,dV_{\mathbb H}(y),
\end{equation}
where
\[
    \mathcal K_{\lambda_0}(y,\theta)
    :=
    \frac{
        \left(\cosh\frac{\rho(y)}{2}\right)^{2\mathrm i\lambda_0}
    }{
        \cosh^{n-1}\frac{\rho(y)}{2}
    }
    \left(
        1-2\theta\cdot y+|y|^2
    \right)^{\mathrm i\lambda_0-\frac{n-1}{2}}.
\]

On the support of $f_\xi$,
\[
    |y|
    \leq
    \tanh\frac{R_0+\sigma}{2}
    =
    e^{-\tau_0}.
\]
Put
\[
    r_*:=\tanh\frac{R_*}{2}=e^{-\tau_*}.
\]
Then
\[
    e^{-\tau_0}<r_*<1.
\]
For $|y|\leq e^{-\tau_0}$, the function
$\theta\mapsto\mathcal K_{\lambda_0}(y,\theta)$ extends
holomorphically to a fixed complex neighborhood of
$\mathbb S^{n-1}$ corresponding to the radius $r_*$. 
By the Funk--Hecke formula and the exponential decay estimates
for Gegenbauer coefficients of functions analytic in a Bernstein
ellipse, see
\cite[Sections~2.3 and~2.5]{atkinsonHan2012}
and \cite[Theorem~4.3]{wang2016gegenbauer}, there exist constants
$C>0$ and $p_0\geq0$ such that
\[
    \sup_{\rho(y)\leq R_0+\sigma}
    \bigl\|
        \Pi_\ell\mathcal K_{\lambda_0}(y,\cdot)
    \bigr\|_{L^2(\mathbb S^{n-1})}
    \leq
    C(1+\ell)^{p_0}e^{-\ell\tau_*}.
\]
For $n=2$, the same conclusion follows from the corresponding
Chebyshev coefficient estimate; see
\cite[Theorem~8.1]{trefethen2013approximation}.

Applying $\Pi_\ell$ to
\eqref{eq:rellich-farfield-kernel-representation}, and using
Minkowski's and Cauchy--Schwarz inequalities, we obtain
\[
\begin{aligned}
    b_\ell
    &\leq
    C
    \int_{A_{R_0,R_0+\sigma}}
        |f_\xi(y)|
        \bigl\|
            \Pi_\ell
            \mathcal K_{\lambda_0}(y,\cdot)
        \bigr\|_{L^2(\mathbb S^{n-1})}
        \,dV_{\mathbb H}(y)
    \\
    &\leq
    C(1+\ell)^{p_0}e^{-\ell\tau_*}
    \|f_\xi\|_{L^2_{\mathbb H}
        (A_{R_0,R_0+\sigma})}.
\end{aligned}
\]
Therefore,
\begin{equation}\label{eq:rellich-single-degree-bound}
    b_\ell^2
    \leq
    C(1+\ell)^{2p_0}e^{-2\ell\tau_*}
    \|f_\xi\|_{L^2_{\mathbb H}
        (A_{R_0,R_0+\sigma})}^2.
\end{equation}

Choose a new exponent
\[
    Q_0>2p_0+1.
\]
Multiplying \eqref{eq:rellich-single-degree-bound} by
$(1+\ell)^{-Q_0}e^{2\ell\tau_*}$ and summing over $\ell$, we obtain
\begin{align}
    \sum_{\ell=0}^{\infty}
        (1+\ell)^{-Q_0}
        e^{2\ell\tau_*}b_\ell^2
    &\leq
    C
    \|f_\xi\|_{L^2_{\mathbb H}
        (A_{R_0,R_0+\sigma})}^2
    \sum_{\ell=0}^{\infty}
        (1+\ell)^{2p_0-Q_0}
    \notag\\
    &\leq
    C
    \|f_\xi\|_{L^2_{\mathbb H}
        (A_{R_0,R_0+\sigma})}^2.
\label{eq:rellich-source-coeff-bound}
\end{align}

By \eqref{eq:rellich-cutoff-commutator},
\[
    f_\xi=[\mathcal L_{\lambda_0},\chi]w_\xi,
\]
and \([\mathcal L_{\lambda_0},\chi]\) is a first-order differential operator
whose coefficients and support are fixed.
{
Proposition~\ref{prop:uniform-local-bound}, applied on the fixed annulus
\(A_{R_0,R_0+\sigma}\), and the identity
\(w_\xi=u_{\mathcal P,\xi}-u_{\mathcal P',\xi}\) give a constant
\(M>0\), depending only on the a priori data, the fixed annulus, and the fixed
\(h\), such that
}
\[
    \|w_\xi\|_{H^1_{\mathbb H}(A_{R_0,R_0+\sigma})}
    \leq M .
\]
Hence, after enlarging \(M\) if necessary,
\begin{equation}\label{eq:rellich-source-uniform-bound}
    \|f_\xi\|_{L^2_{\mathbb H}(A_{R_0,R_0+\sigma})}
    \leq
    C
    \|w_\xi\|_{H^1_{\mathbb H}
        (A_{R_0,R_0+\sigma})}
    \leq
    M.
\end{equation}
Combining
\eqref{eq:rellich-source-coeff-bound} and
\eqref{eq:rellich-source-uniform-bound}, we obtain
\begin{equation}\label{eq:rellich-coeff-weighted}
    \sum_{\ell=0}^{\infty}
        (1+\ell)^{-Q_0}
        e^{2\ell\tau_*}b_\ell^2
    \leq
    CM^2.
\end{equation}

\medskip
\noindent\emph{Step 6. Radial propagation estimate on the fixed annulus.}
We now estimate the radial factor \(\Phi_\ell^+\) appearing in
\eqref{eq:rellich-outgoing-expansion}.
We first derive its radial equation.

For a single separated mode
\(
    \Phi(\rho)Y_{\ell m}(\theta),
\)
the polar expression \eqref{eq:rellich-polar-laplacian}, together with the spherical harmonic identity
\(
    -\Delta_{\mathbb S^{n-1}}Y_{\ell m}
    =
    \ell(\ell+n-2)Y_{\ell m},
\)
gives
\[
\begin{aligned}
    \Delta_{\mathbb H}
    \bigl(\Phi(\rho)Y_{\ell m}(\theta)\bigr)
    &=
    \left[
        \Phi''(\rho)
        +(n-1)\coth\rho\,\Phi'(\rho)
        -
        \frac{\ell(\ell+n-2)}{\sinh^2\rho}\Phi(\rho)
    \right]
    Y_{\ell m}(\theta).
\end{aligned}
\]
Since \(\mathcal L_{\lambda_0}=-\Delta_{\mathbb H}-(n-1)^2/4-\lambda_0^2\), the equation
\(\mathcal L_{\lambda_0}(\Phi Y_{\ell m})=0\) is equivalent to
\[
    \Delta_{\mathbb H}(\Phi Y_{\ell m})
    +
    \left(
        \lambda_0^2+\frac{(n-1)^2}{4}
    \right)\Phi Y_{\ell m}
    =
    0.
\]
Thus the outgoing radial mode \(\Phi_\ell^+\) satisfies
\begin{equation}\label{eq:rellich-radial-equation}
    (\Phi_\ell^+)^{\prime\prime}
    +
    (n-1)\coth\rho\,(\Phi_\ell^+)'
    +
    \left[
        \lambda_0^2+\frac{(n-1)^2}{4}
        -
        \frac{\ell(\ell+n-2)}{\sinh^2\rho}
    \right]\Phi_\ell^+
    =
    0 .
\end{equation}
The last term is the angular momentum contribution of the \(\ell\)-th spherical
harmonic mode.

To remove the first-order derivative in \eqref{eq:rellich-radial-equation}, set
\begin{equation}\label{eq:rellich-liouville-transform}
    V_\ell(\rho)
    :=
    (\sinh\rho)^{\frac{n-1}{2}}\Phi_\ell^+(\rho).
\end{equation}
Substituting \eqref{eq:rellich-liouville-transform} into
\eqref{eq:rellich-radial-equation} and simplifying, we obtain
\begin{equation}
\label{eq:rellich-liouville-equation}
    V_\ell''
    +
    \left[
        \lambda_0^2
        -
        \frac{\nu_\ell^2-\frac14}{\sinh^2\rho}
    \right]V_\ell
    =
    0,
    \qquad
    \nu_\ell
    :=
    \ell+\frac{n-2}{2}.
\end{equation}

To justify the required dependence on $\ell$, we set
\(
    z:=\coth\rho.
\)
Since
\[
    \frac{dz}{d\rho}=-(z^2-1),
    \qquad
    \frac{1}{\sinh^2\rho}=z^2-1,
\]
equation \eqref{eq:rellich-liouville-equation} becomes
\[
    (z^2-1)\frac{d^2V_\ell}{dz^2}
    +
    2z\frac{dV_\ell}{dz}
    -
    \left[
        \left(\nu_\ell-\frac12\right)
        \left(\nu_\ell+\frac12\right)
        +
        \frac{(\mathrm i\lambda_0)^2}{z^2-1}
    \right]
    V_\ell
    =
    0.
\]
This is the associated Legendre equation. With the standard branch
on $z>1$, the outgoing solution is therefore of the form
\begin{equation}\label{eq:rellich-legendre-representation}
    V_\ell(\rho)
    =
    \gamma_{n,\lambda_0}
    P_{\nu_\ell-\frac12}^{\,\mathrm i\lambda_0}
    (\coth\rho),
\end{equation}
where $\gamma_{n,\lambda_0}\neq0$ is chosen according to
\eqref{eq:rellich-radial-normalization}. 
Indeed, as $\rho\to+\infty$, one has
$\coth\rho\to1^+$ and
\[
    P_{\nu_\ell-\frac12}^{\,\mathrm i\lambda_0}
    (\coth\rho)
    =
    \frac{e^{\mathrm i\lambda_0\rho}}
         {\Gamma(1-\mathrm i\lambda_0)}
    \bigl(1+o(1)\bigr).
\]
Thus, $P_{\nu_\ell-\frac12}^{\,\mathrm i\lambda_0}$ corresponds to the outgoing branch. 
Moreover, the leading coefficient in this asymptotic formula is independent of $\ell$, and hence the normalizing constant $\gamma_{n,\lambda_0}$ is independent of $\ell$.

Moreover, by \eqref{eq:tau-function}, one has:
\[
    \coth\rho=\cosh\tau(\rho),
    \qquad
    \tau(\rho)=\log\coth\frac{\rho}{2}.
\]
The uniform large-degree, fixed-order expansions for associated
Legendre functions, together with explicit remainder bounds, are
given in \cite[Theorems~3.1--3.2]{nemesOldeDaalhuis2020}.
Combining these expansions with the standard differential
recurrence relations for associated Legendre functions
\cite[Chapter~14, Section~14.10]{olverEtAl2010nist}, we obtain,
for every integer $s\geq0$, constants $C_s>0$ and $p_s\geq0$
such that
\[
    \left|
        \partial_\tau^k
        P_{\nu_\ell-\frac12}^{\,\mathrm i\lambda_0}
        (\cosh\tau)
    \right|
    \leq
    C_s(1+\ell)^{p_s}e^{\nu_\ell\tau},
    \qquad 0\leq k\leq s,
\]
uniformly for
\[
    \tau\in[\tau(\rho_2),\tau(\rho_1)]
\]
and all sufficiently large $\ell$.
By enlarging $C_s$, the finitely many remaining values of $\ell$
are absorbed into the same estimate.

Since $\tau$ is decreasing, all derivatives of $\tau(\rho)$ are
bounded on $[\rho_1,\rho_2]$, and
\[
    \Phi_\ell^+(\rho)
    =
    (\sinh\rho)^{-\frac{n-1}{2}}
    V_\ell(\rho),
\]
the chain rule and
\eqref{eq:rellich-legendre-representation} give
\[
\begin{aligned}
    \sup_{\rho\in[\rho_1,\rho_2]}
    \bigl|
        \partial_\rho^j\Phi_\ell^+(\rho)
    \bigr|
    &\leq
    C_s(1+\ell)^{p_s}
    e^{\nu_\ell\tau(\rho_1)}
    \\
    &\leq
    C_s(1+\ell)^{p_s}
    e^{\ell\tau_1},
    \qquad
    0\leq j\leq s,
\end{aligned}
\]
where the fixed factor
$e^{(n-2)\tau_1/2}$ has been absorbed into $C_s$. Thus
\begin{equation}\label{eq:rellich-radial-mode-bound}
    \sup_{\rho\in[\rho_1,\rho_2]}
    \bigl|
        \partial_\rho^j\Phi_\ell^+(\rho)
    \bigr|
    \leq
    C_s(1+\ell)^{p_s}e^{\ell\tau_1},
    \qquad
    0\leq j\leq s.
\end{equation}

\medskip
\noindent\emph{Step 7. Low--high angular mode interpolation.}

By \eqref{eq:rellich-coeff-small} and \eqref{eq:rellich-bell-definition} in Steps 3 and 5, we get the first estimation:
\begin{equation}\label{eq:rellich-bell-small}
    \sum_{\ell=0}^{\infty}b_\ell^2
    \leq
    \varepsilon^2.
\end{equation}
On the other hand, \eqref{eq:rellich-coeff-weighted} gives the second estimate:
\begin{equation}\label{eq:rellich-bell-weighted}
    \sum_{\ell=0}^{\infty}
        (1+\ell)^{-Q_0}
        e^{2\ell\tau_*}b_\ell^2
    \leq
    CM^2.
\end{equation}

Let $q>0$ be fixed, to be chosen sufficiently large in Step~8.
Set
\[
    L_\varepsilon:=-\log\varepsilon,
    \qquad
    N_\varepsilon
    :=
    \left\lfloor L_\varepsilon^{1/2}\right\rfloor.
\]
We choose $\varepsilon_0>0$ sufficiently small so that
$L_\varepsilon\geq4$ for $0<\varepsilon<\varepsilon_0$.

For the low angular modes, by \eqref{eq:rellich-bell-small}, one has
\begin{equation}\label{eq:rellich-low-mode-bound}
    \sum_{\ell\leq N_\varepsilon}
        (1+\ell)^q e^{2\ell\tau_1}b_\ell^2
    \leq
    (1+N_\varepsilon)^q
    e^{2N_\varepsilon\tau_1}
    \varepsilon^2
    \leq
    C\exp\left(-cL_\varepsilon^{1/2}\right)
\end{equation}
For the high angular modes, we write
\[
\begin{aligned}
    (1+\ell)^q e^{2\ell\tau_1}
    &=
    (1+\ell)^{q+Q_0}
    e^{-2\ell(\tau_*-\tau_1)}
    \left[
        (1+\ell)^{-Q_0}
        e^{2\ell\tau_*}
    \right].
\end{aligned}
\]
Since $\tau_*-\tau_1>0$, there exist constants $C,c>0$
such that
\[
    \sup_{\ell>N_\varepsilon}
    (1+\ell)^{q+Q_0}
    e^{-2\ell(\tau_*-\tau_1)}
    \leq
    Ce^{-cN_\varepsilon}.
\]
Therefore, by \eqref{eq:rellich-bell-weighted}, one has:
\begin{align}
    \sum_{\ell>N_\varepsilon}
        (1+\ell)^q e^{2\ell\tau_1}b_\ell^2
    &\leq
    Ce^{-cN_\varepsilon}
    \sum_{\ell>N_\varepsilon}
        (1+\ell)^{-Q_0}
        e^{2\ell\tau_*}b_\ell^2
    \notag\\
    &\leq
    CM^2e^{-cN_\varepsilon}
    \notag\\
    &\leq
    C\exp\left(-cL_\varepsilon^{1/2}\right).
\label{eq:rellich-high-mode-bound}
\end{align}

Combining
\eqref{eq:rellich-low-mode-bound} and
\eqref{eq:rellich-high-mode-bound}, we conclude that
\begin{equation}\label{eq:rellich-interpolated-coeff-bound}
    \sum_{\ell=0}^{\infty}
        (1+\ell)^q e^{2\ell\tau_1}b_\ell^2
    \leq
    C\exp\left[
        -c\sqrt{-\log\varepsilon}
    \right].
\end{equation}

\medskip
\noindent\emph{Step 8. From coefficients to the $C^1$-estimate.}

Choose an integer
\[
    s_0>\frac n2+1.
\]
For radial derivatives of order $j$ and angular derivatives of
order $k$, with $j+k\leq s_0$, the estimate
\eqref{eq:rellich-radial-mode-bound} and the spectral
characterization of Sobolev norms on $\mathbb S^{n-1}$ give
\[
    \|w_\xi\|_{H^{s_0}_{\mathbb H}
        (A_{\rho_1,\rho_2})}^2
    \leq
    C
    \sum_{\ell=0}^{\infty}
        (1+\ell)^q
        e^{2\ell\tau_1}b_\ell^2,
\]
provided $q$ is chosen sufficiently large to dominate all polynomial factors arising from the radial and angular derivatives.
For instance, it is enough to choose
\[
    q\geq
    \max_{j+k\leq s_0}(2k+2p_j).
\]

Consequently, by \eqref{eq:rellich-interpolated-coeff-bound}, taking square roots and renaming $C$ and $c$, we obtain
\[
    \|w_\xi\|_{H^{s_0}_{\mathbb H}
        (A_{\rho_1,\rho_2})}
    \leq
    C\exp\left[
        -c\sqrt{-\log\varepsilon}
    \right].
\]
Since $s_0>n/2+1$, Sobolev embedding on the fixed smooth annulus
gives
\[
    \|w_\xi\|_{C^1(A_{\rho_1,\rho_2})}
    \leq
    C\exp\left[
        -c\sqrt{-\log\varepsilon}
    \right].
\]
Finally, by \eqref{eq:rellich-exterior-solution-difference}, one has $
    w_\xi
    =
    u_{\mathcal P,\xi}
    -
    u_{\mathcal P',\xi}$ on $A_{\rho_1,\rho_2}$.
Therefore, \eqref{eq:quantitative-rellich-estimate} follows.

The proof is complete.
\end{proof}

\subsection{Propagation of smallness through geodesic annuli}


We now prepare the local tools used to propagate far-field errors through the exterior domain.
The basic idea is to turn smallness on one hyperbolic ball into quantitative smallness on nearby balls for solutions of \(\mathcal L_{\lambda_0}u=0\).
This local propagation mechanism is encoded in a three-spheres inequality and will later be iterated along chains of admissible balls.

We use the notation introduced above.
In particular, \(\mathcal L_{\lambda_0}\) is the shifted Helmholtz operator fixed above, and \(\Lambda_0\) is defined by \eqref{eq:Lambda0-definition}.
When the center is displayed, \(B_{\mathbb H}(x,r)\) denotes the hyperbolic ball centered at \(x\); as before, \(B_{\mathbb H}(r)=B_{\mathbb H}(0,r)\).
The following estimate is a hyperbolic local version of the classical \emph{three-spheres theorem} for second-order elliptic equations.
It is obtained by writing \(\mathcal L_{\lambda_0}\) in geodesic normal coordinates and checking that the resulting elliptic operators satisfy uniform local bounds.
Hence the constants are uniform with respect to the centers of the balls.

\begin{lemma}\label{lem:local-three-spheres}
There exist constants \(\rho_*>0\), \(C>0\), and \(c_1\in(0,1)\), depending only
on \(n\) and \(\lambda_0\), such that the following holds.

Let \(x_0\in\mathbb B^n\), and let
\[
    w\in H^1_{\mathrm{loc}}\bigl(B_{\mathbb H}(x_0,\rho_2)\bigr)
    \cap L^\infty\bigl(B_{\mathbb H}(x_0,\rho_2)\bigr)
\]
be a weak solution of \(\mathcal L_{\lambda_0}w=0\) in \(B_{\mathbb H}(x_0,\rho_2)\).
Assume that \(0<\rho_1<\rho<s<\rho_2\leq \rho_*\).
Then
\begin{equation}\label{eq:three}
    \|w\|_{L^\infty(B_{\mathbb H}(x_0,\rho))}
    \leq
    C\left(1-\frac{\rho}{s}\right)^{-n/2}
    \|w\|_{L^\infty(B_{\mathbb H}(x_0,\rho_2))}^{1-\beta}
    \|w\|_{L^\infty(B_{\mathbb H}(x_0,\rho_1))}^{\beta},
\end{equation}
where \(\beta=\beta(\rho_1,\rho_2,s)\in(0,1)\) satisfies
\[
    c_1
    \frac{\log(\rho_2/s)}{\log(\rho_2/\rho_1)}
    \leq
    \beta
    \leq
    1-
    c_1
    \frac{\log(s/\rho_1)}{\log(\rho_2/\rho_1)} .
\]
\end{lemma}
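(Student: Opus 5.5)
We reduce to the classical Euclidean three-spheres inequality by working in geodesic normal coordinates at \(x_0\), made uniform through hyperbolic isometries. First, by the homogeneity of \(\mathbb H^n\) we use the M\"obius transformation \(T_{x_0}\) from Section~\ref{sec 2}. Since \(T_{x_0}\) is an isometry with \(T_{x_0}(0)=x_0\), and \(\mathcal L_{\lambda_0}\) commutes with pullback by isometries (Lemma~\ref{lem3}), the function \(\widetilde w:=w\circ T_{x_0}\) solves \(\mathcal L_{\lambda_0}\widetilde w=0\) in \(B_{\mathbb H}(0,\rho_2)\). All three \(L^\infty\) norms in \eqref{eq:three} are unchanged because \(T_{x_0}\) maps \(B_{\mathbb H}(0,r)\) onto \(B_{\mathbb H}(x_0,r)\). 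Hence it suffices to take \(x_0=0\), and the constants are automatically independent of the center.

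At the origin we use the Poincar\'e ball coordinates, or equivalently the normal coordinates \(y=\rho\theta\). By \eqref{eq:Laplace}, on the Euclidean ball \(|x|\le\tanh(\rho_*/2)\) the operator \(\mathcal L_{\lambda_0}\) is a uniformly elliptic operator \(a(x)\Delta+b(x)\cdot\nabla+c\) with smooth coefficients. Its ellipticity and Lipschitz bounds depend only on \(n\) and \(\lambda_0\) once \(\rho_*\) is fixed, say \(\rho_*=1\). The most convenient coordinates are the normal coordinates. There hyperbolic balls \(B_{\mathbb H}(0,r)\) are exactly Euclidean balls of radius \(r\), and the metric \(d\rho^2+\sinh^2\rho\,g_{\mathbb S^{n-1}}\) has coefficients \(g_{ij}=\delta_{ij}+O(|y|^2)\) that are Lipschitz. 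After dividing by the leading coefficient, the equation takes divergence form \(\partial_i(\sqrt g g^{ij}\partial_j\widetilde w)+\Lambda_0\sqrt g\,\widetilde w=0\) with Lipschitz principal part.

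We then invoke the standard three-spheres inequality for such operators with a bounded zeroth-order term on small balls. This can be proved via the Agmon--Nirenberg/Garofalo--Lin frequency function or via Carleman estimates with weight \(|y|^{-\kappa}\). It gives an \(L^2\) three-balls inequality with exponent \(\beta\) comparable to \(\log(\rho_2/s)/\log(\rho_2/\rho_1)\), with the two-sided bounds stated and a uniform constant \(c_1\). The restriction \(\rho_2\le\rho_*\) is exactly what keeps the zeroth-order term \(\Lambda_0\) perturbative, namely \(\Lambda_0\rho_*^2\) is small relative to the frequency monotonicity.

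Finally, we convert from \(L^2\) to \(L^\infty\). On the left, local boundedness (De Giorgi--Nash--Moser, or elliptic estimates in fixed coordinates) gives \(\|\widetilde w\|_{L^\infty(B_\rho)}\le C(s-\rho)^{-n/2}\|\widetilde w\|_{L^2(B_s)}\). This produces the factor \((1-\rho/s)^{-n/2}\) after rescaling, which is why the inequality is applied with the intermediate radius \(s\). On the right, we bound the \(L^2\) norms by \(|B_r|^{1/2}\) times the \(L^\infty\) norms, and the volume factors are absorbed into \(C\) since all radii are at most \(\rho_*\). The main obstacle is keeping the exponent bounds on \(\beta\) explicit and uniform, with \(c_1\) depending only on \(n\) and \(\lambda_0\). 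We would first attempt this with the frequency-function monotonicity, whose almost-monotonicity constant depends only on the Lipschitz norm of \(g_{ij}\) and on \(\Lambda_0\rho_*^2\). Integrating the doubling inequality between \(\log\rho_1\), \(\log s\) and \(\log\rho_2\) then yields the convexity in \(\log r\) that produces precisely the stated two-sided bounds on \(\beta\).
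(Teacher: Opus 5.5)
Your route is the paper's route: the paper also passes to geodesic normal coordinates at \(x_0\), uses the homogeneity of \(\mathbb H^n\) to make the ellipticity and coefficient bounds independent of the center, and then applies a classical three-spheres theorem. Your reduction by the M\"obius map \(T_{x_0}\) is a concrete form of that homogeneity step.

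The difference lies in the last step. The paper directly cites an \(L^\infty\) three-spheres theorem (Brummelhuis, Theorem~6.3), which already contains the factor \((1-\rho/s)^{-n/2}\) and the two-sided bounds on \(\beta\). You instead rebuild that statement from an \(L^2\) frequency-function inequality plus local boundedness. Your version has the merit of explaining the exponent bounds. Suppose the frequency is almost monotone in the multiplicative sense \(N(r_a)\le K N(r_b)\) for \(r_a<r_b\le\rho_*\), with \(K=e^{C\rho_*}\). Put \(L_1=\log(s/\rho_1)\), \(L_2=\log(\rho_2/s)\) and \(\beta_0=L_2/(L_1+L_2)\). Then one obtains \(\beta=L_2/(L_2+KL_1)\), which lies between \(\beta_0/K\) and \(1-(1-\beta_0)/K\). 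This is exactly the stated bound with \(c_1=1/K\).

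The \(L^2\)-to-\(L^\infty\) conversion, however, does not close as you wrote it. With unnormalized norms, local boundedness gives the factor \((s-\rho)^{-n/2}=s^{-n/2}(1-\rho/s)^{-n/2}\). After inserting the \(L^2\) three-ball inequality and \(\|w\|_{L^2(B_r)}\le|B_r|^{1/2}\|w\|_{L^\infty(B_r)}\), you are left with a constant times \((1-\rho/s)^{-n/2}\bigl[(\rho_2/s)(\rho_1/\rho_2)^{\beta}\bigr]^{n/2}\). The bracket equals \(1\) only when \(\beta=\beta_0\). For the perturbed exponent \(\beta=L_2/(L_2+KL_1)\) it equals \(\exp\bigl(\tfrac n2\,\tfrac{(K-1)L_1L_2}{L_2+KL_1}\bigr)\), which is unbounded as \(\rho_1\to0\). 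The fact that all radii are at most \(\rho_*\) does not help, because the dangerous factor is \(s^{-n/2}\), not the volumes on the right.

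The remedy is to keep everything scale-invariant. State the three-ball inequality for the spherical averages \(h(r)=H(r)/r^{n-1}\), which are what the frequency function actually controls. Bound \(h(\rho_1)\) and \(h(\rho_2)\) by \(C\|w\|^2_{L^\infty(B_{\rho_1})}\) and \(C\|w\|^2_{L^\infty(B_{\rho_2})}\). On the left, use \(\|w\|_{L^\infty(B_\rho)}\le C(1-\rho/s)^{-n/2}\bigl(|B_s|^{-1}\int_{B_s}|w|^2\bigr)^{1/2}\) together with \(|B_s|^{-1}\int_{B_s}|w|^2\le C\,h(s)\), which follows from the almost-monotonicity of \(h\). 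No volume factors then appear.

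A minor point: \(\rho_*\) must be chosen small depending on \(\lambda_0\), not fixed at \(1\), so that \(\Lambda_0\rho_*^2\) is perturbative, as you note later yourself.
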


\begin{proof}
By local elliptic regularity, \(w\) is smooth in \(B_{\mathbb H}(x_0,\rho_2)\).
Since
\[
    \mathcal L_{\lambda_0}
    =
    -\Delta_{\mathbb H}
    -
    \Lambda_0,
    \qquad
    \Lambda_0
    =
    \frac{(n-1)^2}{4}+\lambda_0^2,
\]
the equation \(\mathcal L_{\lambda_0}w=0\) is equivalent to
\(\Delta_{\mathbb H}w+\Lambda_0 w=0\).

We work in geodesic normal coordinates centered at \(x_0\).
Let \(y=\exp_{x_0}z\) and \(W(z):=w(\exp_{x_0}z)\).
After choosing an orthonormal basis of \(T_{x_0}\mathbb B^n\), the ball
\(B_{\mathbb H}(x_0,r)\) is identified with the Euclidean ball
\(B_r(0)\subset\mathbb R^n\).
In these coordinates, \(W\) satisfies
\begin{equation}\label{eq:normal-coordinate-equation}
    L_{x_0}W=0
    \quad\text{in }B_{\rho_2}(0),
\end{equation}
where
\[
    L_{x_0}W
    =
    a^{ij}_{x_0}(z)\partial_{ij}W
    +
    b^i_{x_0}(z)\partial_iW
    +
    \Lambda_0 W .
\]
The coefficients \(a^{ij}_{x_0}\) and \(b^i_{x_0}\) are determined by the
hyperbolic metric in normal coordinates.

Choose \(\rho_*>0\) sufficiently small.
Since hyperbolic space is homogeneous, the operators \(L_{x_0}\), with
\(x_0\in\mathbb B^n\), are uniformly elliptic in \(B_{\rho_*}(0)\), and the required
\(C^2\)-bounds for their coefficients are uniform with respect to \(x_0\).
These bounds depend only on \(n\), while the zero-order term is controlled by
\(n\) and \(\lambda_0\).

We may therefore apply the classical local \(L^\infty\) three-spheres theorem for
second-order elliptic equations to \eqref{eq:normal-coordinate-equation}; see
\cite[Theorem~6.3]{brummelhuis1995three}.
It gives, for \(0<\rho_1<\rho<s<\rho_2\leq\rho_*\),
\[
    \|W\|_{L^\infty(B_\rho(0))}
    \leq
    C\left(1-\frac{\rho}{s}\right)^{-n/2}
    \|W\|_{L^\infty(B_{\rho_2}(0))}^{1-\beta}
    \|W\|_{L^\infty(B_{\rho_1}(0))}^{\beta},
\]
where \(C>0\) and \(\beta\in(0,1)\) depend only on the uniform ellipticity and
coefficient bounds above.
In particular, the constants are independent of the center \(x_0\).
Moreover, after possibly decreasing \(\rho_*\), the exponent \(\beta\) satisfies
\[
    c_1
    \frac{\log(\rho_2/s)}{\log(\rho_2/\rho_1)}
    \leq
    \beta
    \leq
    1-
    c_1
    \frac{\log(s/\rho_1)}{\log(\rho_2/\rho_1)}
\]
for some \(c_1\in(0,1)\) depending only on \(n\) and \(\lambda_0\).

Finally, the map \(z\mapsto \exp_{x_0}z\) sends \(B_r(0)\) onto
\(B_{\mathbb H}(x_0,r)\).
Thus, the preceding estimate is precisely \eqref{eq:three}.

The proof is complete.
\end{proof}

We now record the standard propagation of smallness obtained by iterating the
local three-spheres inequality along a chain of overlapping hyperbolic balls.

For a hyperbolic ball
\[
    B_i:=B_{\mathbb H}(x_i,r_i),
\]
and for \(\theta>0\), we write
\[
    B_i^\theta:=B_{\mathbb H}(x_i,\theta r_i).
\]

\begin{definition}\label{def:regular-hyperbolic-chain}
Let \(G\subset\mathbb B^n\) be open, and fix constants
\[
    0<a<b<1<A .
\]
A finite ordered family of hyperbolic balls
\[
    B_i=B_{\mathbb H}(x_i,r_i),
    \qquad i=0,\ldots,N,
\]
is called an \emph{\((a,b,A)\)-regular hyperbolic chain} in \(G\) if the following two
conditions hold.

\begin{itemize}
	\item First, each ball has room inside \(G\):
\[
    B_i^A\subset G,
    \qquad i=0,\ldots,N.
\]
\item Second, consecutive balls overlap with a fixed margin:
\[
    B_{i+1}^a\subset B_i^b,
    \qquad i=0,\ldots,N-1.
\]
\end{itemize}
\end{definition}

The following lemma is obtained by iterating Lemma~\ref{lem:local-three-spheres} along a regular hyperbolic chain.

\begin{lemma}\label{lem:propagation-chain}
Let \(G\subset\mathbb B^n\) be open, and let
\(w\in H^1_{\mathrm{loc}}(G)\) solve
\[
    \mathcal L_{\lambda_0}w=0
    \quad\text{in }G .
\]
Let \(B_i=B_{\mathbb H}(x_i,r_i)\), \(i=0,\ldots,N\), be an
\((a,b,A)\)-regular hyperbolic chain in \(G\), in the sense of
Definition~\ref{def:regular-hyperbolic-chain}.
Assume also that \(A r_i\leq \rho_*\) for \(i=0,\ldots,N\).
Assume that
\[
    \max_{0\leq i\leq N}\|w\|_{L^\infty(B_i^A)}\leq E,
    \qquad
    \|w\|_{L^\infty(B_0^a)}\leq \varepsilon,
    \qquad
    0<\varepsilon\leq E .
\]
Then there exist constants \(C_0\geq 1\) and \(\gamma\in(0,1)\), depending only on
\(n,\lambda_0,a,b,A\), such that
\begin{equation}\label{eq:chain-propagation}
	 \|w\|_{L^\infty(B_N^a)}
    \leq
    C_0 E
    \left(\frac{\varepsilon}{E}\right)^{\gamma^N}.
\end{equation}
\end{lemma}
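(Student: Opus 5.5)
The plan is to prove \eqref{eq:chain-propagation} by induction along the chain. Each step consists of one application of the local three-spheres inequality in Lemma~\ref{lem:local-three-spheres}, with radii fixed as multiples of \(r_i\) determined only by \(a,b,A\).

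\textbf{Setup of the radii.} For each ball \(B_i\), apply Lemma~\ref{lem:local-three-spheres} centered at \(x_i\) with
\[
\rho_1=a r_i,\qquad \rho=b r_i,\qquad s=\tfrac{1+b}{2}r_i,\qquad \rho_2=A r_i .
\]
Then \(0<\rho_1<\rho<s<\rho_2\leq\rho_*\), using the hypothesis \(Ar_i\le\rho_*\). The admissibility \(B_i^A\subset G\) ensures that \(w\) is a bounded weak solution on \(B_{\mathbb H}(x_i,\rho_2)\). The ratios \(\rho_2/s\), \(\rho_2/\rho_1\) and \(s/\rho_1\) depend only on \(a,b,A\). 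Hence the exponent \(\beta_i\) lies in an interval \([\beta_-,\beta_+]\subset(0,1)\) depending only on \(n,\lambda_0,a,b,A\). The prefactor \(C(1-\rho/s)^{-n/2}=C\bigl(\tfrac{1+b}{1-b}\bigr)^{n/2}=:C_1\) is likewise uniform, and we may assume \(C_1\ge1\).

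\textbf{Inductive step.} Set \(\varepsilon_i:=\|w\|_{L^\infty(B_i^a)}\). Lemma~\ref{lem:local-three-spheres} gives
\[
\|w\|_{L^\infty(B_i^b)}\le C_1E^{1-\beta_i}\varepsilon_i^{\beta_i}.
\]
The overlap condition \(B_{i+1}^a\subset B_i^b\) then yields
\[
\varepsilon_{i+1}\le C_1E\,(\varepsilon_i/E)^{\beta_i}.
\]
Since \(\varepsilon_i/E\le1\) by the bound \(E\) on \(B_i^A\supset B_i^a\), we have \((\varepsilon_i/E)^{\beta_i}\le(\varepsilon_i/E)^{\beta_-}\). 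This gives the uniform recursion
\[
\frac{\varepsilon_{i+1}}{E}\le C_1\Bigl(\frac{\varepsilon_i}{E}\Bigr)^{\gamma},\qquad \gamma:=\beta_- .
\]
Note that \(\varepsilon_{i+1}/E\le1\) holds independently of the recursion.

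\textbf{Iteration.} Write \(\eta_i:=\varepsilon_i/E\). Unwinding the recursion gives
\[
\eta_N\le C_1^{1+\gamma+\cdots+\gamma^{N-1}}\eta_0^{\gamma^N}\le C_1^{1/(1-\gamma)}\eta_0^{\gamma^N}.
\]
Taking \(C_0:=C_1^{1/(1-\gamma)}\ge1\) and using \(\eta_0\le\varepsilon/E\) yields \eqref{eq:chain-propagation}.

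The only delicate point is that the constants must not degenerate along the chain. This requires the three-spheres exponent \(\beta\) to be bounded below uniformly in the center, which is what the two-sided bound on \(\beta\) in Lemma~\ref{lem:local-three-spheres} supplies. It also requires the geometric sum of exponents of \(C_1\) to converge, which holds because \(\gamma<1\).
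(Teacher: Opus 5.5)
Your proof is correct and follows the paper's argument: one application of the three-spheres inequality per ball with radii proportional to \(r_i\), the overlap \(B_{i+1}^a\subset B_i^b\), the uniform lower bound \(\beta_i\ge\gamma\) combined with \(\varepsilon_i/E\le 1\), and the geometric-series iteration giving \(C_0=C_1^{1/(1-\gamma)}\). The only difference is cosmetic: you take the intermediate radius \(s=\tfrac{1+b}{2}r_i\), whereas the paper takes \(s=\tfrac{b+A}{2}r_i\), and either choice works.
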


\begin{proof}
Set
\[
    M_i:=\|w\|_{L^\infty(B_i^a)},
    \qquad i=0,\ldots,N .
\]
Then \(M_0\leq\varepsilon\). Fix \(\sigma:=(b+A)/2\).

For each \(i=0,\ldots,N-1\), the overlap condition gives
\(B_{i+1}^a\subset B_i^b\). Hence
\[
    M_{i+1}
    \leq
    \|w\|_{L^\infty(B_i^b)} .
\]
We apply Lemma~\ref{lem:local-three-spheres} in the ball \(B_i^A\), with
\[
    \rho_1=ar_i,
    \qquad
    \rho=br_i,
    \qquad
    s=\sigma r_i,
    \qquad
    \rho_2=Ar_i .
\]
This is allowed because \(B_i^A\subset G\) and \(Ar_i\leq\rho_*\).
Since the ratios \(a,b,\sigma,A\) are fixed, Lemma~\ref{lem:local-three-spheres}
gives constants \(C_*\geq1\) and \(\gamma\in(0,1)\), depending only on
\(n,\lambda_0,a,b,A\), such that
\[
    \|w\|_{L^\infty(B_i^b)}
    \leq
    C_* E^{1-\beta_i}M_i^{\beta_i},
    \qquad
    \beta_i\geq\gamma .
\]
Since \(M_i\leq E\), this implies the one-step estimate
\begin{equation}\label{eq:chain-one-step}
    M_{i+1}
    \leq
    C_*E\left(\frac{M_i}{E}\right)^\gamma,
    \qquad i=0,\ldots,N-1 .
\end{equation}

Let \(m_i:=M_i/E\). Dividing \eqref{eq:chain-one-step} by \(E\), we get
\[
    m_{i+1}\leq C_*m_i^\gamma,
    \qquad
    m_0\leq \frac{\varepsilon}{E}.
\]
Iterating this inequality gives
\begin{equation}\label{eq:chain-iteration}
    m_N
    \leq
    C_*^{1+\gamma+\cdots+\gamma^{N-1}}
    m_0^{\gamma^N}.
\end{equation}
Since \(1+\gamma+\cdots+\gamma^{N-1}\leq(1-\gamma)^{-1}\), \eqref{eq:chain-iteration}
yields
\[
    M_N
    \leq
    C_*^{1/(1-\gamma)}
    E
    \left(\frac{\varepsilon}{E}\right)^{\gamma^N}.
\]
The conclusion \eqref{eq:chain-propagation} follows by taking \(C_0:=C_*^{1/(1-\gamma)}\).
The proof is complete.
\end{proof}

\subsection{Quantitative reflection across totally geodesic hypersurfaces}\label{subsec:quantitative-reflection}

We shall also need a quantitative form of the reflection principle across totally geodesic hypersurfaces.
The reflection principle in hyperbolic space has been introduced and analyzed in Section~\ref{sec:reflection-principles}.
The relevant normal transformation and exact reflection identities have been established in Lemmas~\ref{lem:out-normal}--\ref{lem:reflection-neumann}.

For a $C^1$ function $U$ defined near a relatively open subset of a totally geodesic hypersurface $V$, we denote by
\[
    \nabla_{\mathbb H}^T U
    :=
    \nabla_{\mathbb H}U
    -
    \bigl\langle
        \nabla_{\mathbb H}U,\nu_{\mathbb H}
    \bigr\rangle_{\mathbb H}\nu_{\mathbb H}
\]
the tangential component of the hyperbolic gradient along $V$, where $\nu_{\mathbb H}$ is either choice of a unit hyperbolic normal to $V$.
This definition is independent of the choice of the sign of $\nu_{\mathbb H}$.

In the sequel, we shall use this quantitative reflection principle to control the solution near the reflecting hypersurface.

\begin{lemma}\label{lem:quant-reflection}
Let \(V\subset\mathbb B^n\) be a totally geodesic hypersurface defined in Definition~\ref{def:tg-hypersurface-ball}.
Let \(I_V\) be the hyperbolic reflection with respect to \(V\), defined in
Definition~\ref{def:reflection-general-hypersurface}.
There exist constants \(r_*>0\), \(C>0\), and \(\vartheta\in(0,1)\), depending only on
\(n\) and \(\lambda_0\), with the following property.

Let \(p\in V\), \(0<r\leq r_*\), and set
\[
    \Sigma_r:=V\cap B_{\mathbb H}(p,r).
\]
Let
\[
    w\in H^1_{\mathbb H,\mathrm{loc}}\bigl(B_{\mathbb H}(p,r)\bigr)
    \cap L^\infty\bigl(B_{\mathbb H}(p,r)\bigr)
\]
be a weak solution of
\[
    \mathcal L_{\lambda_0}w=0
    \quad
    \text{in }B_{\mathbb H}(p,r).
\]
Assume that
\[
    \|w\|_{L^\infty(B_{\mathbb H}(p,r))}\leq E.
\]

If \(w\) has small Dirichlet-type trace data on \(\Sigma_r\), namely
\[
    \|w\|_{L^\infty(\Sigma_r)}
    +
    r\|\nabla_{\mathbb H}^T w\|_{L^\infty(\Sigma_r)}
    \leq
    \varepsilon,
\]
then
\begin{equation}
\label{eq:qr-odd-estimate}
    \|w+w\circ I_V\|_{L^\infty(B_{\mathbb H}(p,r/2))}
    \leq
    C E^{1-\vartheta}\varepsilon^\vartheta .
\end{equation}

If \(w\) has small Neumann-type normal data on \(\Sigma_r\), namely
\[
    r\|\partial_{\nu_{\mathbb H}}w\|_{L^\infty(\Sigma_r)}
    \leq
    \varepsilon,
\]
then
\begin{equation}
\label{eq:qr-even-estimate}
    \|w-w\circ I_V\|_{L^\infty(B_{\mathbb H}(p,r/2))}
    \leq
    C E^{1-\vartheta}\varepsilon^\vartheta .
\end{equation}
\end{lemma}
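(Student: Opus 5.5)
The plan is to reduce to the standard hyperplane and then run a Cauchy-data stability argument for the symmetrized function. Choose a hyperbolic isometry \(M\) with \(M(p)=0\) and \(M(V)=V_0=\{x_n=0\}\). Then \(I_V=M^{-1}\circ I_{x_n}\circ M\), balls centred at \(p\) go to balls centred at \(0\), and \(\mathcal L_{\lambda_0}\) commutes with \(M\) (Lemma~\ref{lem3}). By \eqref{eq:normal-covariance-isometry}, normal derivatives are preserved, and tangential gradients are preserved as well. So we may assume \(p=0\), \(V=V_0\) and \(I_V=I_{x_n}\). This normalization is what makes the constants independent of \(p\) and \(V\).

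In the Dirichlet case set \(\Phi:=w+w\circ I_{x_n}\); in the Neumann case set \(\Phi:=w-w\circ I_{x_n}\). By Lemma~\ref{lem3}, \(\Phi\) solves \(\mathcal L_{\lambda_0}\Phi=0\) in \(B_{\mathbb H}(0,r)\), which is \(I_{x_n}\)-invariant, and \(\|\Phi\|_{L^\infty}\le 2E\). We then read off the Cauchy data of \(\Phi\) on \(\Sigma_r\), using that \(I_{x_n}\) fixes \(V_0\) and \eqref{eq:normal-reflection}.
\begin{itemize}
\item In the Dirichlet case, \(\Phi=2w\) and \(\nabla^T\Phi=2\nabla^Tw\) on \(\Sigma_r\), so both are \(O(\varepsilon)\) after scaling by \(r\), while \(\partial_{\nu_{\mathbb H}}\Phi=0\).
\item In the Neumann case, \(\Phi=0\) and \(\nabla^T\Phi=0\) on \(\Sigma_r\), while \(r\,\partial_{\nu_{\mathbb H}}\Phi=2r\,\partial_{\nu_{\mathbb H}}w=O(\varepsilon)\).
\end{itemize}
In both cases \(\Phi\) has Cauchy data on \(\Sigma_r\) of size \(O(\varepsilon)\), measured at scale \(r\), together with the global bound \(2E\).

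The key step is a Hölder stability estimate for the Cauchy problem across a flat piece. We rescale the ball \(B_{\mathbb H}(0,r)\), \(r\le r_*\), to unit Euclidean size; in the ball model near \(0\) the metric is uniformly comparable to the Euclidean one. The operator becomes uniformly elliptic with \(C^\infty\) coefficients, bounded uniformly in \(r\le r_*\). The zero-order term \(r^2\Lambda_0\) is harmless, and this is where the dependence on \(n,\lambda_0\) enters. We then apply the classical interior Hölder stability estimate for the Cauchy problem, proved via a Carleman estimate with weight adapted to the flat boundary piece, or equivalently via a three-region inequality. This controls \(\Phi\) on the half ball \(B^+_{3/4}=B_{3/4}\cap\{x_n>0\}\) by \(CE^{1-\vartheta}\varepsilon^\vartheta\). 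The Carleman step needs Cauchy data in \(H^{1/2}\times H^{-1/2}\) or Lipschitz norms. In the Dirichlet case the tangential gradient provides this control for \(\Phi\). For the normal derivative in the Neumann case we take \(L^\infty\), which is weaker than what the estimate consumes, so the data are admissible. Interior regularity, using the \(E\) bound, supplies any extra smoothness that is needed.

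Finally, symmetry transfers the bound to the other side. \(\Phi\) is even, respectively odd, under \(I_{x_n}\), and \(I_{x_n}\) preserves \(B_{\mathbb H}(0,r/2)\). Hence the bound on \(B^+_{3/4}\) gives the bound on the whole ball \(B_{\mathbb H}(p,r/2)\), which is \eqref{eq:qr-odd-estimate} and \eqref{eq:qr-even-estimate}. I expect the main obstacle to be the Carleman/Cauchy stability step up to \(\Sigma_r\): the uniformity in \(r\) and the precise norms of the Cauchy data must be tracked. If needed, one can instead extend \(\Phi\) across \(V_0\) by subtracting an explicit lift of the small Cauchy data and invoke Lemma~\ref{lem:local-three-spheres}.
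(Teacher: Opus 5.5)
Your proposal is correct and follows essentially the same route as the paper: move $(p,V)$ to $(0,V_0)$ by a hyperbolic isometry, form the reflection residual $w\pm w\circ I_V$, whose Cauchy data on $\Sigma_r$ are $O(\varepsilon)$ at scale $r$, and apply a rescaled classical H\"older stability estimate for the elliptic Cauchy problem. The only difference is cosmetic: you estimate on one side of $V$ and use the parity of the residual to cover the other side, whereas the paper applies the Cauchy stability estimate on each side separately.
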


\begin{proof}
\medskip
\noindent\emph{Step 1. Reflection preserves the local equation.}
By interior elliptic regularity, \(w\) is smooth in \(B_{\mathbb H}(p,r)\), and hence all traces appearing below are well defined.
Since \(p\in V\), we have \(I_V(p)=p\). As \(I_V\) is a hyperbolic isometry,
\[
    I_V\bigl(B_{\mathbb H}(p,r)\bigr)=B_{\mathbb H}(p,r).
\]
Thus \(w\circ I_V\) is well defined in \(B_{\mathbb H}(p,r)\).
Since \(I_V\) is a hyperbolic isometry and \(\Delta_{\mathbb H}\) is invariant under isometries, the conclusion drawn in Lemma~\ref{lem3} implies that
\[
    \mathcal L_{\lambda_0}(w\circ I_V)
    =
    (\mathcal L_{\lambda_0}w)\circ I_V
    =
    0 .
\]

\medskip
\noindent\emph{Step 2. The odd reflection residual.}
Define \(W_+:=w+w\circ I_V\).
Then, it satisfies \(\mathcal L_{\lambda_0}W_+=0\) in \(B_{\mathbb H}(p,r)\), and
\[
    \|W_+\|_{L^\infty(B_{\mathbb H}(p,r))}\leq 2E .
\]

\medskip
\noindent\emph{Step 3. Cauchy data of the odd residual on \(V\).}
Since \(I_V\) fixes every point of \(V\), one has \(W_+=2w\) on \(\Sigma_r\).
Since \(dI_V\) is the identity on tangential vectors to \(V\), one also has
\[
    \nabla_{\mathbb H}^T W_+
    =
    2\nabla_{\mathbb H}^T w
    \quad\text{on }\Sigma_r .
\]
By the normal transformation formula in Lemma~\ref{lem:out-normal},
\[
    \partial_{\nu_{\mathbb H}}(w\circ I_V)
    =
    -\partial_{\nu_{\mathbb H}}w
    \quad\text{on }V.
\]
Hence \(\partial_{\nu_{\mathbb H}}W_+=0\) on \(\Sigma_r\).
Combining these identities with the Dirichlet-type trace smallness assumption gives
\begin{equation}
\label{eq:qr-Wplus-cauchy}
    \|W_+\|_{L^\infty(\Sigma_r)}
    +
    r\|\nabla_{\mathbb H}^T W_+\|_{L^\infty(\Sigma_r)}
    +
    r\|\partial_{\nu_{\mathbb H}}W_+\|_{L^\infty(\Sigma_r)}
    \leq
    C\varepsilon .
\end{equation}

\medskip
\noindent\emph{Step 4. Cauchy stability for the odd residual.}
We first reduce the pair $(p,V)$ to a standard position by using the M\"obius isometries $T_a$ defined in \eqref{eq:Mobius}.
By Definition~\ref{def:tg-hypersurface-ball}, there exist $a\in\mathbb B^n$ and $\nu\in\mathbb S^{n-1}$ such that
\[
    V=T_a(P_\nu),
    \qquad
    P_\nu:=\{x\in\mathbb B^n:x\cdot\nu=0\}.
\]
Set
\[
    b:=T_a^{-1}(p).
\]
Then $b\in P_\nu$.
Since $b\in P_\nu$, it follows directly from \eqref{eq:Mobius} that
\[
    T_b(P_\nu)=P_\nu .
\]
Choose an orthogonal map $Q$ such that $Q\nu=e_n$, and define
\[
    \Phi:=Q\circ T_b\circ T_a^{-1}.
\]
Then $\Phi$ is a hyperbolic isometry and
\[
    \Phi(p)=0,
    \qquad
    \Phi(V)=V_0:=\{x_n=0\}\cap\mathbb B^n .
\]
By the uniqueness of the hyperbolic reflection with respect to a totally geodesic hypersurface, we also have
\[
    \Phi\circ I_V\circ\Phi^{-1}=I_0,
    \qquad
    I_0(x',x_n)=(x',-x_n).
\]

Let $U$ be any solution of $\mathcal L_{\lambda_0}U=0$ in $B_{\mathbb H}(p,r)$, and set
\[
    \widetilde U:=U\circ\Phi^{-1}.
\]
Since $\Phi$ is a hyperbolic isometry and $\mathcal L_{\lambda_0}$ is invariant under hyperbolic isometries,
\[
    \mathcal L_{\lambda_0}\widetilde U=0
    \quad\text{in }B_{\mathbb H}(0,r).
\]
Moreover,
\[
    \Phi(\Sigma_r)=V_0\cap B_{\mathbb H}(0,r).
\]
The $L^\infty$ norm of $\widetilde U$ on $B_{\mathbb H}(0,r)$, as well as the $L^\infty$ norms of its tangential and normal hyperbolic Cauchy data on $V_0\cap B_{\mathbb H}(0,r)$, coincide with the corresponding quantities for $U$ on $\Sigma_r$.

In the standard Poincar\'e coordinates near the origin, $\mathcal L_{\lambda_0}$ is a uniformly elliptic second-order operator with smooth coefficients.
Choosing $r_*>0$ sufficiently small and scaling the coordinate ball to a unit ball, the ellipticity constants and the required coefficient bounds are controlled only by $n$ and $\lambda_0$.
By the local pointwise Cauchy stability estimates for second-order uniformly elliptic equations \cite{trytten1963pointwise,payne1975improperly}, applied in the standard Poincar\'e coordinates near the origin and then pulled back by $\Phi$, we obtain
\begin{equation}
\label{eq:qr-local-cauchy}
    \|U\|_{L^\infty(B_{\mathbb H}(p,r/2))}
    \leq
    C
    \|U\|_{L^\infty(B_{\mathbb H}(p,r))}^{1-\vartheta}
    \mathcal C_{\Sigma_r}(U)^\vartheta,
\end{equation}
where the estimate is applied separately on the two sides of $V$, and
\[
    \mathcal C_{\Sigma_r}(U)
    :=
    \|U\|_{L^\infty(\Sigma_r)}
    +
    r\|\nabla_{\mathbb H}^T U\|_{L^\infty(\Sigma_r)}
    +
    r\|\partial_{\nu_{\mathbb H}}U\|_{L^\infty(\Sigma_r)}.
\]

Applying \eqref{eq:qr-local-cauchy} with $U=W_+$, and using \eqref{eq:qr-Wplus-cauchy} together with
\[
    \|W_+\|_{L^\infty(B_{\mathbb H}(p,r))}\leq 2E,
\]
we obtain
\[
    \|W_+\|_{L^\infty(B_{\mathbb H}(p,r/2))}
    \leq
    C E^{1-\vartheta}\varepsilon^\vartheta .
\]
By the definition of $W_+$, this is precisely \eqref{eq:qr-odd-estimate}.

\medskip\noindent\emph{Step 5. The even reflection residual.}
Define \(W_-:=w-w\circ I_V\).
Then \(\mathcal L_{\lambda_0}W_-=0\) in \(B_{\mathbb H}(p,r)\), and
\[
    \|W_-\|_{L^\infty(B_{\mathbb H}(p,r))}\leq 2E .
\]

\medskip
\noindent\emph{Step 6. Cauchy data of the even residual on \(V\).}
Since \(I_V(y)=y\) for every \(y\in V\), one has \(W_-=0\) on \(\Sigma_r\).
Consequently, \(\nabla_{\mathbb H}^T W_-=0\) on \(\Sigma_r\).
Using again the normal transformation formula in Lemma~\ref{lem:out-normal}, we obtain
\[
    \partial_{\nu_{\mathbb H}}W_-
    =
    2\partial_{\nu_{\mathbb H}}w
    \quad\text{on }\Sigma_r .
\]
Combining these identities with the Neumann-type normal smallness assumption gives
\begin{equation}
\label{eq:qr-Wminus-cauchy}
    \|W_-\|_{L^\infty(\Sigma_r)}
    +
    r\|\nabla_{\mathbb H}^T W_-\|_{L^\infty(\Sigma_r)}
    +
    r\|\partial_{\nu_{\mathbb H}}W_-\|_{L^\infty(\Sigma_r)}
    \leq
    C\varepsilon .
\end{equation}

\medskip
\noindent\emph{Step 7. Cauchy stability for the even residual.}
Applying \eqref{eq:qr-local-cauchy} with \(U=W_-\), and using
\eqref{eq:qr-Wminus-cauchy} together with
\(\|W_-\|_{L^\infty(B_{\mathbb H}(p,r))}\leq 2E\), we obtain
\[
    \|W_-\|_{L^\infty(B_{\mathbb H}(p,r/2))}
    \leq
    C E^{1-\vartheta}\varepsilon^\vartheta .
\]
By the definition of \(W_-\), this is precisely \eqref{eq:qr-even-estimate}.

The proof is complete.
\end{proof}


\section{Stable determination of totally geodesic defects}\label{sec:quantitative-stability}

In this section, we prove the quantitative stability estimates for the far-field inverse problem at conformal infinity \eqref{eq:hip1}, as stated in Theorems~\ref{thm:main-soft-farfield} and~\ref{thm:main-hard-farfield-simple}.
Our main point is that the far-field error can be quantitatively converted into geometric control of the totally geodesic defects.
This is achieved by combining the quantitative estimates developed in Sections~\ref{sec:quant-prelim} and \ref{sec:quantitative-rellich}.

Throughout this section, the admissible class refers to \(\mathcal A_{\mathbb H}^h\) in the \(\mathsf D\)-type case and to \(\mathcal B_{\mathbb H}^h\) in the \(\mathsf N\)-type case, as defined in Definitions~\ref{def:admissible-tg-defects} and~\ref{def:hard-admissible-tg-defects}, respectively.
The a priori data are the parameters appearing in these definitions.

{
Throughout this section, \(h\in(0,h_0]\) is fixed.  Unless explicitly stated
otherwise, the auxiliary constants denoted by \(C,c,C_j,c_j\) are positive,
may change from line to line, and may depend on \(h\), on \(n\), on
\(\lambda_0\), and on the a priori data of the corresponding admissible class.
When relevant, they may also depend on the boundary label \(\xi\) or on the
admissible family \(\Xi\), but they are independent of the particular defects
and the far-field error.  The geometric chain constant \(C_{\rm geo}\) and the
propagation exponent \(\beta\) are independent of \(h\).  Any \(h\)-dependence
of auxiliary prefactors and lower bounds is absorbed into the admissible error
thresholds; the constants \(C\) and \(\alpha\) in the final stability estimates
remain independent of \(h\).
}

\subsection{\texorpdfstring{Stable determination of \(\mathsf N\)-type totally geodesic defects}{Stable determination of N-type totally geodesic defects}}\label{subsec:stable-hard-tg-defects}

We now prove Theorem~\ref{thm:main-hard-farfield-simple}.
Let \(0<h\leq h_0\), and let \(\mathcal P,\mathcal P'\in\mathcal B_{\mathbb H}^h\) be two admissible \(\mathsf N\)-type totally geodesic defects defined in Definition~\ref{def:hard-admissible-tg-defects}.
Let \(\Xi=(\xi_1,\ldots,\xi_m)\in\bigl(\mathbb S^{n-1}\bigr)^m\) be an admissible family of boundary labels in the sense of Definition~\ref{def:hard-directions}.
For each \(\ell=1,\ldots,m\), write
\begin{equation}\label{eq:hard-exterior-solutions-section-six}
    u^i_{\xi_\ell}:=e_{2\lambda_0,\xi_\ell},
    \qquad
    u_\ell:=u_{\mathcal P,\xi_\ell}=u^i_{\xi_\ell}+u^s_{\mathcal P,\xi_\ell},
    \qquad
    u'_\ell:=u_{\mathcal P',\xi_\ell}=u^i_{\xi_\ell}+u^s_{\mathcal P',\xi_\ell}.
\end{equation}
Define the \emph{far-field error} by
\begin{equation}\label{eq:hard-farfield-error-section-six}
    \varepsilon
    :=
    \max_{\ell=1,\ldots,m}
    \bigl\|
        u_{\infty,\mathcal P,\xi_\ell}
        -
        u_{\infty,\mathcal P',\xi_\ell}
    \bigr\|_{L^2(\mathbb S^{n-1})}.
\end{equation}
Assume that \(0<\varepsilon\leq\varepsilon_{\mathsf N}(h)\).

Set
\begin{equation*}
    G:=\mathbb B^n\setminus\mathcal P,
    \qquad
    G':=\mathbb B^n\setminus\mathcal P',
\end{equation*}
and let \(H\) be the unbounded connected component of \(G\cap G'\).
The final estimate is stated in terms of the hyperbolic Hausdorff distance \(d_{\mathcal H}^{\mathbb H}(\mathcal P,\mathcal P')\).
In the proof, we use the modified boundary distance \(d(\mathcal P,\mathcal P')\) from Definition~\ref{def:hyperbolic-distances}, which is adapted to propagation through the exterior domain.
Set 
\begin{equation*}
    d:=d(\mathcal P,\mathcal P'),
    \qquad
    \bar h:=\min\{d,h\}.
\end{equation*}
If \(d=0\), then the modified boundary discrepancy has already vanished.
Hence, in the rest of the proof, we assume \(d>0\).

We fix an observation ball
\begin{equation}\label{eq:hard-observation-ball-section-six}
    B_{\mathbb H}(x_0,\widehat\rho)
    \subset
    \mathbb B^n\setminus \overline{B_{\mathbb H}(R)},
\end{equation}
where \(R\) is the a priori radius in Definition~\ref{def:admissible-tg-defects}.
Since every admissible defect is contained in \(\overline{B_{\mathbb H}(R)}\), this ball lies in the common exterior \(H\) and serves as the initial region for propagating the smallness obtained from the far-field error.

\medskip
The following proposition is the quantitative reflected-continuation step for the
\(\mathsf N\)-type case.  It starts directly from the far-field error
\eqref{eq:hard-farfield-error-section-six}.  The far-field-to-near-field conversion is included in the proof.
We also use the far-field-to-near-field modulus supplied by Lemma~\ref{lem:quantitative-rellich}, namely
\begin{equation}\label{eq:Psi-ff-definition-section-six}
    \Psi(t)
    :=
    C\exp\!\left(-c(-\log t)^{1/2}\right).
\end{equation}
{
Here \(c>0\) depends only on the corresponding a priori data and
the fixed observation ball.
}

\begin{proposition}\label{prop:hard-quant-reflected-continuation}
Let \(d=d(\mathcal P,\mathcal P')\) be the modified boundary distance and set \(\bar h:=\min\{d,h\}\).
Assume that \(\bar h>0\).
Let \(U=(u_1,\ldots,u_m)\) and \(U'=(u'_1,\ldots,u'_m)\) be defined by \eqref{eq:hard-exterior-solutions-section-six}.
Let \(\varepsilon\) be defined by \eqref{eq:hard-farfield-error-section-six}.
There exist constants
\[
    \begin{gathered}
        R_*>R,\qquad T_*>0,\qquad C\geq1,\qquad C_{\rm geo}\geq1,\\
        \beta\in(0,1),\qquad \varepsilon_0\in(0,e^{-1}).
    \end{gathered}
\]
{
The geometric exponent \(\beta\) and the constants in the bounds for \(J\) and
\(M\) depend only on the stated a priori data; \(\varepsilon_0\) may also
depend on \(h\).
}
They have the following property.
{
The far annulus may be prescribed in advance: the same conclusion holds for
every fixed \(T_*>0\) and every \(R_*\) larger than an a priori radius, with
the constants also allowed to depend on this prescribed annulus.
}

If
\[
    0<\varepsilon\leq\varepsilon_0,
\]
then, after possibly interchanging the roles of
\((\mathcal P,U)\) and \((\mathcal P',U')\), there exist integers \(J,M\geq1\) and
a totally geodesic hypersurface \(V_J\), intersecting a fixed compact hyperbolic
ball depending only on the a priori data, such that
\begin{equation}
\label{eq:hard-reflected-continuation-upper}
    \bar h
    \max_{\ell=1,\ldots,m}
    \sup_{x\in V_J\cap
    \left(B_{\mathbb H}(R_*+T_*)\setminus B_{\mathbb H}(R_*)\right)}
    e^{\frac{n+1}{2}\rho(x)}
    \left|
        \partial_{\nu_{\mathbb H}}\Big|_{V_J}u_\ell(x)
    \right|
    \leq
    C^M\bigl(\Psi(\varepsilon)\bigr)^{\beta^M}.
\end{equation}
Here, \(\partial_{\nu_{\mathbb H}}\big|_{V_J}\) denotes the hyperbolic normal derivative along \(V_J\), with either choice of the unit hyperbolic normal.
The left-hand side is independent of this choice.
The function \(\Psi\) is the far-field-to-near-field modulus given by \eqref{eq:Psi-ff-definition-section-six}; up to changing the constants,
\[
    \Psi(\varepsilon)
    =
    C\exp\!\left(-c(-\log\varepsilon)^{1/2}\right).
\]
{
Here \(J\) is the number of reflection stages, whereas \(M\) is the total
number of elementary propagation and reflection steps.  Moreover,
\[
    J\leq C_{\rm geo}\log\frac{2eR}{d}.
\]
}
In addition,
\begin{equation*}
    M
    \leq
    C_{\rm geo}
    \log\frac{2eR}{d}
    \left(
        \log\frac{2eR}{d}
        +
        \log\frac{2eR}{\bar h}
    \right).
\end{equation*}
\end{proposition}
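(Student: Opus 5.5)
The plan is to make the qualitative argument of Theorem~\ref{thm:mainip2} quantitative: smallness is propagated from conformal infinity to a first contact face, and then along the reflected-continuation construction, with each elementary step costing one H\"older exponent. First, by Lemma~\ref{lem:quantitative-rellich} applied on an annulus containing the observation ball \eqref{eq:hard-observation-ball-section-six}, we get
\[
    \max_\ell\|u_\ell-u'_\ell\|_{C^1}\le\Psi(\varepsilon).
\]
Next we choose a point realizing the modified distance $d$; after possibly interchanging $\mathcal P$ and $\mathcal P'$, we may take $y\in\partial\mathcal P'\setminus\mathcal P$ with $\rho(y,\partial\mathcal P)\ge d$. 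Using the uniform exterior connectedness (item (6) of Definition~\ref{def:admissible-tg-defects}) together with the exterior cone condition, we join the observation ball to a point near $y$ by a curve in $H$ that keeps a tube of width comparable to $\bar h$. We then cover this curve by an $(a,b,A)$-regular hyperbolic chain whose radii decrease geometrically toward the face. Such a chain has length $O(\log(2eR/\bar h))$ and stays below $\rho_*$. Lemma~\ref{lem:propagation-chain} combined with the uniform bound $E$ of Proposition~\ref{prop:uniform-local-bound} transports the smallness to a ball of radius $\sim\bar h$ touching the face $\mathcal F_1\subset\partial\mathcal P'$; here the $h$-regular cone in item (3) guarantees a relatively open patch of size $\sim\bar h$.

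On this patch $\partial_{\nu_{\mathbb H}}u'_\ell=0$, so $\partial_{\nu_{\mathbb H}}u_\ell$ is small there, with the factor $\bar h$ coming from converting $L^\infty$ smallness into gradient smallness via interior estimates at scale $\bar h$. Lemma~\ref{lem:quant-reflection} (Neumann case) then gives approximate even symmetry of $U$ across $V_1$ in a half-ball. The stage is iterated as follows: we propagate approximate evenness $w-w\circ I_{V_j}$ (itself a solution) by a further chain inside the symmetric region $E_j$, reach a reflected face $I_j(\mathcal F)$ where $\partial_\nu U$ is approximately zero, and thereby obtain $V_{j+1}$. Each stage contributes $O(\log(2eR/d)+\log(2eR/\bar h))$ elementary steps. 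The number of stages $J$ is bounded by $C_{\rm geo}\log(2eR/d)$ through a quantitative version of the length argument in Step~5 of Theorem~\ref{thm:mainip1}: each stage advances the curve by a definite fraction of the remaining distance at scale $\gtrsim d$, and the curve must escape $B_{\mathbb H}(R)$. Multiplying gives the stated bound on $M$, and composing the $M$ H\"older steps gives $C^M\Psi(\varepsilon)^{\beta^M}$.

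At the last stage $V_J$ must exit $B_{\mathbb H}(R)$; otherwise another stage is possible. By analytic continuation along $V_J$ in the common exterior, implemented as one more chain, the approximate Neumann data transfer to $V_J\cap(B_{\mathbb H}(R_*+T_*)\setminus B_{\mathbb H}(R_*))$. The weight $e^{\frac{n+1}{2}\rho}$ is harmless on this fixed annulus and is absorbed into $C$, which yields \eqref{eq:hard-reflected-continuation-upper}. The fact that $V_J$ meets a fixed compact ball follows because every $V_j$ contains a face or a reflection of a face of $\mathcal P'$ across a hypersurface through points of $\overline{B_{\mathbb H}(R_*)}$.

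The main obstacle is the stage count $J\lesssim\log(2eR/d)$. The qualitative argument only gives finiteness, by contradiction with boundedness of the Neumann set, so a quantitative replacement is needed. The approach I would try first is to use the controlled incidence (item (5), with angles $\ge\alpha_0$) and the finiteness $N_0$ of faces to show that each reflected region $E_j$ has inradius $\gtrsim d$ along the curve. Each stage then advances the curve $\gamma$ by a length comparable to the current scale, and a dyadic bookkeeping of scales between $d$ and $2R$ gives the logarithmic count.
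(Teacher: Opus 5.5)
Your outline follows the paper's: Lemma~\ref{lem:quantitative-rellich}, a chain to the first discrepancy, a face patch at scale \(\bar h\) that produces the factor \(\bar h\), the Neumann part of Lemma~\ref{lem:quant-reflection}, iteration with residuals \(u_\ell-u_\ell\circ I_{V_j}\), and a final chain to the far annulus. However, the step you single out as the main obstacle is not closed by what you propose, and the stated bounds on \(J\) and \(M\) depend on it. Suppose each reflected region has inradius \(\gtrsim d\) along \(\gamma\) and each stage advances \(\gamma\) by a length comparable to that scale. Then you only get \(J\lesssim\operatorname{length}(\gamma)/d\), which is polynomial in \(1/d\); this is just the length argument of Theorem~\ref{thm:mainip1} made quantitative. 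A logarithmic count would need each stage to advance by a fixed fraction of the remaining distance, equivalently geometrically growing scales, and nothing forces that. A reflected face \(I_{V_j}(\mathcal F)\) may block \(\gamma\) anywhere, and repeatedly at the same scale. Moreover, the curve supplied by exterior connectedness has no length bound in terms of \(R\).

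The missing idea is to count stages along a \emph{fixed} chain rather than along a curve. The paper extends the Step-2 chain, of length \(O(\log(2eR/d))\), by a bounded fixed-scale chain to the far annulus, and indexes it \(\mathscr B_0,\dots,\mathscr B_{N_{\rm ref}}\) from the annulus toward the discrepancy. At stage \(j\) it follows this chain backwards from \(\mathscr B_{q_{j-1}}\) to the first enlarged ball meeting \(I_{V_j}(\mathcal P)\). The new face ball is attached so that \(\mathscr B^A_{q_{j-1}}\subset Q_j:=\mathbb B^n\setminus(\mathcal P\cup I_{V_j}(\mathcal P))\), so the contact indices strictly decrease. Hence \(J\le N_{\rm ref}+1\le C_{\rm geo}\log(2eR/d)\), and \(M\) is at most a constant times \(J\) times the per-stage cost \(C_{\rm geo}(\log(2eR/d)+\log(2eR/\bar h))\). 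The same attachment keeps every patch within bounded distance of the reference chain, and that is why all \(V_j\) meet a fixed compact ball. Your argument via reflections across hypersurfaces through points of \(\overline{B_{\mathbb H}(R_*)}\) does not by itself stop the iterated images from drifting outward.

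Three smaller points also need fixing.
\begin{enumerate}
\item Item (6) of Definition~\ref{def:admissible-tg-defects} is a property of \(G\) alone, not of the common exterior \(H\), and it yields tubes of width \(\delta(t)\), not \(\bar h\). Even a tube of width \(\gtrsim\bar h\) gives \(O(\operatorname{length}/\bar h)\) balls, not \(O(\log(2eR/\bar h))\); geometrically shrinking radii require the distance to the boundary to grow linearly away from the endpoint. The paper instead builds a \(d\)-scale chain in \(G\) and truncates it before the first enlarged ball meeting \(\mathcal P'\), so the retained balls lie in \(H\). It then uses a conical chain down to scale \(\bar h\).
\item \(u'_\ell\) is defined only on one side of \(\mathcal F_1\), so interior estimates do not control \(\partial_{\nu_{\mathbb H}}(u_\ell-u'_\ell)\) on the face. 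You must first extend \(u'_\ell\) evenly by Lemma~\ref{lem:one-sided-reflection-extension}. Then \(u_\ell-\widetilde u'_\ell\) solves \(\mathcal L_{\lambda_0}w=0\) in a full ball \(B_{\mathbb H}(y_1,4r_h)\), and its normal derivative on the patch equals \(\partial_{\nu_{\mathbb H}}u_\ell\). The same device is needed at each reflected face, which is the image of a face of \(\mathcal P\), not of \(\mathcal P'\).
\item In the terminal step, the condition that \(V_J\) exits \(B_{\mathbb H}(R)\) is vacuous, since every totally geodesic hypersurface does. The relevant domain is \(Q_J\), not the common exterior. Choose \(R_*\) so that \(\mathcal P\cup I_V(\mathcal P)\subset B_{\mathbb H}(R_*-1)\) for every \(V\) meeting the fixed ball. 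Use the stopping rule to see that the component of \(Q_J\) containing the last patch reaches the far annulus, and propagate the residual \(u_\ell-u_\ell\circ I_{V_J}\) there. Conclude from \(\partial_{\nu_{\mathbb H}}(u_\ell-u_\ell\circ I_{V_J})=2\partial_{\nu_{\mathbb H}}u_\ell\) on \(V_J\).
\end{enumerate}
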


\begin{proof}
We divide the proof into five steps.

\smallskip
\noindent
\emph{Step 1. Far-field error, near-field error, and uniform a priori bounds.}

For each \(\ell=1,\ldots,m\), set
\[
    w_\ell:=u_\ell-u'_\ell
    =
    u^s_{\mathcal P,\xi_\ell}
    -
    u^s_{\mathcal P',\xi_\ell}.
\]
Since the two exterior solutions are generated by the same incoming Helgason mode, \(w_\ell\) is an outgoing solution of
\[
    \mathcal L_{\lambda_0}w_\ell=0
    \quad\text{in }\mathbb B^n\setminus \overline{B_{\mathbb H}(R)}.
\]
Moreover, by \eqref{eq:hard-farfield-error-section-six},
\[
    \|(w_\ell)_\infty\|_{L^2(\mathbb S^{n-1})}
    =
    \bigl\|
        u_{\infty,\mathcal P,\xi_\ell}
        -
        u_{\infty,\mathcal P',\xi_\ell}
    \bigr\|_{L^2(\mathbb S^{n-1})}
    \leq
    \varepsilon .
\]
Applying Lemma~\ref{lem:quantitative-rellich} on a fixed annulus containing \(B_{\mathbb H}(x_0,\widehat\rho)\) in \eqref{eq:hard-observation-ball-section-six}, we obtain
\begin{equation*}
    \max_{\ell=1,\ldots,m}
    \|u_\ell-u'_\ell\|_{L^\infty(B_{\mathbb H}(x_0,\widehat\rho))}
    \leq
    \varepsilon_1,
    \qquad
    \varepsilon_1:=\Psi(\varepsilon).
\end{equation*}
Here, \(\Psi\) is the far-field-to-near-field modulus defined in
\eqref{eq:Psi-ff-definition-section-six}, with the dependence specified there.

We also fix the uniform upper bound used in all propagation and reflection steps.
All balls and patches appearing below are contained in compact hyperbolic regions determined only by the a priori geometry.
Hence Proposition~\ref{prop:uniform-local-bound} gives a constant \(E_*\geq1\), depending only on these fixed regions, on \(n\), on \(\lambda_0\), on the \(\mathsf N\)-type a priori data, and on the fixed \(h\), such that for every ball \(B\) used below,
\begin{equation}
\label{eq:hard-uniform-upper-bound-used}
    \max_{\ell=1,\ldots,m}
    \left(
        \|u_\ell\|_{L^\infty(B)}
        +
        \|u'_\ell\|_{L^\infty(B)}
    \right)
    \leq
    E_* .
\end{equation}
This is the a priori \(L^\infty\)-bound used below when applying Lemma~\ref{lem:propagation-chain} and Lemma~\ref{lem:quant-reflection}.
{
Fix \(\beta\in(0,1)\) no larger than the exponents in
Lemmas~\ref{lem:propagation-chain} and~\ref{lem:quant-reflection}.
}
After decreasing \(\varepsilon_0\), we may assume
\[
    0<\varepsilon_1=\Psi(\varepsilon)\leq E_*,
    \qquad
    \varepsilon_1<e^{-1}.
\]

\smallskip
\noindent
\emph{Step 2. Propagation to the first boundary discrepancy.}

Using the modified boundary distance $d$ in Definition~\ref{def:hyperbolic-distances}, after possibly interchanging \(\mathcal P\) and \(\mathcal P'\), we may choose \(x_1\in\partial\mathcal P'\setminus\mathcal P \) so that
\begin{equation*}
    \rho(x_1,\partial\mathcal P)\geq d/2 .
\end{equation*}
This possible interchange does not change \(d\), \(\bar h\), or \(\varepsilon\).
Using \(d/2\) only changes constants.

Since \(x_1\notin\mathcal P\) and
\(\rho(x_1,\partial\mathcal P)\geq d/2\), we have $B_{\mathbb H}(x_1,d/2)\subset G$.
{
We first construct in \(G\) a regular chain ending at \(x_1\), with
nonincreasing radii and terminal radius comparable to \(d\).  We then truncate
it immediately before the first enlarged ball meeting \(\mathcal P'\); all
retained enlarged balls lie in \(H\), while radius monotonicity keeps the last
retained radius bounded below by \(cd\) and the overlap conditions place the
last ball at controlled distance from \(\partial\mathcal P'\).
}
Thus, after relabeling, we obtain
\[
    B_i=B_{\mathbb H}(z_i,r_i),
    \qquad i=0,\ldots,N_1,
\]
with fixed regular-chain parameters such that
\[
    B_0^a\subset B_{\mathbb H}(x_0,\widehat\rho),
    \qquad
    B_i^A\subset H,\quad i=0,\ldots,N_1.
\]
The number \(N_1\) of iterations along this first regular chain satisfies
\begin{equation}\label{eq:first-chain-length-bound-hard}
    N_1\leq C_{\rm geo}\log\frac{2eR}{d}.
\end{equation}

Since \(w_\ell\) solves \(\mathcal L_{\lambda_0}w_\ell=0\) in \(H\), and since
\(B_i^A\subset H\) for \(i=0,\ldots,N_1\), the regular chain constructed above is
admissible for Lemma~\ref{lem:propagation-chain}.
On the initial ball \(B_0^a\), Step~1 gives
\[
    \|w_\ell\|_{L^\infty(B_0^a)}
    \leq
    \varepsilon_1
    =
    \Psi(\varepsilon).
\]
On the enlarged balls \(B_i^A\), the uniform bound
\eqref{eq:hard-uniform-upper-bound-used} gives, after enlarging \(E_*\) by a fixed
factor,
\[
    \|w_\ell\|_{L^\infty(B_i^A)}
    \leq
    E_*,
    \qquad i=0,\ldots,N_1 .
\]
Applying Lemma~\ref{lem:propagation-chain}, namely the propagation-of-smallness
estimate along regular hyperbolic chains, we obtain
\begin{equation*}
    \max_{\ell=1,\ldots,m}
    \|u_\ell-u'_\ell\|_{L^\infty(B_{N_1}^a)}
    \leq
    C E_*
    \left(
        \frac{\Psi(\varepsilon)}{E_*}
    \right)^{\beta^{N_1}} .
\end{equation*}
Absorbing the fixed \(E_*\) into \(C\), we shall use this estimate in the form
\begin{equation}\label{eq:hard-smallness-first-discrepancy-simple}
    \max_{\ell=1,\ldots,m}
    \|u_\ell-u'_\ell\|_{L^\infty(B_{N_1}^a)}
    \leq
    C^{N_1+1}
    \bigl(\Psi(\varepsilon)\bigr)^{\beta^{N_1}},
    \qquad
    N_1\leq C_{\rm geo}\log\frac{2eR}{d}.
\end{equation}

\smallskip
\noindent
\emph{Step 3. Propagation from the discrepancy point to a regular face patch.}

We next pass from the neighborhood obtained in Step~2 to a regular face patch of \(\partial\mathcal P'\).
Let \(z_*\in\partial\mathcal P'\) be a boundary point detected by the last ball of the first chain.
Choose a totally geodesic face \(\mathcal F'\subset\partial\mathcal P'\) containing \(z_*\).
If \(z_*\) is close to the relative boundary of \(\mathcal F'\), the controlled incidence condition and the relative \(h\)-regularity allow us to move within the same face, on the side accessible from the common exterior, to a point \(y_1\in\mathcal F'\).
The point \(y_1\) can be chosen so that two quantitative properties hold.
First, it stays away from the other defect at the \(d\)-scale:
\[
    \rho(y_1,\mathcal P)\geq c_1 d .
\]
Second, it is an interior point of the face at the \(\bar h\)-scale.
More precisely, if \(V_{\mathcal F'}\) denotes the totally geodesic hypersurface containing \(\mathcal F'\), then
\begin{equation}
\label{eq:face-patch-hard-section-six}
    B_{\mathbb H}(y_1,c_1\bar h)\cap V_{\mathcal F'}
    \subset
    \mathcal F' .
\end{equation}
Here \(c_1\in(0,1)\) depends only on the \(\mathsf N\)-type a priori geometry.

	These two properties are the geometric output of the face-selection construction.
They are the quantitative analogue of the face-selection step in the
reflected-continuation construction used in the proof of
Theorem~\ref{thm:mainip1}.
Starting from the first boundary discrepancy, one follows the accessible side
of the common exterior until the construction meets a face of
\(\partial\mathcal P'\).
If the contact point lies close to the relative boundary of that face, the
controlled incidence condition and the relative \(h\)-regularity in
Definition~\ref{def:admissible-tg-defects} allow the point to be shifted
along the same face, after decreasing \(c_1\) by a fixed factor, to an interior
point \(y_1\) satisfying \eqref{eq:face-patch-hard-section-six} while
preserving the \(d\)-scale separation from \(\mathcal P\).

The point \(y_1\) is chosen on the side of \(\mathcal F'\) accessible from the common exterior.
Thus there is a ball \(B_{N_1+N_2}\), with radius comparable to \(\bar h\), lying on the exterior side of this patch and satisfying
\[
    B_{N_1+N_2}^A\subset H .
\]
The passage from \(B_{N_1}\) to \(B_{N_1+N_2}\) is obtained by concatenating a bounded number of fixed-scale balls with a conical chain whose radii decrease geometrically down to the scale \(\bar h\).
The number \(N_2\) of iterations along this second regular chain satisfies
\begin{equation}\label{eq:second-chain-length-bound-hard}
    N_2
    \leq
    C_{\rm geo}\left(
        \log\frac{2eR}{d}
        +
        \log\frac{2eR}{\bar h}
    \right).
\end{equation}
{
Indeed, on each conical part the balls may be chosen so that
\(r_k\asymp q^{k-1}r_1\), where \(q\in(0,1)\) depends only on the fixed
cone aperture, while the fixed-scale parts contain only a uniformly bounded
number of balls.  If \(N(s)\) is the first index for which
\(r_{N(s)}\lesssim s\), then either \(N(s)=1\), or
\(r_{N(s)-1}\gtrsim s\), and hence in both cases
\(N(s)\leq C_{\rm geo}[1+\log(r_1/s)]\); applying this with
\(r_1\leq C_{\rm geo}R\) and
\(s=d,\bar h\), and noting that the first-contact truncation can only reduce
the number of balls, gives
\eqref{eq:first-chain-length-bound-hard} and
\eqref{eq:second-chain-length-bound-hard} after changing \(C_{\rm geo}\).
}
Applying Lemma~\ref{lem:propagation-chain} again, with \eqref{eq:hard-smallness-first-discrepancy-simple} as the initial input and with
the same a priori bound \(E_*\), we obtain
\begin{equation}
\label{eq:hard-smallness-near-face}
    \max_{\ell=1,\ldots,m}
    \|u_\ell-u'_\ell\|_{L^\infty(B_{N_1+N_2}^a)}
    \leq
    C^{N_1+N_2}
    \bigl(\Psi(\varepsilon)\bigr)^{\beta^{N_1+N_2}} .
\end{equation}

\smallskip
\noindent
\emph{Step 4. Neumann trace on the face patch and one reflection.}

On the patch \eqref{eq:face-patch-hard-section-six}, the exterior solutions \(u'_\ell\) satisfy
\[
    \partial_{\nu_{\mathbb H}}\Big|_{V_{\mathcal F'}}u'_\ell=0,
    \qquad
    \ell=1,\ldots,m .
\]
We first pass from the smallness of \(u_\ell-u'_\ell\) on the exterior side to the smallness of the Neumann trace of \(u_\ell\) on the face patch.

Set
\[
    r_h:=c\bar h ,
\]
where \(c>0\) will be chosen sufficiently small depending only on the \(\mathsf N\)-type a priori geometry.
By the geometric output of Step~3, we have
\[
    \rho(y_1,\mathcal P)\geq c_1d,
    \qquad
    B_{\mathbb H}(y_1,c_1\bar h)\cap V_{\mathcal F'}
    \subset
    \mathcal F' .
\]
Since \(\bar h\leq d\), we choose \(c>0\) so small that
\[
    4c\leq c_1 .
\]
Then
\[
    B_{\mathbb H}(y_1,4r_h)\subset G
\]
and
\[
    V_{\mathcal F'}\cap B_{\mathbb H}(y_1,4r_h)\subset \mathcal F' .
\]
Let \(\Omega_h^+\) be the component of
\[
    B_{\mathbb H}(y_1,4r_h)\setminus V_{\mathcal F'}
\]
which lies on the side accessible from the common exterior.
Then \(\Omega_h^+\subset H\).
We also set
\[
    \Omega_h^-:=I_{V_{\mathcal F'}}(\Omega_h^+).
\]
After decreasing \(c\) once more, the last ball constructed in Step~3 satisfies
\[
    B_{N_1+N_2}^a\subset \Omega_h^+ .
\]

{
Since \(u'_\ell\) satisfies the homogeneous hyperbolic Neumann condition
on the patch
\[
    V_{\mathcal F'}\cap B_{\mathbb H}(y_1,4r_h),
\]
the one-sided weak reflection principle in
Lemma~\ref{lem:one-sided-reflection-extension} gives the even reflected extension
}
\[
    \widetilde u'_\ell(x)
    :=
    \begin{cases}
        u'_\ell(x),
        & x\in \Omega_h^+,\\[2mm]
        u'_\ell\bigl(I_{V_{\mathcal F'}}x\bigr),
        & x\in \Omega_h^- .
    \end{cases}
\]
Then
\[
    \mathcal L_{\lambda_0}\widetilde u'_\ell=0
    \quad
    \text{in }B_{\mathbb H}(y_1,4r_h)
\]
in the weak sense.
Since \(B_{\mathbb H}(y_1,4r_h)\subset G\), the exterior solution \(u_\ell\) also satisfies
\[
    \mathcal L_{\lambda_0}u_\ell=0
    \quad
    \text{in }B_{\mathbb H}(y_1,4r_h).
\]

Set
\[
    \widetilde w_\ell:=u_\ell-\widetilde u'_\ell .
\]
Then \(\widetilde w_\ell\) satisfies
\[
    \mathcal L_{\lambda_0}\widetilde w_\ell=0
    \quad
    \text{in }B_{\mathbb H}(y_1,4r_h).
\]
Moreover, on \(\Omega_h^+\), one has \(\widetilde u'_\ell=u'_\ell\), and hence
\(
    \widetilde w_\ell=w_\ell .
\)
Therefore, by \eqref{eq:hard-smallness-near-face} and
\(B_{N_1+N_2}^a\subset\Omega_h^+\), one has:
\[
    \max_{\ell=1,\ldots,m}
    \|\widetilde w_\ell\|_{L^\infty(B_{N_1+N_2}^a)}
    \leq
    C^{N_1+N_2}
    \bigl(\Psi(\varepsilon)\bigr)^{\beta^{N_1+N_2}} .
\]

Since all radii in this local configuration are comparable to \(r_h\), a finite family of regular hyperbolic chains of uniformly bounded length propagates this smallness from \(B_{N_1+N_2}^a\) to \(B_{\mathbb H}(y_1,2r_h)\) inside \(B_{\mathbb H}(y_1,4r_h)\).
Applying Lemma~\ref{lem:propagation-chain} again, with the a priori bound \(2E_*\), still denoted by \(E_*\), we obtain
\[
    \max_{\ell=1,\ldots,m}
    \|\widetilde w_\ell\|_{L^\infty(B_{\mathbb H}(y_1,2r_h))}
    \leq
    C^{N_1+N_2}
    \bigl(\Psi(\varepsilon)\bigr)^{\beta^{N_1+N_2}} .
\]
The interior \(C^1\)-estimate for \(\mathcal L_{\lambda_0}\) in \(B_{\mathbb H}(y_1,2r_h)\) gives
\[
    r_h
    \max_{\ell=1,\ldots,m}
    \|\nabla_{\mathbb H}\widetilde w_\ell\|_{L^\infty(B_{\mathbb H}(y_1,r_h))}
    \leq
    C^{N_1+N_2}
    \bigl(\Psi(\varepsilon)\bigr)^{\beta^{N_1+N_2}} .
\]
Since \(r_h=c\bar h\), the fixed factor \(c\) is absorbed into the constant.

The reflected extension \(\widetilde u'_\ell\) is even with respect to \(V_{\mathcal F'}\).
Therefore
\[
    \partial_{\nu_{\mathbb H}}\Big|_{V_{\mathcal F'}}
    \widetilde u'_\ell=0
    \quad
    \text{on }V_{\mathcal F'}\cap B_{\mathbb H}(y_1,r_h).
\]
Consequently,
\[
    \partial_{\nu_{\mathbb H}}\Big|_{V_{\mathcal F'}}\widetilde w_\ell
    =
    \partial_{\nu_{\mathbb H}}\Big|_{V_{\mathcal F'}}u_\ell
    \quad
    \text{on }V_{\mathcal F'}\cap B_{\mathbb H}(y_1,r_h).
\]
Hence, on a slightly smaller patch,
\begin{equation}\label{eq:hard-small-neumann-defect}
    \bar h
    \max_{\ell=1,\ldots,m}
    \sup_{x\in V_{\mathcal F'}\cap B_{\mathbb H}(y_1,r_h)}
    \left|
        \partial_{\nu_{\mathbb H}}\Big|_{V_{\mathcal F'}}u_\ell(x)
    \right|
    \leq
    C^{N_1+N_2}
    \bigl(\Psi(\varepsilon)\bigr)^{\beta^{N_1+N_2}} .
\end{equation}
The factor \(\bar h\) comes from the scale \(r_h=c\bar h\) in the derivative estimate.

We now apply the Neumann part of Lemma~\ref{lem:quant-reflection} to \(u_\ell\) with
\[
    V=V_{\mathcal F'},
    \qquad
    p=y_1,
    \qquad
    r=r_h .
\]
The equation holds in \(B_{\mathbb H}(y_1,r_h)\), the \(L^\infty\)-bound is \(E_*\), and the small Neumann datum is given by \eqref{eq:hard-small-neumann-defect}.
After decreasing \(c\) and enlarging \(C\), we obtain
\begin{equation}\label{eq:hard-one-reflection-defect}
    \max_{\ell=1,\ldots,m}
    \|u_\ell-u_\ell\circ I_{V_{\mathcal F'}}\|_{L^\infty
    (B_{\mathbb H}(y_1,c\bar h))}
    \leq
    C^{N_1+N_2}
    \bigl(\Psi(\varepsilon)\bigr)^{\beta^{N_1+N_2+1}} .
\end{equation}
Here \(I_{V_{\mathcal F'}}\) is the hyperbolic reflection with respect to \(V_{\mathcal F'}\).

\smallskip
\noindent
\emph{Step 5. Iteration of reflected continuation.}

We now iterate the propagation-reflection step obtained above.
This is the quantitative version of the reflected-continuation construction used in the uniqueness proof.
At each propagation step we use Lemma~\ref{lem:propagation-chain}.
At each reflection step we use the Neumann part of Lemma~\ref{lem:quant-reflection}.
Reflections across totally geodesic hypersurfaces are hyperbolic isometries, and hence preserve \(\mathcal L_{\lambda_0}\).
The uniform \(L^\infty\)-bound entering all applications is the fixed constant \(E_*\).

{
Fix a reference chain
\(\mathscr B_0,\ldots,\mathscr B_{N_{\rm ref}}\) by adjoining, at the observation end
of the Step~2 chain, a fixed-scale exterior chain to the far annulus chosen
below, and orient it from that annulus toward the first discrepancy.  This
exterior extension has uniformly bounded
length, so
\[
    N_{\rm ref}\leq N_1+C_{\rm geo}\leq C_{\rm geo}\log\frac{2eR}{d}.
\]
Put \(q_0=N_{\rm ref}\).  At stage \(j\), with \(Q_j\) and \(Q_{j+1}\)
defined below, join the local face ball to
\(\mathscr B_{q_{j-1}}\) and follow this same chain toward decreasing indices.
The attachment is chosen so that
\(\mathscr B_{q_{j-1}}^A\subset Q_j\); hence the first enlarged ball meeting
\(I_{V_j}(\mathcal P)\), if it exists, has index strictly smaller than
\(q_{j-1}\).
If the relevant component of \(Q_j\) has not reached the far annulus, let
\(q_j<q_{j-1}\) be the index of the first enlarged ball meeting
\(I_{V_j}(\mathcal P)\).  The intervening balls lie in \(Q_j\); the exterior
cone and face-selection construction joins the last ball before contact to a
regular patch of this first contacted face and, after reflection, joins the new
local face ball to \(\mathscr B_{q_j}\) inside \(Q_{j+1}\).
The new local ball is invariant under \(I_{V_{j+1}}\) and is contained in
\(G\), so it is indeed contained in \(Q_{j+1}\).
}

We first describe one induction step.
Assume that, at some stage \(j\), we have obtained a totally geodesic hypersurface \(V_j\), a point \(y_j\in V_j\), and a scale \(r_j\geq c\bar h\).
{
Set
\[
    \mathcal P_j:=I_{V_j}(\mathcal P),
    \qquad
    Q_j
    :=
    \{x\in G:\ I_{V_j}x\in G\}
    =
    \mathbb B^n\setminus(\mathcal P\cup\mathcal P_j).
\]
}
We consider the reflection residual in the connected component of \(Q_j\) used in the reflected-continuation construction:
\[
    D_{j,\ell}
    :=
    u_\ell-u_\ell\circ I_{V_j},
    \qquad
    \ell=1,\ldots,m .
\]
Assume that
\begin{equation}\label{eq:hard-induction-defect}
    \max_{\ell=1,\ldots,m}
    \|D_{j,\ell}\|_{L^\infty(B_{\mathbb H}(y_j,c r_j))}
    \leq
    C^{K_j}
    \bigl(\Psi(\varepsilon)\bigr)^{\beta^{K_j}} .
\end{equation}
Here, \(K_j\) is the total number of elementary propagation and reflection
steps used up to this stage, with the chain lengths controlled by
\eqref{eq:first-chain-length-bound-hard} and
\eqref{eq:second-chain-length-bound-hard}.
For \(j=1\), this estimate is exactly \eqref{eq:hard-one-reflection-defect}, after setting
\[
    K_1:=N_1+N_2+1
\]
and enlarging \(C\) if necessary.

The function \(D_{j,\ell}\) solves
\[
    \mathcal L_{\lambda_0}D_{j,\ell}=0
\]
in the connected component of \(Q_j\) used in the reflected-continuation construction.
Indeed, \(u_\ell\) solves \(\mathcal L_{\lambda_0}u_\ell=0\) in \(G\), while \(u_\ell\circ I_{V_j}\) solves the same equation at every point \(x\in Q_j\), because \(I_{V_j}x\in G\) and \(I_{V_j}\) is a hyperbolic isometry.
Following the preceding construction, the next reflected face is reached from
\(B_{\mathbb H}(y_j,c r_j)\) by a regular chain of hyperbolic balls contained
in this reflected-continuation region.
The length of this chain is controlled by the admissible geometry and the exterior connectedness modulus.

Applying Lemma~\ref{lem:propagation-chain} to \(D_{j,\ell}\) along this chain gives smallness of \(D_{j,\ell}\) near the next face.
More precisely, if \(V_{j+1}\) is the totally geodesic hypersurface containing the next reflected face and \(y_{j+1}\in V_{j+1}\) is the chosen interior point of that face, then
\begin{equation}
\label{eq:hard-next-defect-small}
    \max_{\ell=1,\ldots,m}
    \|D_{j,\ell}\|_{L^\infty(B_{\mathbb H}(y_{j+1},c r_{j+1}))}
    \leq
    C^{K_{j+1}}
    \bigl(\Psi(\varepsilon)\bigr)^{\beta^{K_{j+1}}} ,
\end{equation}
where \(r_{j+1}\geq c\bar h\), and \(K_{j+1}\) is obtained from \(K_j\) by adding the number of balls in the new chain.

More precisely, the next hypersurface \(V_{j+1}\) is chosen so that
\[
    I_{V_j}
    \left(
        V_{j+1}\cap B_{\mathbb H}(y_{j+1},c r_{j+1})
    \right)
\]
is contained in a \(\mathsf N\)-type totally geodesic face of \(\partial\mathcal P\).
{
Choose the unit hyperbolic normal on the reflected face as the image under
\(dI_{V_j}\) of the unit hyperbolic normal on the original face.  The isometric covariance formula
\eqref{eq:normal-covariance-isometry} then gives, at corresponding points,
\[
    \partial_{\nu_{\mathbb H,V_{j+1}}}(u_\ell\circ I_{V_j})
    =
    \bigl(\partial_{\nu_{\mathbb H,I_{V_j}(V_{j+1})}}u_\ell\bigr)\circ I_{V_j}
    =0.
\]
Thus \(u_\ell\circ I_{V_j}\) satisfies the homogeneous hyperbolic Neumann
condition on the reflected face, with no additional lower-order term.
}
Hence, on a slightly smaller patch of \(V_{j+1}\),
\[
    \partial_{\nu_{\mathbb H}}\Big|_{V_{j+1}}u_\ell
    =
    \partial_{\nu_{\mathbb H}}\Big|_{V_{j+1}}D_{j,\ell}.
\]
Using the local interior \(C^1\)-estimate for \(D_{j,\ell}\), together with \eqref{eq:hard-next-defect-small}, we obtain
\begin{equation}\label{eq:hard-next-neumann-small}
    \bar h
    \max_{\ell=1,\ldots,m}
    \sup_{x\in V_{j+1}\cap B_{\mathbb H}(y_{j+1},c r_{j+1}/2)}
    \left|
        \partial_{\nu_{\mathbb H}}\Big|_{V_{j+1}}u_\ell(x)
    \right|
    \leq
    C^{K_{j+1}}
    \bigl(\Psi(\varepsilon)\bigr)^{\beta^{K_{j+1}}} .
\end{equation}
Here we used \(r_{j+1}\geq c\bar h\), and absorbed the fixed constant \(c\) into \(C\).
The same reflected-continuation geometry also ensures, after decreasing \(c>0\) if necessary, that
\[
    B_{\mathbb H}(y_{j+1},c r_{j+1})\subset G .
\]
Hence \(u_\ell\) solves \(\mathcal L_{\lambda_0}u_\ell=0\) in the ball where Lemma~\ref{lem:quant-reflection} is applied.

We now apply the Neumann part of Lemma~\ref{lem:quant-reflection} to \(u_\ell\) across \(V_{j+1}\).
The \(L^\infty\)-bound is \(E_*\), and the small Neumann datum is given by \eqref{eq:hard-next-neumann-small}.
After decreasing \(c\) and enlarging \(C\), this gives
\begin{equation*}
    \max_{\ell=1,\ldots,m}
    \|u_\ell-u_\ell\circ I_{V_{j+1}}\|_{L^\infty(B_{\mathbb H}(y_{j+1},c r_{j+1}))}
    \leq
    C^{K_{j+1}+1}
    \bigl(\Psi(\varepsilon)\bigr)^{\beta^{K_{j+1}+1}} .
\end{equation*}
This is the induction hypothesis \eqref{eq:hard-induction-defect} at the next stage, after replacing \(K_{j+1}\) by \(K_{j+1}+1\).

{
At every nonterminal stage \(q_j<q_{j-1}\).  Hence the construction stops
after a number \(J\) of reflection stages satisfying
\[
    J\leq N_{\rm ref}+1\leq C_{\rm geo}\log\frac{2eR}{d}.
\]
At each stage the passage from the first hit to a regular face patch uses, as
in Steps~2--3, at most
\[
    L_*:=C_{\rm geo}\left(
        \log\frac{2eR}{d}
        +
        \log\frac{2eR}{\bar h}
    \right)
\]
elementary propagation and reflection steps.
Consequently,
\[
    K_J\leq C_{\rm geo}(N_{\rm ref}+JL_*).
\]
}
Let \(V_J\) be the terminal totally geodesic hypersurface.
{
Every selected face patch lies at controlled distance from the fixed reference
chain; hence \(V_J\) intersects a fixed compact hyperbolic ball depending only
on the a priori data.
}

At the terminal stage we have, after renaming \(C\),
\begin{equation*}
    \max_{\ell=1,\ldots,m}
    \|u_\ell-u_\ell\circ I_{V_J}\|_{L^\infty(B_{\mathbb H}(y_J,c r_J))}
    \leq
    C^{K_J}
    \bigl(\Psi(\varepsilon)\bigr)^{\beta^{K_J}},
\end{equation*}
where \(y_J\in V_J\) lies in a fixed compact hyperbolic ball and \(r_J\geq c\bar h\).

We now propagate this terminal reflection residual to the fixed far annulus
\[
    B_{\mathbb H}(R_*+T_*)\setminus B_{\mathbb H}(R_*).
\]
{
Choose \(R_{\rm geo}\) so that the fixed compact ball met by every hypersurface
produced above is contained in \(B_{\mathbb H}(R_{\rm geo})\).  If
\(z\in V\cap \overline{B_{\mathbb H}(R_{\rm geo})}\) and
\(p\in\mathcal P\subset \overline{B_{\mathbb H}(R)}\), then
\[
    \rho(0,I_Vp)
    \leq \rho(0,z)+\rho(z,I_Vp)
    =\rho(0,z)+\rho(z,p)
    \leq 2R_{\rm geo}+R.
\]
Thus \(R_*>2R_{\rm geo}+R+1\) may be fixed so that, uniformly for all such
\(V\),
\[
    \mathcal P\cup I_V(\mathcal P)
    \subset B_{\mathbb H}(R_*-1),
    \qquad
    \mathbb B^n\setminus B_{\mathbb H}(R_*-1)
    \subset Q_V.
\]
Here \(Q_V:=\{x\in G:\ I_Vx\in G\}\).
}
Hence the terminal reflection residual
\[
    u_\ell-u_\ell\circ I_{V_J}
\]
is well-defined in the far region used below.
{
By the stopping rule, the component of
\[
    Q_J:=\{x\in G:\ I_{V_J}x\in G\}
\]
containing \(B_{\mathbb H}(y_J,c r_J)\) reaches the fixed far annulus: either it
does so before another contact, or the remaining reference balls provide
the connection when no further \(q_j\) is defined.  A regular
chain in that component has length bounded by the same single-stage estimate
displayed above.  If its length is \(L_J\), set
\(M:=K_J+L_J\).  The stage bound therefore gives
\[
    M
    \leq
    C_{\rm geo}
    \log\frac{2eR}{d}
    \left(
        \log\frac{2eR}{d}
        +
        \log\frac{2eR}{\bar h}
    \right).
\]
The exterior inclusion above shows that the exterior end of this chain and
the tubular neighborhood of \(V_J\) in the far annulus, chosen with radius
less than \(1\), lie in the same component of \(Q_J\).
}
Propagating along this last chain gives
\[
    \max_{\ell=1,\ldots,m}
    \|u_\ell-u_\ell\circ I_{V_J}\|_{L^\infty(\mathcal N_J)}
    \leq
    C^M
    \bigl(\Psi(\varepsilon)\bigr)^{\beta^M},
\]
where \(\mathcal N_J\) is a fixed-size tubular neighborhood of
\[
    V_J\cap
    \left(B_{\mathbb H}(R_*+T_*)\setminus B_{\mathbb H}(R_*)\right).
\]
The required uniform \(L^\infty\)-bounds in this far region follow from Proposition~\ref{prop:uniform-decay} for the outgoing corrections and from the explicit form of the incoming Helgason modes.

Applying the interior \(C^1\)-estimate in \(\mathcal N_J\), we obtain
\[
    \max_{\ell=1,\ldots,m}
    \sup_{x\in V_J\cap
    \left(B_{\mathbb H}(R_*+T_*)\setminus B_{\mathbb H}(R_*)\right)}
    \left|
        \partial_{\nu_{\mathbb H}}\Big|_{V_J}
        \bigl(u_\ell-u_\ell\circ I_{V_J}\bigr)(x)
    \right|
    \leq
    C^M
    \bigl(\Psi(\varepsilon)\bigr)^{\beta^M}.
\]
On \(V_J\), the reflection \(I_{V_J}\) fixes points and reverses the hyperbolic normal direction.
Therefore
\[
    \partial_{\nu_{\mathbb H}}\Big|_{V_J}
    \bigl(u_\ell-u_\ell\circ I_{V_J}\bigr)
    =
    2\partial_{\nu_{\mathbb H}}\Big|_{V_J}u_\ell .
\]
Since \(\rho(x)\in[R_*,R_*+T_*]\) on the fixed annulus, the weight
\[
    e^{\frac{n+1}{2}\rho(x)}
\]
is bounded above by a constant depending only on \(R_*\), \(T_*\), and \(n\).
Multiplying also by \(\bar h\leq 2R\), and absorbing these fixed factors into \(C^M\), we obtain
\begin{equation*}
    \bar h
    \max_{\ell=1,\ldots,m}
    \sup_{x\in V_J\cap
    \left(B_{\mathbb H}(R_*+T_*)\setminus B_{\mathbb H}(R_*)\right)}
    e^{\frac{n+1}{2}\rho(x)}
    \left|
        \partial_{\nu_{\mathbb H}}\Big|_{V_J}u_\ell(x)
    \right|
    \leq
    C^M
    \bigl(\Psi(\varepsilon)\bigr)^{\beta^M}.
\end{equation*}
This is \eqref{eq:hard-reflected-continuation-upper}.
{
These constants are independent of \(\mathcal P,\mathcal P'\) and
\(\varepsilon\), with the dependence on the fixed \(h\) as stated above.
}

The proof is complete.
\end{proof}

The following elementary lemma converts the reflected-continuation estimate into the logarithmic stability modulus.  It will be used in both the \(\mathsf N\)-type and \(\mathsf D\)-type parts.
We shall use the logarithmic modulus
\begin{equation}\label{eq:eta-modulus-section-six}
    \eta(t):=
    \exp\!\left(-\sqrt{\log(-\log t)}\right),
    \qquad 0<t<e^{-1}.
\end{equation}
After decreasing the admissible error thresholds, every argument of \(\eta\) appearing below lies in \((0,e^{-1})\).

\begin{lemma}\label{lem:logarithmic-reduction}
Let \(0<\bar h\leq d\leq 2R\), \(A_0:=2eR\), and \(0<s<e^{-1}\).  Assume that
there are constants \(E\geq1\), \(C_0\geq1\), \(C_1\geq1\),
\(\beta_0\in(0,1)\), \(c_0>0\), and \(\sigma\in\{0,1\}\) such that
\begin{equation}
\label{eq:log-lemma-main-ineq}
    c_0\bar h^\sigma
    \leq
    C_0^{\,1+B_M}E^{1-\Gamma_M}s^{\Gamma_M},
\end{equation}
where
\begin{equation}
\label{eq:log-lemma-M-bound}
    M
    \leq
    C_1
    \log\frac{A_0}{d}
    \left(
        \log\frac{A_0}{d}
        +
        \log\frac{A_0}{\bar h}
    \right),
\end{equation}
and
\begin{equation*}
    \Gamma_M\geq\beta_0^M,
    \qquad
    B_M\leq C_1(1+M),
\end{equation*}
{
Then there exist \(C>0\) and \(\alpha>0\), depending only on
\(A_0,C_1,\beta_0\).  There also exists \(s_0>0\), which may depend on
\(C_0,E,c_0\), such that, for every \(0<s\leq s_0\),
}
\begin{equation}
\label{eq:log-lemma-conclusion}
    \bar h\leq C\bigl(\eta(s)\bigr)^\alpha,
\end{equation}
where $\eta(s)$ is defined in \eqref{eq:eta-modulus-section-six}.
\end{lemma}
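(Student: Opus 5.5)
Since \(\bar h\le d\), the bound \eqref{eq:log-lemma-M-bound} gives
\[
M\le 2C_1\Bigl(\log\frac{A_0}{\bar h}\Bigr)^2=:2C_1L^2,
\qquad L:=\log\frac{A_0}{\bar h}\ge 1 .
\]
This uses \(\bar h\le 2R<A_0\). So everything is controlled by the single quantity \(L\). The plan is to take logarithms in \eqref{eq:log-lemma-main-ineq}, use that \(s^{\Gamma_M}\) is tiny unless \(\Gamma_M\) is very small, and then solve the resulting inequality for \(L\) from below.

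First I take logarithms. Since \(s<e^{-1}<1\), \(E\ge1\) and \(\Gamma_M\le1\) can be assumed (otherwise replace \(\Gamma_M\) by \(\min\{\Gamma_M,1\}\), which only weakens the right side because \(s/E<1\)), the right-hand side is at most \(C_0^{1+B_M}E(s/E)^{\Gamma_M}\le C_0^{1+B_M}E\,s^{\beta_0^M}\). Taking logs of \(c_0\bar h^\sigma\le\cdots\) and using \(\sigma\in\{0,1\}\), \(\log\bar h=\log A_0-L\), we get
\[
\beta_0^M\log\frac1s\le (1+B_M)\log C_0+\log E+\log\frac1{c_0}+L+|\log A_0|\le K_0(1+M)+L,
\]
with \(K_0\) depending on \(C_0,C_1,E,c_0,A_0\). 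Inserting \(M\le2C_1L^2\) and \(\beta_0^{M}\ge e^{-2C_1|\log\beta_0|L^2}\) yields
\[
\log\frac1s\le K_1L^2\,e^{K_2L^2},\qquad K_2=2C_1|\log\beta_0|,
\]
and hence \(\log\frac1s\le e^{K_3L^2}\) with \(K_3=K_2+1\), once \(L^2\ge\log K_1\). The set of \(L\) below that threshold is exactly where \(\bar h\) is bounded below by a constant depending on \(K_1\). That case is excluded by choosing \(s_0\) small: then \(\log\frac1s\le K_1L^2e^{K_2L^2}\) with bounded \(L\) is violated. This is where \(s_0\) picks up its dependence on \(C_0,E,c_0\).

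Taking another logarithm gives \(\log\log\frac1s\le K_3L^2\), that is,
\[
L\ge K_3^{-1/2}\sqrt{\log(-\log s)} .
\]
Exponentiating,
\[
\bar h=A_0e^{-L}\le A_0\exp\Bigl(-K_3^{-1/2}\sqrt{\log(-\log s)}\Bigr)=A_0\,\eta(s)^{\alpha},
\qquad \alpha=K_3^{-1/2}.
\]
Here \(\alpha\) depends only on \(C_1,\beta_0\), as required, and \(C=A_0\).

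The main obstacle is keeping \(\alpha\) and \(C\) free of \(C_0,E,c_0\). This works because those constants enter only additively through \(K_0\) and the threshold \(\log K_1\). Both are absorbed by shrinking \(s_0\), which covers the bounded-\(L\) regime, while the exponent \(K_3\) uses only \(C_1\) and \(\beta_0\). Getting \(K_3\) to depend on \(C_1,\beta_0\) alone requires care with \(K_1L^2e^{K_2L^2}\le e^{(K_2+1)L^2}\), which holds only for \(L\) large depending on \(K_1\); that is exactly the regime guaranteed by small \(s_0\).
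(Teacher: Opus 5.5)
Your proof is correct and follows the paper's argument in substance: both reduce everything to the single variable \(\log(A_0/\bar h)\) via \(M\lesssim(\log(A_0/\bar h))^2\), take logarithms, and invert. The only difference is phrasing: the paper argues by contradiction (assuming \(\log(A_0/\bar h)<c\sqrt{\log(-\log s)}\) with \(C_3c^2<1/2\)), while you solve \(\log\frac1s\le e^{K_3L^2}\) directly. One small slip is that \(K_1L^2e^{K_2L^2}\le e^{(K_2+1)L^2}\) does not follow from \(L^2\ge\log K_1\) alone; you need, e.g., \(L^2\ge 2\log K_1\), using \(x\le e^{x/2}\). This only changes the threshold absorbed into \(s_0\).
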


\begin{proof}
Set
\begin{equation*}
    Y:=\log\frac{A_0}{\bar h}.
\end{equation*}
Since \(0<\bar h\leq 2R\), we have \(Y\geq1\).  Moreover, since
\(\bar h\leq d\), one has
\[
    \log\frac{A_0}{d}\leq Y.
\]
Thus \eqref{eq:log-lemma-M-bound} gives
\begin{equation*}
    M\leq C_2Y^2 .
\end{equation*}
Consequently,
\begin{equation}
\label{eq:gamma-lower-Y}
    \Gamma_M
    \geq
    \beta_0^M
    \geq
    \exp(-C_3Y^2),
\end{equation}
and, using \(E\geq1\),
\begin{equation}
\label{eq:prefactor-Y}
    C_0^{1+B_M}E^{1-\Gamma_M}
    \leq
    \exp(C_4(1+Y^2)).
\end{equation}

Let
\[
    L:=-\log s .
\]
Taking logarithms in \eqref{eq:log-lemma-main-ineq}, and using
\eqref{eq:prefactor-Y} together with
\(\bar h=A_0e^{-Y}\), gives
\begin{equation}
\label{eq:basic-Y-L-ineq}
    \Gamma_M L
    \leq
    C_5(1+Y^2)+C_5Y .
\end{equation}
When \(\sigma=0\), the linear term in \(Y\) is absent; keeping it only enlarges the
right-hand side.

We now show that \(Y\) cannot be too small.  Choose \(c>0\) so small that
\(C_3c^2<1/2\).  If, for arbitrarily large \(L\),
\[
    Y<c\sqrt{\log L},
\]
then \eqref{eq:gamma-lower-Y} gives
\[
    \Gamma_M L
    \geq
    L\exp(-C_3c^2\log L)
    \geq
    L^{1/2}.
\]
On the other hand, the right-hand side of \eqref{eq:basic-Y-L-ineq} is bounded by
\(C\log L\), which is impossible for large \(L\).  Hence, for all sufficiently small
\(s\),
\begin{equation*}  
  Y\geq c\sqrt{\log L}
    =
    c\sqrt{\log(-\log s)} .
\end{equation*}
Therefore
\[
    \bar h
    =
    A_0e^{-Y}
    \leq
    A_0
    \exp\!\left(-c\sqrt{\log(-\log s)}\right)
    =
    A_0(\eta(s))^c .
\]
Renaming the constants gives \eqref{eq:log-lemma-conclusion}.
{
Since the above \(c\) depends only on \(C_3\), the constants \(C_0,E,c_0\)
enter only through the smallness threshold for \(s\).
}
\end{proof}

\medskip

The reflected-continuation estimate above gives an upper bound for the normalized
Neumann trace of the \(\mathsf N\)-type exterior solutions on a terminal totally geodesic hypersurface
\(V_J\).  In order to turn this analytic smallness into geometric stability, we need
a lower bound for the same quantity.  The point is that the terminal hypersurface
produced by the reflected-continuation construction is not arbitrary: as in the
uniqueness proof, it intersects a fixed compact hyperbolic ball.  We denote by
\(R_{\rm geo}>R\) an a priori radius, depending only on the \(\mathsf N\)-type admissible class,
such that every terminal hypersurface arising in the construction satisfies
\[
    V_J\cap \overline{B_{\mathbb H}(R_{\rm geo})}\neq\emptyset .
\]
This radius is only a bookkeeping parameter for the compactness of the family of
possible terminal hypersurfaces; it is fixed once and for all and is independent of
\(\mathcal P,\mathcal P'\), \(h\), and the far-field error.

The next lemma gives the required nondegeneracy estimate on this compact family
of hypersurfaces.  It is the quantitative version of the argument used in the
boundedness of Neumann totally geodesic hypersurfaces in the uniqueness proof:
the admissibility of \(\Xi\) ensures that, for every such \(V\), at least one boundary
label has a non-vanishing normal component at conformal infinity.  Consequently, the
incoming principal part of the corresponding incoming Helgason mode cannot be canceled by
the outgoing correction on a fixed far annulus.

\begin{lemma}\label{lem:hard-lower-bound}
Let \(0<h\leq h_0\), let \(\mathcal P\in\mathcal B_{\mathbb H}^h\), and let
\(\Xi=(\xi_1,\ldots,\xi_m)\) be admissible in the sense of
Definition~\ref{def:hard-directions}.
Set
\[
    U=(u_{\mathcal P,\xi_1},\ldots,u_{\mathcal P,\xi_m}).
\]
{
There exist \(R_*>\max\{R,R_{\rm geo}\}\) and \(T_*>0\), depending only on
the \(\mathsf N\)-type a priori data, on \(n\), on \(\lambda_0\), on
\(R_{\rm geo}\), and on \(a_0(\Xi)\), and a constant \(c_{\mathsf N}>0\), which may
also depend on the fixed \(h\), such that for every totally geodesic
hypersurface \(V\) satisfying
}
\[
    V\cap \overline{B_{\mathbb H}(R_{\rm geo})}\neq\emptyset,
\]
one has
\begin{equation}
\label{eq:hard-lower-bound}
    \max_{\ell=1,\ldots,m}
    \sup_{x\in V\cap
    \left(B_{\mathbb H}(R_*+T_*)\setminus B_{\mathbb H}(R_*)\right)}
    e^{\frac{n+1}{2}\rho(x)}
    \left|
        \partial_{\nu_{\mathbb H}}\Big|_{V}
        u_{\mathcal P,\xi_\ell}(x)
    \right|
    \geq c_{\mathsf N} .
\end{equation}
Here \(\partial_{\nu_{\mathbb H}}\big|_V\) denotes the hyperbolic normal derivative
along \(V\), with either choice of a unit hyperbolic normal to \(V\).
The left-hand side is independent of this choice.
\end{lemma}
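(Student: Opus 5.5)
The plan is to split \(\partial_{\nu_{\mathbb H}}|_V u_{\mathcal P,\xi_\ell}=\partial_{\nu_{\mathbb H}}|_V u^i_{\xi_\ell}+\partial_{\nu_{\mathbb H}}|_V u^s_{\mathcal P,\xi_\ell}\), to bound the weighted incoming part from below on part of the far annulus, and to show that the outgoing part cannot cancel it there. I would use the explicit computation from the proof of Lemma~\ref{lem:neumann-set-bounded}, made quantitative and uniform over the compact family \(\mathcal V:=\{V:\ V\cap\overline{B_{\mathbb H}(R_{\rm geo})}\neq\emptyset\}\). The weight \(e^{\frac{n+1}{2}\rho}\) is exactly the one that makes both parts of order one. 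After reducing to standard position by an isometry as in Lemma~\ref{lem:neumann-set-bounded}, the incoming part along \(V\) behaves like \(A_{\xi,\eta}(\rho)(\cosh\frac\rho2)^{-(n+1)-2\lambda_0\mathrm i}\), with amplitude proportional to the normal component of \(\xi\) relative to \(\partial_\infty V\). The outgoing part behaves like \(B(\rho)(\cosh\frac\rho2)^{-(n+1)+2\lambda_0\mathrm i}\).

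\textbf{Step 1 (incoming lower bound).} By admissibility, for each \(V\in\mathcal V\) some \(\ell\) has \(\operatorname{dist}_{\mathbb S^{n-1}}(\xi_\ell,\partial_\infty V)\ge a_0(\Xi)\). Since \(V\) meets \(\overline{B_{\mathbb H}(R_{\rm geo})}\), its Euclidean picture is a spherical cap orthogonal to \(\mathbb S^{n-1}\) that stays a uniform distance from the boundary. I would pick an ideal point \(\eta\in\partial_\infty V\) and follow the geodesic of \(V\) toward \(\eta\). A uniform version of the computation \(\partial_{x_n}u^i=(n-1+2\lambda_0\mathrm i)\xi_n|x-\xi|^{-2}u^i\), done in coordinates adapted to \(V\), then gives
\[
e^{\frac{n+1}{2}\rho}\,|\partial_{\nu_{\mathbb H}}u^i_{\xi_\ell}|\ \ge\ c_1(a_0,R_{\rm geo})>0
\]
on \(V\cap\{\rho\ge R_1\}\), once \(R_1\) is large. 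The lower bound holds uniformly in \(V\in\mathcal V\) by compactness of \(\mathcal V\) in the local smooth topology and continuity in \((V,\xi)\). The admissibility of \(\Xi\) is used only here, and it is what prevents the amplitude from degenerating.

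\textbf{Step 2 (outgoing part is bounded with the conjugate phase).} Proposition~\ref{prop:uniform-decay} gives only \(|\nabla_{\mathbb H}u^s|\le Ee^{-\frac{n-1}{2}\rho}\). The normal derivative along \(V\) is smaller, because \(\nu_{\mathbb H}\) is asymptotically tangent to the sphere at infinity: it is the tangential derivative of the far-field term, divided by \(\sinh\rho\). I would rewrite the Green representation \eqref{eq:green-representation-decay} with the asymptotic expansion of \(G_{-\lambda_0\mathrm i}\) and differentiate it. This should give, uniformly in \(\mathcal P\in\mathcal B^h_{\mathbb H}\), \(\xi\) and \(V\in\mathcal V\),
\[
e^{\frac{n+1}{2}\rho}\,\partial_{\nu_{\mathbb H}}u^s=b(x)\,(\cosh\tfrac\rho2)^{4\lambda_0\mathrm i}\cdot(\text{phase of }u^i)+O(e^{-\rho}),
\]
with \(|b|\le C\) and \(b\) slowly varying in \(\rho\): its \(\rho\)-derivative is \(O(e^{-\rho})\). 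The bound on \(b\) comes from the uniform Cauchy data \eqref{eq:uniform-Cauchy-data-fixed-sphere}.

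\textbf{Step 3 (non-cancellation, the main obstacle).} Unlike the qualitative proof, we cannot take \(\rho\to\infty\); we need a fixed annulus \([R_*,R_*+T_*]\) that works uniformly. The relative phase \((\cosh\frac\rho2)^{4\lambda_0\mathrm i}\approx e^{2\lambda_0\mathrm i\rho}\) rotates once over a \(\rho\)-interval of length \(\pi/\lambda_0\), while the amplitudes \(A\) and \(b\) change by only \(O(e^{-R_*})\) there. I would therefore choose \(T_*=\pi/\lambda_0\), and choose \(R_*\) so large that the \(O(e^{-R_*})\) errors are below \(c_1/4\). Then \(|A+be^{2\lambda_0\mathrm i\rho}|\) cannot stay below \(c_1/2\) along the whole interval. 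If it did, then \(|b|\approx|A|\ge c_1\), and comparing two points half a period apart gives \(|A-be^{\mathrm i\phi}|+|A+be^{\mathrm i\phi}|\ge 2|A|\), which is a contradiction. This yields \eqref{eq:hard-lower-bound} with \(c_{\mathsf N}=c_1/2\).

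The delicate points are the uniform derivative control of the slowly varying amplitudes in Step~2, which must hold uniformly in \(\mathcal P\), and the uniformity of the ray geometry over \(\mathcal V\) in Step~1. If the direct estimates for Step~2 fail, a fallback is a compactness-contradiction argument: take sequences \(\mathcal P_k\), \(V_k\) with the supremum tending to \(0\), pass to limits using Proposition~\ref{prop:compactness-admissible-classes} and Proposition~\ref{prop:direct-stability}, and obtain the vanishing for the limit on the fixed annulus. The limit would then still need the phase-rotation argument above, since vanishing on a bounded annulus alone does not contradict Lemma~\ref{lem:neumann-set-bounded}.
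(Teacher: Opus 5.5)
Your Steps~1--3 reproduce the paper's core mechanism: a uniform incoming amplitude obtained from $a_0(\Xi)$ and compactness of the family of $V$, a bounded outgoing amplitude carrying the conjugate phase, and two radii $\pi/(2\lambda_0)$ apart. However, they do not give the lemma with the stated dependence of the annulus. The statement requires $R_*,T_*$ to depend only on the a priori data, $n$, $\lambda_0$, $R_{\rm geo}$ and $a_0(\Xi)$; only $c_{\mathsf N}$ may depend on $h$. This is deliberate. The same annulus is fed into Proposition~\ref{prop:hard-quant-reflected-continuation}, whose constants may depend on the annulus, while the final $C,\alpha$ are claimed independent of $h$. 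In your Step~3, $R_*$ is chosen so that the outgoing remainders fall below $c_1/4$. Those remainders, like the bound on $b$, come from \eqref{eq:uniform-Cauchy-data-fixed-sphere} and Proposition~\ref{prop:uniform-decay}, whose constants depend on the fixed $h$. So you obtain the inequality with the dependences swapped: $c_{\mathsf N}=c_1/2$ is free of $h$, but $R_*=R_*(h)$.

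The missing idea is precisely the one your fallback discards: vanishing on a bounded annulus \emph{does} contradict Lemma~\ref{lem:neumann-set-bounded}. Suppose $\partial_{\nu_{\mathbb H}}u_{\mathcal P,\xi_\ell}=0$ on $V\cap(B_{\mathbb H}(R_*+T_*)\setminus B_{\mathbb H}(R_*))$ for every $\ell$. This set is a nonempty relatively open subset of $V\setminus\overline{B_{\mathbb H}(R)}\subset V\cap G$. Lemma~\ref{lem:analytic-continuation-hypersurface} then propagates the Neumann condition to an unbounded component of $V\cap G$, i.e.\ an unbounded Neumann totally geodesic hypersurface, which $a_0(\Xi)>0$ excludes.

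Hence, for every prescribed annulus with $R_*>\max\{R,R_{\rm geo}\}$ and every $T_*>0$, the left side of \eqref{eq:hard-lower-bound} is positive for each pair $(\mathcal P,V)$. It is lower semicontinuous in $(\mathcal P,V)$ by Proposition~\ref{prop:direct-stability} and local smooth convergence of $V$. Compactness of $\mathcal B^h_{\mathbb H}$ (Proposition~\ref{prop:compactness-admissible-classes}) and of the family of $V$ then yields a positive minimum $c_{\mathsf N}(h)$, and no phase rotation is needed on the prescribed annulus.

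The paper does essentially this. It runs your two-radius argument on an auxiliary annulus $[R_h^\sharp,R_h^\sharp+T^\sharp]$, which is allowed to depend on $h$, only to obtain positivity. It then transfers the bound to the fixed annulus by analytic continuation along $V$ and compactness.

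A small correction to the heuristic in Step~2: when $V$ does not pass through the origin, $\nu_{\mathbb H}$ has a radial component of order $e^{-\rho}$. So $\langle\nu_{\mathbb H},\partial_\rho\rangle_{\mathbb H}\,\partial_\rho u^s$ contributes at the same order $e^{-\frac{n+1}{2}\rho}$ as the tangential term. It carries the same outgoing phase, so your form for $\partial_{\nu_{\mathbb H}}u^s$ is unaffected.
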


\begin{proof}
{
Fix \(R_*>\max\{R,R_{\rm geo}\}\) and \(T_*>0\) once and for all, depending
only on the stated a priori data.
}
Fix a totally geodesic hypersurface \(V\) with
\[
    V\cap \overline{B_{\mathbb H}(R_{\rm geo})}\neq\emptyset .
\]
By admissibility of \(\Xi\) given in Definition~\ref{def:hard-directions}, there is an index \(\ell=\ell(V)\) such that
\begin{equation}
\label{eq:chosen-hard-direction-separated}
    \operatorname{dist}_{\mathbb S^{n-1}}(\xi_\ell,\partial_\infty V)
    \geq a_0(\Xi).
\end{equation}
This separation is the quantitative non-tangency condition at conformal infinity.
We use here the same asymptotic calculation as in the proof of
Lemma~\ref{lem:neumann-set-bounded}, but not the boundedness conclusion of that
lemma.
The point is that \eqref{eq:chosen-hard-direction-separated} gives a uniform
nonzero principal coefficient for the normal derivative of the incoming Helgason mode
\(u^i_{\xi_\ell}=e_{2\lambda_0,\xi_\ell}\) along \(V\) at conformal infinity.

We first record the calculation in the standard case
\[
    V_0=\{x_n=0\}.
\]
Let
\[
    x_\rho=\tanh(\rho/2)\eta\in V_0,
    \qquad
    \eta\in\partial_\infty V_0=\{\eta_n=0\}.
\]
On \(V_0\), the hyperbolic normal derivative is
\[
    \partial_{\nu_{\mathbb H}}\Big|_{V_0}
    =
    \frac{1-|x|^2}{2}\,
    \partial_{x_n}\Big|_{V_0}.
\]
A direct computation from the formula for \(e_{2\lambda_0,\xi}\) gives
\[
    \partial_{\nu_{\mathbb H}}\Big|_{V_0}
    e_{2\lambda_0,\xi}(x_\rho)
    =
    A_{\xi,\eta}(\rho)
    \left(\cosh\frac{\rho}{2}\right)^{-(n+1)-2\mathrm i\lambda_0},
\]
where
\[
    A_{\xi,\eta}(\rho)
    =
    \frac{n-1+2\mathrm i\lambda_0}{2}\,
    \xi_n
    \left|
        \tanh\frac{\rho}{2}\eta-\xi
    \right|^{-(n+1)-2\mathrm i\lambda_0}.
\]
Hence
\[
    e^{\frac{n+1}{2}\rho}
    \partial_{\nu_{\mathbb H}}\Big|_{V_0}
    e_{2\lambda_0,\xi}(x_\rho)
    =
    A(\eta,\xi)e^{-\mathrm i\lambda_0\rho}+O(e^{-\rho}),
\]
where \(A(\eta,\xi)\neq0\) precisely when
\[
    \xi\notin\partial_\infty V_0.
\]
Moreover, if
\[
    \operatorname{dist}_{\mathbb S^{n-1}}(\xi,\partial_\infty V_0)
    \geq a_0(\Xi),
\]
then
\[
    |A(\eta,\xi)|\geq a_1>0,
\]
where \(a_1\) depends only on \(n\), \(\lambda_0\), and \(a_0(\Xi)\) in the standard
model.

We now pass to a general \(V\).
{
The passage is legitimate for the hyperbolic normal derivative because
\eqref{eq:normal-covariance-isometry} transports it exactly under every
hyperbolic isometry; in particular, no zeroth-order term is created when
\(V\) is moved to \(V_0\).
}
The family of totally geodesic hypersurfaces satisfying
\[
    V\cap \overline{B_{\mathbb H}(R_{\rm geo})}\neq\emptyset
\]
is compact in the natural parametrization by signed distance from the origin and boundary direction.
For each such \(V\), the admissibility condition \eqref{eq:chosen-hard-direction-separated} gives an index \(\ell=\ell(V)\) such that \(\xi_\ell\) stays a distance at least \(a_0(\Xi)\) away from \(\partial_\infty V\).
Equivalently, the boundary label is uniformly non-tangent to \(V\) at conformal infinity.

The leading coefficient of
\[
    e^{\frac{n+1}{2}\rho}
    \partial_{\nu_{\mathbb H}}\Big|_V e_{2\lambda_0,\xi_\ell}
\]
along any ray in \(V\) depends continuously on \(V\), on the endpoint of the ray in \(\partial_\infty V\), and on \(\xi_\ell\).
By the standard-model calculation above and by the separation from \(\partial_\infty V\), this coefficient is nonzero.
Using compactness, after decreasing \(a_1>0\) if necessary, we obtain a uniform lower bound for a suitable ray in each \(V\).

Thus, for every such \(V\), we may choose a family of points \(x_r\in V\), defined for all sufficiently large \(r\), such that
\[
    \rho(x_r)=r,
    \qquad
    \frac{x_r}{|x_r|}\to\eta_V\in\partial_\infty V
    \quad\text{as }r\to+\infty .
\]
Along this family one has
\begin{equation*}
    e^{\frac{n+1}{2}r}
    \partial_{\nu_{\mathbb H}}\Big|_{V}
    u^i_{\xi_\ell}(x_r)
    =
    A_Ve^{-\mathrm i\lambda_0 r}
    +
    r^i_V(r),
    \qquad
    |A_V|\geq a_1 .
\end{equation*}
The convergence \(r^i_V(r)\to0\) is uniform in \(V\), by the same compactness and by the separation \eqref{eq:chosen-hard-direction-separated}.

We now treat the outgoing correction.
The outgoing Green representation and the \(C^1\) version of the far-field asymptotics in Subsection~\ref{subsec:radition}, together with the uniform exterior bounds from Proposition~\ref{prop:uniform-decay}, give
\begin{equation*}
    e^{\frac{n+1}{2}r}
    \partial_{\nu_{\mathbb H}}\Big|_{V}
    u^s_{\mathcal P,\xi_\ell}(x_r)
    =
    B_{\mathcal P,V}e^{\mathrm i\lambda_0 r}
    +
    r^s_{\mathcal P,V}(r).
\end{equation*}
{
This uniform remainder follows by inserting the fixed-sphere Green
representation, differentiating the uniform kernel expansion in the
\(x\)-variable once, and using the uniform Cauchy-data bound
\eqref{eq:uniform-Cauchy-data-fixed-sphere}.
}
Here, \(B_{\mathcal P,V}\) is uniformly bounded, and \(r^s_{\mathcal P,V}(r)\to0\) uniformly for all admissible \(\mathcal P\), all admissible \(V\), and the selected index \(\ell(V)\).

For the following asymptotic contradiction, choose
\[
    T^\sharp>\frac{\pi}{2\lambda_0}+1.
\]
By the uniform convergence of the remainders, we may then choose \(R_h^\sharp\)
sufficiently large, larger also than the radius in Proposition~\ref{prop:uniform-decay}, such that
\[
    R_h^\sharp>\max\{R,R_{\rm geo}\}
\]
and
\begin{equation}
\label{eq:hard-remainder-small}
    |r^i_V(r)|+|r^s_{\mathcal P,V}(r)|
    \leq
    \frac{a_1}{8},
    \qquad
    r\in[R_h^\sharp,R_h^\sharp+T^\sharp],
\end{equation}
uniformly for all admissible \(\mathcal P\), all admissible \(V\), and the selected
index \(\ell(V)\).

Choose
\[
    \rho_a:=R_h^\sharp+\frac12,
    \qquad
    \rho_b:=\rho_a+\frac{\pi}{2\lambda_0}.
\]
Since
\[
    T^\sharp>\frac{\pi}{2\lambda_0}+1,
\]
both \(\rho_a\) and \(\rho_b\) belong to
\((R_h^\sharp,R_h^\sharp+T^\sharp)\).
Set
\[
    X:=A_Ve^{-\mathrm i\lambda_0\rho_a},
    \qquad
    Y:=B_{\mathcal P,V}e^{\mathrm i\lambda_0\rho_a}.
\]
At the two radii \(\rho_a\) and \(\rho_b\), the principal parts of the normal derivative of the exterior solution are
\[
    X+Y,
    \qquad
    -\mathrm iX+\mathrm iY.
\]
Since
\[
    |X+Y|^2+|-\mathrm iX+\mathrm iY|^2
    =
    2(|X|^2+|Y|^2),
\]
we have
\[
    \max\{|X+Y|,|-\mathrm iX+\mathrm iY|\}
    \geq
    |X|
    \geq
    a_1.
\]
Using \eqref{eq:hard-remainder-small}, we obtain
\[
    \max_{r\in[R_h^\sharp,R_h^\sharp+T^\sharp]}
    e^{\frac{n+1}{2}r}
    \left|
        \partial_{\nu_{\mathbb H}}\Big|_{V}
        u_{\mathcal P,\xi_\ell}(x_r)
    \right|
    \geq
    \frac{a_1}{2}.
\]
{
Consequently, the selected normal derivative cannot vanish identically on the
portion of \(V\) contained in the fixed annulus in
\eqref{eq:hard-lower-bound}: otherwise analytic continuation along \(V\) would
make it vanish on the above ray, contradicting the two-radius estimate.  Thus
the left-hand side is positive for every admissible pair \((\mathcal P,V)\).
Its continuity follows from Proposition~\ref{prop:direct-stability}, local
\(C^1\)-convergence on the fixed annulus, and smooth convergence of \(V\).
Compactness of
\(\mathcal B_{\mathbb H}^h\) and of the family of such \(V\) therefore gives a
positive minimum \(c_{\mathsf N}\).
}

The proof is complete.
\end{proof}
\subsubsection{Proof of Theorem~\ref{thm:main-hard-farfield-simple}}

\begin{proof}
If \(\varepsilon=0\), then the far-field patterns corresponding to all boundary labels in \(\Xi\) coincide.
Since \(\Xi\) is admissible, it contains an affinely independent subfamily of \(n+1\) boundary labels.
The \(\mathsf N\)-type uniqueness theorem, Theorem~\ref{thm:mainip2}, applied to this subfamily yields $\mathcal P=\mathcal P'$.
Hence the desired stability estimate is immediate, and we may assume from now on that \(\varepsilon>0\).

Set
\[
    d:=d(\mathcal P,\mathcal P'),
    \qquad
    \bar h:=\min\{d,h\}.
\]
Since \(h>0\), the case \(\bar h=0\) is equivalent to \(d=0\).
In this case the modified boundary distance already vanishes, and the corresponding Hausdorff estimate follows from the distance comparison in Proposition~\ref{prop:distance-equivalence-hyperbolic}.
It remains to consider the nontrivial case \(\bar h>0\).

{
Choose \(R_*,T_*\) in Lemma~\ref{lem:hard-lower-bound} so that \(R_*\) also
exceeds the a priori radius required in
Proposition~\ref{prop:hard-quant-reflected-continuation}, and use the same
annulus in that proposition.
}
We now apply the quantitative reflected-continuation estimate in Proposition~\ref{prop:hard-quant-reflected-continuation}.
Denote by \(B_{\mathbb H}(x_0,\widehat\rho)\) the fixed observation ball used in that proposition.
The far-field-to-near-field estimate first converts the far-field discrepancy \(\varepsilon\) into the near-field bound
\[
    \max_{\ell=1,\ldots,m}
    \bigl\|
        u_{\mathcal P,\xi_\ell}
        -
        u_{\mathcal P',\xi_\ell}
    \bigr\|_{L^\infty(B_{\mathbb H}(x_0,\widehat\rho))}
    \leq
    \Psi(\varepsilon),
\]
where \(\Psi\) is the far-field-to-near-field modulus defined in \eqref{eq:Psi-ff-definition-section-six}.
Starting from this fixed observation ball, Proposition~\ref{prop:hard-quant-reflected-continuation} propagates the smallness through the common exterior by chains of hyperbolic balls and then applies the quantitative Neumann reflection across successive totally geodesic faces.
As a result, after possibly interchanging \(\mathcal P\) and \(\mathcal P'\), there is a selected defect
\[
    \widetilde{\mathcal P}\in\{\mathcal P,\mathcal P'\}
\]
with corresponding exterior solutions
\[
    \widetilde u_\ell:=u_{\widetilde{\mathcal P},\xi_\ell},
    \qquad \ell=1,\ldots,m,
\]
and there exist integers \(J,M\geq1\) and a terminal totally geodesic hypersurface \(V_J\), intersecting a fixed compact hyperbolic ball, such that
\begin{equation}
\label{eq:hard-upper-from-reflection}
    \bar h
    \max_{\ell=1,\ldots,m}
    \sup_{x\in V_J\cap
    \left(B_{\mathbb H}(R_*+T_*)\setminus B_{\mathbb H}(R_*)\right)}
    e^{\frac{n+1}{2}\rho(x)}
    \left|
        \partial_{\nu_{\mathbb H}}\Big|_{V_J}\widetilde u_\ell(x)
    \right|
    \leq
    C^M
    \bigl(\Psi(\varepsilon)\bigr)^{\beta^M}.
\end{equation}
{
Here \(J\) is the number of reflection stages, while \(M\) is the total number
of elementary propagation and reflection steps; in particular,
\[
    J\leq C_{\rm geo}\log\frac{2eR}{d},
\]
and \(M\) satisfies
\begin{equation}\label{eq:hard-M-bound-main-proof}
    M
    \leq
    C_{\rm geo}
    \log\frac{2eR}{d}
    \left(
        \log\frac{2eR}{d}
        +
        \log\frac{2eR}{\bar h}
    \right).
\end{equation}
}
The possible interchange of \(\mathcal P\) and \(\mathcal P'\) does not change \(d\), \(\bar h\), or the far-field error \(\varepsilon\).

We now compare the upper bound obtained by reflected continuation with the uniform lower bound for \(\mathsf N\)-type exterior solutions.
Recall that the terminal hypersurface \(V_J\) produced by Proposition~\ref{prop:hard-quant-reflected-continuation} intersects a fixed compact hyperbolic ball depending only on the a priori data.
Since \(\widetilde{\mathcal P}\in\mathcal B_{\mathbb H}^h\) and the family \(\Xi\) is admissible, Lemma~\ref{lem:hard-lower-bound} applies to the selected exterior solutions \(\widetilde u_\ell=u_{\widetilde{\mathcal P},\xi_\ell}\).
It yields
\begin{equation}\label{eq:hard-lower-in-proof}
    \max_{\ell=1,\ldots,m}
    \sup_{x\in V_J\cap
    \left(B_{\mathbb H}(R_*+T_*)\setminus B_{\mathbb H}(R_*)\right)}
    e^{\frac{n+1}{2}\rho(x)}
    \left|
        \partial_{\nu_{\mathbb H}}\Big|_{V_J}\widetilde u_\ell(x)
    \right|
    \geq
    c_{\mathsf N} .
\end{equation}
{
For fixed \(h\), this lower bound is uniform in
\(\widetilde{\mathcal P}\) and the far-field error.
}
Combining \eqref{eq:hard-upper-from-reflection} and \eqref{eq:hard-lower-in-proof}, we obtain
\begin{equation}
\label{eq:hard-log-input-main-proof}
    c_{\mathsf N}\bar h
    \leq
    C^M
    \bigl(\Psi(\varepsilon)\bigr)^{\beta^M}.
\end{equation}

We apply Lemma~\ref{lem:logarithmic-reduction} to \eqref{eq:hard-log-input-main-proof}.
We take
\[
    s:=\Psi(\varepsilon),
    \qquad
    \sigma=1.
\]
In the notation of Lemma~\ref{lem:logarithmic-reduction}, we take
\[
    \Gamma_M=\beta^M,
    \qquad
    B_M=M.
\]
{
Here \(C_0=C\), \(C_1=C_{\rm geo}\), \(c_0=c_{\mathsf N}\), and \(E=1\).
}
The estimate \eqref{eq:hard-M-bound-main-proof} gives the required bound for \(M\).
After decreasing the admissible error threshold, Lemma~\ref{lem:logarithmic-reduction} yields
\begin{equation}\label{eq:hard-far-field-modified-distance-eta}
    \bar h
    =
    \min\{d(\mathcal P,\mathcal P'),h\}
    \leq
    C\left(\eta(\Psi(\varepsilon))\right)^\alpha .
\end{equation}

Since
\[
    \Psi(t)
    =
    C
    \exp\!\left(-c(-\log t)^{1/2}\right),
\]
the composition \(\eta(\Psi(t))\) is bounded, for \(t\) sufficiently small, by a double-logarithmic modulus.
Thus, after reducing the small-error regime and renaming \(C\) and \(\alpha\), \eqref{eq:hard-far-field-modified-distance-eta} gives
\begin{equation}
\label{eq:hard-far-field-modified-distance-clean}
    \bar h
    \leq
    C
    \bigl(\log\log(1/\varepsilon)\bigr)^{-\alpha}.
\end{equation}

After decreasing the admissible threshold \(\varepsilon_{\mathsf N}(h)\) if necessary, the right-hand side of \eqref{eq:hard-far-field-modified-distance-clean} is smaller than \(h\).
Hence \(\bar h=d(\mathcal P,\mathcal P')\), and therefore
\begin{equation}\label{eq:hard-modified-distance-clean}
    d(\mathcal P,\mathcal P')
    \leq
    C
    \bigl(\log\log(1/\varepsilon)\bigr)^{-\alpha}.
\end{equation}
This is the modified-distance estimate \eqref{eq:main-hard-modified-distance-stability} in Theorem~\ref{thm:main-hard-farfield-simple}.

Finally, by Proposition~\ref{prop:distance-equivalence-hyperbolic},
\[
    d_{\mathcal H}^{\mathbb H}(\mathcal P,\mathcal P')
    \leq
    \delta^{-1}\bigl(d(\mathcal P,\mathcal P')\bigr).
\]
Using \eqref{eq:hard-modified-distance-clean} and the monotonicity of \(\delta^{-1}\), we obtain
\begin{equation*}
    d_{\mathcal H}^{\mathbb H}(\mathcal P,\mathcal P')
    \leq
    \delta^{-1}
    \left(
        C
        \bigl(\log\log(1/\varepsilon)\bigr)^{-\alpha}
    \right).
\end{equation*}
This is the Hausdorff stability estimate \eqref{eq:main-hard-final-stability} in Theorem~\ref{thm:main-hard-farfield-simple}.

The constants \(C>0\) and \(\alpha>0\) depend only on \(n\), on \(\lambda_0\), on the \(\mathsf N\)-type a priori data in Definition~\ref{def:hard-admissible-tg-defects}, and on \(\Xi\) through \(m\) and \(a_0(\Xi)\).
They are independent of \(\mathcal P,\mathcal P'\), \(h\), and \(\varepsilon\).

This proves Theorem~\ref{thm:main-hard-farfield-simple}.

\end{proof}

\subsection{\texorpdfstring{Stable determination of \(\mathsf D\)-type totally geodesic defects}{Stable determination of D-type totally geodesic defects}}\label{subsec:stable-soft-tg-defects}

We now prove Theorem~\ref{thm:main-soft-farfield}.  Let \(0<h\leq h_0\), and let
\[
    \mathcal P,\mathcal P'\in\mathcal A_{\mathbb H}^h
\]
be two admissible \(\mathsf D\)-type totally geodesic defects.  
Fix a boundary label \(\xi\in\mathbb S^{n-1}\), and write
\begin{equation}\label{eq:soft-exterior-solutions-section-six}
    u^i_\xi:=e_{2\lambda_0,\xi},
    \qquad
    u:=u_{\mathcal P,\xi}=u^i_\xi+u^s_{\mathcal P,\xi},
    \qquad
    u':=u_{\mathcal P',\xi}=u^i_\xi+u^s_{\mathcal P',\xi}.
\end{equation}
Define the far-field error by
\begin{equation}\label{eq:soft-farfield-error-section-six}
    \varepsilon
    :=
    \bigl\|
        u_{\infty,\mathcal P,\xi}
        -
        u_{\infty,\mathcal P',\xi}
    \bigr\|_{L^2(\mathbb S^{n-1})}.
\end{equation}
We assume \(0<\varepsilon\leq \varepsilon_{\mathsf D}(h)\).

Set
\begin{equation*}
    G:=\mathbb B^n\setminus\mathcal P,
    \qquad
    G':=\mathbb B^n\setminus\mathcal P',
\end{equation*}
and let \(H\) be the unbounded connected component of \(G\cap G'\).  
As in the \(\mathsf N\)-type case, we use the modified boundary distance
\[
    d:=d(\mathcal P,\mathcal P'),
    \qquad
    \bar h:=\min\{d,h\}.
\]
If \(d=0\), then the modified boundary discrepancy has already vanished, so there is nothing to prove.  
Hence we assume \(d>0\).

We use the same fixed observation ball
\[
    B_{\mathbb H}(x_0,\widehat\rho)
    \subset
    \mathbb B^n\setminus \overline{B_{\mathbb H}(R)},
\]
which lies in \(H\), since both defects are contained in \(\overline{B_{\mathbb H}(R)}\).

\medskip

The next proposition is the \(\mathsf D\)-type analogue of
Proposition~\ref{prop:hard-quant-reflected-continuation}.  It starts directly from
the far-field error \eqref{eq:soft-farfield-error-section-six}.  The proof is the
same reflected-chain construction as in the \(\mathsf N\)-type case, except that the Dirichlet
condition produces an odd reflection residual and no scale factor \(\bar h\) appears on
the left-hand side.

\begin{proposition}\label{prop:soft-quant-reflected-continuation}
Let \(0<h\leq h_0\), and let
\(\mathcal P,\mathcal P'\in\mathcal A_{\mathbb H}^h\).
Let \(d=d(\mathcal P,\mathcal P')\) be the modified boundary distance and set
\[
    \bar h:=\min\{d,h\}.
\]
Assume that \(\bar h>0\).
Let \(u,u'\) be defined by \eqref{eq:soft-exterior-solutions-section-six}.
Let \(\varepsilon\) be defined by \eqref{eq:soft-farfield-error-section-six}.

There exist constants
\[
    \begin{gathered}
        R_*>R,\qquad T_*>0,\qquad C\geq1,\qquad C_{\rm geo}\geq1,\\
        \beta\in(0,1),\qquad \varepsilon_0\in(0,e^{-1}).
    \end{gathered}
\]
{
The geometric exponent \(\beta\) and the constants in the bounds for \(J\) and
\(M\) depend only on the stated a priori data; \(\varepsilon_0\) may also
depend on \(h\).
}
They have the following property.
{
The far annulus may be prescribed in advance: the same conclusion holds for
every fixed \(T_*>0\) and every \(R_*\) larger than an a priori radius, with
the constants also allowed to depend on this prescribed annulus.
}

If \(0<\varepsilon\leq\varepsilon_0\), then, after possibly interchanging
\((\mathcal P,u)\) and \((\mathcal P',u')\), there exist integers \(J,M\geq1\), an
exterior solution
\[
    \widetilde u\in\{u,u'\},
\]
and a totally geodesic hypersurface \(V_J\), intersecting a fixed compact
hyperbolic ball depending only on the a priori data, such that
\begin{equation}
\label{eq:soft-reflected-continuation-upper}
    \sup_{x\in V_J\cap
    \left(B_{\mathbb H}(R_*+T_*)\setminus B_{\mathbb H}(R_*)\right)}
    e^{\frac{n-1}{2}\rho(x)}
    |\widetilde u(x)|
    \leq
    C^M\bigl(\Psi(\varepsilon)\bigr)^{\beta^M}.
\end{equation}
{
Here \(J\) is the number of reflection stages, whereas \(M\) is the total
number of elementary propagation and reflection steps, and
\[
    J\leq C_{\rm geo}\log\frac{2eR}{d}.
\]
}
Moreover,
\begin{equation}
\label{eq:M-bound-soft-reflected-continuation}
    M
    \leq
    C_{\rm geo}
    \log\frac{2eR}{d}
    \left(
        \log\frac{2eR}{d}
        +
        \log\frac{2eR}{\bar h}
    \right).
\end{equation}
\end{proposition}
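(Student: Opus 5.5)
The plan is to follow the proof of Proposition~\ref{prop:hard-quant-reflected-continuation} step by step. Two changes are needed. The Neumann reflection is replaced by the Dirichlet part of Lemma~\ref{lem:quant-reflection}, which produces the odd residual \(u+u\circ I_V\). The scalar solution \(u\) replaces the vector \(U\).

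\emph{Step 1 (near field).} Set \(w:=u-u'\). It is an outgoing solution outside \(\overline{B_{\mathbb H}(R)}\), and its far-field pattern has norm at most \(\varepsilon\). Lemma~\ref{lem:quantitative-rellich} then gives \(\|w\|_{L^\infty(B_{\mathbb H}(x_0,\widehat\rho))}\le\Psi(\varepsilon)\). Proposition~\ref{prop:uniform-local-bound} supplies the uniform bound \(E_*\) on every ball used later.

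\emph{Step 2 (first chain).} Interchange the defects if necessary, and pick \(x_1\in\partial\mathcal P'\setminus\mathcal P\) with \(\rho(x_1,\partial\mathcal P)\ge d/2\). Build a regular chain in \(H\) of length \(N_1\le C_{\rm geo}\log(2eR/d)\), exactly as before. Lemma~\ref{lem:propagation-chain} then gives smallness of \(w\) near the first contact with \(\partial\mathcal P'\).

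\emph{Step 3 (conical chain).} The conical chain has length \(N_2\le C_{\rm geo}(\log(2eR/d)+\log(2eR/\bar h))\). It leads to a face patch \(y_1\in\mathcal F'\subset V_{\mathcal F'}\) satisfying \eqref{eq:face-patch-hard-section-six} and \(\rho(y_1,\mathcal P)\ge c_1d\). Up to this point only the admissible geometry of \(\mathcal A_{\mathbb H}^h\) is used, never the boundary condition.

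On the patch, \(u'=0\). By Lemma~\ref{lem:one-sided-reflection-extension} (Dirichlet case), the odd extension \(\widetilde u'\) solves the equation in \(B_{\mathbb H}(y_1,4r_h)\), where \(r_h=c\bar h\). So \(\widetilde w:=u-\widetilde u'\) is small on \(B_{\mathbb H}(y_1,2r_h)\) after a bounded number of further chain steps. Since \(\widetilde u'\) vanishes on the patch, the trace data \(\|u\|_{L^\infty(\Sigma)}\) and \(r_h\|\nabla^T_{\mathbb H}u\|_{L^\infty(\Sigma)}\) are controlled by \(\widetilde w\). This uses the interior \(C^1\) estimate with the factor \(r_h\) built in, so no extra \(\bar h\) survives in the final bound. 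The Dirichlet part of Lemma~\ref{lem:quant-reflection} then gives
\[
\|u+u\circ I_{V_{\mathcal F'}}\|_{L^\infty(B_{\mathbb H}(y_1,c\bar h))}\le C^{N_1+N_2}\Psi(\varepsilon)^{\beta^{N_1+N_2+1}}.
\]

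\emph{Step 4 (iteration).} Iterate with the residuals \(D_j:=u+u\circ I_{V_j}\), which solve the equation in \(Q_j=\mathbb B^n\setminus(\mathcal P\cup I_{V_j}\mathcal P)\). Follow the fixed reference chain toward decreasing indices, so that \(J\le C_{\rm geo}\log(2eR/d)\) and the total step count \(M\) satisfies \eqref{eq:M-bound-soft-reflected-continuation}. At the next reflected face \(V_{j+1}\), the function \(u\circ I_{V_j}\) vanishes because \(I_{V_j}(V_{j+1})\) locally lies in a Dirichlet face of \(\partial\mathcal P\). Hence the Dirichlet data of \(u\) on \(V_{j+1}\) coincide with those of \(D_j\), which are small by chain propagation together with the interior \(C^1\) estimate at scale \(\ge c\bar h\). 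A further application of the Dirichlet part of Lemma~\ref{lem:quant-reflection} closes the induction.

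\emph{Step 5 (far annulus).} At the terminal stage, take \(R_*>2R_{\rm geo}+R+1\) as before, so that the far annulus lies in \(Q_J\). Propagate \(u+u\circ I_{V_J}\) to a tubular neighborhood of \(V_J\cap(B_{\mathbb H}(R_*+T_*)\setminus B_{\mathbb H}(R_*))\). On \(V_J\) this residual equals \(2u\), which gives a pointwise bound on \(u\) itself. Multiplying by the bounded weight \(e^{\frac{n-1}{2}\rho}\) yields \eqref{eq:soft-reflected-continuation-upper}, with \(\widetilde u\) the solution of whichever defect was selected by the interchange.

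The main obstacle is the face-selection step together with the chain-length bookkeeping, in particular ensuring that each contacted reflected face gives a regular patch at scale \(\bar h\) on the accessible side. The treatment will be the same as in the \(\mathsf N\)-type proof. Here the Dirichlet case is simpler, because trace closedness needs no collar condition.
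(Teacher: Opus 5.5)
Your proposal is correct and is essentially the paper's own argument. It uses the same reflected-chain construction as Proposition~\ref{prop:hard-quant-reflected-continuation}, the odd extension of Lemma~\ref{lem:one-sided-reflection-extension}, small Dirichlet Cauchy data for \(u\) from the interior \(C^1\) estimate at scale \(r_h\), the Dirichlet part of Lemma~\ref{lem:quant-reflection} (so no factor \(\bar h\) appears), and the terminal identity \(u+u\circ I_{V_J}=2u\) on \(V_J\). One small point, which the paper glosses identically: at stage \(j+1\) the residual \(D_j\) is not defined across the reflected face patch on \(V_{j+1}\), because \(I_{V_j}\) maps that patch into \(\partial\mathcal P\), so the ``interior'' \(C^1\) estimate there should be obtained as at the first stage, by first extending \(u\circ I_{V_j}\) oddly across \(V_{j+1}\) via Lemma~\ref{lem:one-sided-reflection-extension} and then propagating.
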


\begin{proof}
The proof follows the same reflected-chain construction as
Proposition~\ref{prop:hard-quant-reflected-continuation}.
The only differences are caused by the boundary condition on each exposed
face.
In the \(\mathsf D\)-type case the reflection residual is odd rather than even,
the small boundary data are the Dirichlet trace and tangential gradient rather
than the Neumann trace, and no scale factor \(\bar h\) is lost in passing from
smallness near the face to the reflection residual.
For completeness, we indicate these changes.

The far-field-to-near-field conversion is supplied by
Lemma~\ref{lem:quantitative-rellich}, applied to
\[
    u-u'=u^s_{\mathcal P,\xi}-u^s_{\mathcal P',\xi}
\]
in the exterior of \(B_{\mathbb H}(R)\).
Thus the initial smallness on the observation ball is bounded by
\(\Psi(\varepsilon)\).
The uniform \(L^\infty\)-bound used in every application of
Lemma~\ref{lem:propagation-chain} and Lemma~\ref{lem:quant-reflection} is provided by
Proposition~\ref{prop:uniform-local-bound}, exactly as in the \(\mathsf N\)-type case.

The propagation to the first boundary discrepancy and then to a regular face patch uses the same regular-chain construction and gives the same bound for the number of balls.
On the exposed face of \(\partial\mathcal P'\), however, the exterior solution \(u'\) satisfies the homogeneous Dirichlet condition.
{
As in Step~4 of the proof of
Proposition~\ref{prop:hard-quant-reflected-continuation}, the Dirichlet part of
Lemma~\ref{lem:one-sided-reflection-extension} allows us to reflect \(u'\)
oddly across the totally geodesic hypersurface containing that face.
}
The resulting reflection residual solves \(\mathcal L_{\lambda_0}w=0\) in the corresponding local ball, agrees with \(u-u'\) on the common exterior side, and is therefore small near the face after a bounded-length propagation.
The interior \(C^1\)-estimate then gives small Dirichlet-type Cauchy data for \(u\) on the face patch, namely small \(u\) and small tangential hyperbolic gradient.

We then apply the Dirichlet, or odd-reflection, part of Lemma~\ref{lem:quant-reflection}.
This gives a small odd reflection residual across the corresponding totally geodesic hypersurface.
No Neumann trace has to be estimated, and therefore no factor \(\bar h\) is lost.
Iterating the same propagation-reflection procedure as in the \(\mathsf N\)-type case gives the terminal hypersurface \(V_J\) and the estimate \eqref{eq:soft-reflected-continuation-upper}.
{
The same strictly decreasing first-hit index gives
\(J\leq C_{\rm geo}\log(2eR/d)\), while summing the elementary steps over these stages,
including the terminal chain to the far annulus, gives
\eqref{eq:M-bound-soft-reflected-continuation}.
}
\end{proof}

\medskip

The reflected-continuation estimate above gives an upper bound for the normalized
size of a \(\mathsf D\)-type exterior solution on a terminal totally geodesic hypersurface.  We now need
the corresponding lower bound.  As in the \(\mathsf N\)-type case, the terminal hypersurface
intersects a fixed compact hyperbolic ball; let \(R_{\rm geo}>R\) be an a priori
radius such that every terminal hypersurface produced in the construction satisfies
\[
    V_J\cap \overline{B_{\mathbb H}(R_{\rm geo})}\neq\emptyset .
\]
The next lemma says that a nontrivial incoming Helgason mode cannot be canceled by an outgoing correction on the whole far part of such a hypersurface.

\begin{lemma}\label{lem:soft-lower-bound}
Let \(0<h\leq h_0\), let
\(\mathcal P\in\mathcal A_{\mathbb H}^h\), and fix
{
\(\xi\in\mathbb S^{n-1}\).  Let \(u=u_{\mathcal P,\xi}\).  There exist
\(R_*>\max\{R,R_{\rm geo}\}\) and \(T_*>0\), depending only on the \(\mathsf D\)-type a priori
data, on \(n\), on \(\lambda_0\), on \(R_{\rm geo}\), and on \(\xi\), and a
constant \(c_{\mathsf D}>0\), which may also depend on the fixed \(h\), such that for every
totally geodesic hypersurface \(V\) satisfying
\(V\cap \overline{B_{\mathbb H}(R_{\rm geo})}\neq\emptyset\),
}
\begin{equation}\label{eq:soft-lower-bound}
    \sup_{x\in V\cap
    \left(B_{\mathbb H}(R_*+T_*)\setminus B_{\mathbb H}(R_*)\right)}
    e^{\frac{n-1}{2}\rho(x)}
    |u_{\mathcal P,\xi}(x)|
    \geq c_{\mathsf D} .
\end{equation}
\end{lemma}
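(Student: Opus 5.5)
The plan mirrors the proof of Lemma~\ref{lem:hard-lower-bound}, with the Neumann trace replaced by the value of $u$ itself. There is now no normal-component factor $\xi_n$ that could vanish, which is why a single label suffices. First we show that along a suitable ray in $V$ the incoming Helgason mode has a uniformly nonzero leading coefficient, while the outgoing correction oscillates with the opposite phase. A two-radius argument then excludes cancellation, and compactness makes the bound uniform.

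\emph{Step 1: incoming part.} By the isometric reduction used in Lemma~\ref{lem:neumann-set-bounded}, move $V$ to $V_0=\{x_n=0\}$; the Helgason kernel transforms into a nonzero multiple of a kernel with transformed label. Consider a ray $x_r=\tanh(r/2)\eta$ with $\eta\in\partial_\infty V_0$. If $\eta\neq\xi$, the explicit formula gives
\[
e^{\frac{n-1}{2}r}u^i_\xi(x_r)=A_V e^{-\mathrm i\lambda_0 r}+r^i_V(r),
\qquad A_V\asymp|\eta-\xi|^{-(n-1)-2\mathrm i\lambda_0}.
\]
Since $\partial_\infty V$ is an $(n-2)$-sphere (for $n=2$, two points), we can always choose the endpoint $\eta_V\in\partial_\infty V$ at spherical distance at least some fixed $a>0$ from $\xi$. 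We pick the antipodal-type point of $\partial_\infty V$ maximizing the distance to $\xi$; this distance is bounded below uniformly. Therefore $|A_V|\ge a_1>0$. The family of $V$ meeting $\overline{B_{\mathbb H}(R_{\rm geo})}$ is compact, and the Möbius normalization constants depend continuously on $V$. Together with the continuous choice of ray, this makes $a_1$ and the convergence $r^i_V\to0$ uniform in $V$. If $\xi\in\partial_\infty V$ we simply avoid $\eta=\xi$, and the chosen ray stays uniformly away from $\xi$.

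\emph{Step 2: outgoing part.} By the fixed-sphere Green representation \eqref{eq:green-representation-decay}, the uniform Cauchy-data bound \eqref{eq:uniform-Cauchy-data-fixed-sphere}, and the kernel asymptotics of Subsection~\ref{subsec:radition}, we get
\[
e^{\frac{n-1}{2}r}u^s_{\mathcal P,\xi}(x_r)=B_{\mathcal P,V}e^{\mathrm i\lambda_0 r}+r^s_{\mathcal P,V}(r),
\]
with $B$ uniformly bounded and $r^s\to0$ uniformly over $\mathcal P\in\mathcal A^h_{\mathbb H}$ and $V$. The uniformity comes from the $O(\sinh^{-(n+1)/2})$ remainder, which is uniform for sources on a fixed sphere.

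\emph{Step 3: two-radius argument and compactness.} Choose $T^\sharp>\pi/(2\lambda_0)+1$. Take $R_h^\sharp$ large, so that the remainders are at most $a_1/8$ on $[R_h^\sharp,R_h^\sharp+T^\sharp]$. Set $\rho_a=R_h^\sharp+1/2$ and $\rho_b=\rho_a+\pi/(2\lambda_0)$. Then the principal parts at $\rho_a$ and $\rho_b$ are $X+Y$ and $-\mathrm i X+\mathrm i Y$. The parallelogram identity gives $\max\ge|X|\ge a_1$, so the supremum in \eqref{eq:soft-lower-bound} is at least $a_1/2$. We take $R_*=R_h^\sharp$ and $T_*=T^\sharp$. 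This step already yields a constant independent of $h$; if needed, continuity via Proposition~\ref{prop:direct-stability} plus compactness of $\mathcal A^h_{\mathbb H}$ and of the family of $V$ confirms positivity of the minimum $c_{\mathsf D}$.

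The main obstacle is the uniformity in Step~2: the remainder $r^s$ must go to zero uniformly in $\mathcal P$, $V$ and the chosen ray. I would handle it by writing $u^s$ through the fixed sphere $\partial B_{\mathbb H}(R_0)$ and expanding the kernel $G_{-\lambda_0\mathrm i}(\rho(x_r,y))$ uniformly in $y$ on that sphere and in the direction $\hat x_r$.
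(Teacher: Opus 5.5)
Your asymptotic computation and two-radius argument match the paper's, but setting $R_*=R^\sharp_h$ in Step~3 does not prove the lemma as stated.

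\textbf{Why $R_*=R^\sharp_h$ fails.} The radius beyond which $|r^i_V|+|r^s_{\mathcal P,V}|\le a_1/8$ is governed by the Cauchy-data bound \eqref{eq:uniform-Cauchy-data-fixed-sphere}. That bound comes from Proposition~\ref{prop:uniform-local-bound}, which is proved by compactness of $\mathcal A^h_{\mathbb H}$ and is uniform only for the fixed $h$; you yourself claim uniformity only over $\mathcal A^h_{\mathbb H}$. So your $R_*$ depends on $h$. The statement, however, requires $R_*,T_*$ to depend only on the a priori data, $n$, $\lambda_0$, $R_{\rm geo}$ and $\xi$, and lets only $c_{\mathsf D}$ depend on $h$. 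Your remark that $a_1/2$ is $h$-independent just moves the $h$-dependence from $c_{\mathsf D}$ into the annulus. That is the wrong place. The annulus is used in Proposition~\ref{prop:soft-quant-reflected-continuation}, whose constants may depend on it; for instance, the length of the terminal chain to the far annulus enters the bound on $M$. Through Lemma~\ref{lem:logarithmic-reduction} this would make $C$ and $\alpha$ in Theorem~\ref{thm:main-soft-farfield} depend on $h$. An $h$-dependent $c_{\mathsf D}$, by contrast, only affects the error threshold.

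\textbf{The missing step.} You need the decoupling used in the paper.
\begin{enumerate}
\item Fix $R_*>\max\{R,R_{\rm geo}\}$ and $T_*>0$ first, independently of $h$.
\item Use the two-radius estimate at the $h$-dependent radius $R^\sharp_h$ only qualitatively, to show that $u_{\mathcal P,\xi}$ does not vanish identically on the far part of your ray.
\item Transfer this to the fixed annulus by analytic continuation. Since $V\setminus\overline{B_{\mathbb H}(R)}\subset V\cap G$ and $u|_V$ is real-analytic there, consider the component of $V\setminus\overline{B_{\mathbb H}(R)}$ containing the far part of the ray. Because $V$ meets $\overline{B_{\mathbb H}(R_{\rm geo})}\subset B_{\mathbb H}(R_*)$, this component meets $V\cap\{R_*<\rho<R_*+T_*\}$ in a nonempty relatively open set (for $n=2$, on the same half-line). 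Hence $u$ cannot vanish identically on the fixed annulus portion, and the left-hand side of \eqref{eq:soft-lower-bound} is positive for every pair $(\mathcal P,V)$.
\item This quantity is lower semicontinuous in $(\mathcal P,V)$, by Proposition~\ref{prop:direct-stability} ($C^0$-convergence on the fixed annulus) and smooth convergence of $V$.
\item Compactness of $\mathcal A^h_{\mathbb H}$ and of the family of $V$ meeting $\overline{B_{\mathbb H}(R_{\rm geo})}$ then yields $c_{\mathsf D}=c_{\mathsf D}(h)>0$.
\end{enumerate}
Your ``if needed'' appeal to compactness cannot replace this. Without the analytic-continuation transfer, nothing gives positivity on an $h$-independent annulus.

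\textbf{Two smaller points.} First, the reduction of $V$ to $V_0$ in Step~1 is unnecessary. Since that isometry moves the origin, the parameter $r$ in $\tanh(r/2)\eta$ is no longer $\rho$ of the original ball. Compute directly from the explicit kernel instead, using that $|x_r-\xi|$ stays bounded below. Second, no continuous choice of $\eta_V$ is needed; the farthest point of $\partial_\infty V$ from $\xi$ is generally not unique. Uniformity follows from compactness of the set of pairs $(V,\eta)$ with $\eta\in\partial_\infty V$ at distance at least a fixed $\kappa>0$ from $\xi$.
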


\begin{proof}
{
Fix \(R_*>\max\{R,R_{\rm geo}\}\) and \(T_*>0\) once and for all, depending
only on the stated a priori data.
}
Fix \(V\) with
\[
    V\cap \overline{B_{\mathbb H}(R_{\rm geo})}\neq\emptyset .
\]
By compactness of this family of totally geodesic hypersurfaces, there exists
\(\kappa_{\rm sep}>0\), depending only on \(R_{\rm geo}\), such that for every such \(V\) we can choose
\[
    \eta_V\in\partial_\infty V
\]
with
\[
    |\eta_V-\xi|\geq \kappa_{\rm sep} .
\]
Thus the boundary label \(\xi\) stays uniformly away from the endpoint \(\eta_V\).

Choose a family of points \(x_r\in V\), defined for all sufficiently large \(r\), such that
\[
    \rho(x_r)=r,
    \qquad
    \frac{x_r}{|x_r|}\to\eta_V
    \quad\text{as }r\to+\infty .
\]
From the explicit formula for \(u^i_\xi=e_{2\lambda_0,\xi}\), along this family one has
\begin{equation*}
    e^{\frac{n-1}{2}r}
    u^i_\xi(x_r)
    =
    A_Ve^{-\mathrm i\lambda_0 r}
    +
    r^i_V(r),
    \qquad
    |A_V|\geq a_s>0 .
\end{equation*}
Here \(a_s\) depends only on \(n\), \(\lambda_0\), \(R_{\rm geo}\), and \(\xi\).
The convergence \(r^i_V(r)\to0\) is uniform for the above family of \(V\), because the endpoint \(\eta_V\) stays a fixed positive distance away from \(\xi\).

The outgoing correction satisfies the outgoing radiation condition.
The outgoing Green representation and the \(C^0\) far-field asymptotics in Subsection~\ref{subsec:radition}, together with the uniform exterior bounds from Proposition~\ref{prop:uniform-decay}, give
\begin{equation*}
    e^{\frac{n-1}{2}r}
    u^s_{\mathcal P,\xi}(x_r)
    =
    B_{\mathcal P,V}e^{\mathrm i\lambda_0 r}
    +
    r^s_{\mathcal P,V}(r).
\end{equation*}
{
This follows from the fixed-sphere Green representation, the uniform kernel
expansion, and \eqref{eq:uniform-Cauchy-data-fixed-sphere}.
}
Here \(B_{\mathcal P,V}\) is uniformly bounded, and \(r^s_{\mathcal P,V}(r)\to0\) uniformly for all admissible \(\mathcal P\) and all admissible \(V\).

For the following asymptotic contradiction, choose
\[
    T^\sharp>\frac{\pi}{2\lambda_0}+1 .
\]
By the uniform convergence of the remainders, we may then choose
\(R_h^\sharp\) sufficiently large, larger also than the radius in
Proposition~\ref{prop:uniform-decay}, such that
\[
    R_h^\sharp>\max\{R,R_{\rm geo}\}
\]
and
\begin{equation}\label{eq:soft-remainder-small}
    |r^i_V(r)|+|r^s_{\mathcal P,V}(r)|
    \leq
    \frac{a_s}{8},
    \qquad
    r\in[R_h^\sharp,R_h^\sharp+T^\sharp],
\end{equation}
uniformly for all admissible \(\mathcal P\) and all admissible \(V\).

Choose
\[
    \rho_a:=R_h^\sharp+\frac12,
    \qquad
    \rho_b:=\rho_a+\frac{\pi}{2\lambda_0}.
\]
Since
\[
    T^\sharp>\frac{\pi}{2\lambda_0}+1,
\]
both \(\rho_a\) and \(\rho_b\) belong to
\((R_h^\sharp,R_h^\sharp+T^\sharp)\).
Set
\[
    X:=A_Ve^{-\mathrm i\lambda_0\rho_a},
    \qquad
    Y:=B_{\mathcal P,V}e^{\mathrm i\lambda_0\rho_a}.
\]
At the two radii \(\rho_a\) and \(\rho_b\), the principal parts of the exterior solution are
\[
    X+Y,
    \qquad
    -\mathrm iX+\mathrm iY.
\]
Since
\[
    |X+Y|^2+|-\mathrm iX+\mathrm iY|^2
    =
    2(|X|^2+|Y|^2),
\]
we have
\[
    \max\{|X+Y|,|-\mathrm iX+\mathrm iY|\}
    \geq
    |X|
    \geq
    a_s .
\]
Using \eqref{eq:soft-remainder-small}, we obtain
\[
    \max_{r\in[R_h^\sharp,R_h^\sharp+T^\sharp]}
    e^{\frac{n-1}{2}r}
    |u_{\mathcal P,\xi}(x_r)|
    \geq
    \frac{a_s}{2}.
\]
{
Consequently, \(u_{\mathcal P,\xi}\) cannot vanish identically on the portion
of \(V\) contained in the fixed annulus in
\eqref{eq:soft-lower-bound}: otherwise analytic continuation along \(V\) would
make it vanish on the above ray, contradicting the two-radius estimate.  Thus
the left-hand side is positive for every admissible pair \((\mathcal P,V)\).
Continuity follows from Proposition~\ref{prop:direct-stability}, local
\(C^0\)-convergence on the fixed annulus, and smooth convergence of \(V\).
Compactness of \(\mathcal A_{\mathbb H}^h\) and of the family of such \(V\)
then gives a positive minimum \(c_{\mathsf D}\).
}

The proof is complete.
\end{proof}

\subsubsection{Proof of Theorem~\ref{thm:main-soft-farfield}}

\begin{proof}
Let
\[
    \varepsilon
    :=
    \bigl\|
        u_{\infty,\mathcal P,\xi}
        -
        u_{\infty,\mathcal P',\xi}
    \bigr\|_{L^2(\mathbb S^{n-1})}.
\]
Set
\[
    u:=u_{\mathcal P,\xi},
    \qquad
    u':=u_{\mathcal P',\xi}.
\]
If \(\varepsilon=0\), then Theorem~\ref{thm:mainip1} gives \(\mathcal P=\mathcal P'\).
Hence the desired estimate is immediate.
We therefore assume that \(\varepsilon>0\).

Set
\[
    d:=d(\mathcal P,\mathcal P'),
    \qquad
    \bar h:=\min\{d,h\}.
\]
Since \(h>0\), the case \(\bar h=0\) is equivalent to \(d=0\).
In this case the modified boundary distance vanishes, and the Hausdorff estimate follows from Proposition~\ref{prop:distance-equivalence-hyperbolic}.
Thus it remains to consider the case \(\bar h>0\).

{
Choose \(R_*,T_*\) in Lemma~\ref{lem:soft-lower-bound} so that \(R_*\) also
exceeds the a priori radius required in
Proposition~\ref{prop:soft-quant-reflected-continuation}, and use the same
annulus in that proposition.
}
We apply Proposition~\ref{prop:soft-quant-reflected-continuation}.
This proposition starts from the far-field discrepancy \(\varepsilon\), converts it into near-field smallness through the modulus \(\Psi(\varepsilon)\), and then propagates this smallness by the Dirichlet reflected-continuation argument.
After possibly interchanging \((\mathcal P,u)\) and \((\mathcal P',u')\), we obtain a selected exterior solution
\[
    \widetilde u\in\{u,u'\},
\]
and integers \(J,M\geq1\) and a terminal totally geodesic hypersurface \(V_J\), intersecting a fixed compact hyperbolic ball, such that
\begin{equation}
\label{eq:soft-upper-from-reflection}
    \sup_{x\in V_J\cap
    \left(B_{\mathbb H}(R_*+T_*)\setminus B_{\mathbb H}(R_*)\right)}
    e^{\frac{n-1}{2}\rho(x)}
    |\widetilde u(x)|
    \leq
    C^M
    \bigl(\Psi(\varepsilon)\bigr)^{\beta^M}.
\end{equation}
{
Here \(J\) is the number of reflection stages and
\[
    J\leq C_{\rm geo}\log\frac{2eR}{d};
\]
the total number \(M\) of elementary propagation and reflection steps satisfies
\begin{equation}
\label{eq:soft-M-bound-main-proof}
    M
    \leq
    C_{\rm geo}
    \log\frac{2eR}{d}
    \left(
        \log\frac{2eR}{d}
        +
        \log\frac{2eR}{\bar h}
    \right).
\end{equation}
}
The possible interchange does not change \(d\), \(\bar h\), or \(\varepsilon\).

We now use the lower bound for \(\mathsf D\)-type exterior solutions.
Since \(\widetilde u\) is the exterior solution associated with an admissible \(\mathsf D\)-type defect and with the fixed boundary label \(\xi\), Lemma~\ref{lem:soft-lower-bound} gives
\begin{equation}
\label{eq:soft-lower-in-proof}
    \sup_{x\in V_J\cap
    \left(B_{\mathbb H}(R_*+T_*)\setminus B_{\mathbb H}(R_*)\right)}
    e^{\frac{n-1}{2}\rho(x)}
    |\widetilde u(x)|
    \geq
    c_{\mathsf D} .
\end{equation}
Combining \eqref{eq:soft-upper-from-reflection} and \eqref{eq:soft-lower-in-proof}, we obtain
\begin{equation}
\label{eq:soft-log-input-main-proof}
    c_{\mathsf D}
    \leq
    C^M
    \bigl(\Psi(\varepsilon)\bigr)^{\beta^M}.
\end{equation}
This is the \(\mathsf D\)-type analogue of the logarithmic input.
Here no factor \(\bar h\) appears on the left-hand side, because the reflected-continuation step uses Dirichlet data and odd reflection rather than a Neumann trace estimate.

We apply Lemma~\ref{lem:logarithmic-reduction} to \eqref{eq:soft-log-input-main-proof}.
We take
\[
    s:=\Psi(\varepsilon),
    \qquad
    \sigma=0.
\]
In the notation of Lemma~\ref{lem:logarithmic-reduction}, we take
\[
    \Gamma_M=\beta^M,
    \qquad
    B_M=M.
\]
{
Here \(C_0=C\), \(C_1=C_{\rm geo}\), \(c_0=c_{\mathsf D}\), and \(E=1\).
}
The estimate \eqref{eq:soft-M-bound-main-proof} gives the required control of \(M\).
Moreover, \(\bar h\leq d\) holds by definition.
After decreasing the admissible error threshold, Lemma~\ref{lem:logarithmic-reduction} yields
\begin{equation}
\label{eq:soft-far-field-modified-distance-eta}
    \bar h
    =
    \min\{d(\mathcal P,\mathcal P'),h\}
    \leq
    C
    \left(\eta(\Psi(\varepsilon))\right)^\alpha .
\end{equation}

Since
\[
    \Psi(t)
    =
    C
    \exp\!\left(-c(-\log t)^{1/2}\right),
\]
the composition \(\eta(\Psi(t))\) is bounded, for \(t\) sufficiently small, by a double-logarithmic modulus.
Thus, after reducing the small-error regime and renaming \(C\) and \(\alpha\), \eqref{eq:soft-far-field-modified-distance-eta} gives
\begin{equation}
\label{eq:soft-far-field-modified-distance-clean}
    \bar h
    \leq
    C
    \bigl(\log\log(1/\varepsilon)\bigr)^{-\alpha}.
\end{equation}

After decreasing the admissible threshold \(\varepsilon_{\mathsf D}(h)\) if necessary, the right-hand side of \eqref{eq:soft-far-field-modified-distance-clean} is smaller than \(h\).
Hence \(\bar h=d(\mathcal P,\mathcal P')\), and therefore
\begin{equation}
\label{eq:soft-modified-distance-clean}
    d(\mathcal P,\mathcal P')
    \leq
    C
    \bigl(\log\log(1/\varepsilon)\bigr)^{-\alpha}.
\end{equation}
This is the modified-distance estimate \eqref{eq:main-soft-modified-distance-stability}.

Finally, Proposition~\ref{prop:distance-equivalence-hyperbolic} gives
\[
    d_{\mathcal H}^{\mathbb H}(\mathcal P,\mathcal P')
    \leq
    \delta^{-1}\bigl(d(\mathcal P,\mathcal P')\bigr).
\]
Using \eqref{eq:soft-modified-distance-clean} and the monotonicity of \(\delta^{-1}\), we obtain
\begin{equation*}
    d_{\mathcal H}^{\mathbb H}(\mathcal P,\mathcal P')
    \leq
    \delta^{-1}
    \left(
        C
        \bigl(\log\log(1/\varepsilon)\bigr)^{-\alpha}
    \right).
\end{equation*}
This is the Hausdorff stability estimate \eqref{eq:main-soft-final-stability}.

The constants \(C>0\) and \(\alpha>0\) depend only on \(n\), on \(\lambda_0\), on the \(\mathsf D\)-type a priori data in Definition~\ref{def:admissible-tg-defects}, and on the boundary label \(\xi\).
They are independent of \(\mathcal P,\mathcal P'\), \(h\), and \(\varepsilon\).

This proves Theorem~\ref{thm:main-soft-farfield}.
\end{proof}

\bibliographystyle{plain}
\bibliography{ref}

\end{document}